\newtheorem{lem}{Lemma}[section]
\newtheorem{theo}{Theorem}
\newtheorem{fact}{Fact}[section]
\newtheorem{coro}{Corollary}[section]
\newtheorem{rem}{Remark}[section]
\newtheorem{prop}{Proposition}[section]
\newtheorem{defi}{Definition}[section]
\newenvironment{proof}
{{\bf Proof:}\noindent}{\begin{flushright}$\Box$\end{flushright}}
\def\mathrm{\rm}
\font\mathfonta=msam10 at 11pt
\font\mathfontb=msbm10 at 11pt
\def\Bbb#1{\mbox{\mathfontb #1}}
\def\lesssim{\mbox{\mathfonta.}}
\def\grtsim{\mbox{\mathfonta\&}}
\def\gtrsim{\mbox{\mathfonta\&}}
\def\Sig{{\bf\Sigma}}
\def\diam{{\mathrm{diam}}}
\def\Jac{{\mathrm{Jac}}}
\def\dist#1{{{\mathrm{dist}}\br{#1}}}
\def\qhdist#1{{{\mathrm{dist_{qh}}}\br{#1}}}
\def\del{\Delta}
\def\om{\Omega}
\def\dd{\partial}
\def\Cal#1{{\cal#1}}
\def\J{J_F}
\def\C{\Bbb C}
\def\N{\Bbb N}
\def\CC{\hat{\Bbb C}}
\def\R{\Bbb R}
\def\Crit{{\mathrm{Crit}}}
\def\ha{{\cal H}}
\def\br#1{\left(#1\right)}
\def\brs#1{\left\{#1\right\}}
\def\abs#1{\left|#1\right|}
\def\len#1{{{\mathrm{length}}\br{#1}}}
\def\qhlen#1{{{\mathrm{length_{qh}}}\br{#1}}}
\def\HD{{\mathrm{HDim}}}
\def\MD{{\mathrm{MDim}}}
\def\MDsup{{\overline{\mathrm{MDim}}}}
\def\MDinf{{\underline{\mathrm{MDim}}}}
\def\HH{{\mathrm{HypDim}}}
\def\dwhit{{\delta_{Whit}}}
\def\dpoin{{\delta_{Poin}}}
\def\dconf{{\delta_{conf}}}
\def\const{{\mathrm{const}}}
\def\fr{\frac}
\def\itemm#1{ \item{\makebox[0.3in][l]{#1}}}
\def\m#1{\mu(#1)}
\def\mum{\mu}
\def\mmax{\mu_{max}}
\def\l2t{L}
\def\supF{{\sup|F'|}}
\def\xx#1#2{{{\cal H}_{#1}(#2)}}
\def\diam{{\mathrm{diam\;}}}
\def\degg#1#2{{\# \br{#1,#2}}}
\def\Jls{{J_*}}
\def\Jlso{{J_{*,\epsilon}}}
\def\typei{{\bf I}}
\def\typeii{{\bf II}}
\def\gr{{\mathrm g}}
\def\Fe{{F^{-1}_{\mathrm{rl}}}}
\begin{document}
\vspace*{1cm}
\begin{tabular}{||c||}\hline\hline
                            \\
{\Huge{\bf Non-uniform Hyperbolicity}} \\ \\
{\Huge{\bf in Complex Dynamics}}\\
 \\ 
{\normalsize { by}}\\
 \\ 
{\Large { JACEK GRACZYK\footnotemark
 \& STANISLAV SMIRNOV\footnotemark
 }} \\ 
              \\  \hline\hline
\end{tabular}
\addtocounter{footnote}{-1}
\footnotetext{Universit\'e de Paris-Sud, Mathematique, 91405 Orsay, France.
(e-mail: graczyk@math.u-psud.fr)} 
\addtocounter{footnote}{1}%
\footnotetext{Universit\'e de Gen\`eve, Section de Math\'ematiques, 2-4 rue du Li\`evre,
CP 64, 1211 Gen\`eve 4, Switzerland
(e-mail: Stanislav.Smirnov@math.unige.ch)}
\addtocounter{footnote}{1}
\footnotetext{Both authors are supported by   EU Research Training Network  CODY.
The second author is supported by the Swiss National Science Foundation.}
\pagestyle{empty}
\clearpage
\begin{abstract}
We say that a rational function $F$ 
satisfies  the summability condition
with exponent $\alpha$ if for every critical point $c$ which
belongs to the Julia set $J$ there
exists a positive integer $n_{c}$ so that 
$\sum_{n=1}^{\infty}
|(F^{n})'(F^{n_{c}}(c))|^{-\alpha}<\infty$ 
and $F$ has no parabolic periodic cycles.
Let $\mmax$ be the maximal multiplicity of the critical points.

The objective is to study the Poincar\'e series for a large class
of rational maps and establish 
 ergodic and regularity   properties of  conformal
measures.
If $F$
is summable with exponent $\alpha< \frac{\dpoin(J)}{\dpoin(J)+\mmax}$
where $\dpoin(J)$ is the Poincar\'e exponent of the Julia set 
then there exists a unique, ergodic, and  non-atomic conformal 
measure $\nu$ with
exponent $\dpoin(J)=\HD(J)$.
If $F$ is polynomially summable with the exponent $\alpha$,
$\sum_{n=1}^{\infty}
n |(F^{n})'(F^{n_{c}}(c))|^{-\alpha}<\infty$ 
and $F$ has no parabolic periodic cycles, then $F$
has an absolutely continuous invariant measure
with respect to $\nu$. This leads also to  a new result
about the existence of absolutely continuous invariant 
measures for multimodal maps of the interval.

We prove that if $F$ is summable with an exponent 
$\alpha< \frac{2}{2+\mmax}$ then the Minkowski dimension of $J$
is strictly less than $2$ if $J\not=\C$ and $F$ is unstable. 
If $F$ is a polynomial or
Blaschke product then $J$ is conformally removable.  
If $F$ is summable with $\alpha< \frac{1}{1+\mmax}$
then connected components of the
boundary of every invariant Fatou component
are  locally connected. To study
continuity
of Hausdorff dimension of Julia sets,
 we introduce the concept of the 
uniform summability.  

Finally, we derive  a conformal analogue of Jakobson's 
(Benedicks-Carleson's) theorem
and prove the external continuity of the Hausdorff dimension of Julia
sets for almost all points $c$ from the Mandelbrot set with respect
to the harmonic measure.
\end{abstract}

\begin{small}
\begin{quote}
\begin{center}
{ \bf R\'esum\'e}
\end{center}
Nous  disons qu'une application rationnelle $F$ satisfait la condition
de la sommabilit\'e avec un exposant $\alpha$ si pour tout point critique
$c$ qui appartient \`a l'ensemble de Julia $J$, il y a un entier positif
$n_{c}$ tel que  $\sum_{n=1}^{\infty}|(F^{n})'(F^{n_{c}}(c))|^{-\alpha}<\infty$ 
et $F$ n'a pas de points p\'eriodiques paraboliques.
Soit $\mmax$ la multiplicit\'e maximale des points critiques de $F$.

L'objectif est d'\'etudier les s\'eries
de Poincar\'e pour une large classe  d'applications rationnelles
et  d'\'etablir les propri\'et\'es ergodiques et la regularit\'e
des mesures conformes.
Si $F$ est sommable avec un exposant 
$\alpha< \frac{\dpoin(J)}{\dpoin(J)+\mmax}$,
o\`u $\dpoin(J)$
est l'exposant de Poincar\'e de l'ensemble de Julia, alors il existe
une unique mesure conforme $\nu$ avec l'exposant $\dpoin(J)=\HD(J)$
qui est invariante, ergodique, et non-atomique. En plus, $F$
poss\`ede une mesure invariante absolument continue par rapport \`a $\nu$
pourvu que
$\sum_{n=1}^{\infty}
n |(F^{n})'(F^{n_{c}}(c))|^{-\alpha}<\infty$ 
(la sommabilit\'e de type polyn\^{o}mial) et que 
$F$ n'a pas de points p\'eriodiques paraboliques.
Cela aboutit \`a un nouveau r\'esultat sur l'existence des mesures invariantes
absolument continues pour des applications multimodales d'un intervalle.

Nous d\'emontrons que si $F$ est sommable avec un exposant  
$\alpha< \frac{2}{2+\mmax}$,
alors la dimension de Minkowski de $J$, si $J\not=\C$, est 
strictement plus petite que $2$ et $F$ est instable. 
Si $F$ est un polyn\^{o}me ou le produit de Blaschke, 
alors $J$ est conform\'ement effac\c\negthinspace able. Si $F$ est sommable
avec $\alpha< \frac{1}{1+\mmax}$,
 alors toute composante connexe de la fronti\`ere de chaque composante
de Fatou invariante est localement connexe. Pour \'etudier la continuit\'e
de la dimension de Hausdorff des ensembles de Julia, nous introduisons le concept de la sommabilit\'e
uniforme.

Enfin, nous en d\'eduisons  un analogue conforme du th\'eor\`eme de Jakobson
et Benedicks-Carleson. Nous montrons la continuit\'e externe de la dimension
de Hausdorff des ensembles de Julia pour presque tout point de l'ensemble
de Mandelbrot par rapport \`a la mesure harmonique. 
\end{quote}
\end{small}
\clearpage
\pagestyle{myheadings}
\pagenumbering{arabic}
\setcounter{page}{1}

\tableofcontents
\clearpage

\pagenumbering{arabic}
\setcounter{page}{3}

\section{Introduction}
\subsection{Overview}
The Poincar\'e series
is a basic tool in the theory of Kleinian groups. It is used
to construct and study conformal densities and dimensions of the limit set.
Patterson and Sullivan  proved 
that the critical exponent
(the Poincar\'e exponent) is equal to the Hausdorff dimension
of the limit set for Fuchsian and non-elementary geometrically finite
Kleinian groups.

We focus  on estimates of  the Poincar\'e series in rational dynamics.
From this perspective, we address the problem of 
regularity  of conformal measures. We propose to study rational maps 
satisfying the summability
condition, which requires, roughly speaking, only a polynomial
growth of the derivative along critical orbits. 
Rational maps with parabolic periodic points are
non-generic and for simplicity we exclude them from our considerations.

In the class
of rational maps satisfying the summability condition,
we prove the counterpart of Sullivan's result that
conformal measures with minimal exponent are ergodic (hence unique)
and non-atomic.
To pursue properties of the Poincar\'e series for rational maps
we introduce a  notion of a  restricted Poincar\'e series
which is also well-defined for points in the  Julia set. 
This notion leads to new estimates, particularly implying 
that the convergence property of the Poincar\'e series is ``self-improving.'' 
This turns out to be an underlying reasons for 
regularity properties of conformal measures  on Julia sets. 
Also, the divergence of the Poincar\'e series with 
the Poincar\'e exponent (infimum of exponents
with converging Poincar\'e series) 
is an immediate consequence. A different definition of the Poincar\'e
exponent and its relation with various dynamical dimensions can be found
in \cite{przytycki-exponent}.

One of the central problems
in the theory of iteration of rational functions is to estimate
the Hausdorff dimension of Julia sets which are not the whole sphere
and investigate their fractal  properties.
It is believed that rational functions with metrically small
Julia sets  should possess certain weak expansion property. 
We prove that the Poincar\'e exponent
coincides with the Hausdorff dimension of the Julia set $J$ 
and $\HD(J)<2$ unless $J=\CC$ for rational
functions satisfying the summability condition with an exponent
$\alpha< \frac{2}{\mmax+2}$. These results bear some relationship
with a recent result of C. Bishop and P. Jones \cite{bijo} which says
that for finitely generated Kleinian groups
if the limit set has zero area then the Poincar\'e exponent
is equal to the Hausdorff dimension of the limit set.

Perhaps, the most famous problem in the iteration theory of rational
functions is  whether a given system 
can be perturbed to a hyperbolic one or not. It is
widely believed that this should be possible (the Fatou conjecture),
at least in the class of polynomials.  
It is well known, \cite{MSS},
that if the Julia set of a polynomial is of Lebesgue measure
zero then the polynomial can be perturbed to a hyperbolic one.
In general, despite much effort, a very limited progress was achieved
towards proving the Fatou conjecture. The real Fatou conjecture was proved
in  \cite{KozShenStrien} .
 We use the recent result of \cite{josm} to prove strong instability of polynomials satisfying 
the summability condition. This  both strengthens and generalizes
the results of \cite{przytycki-rohde-rigidity} in the class of polynomials 
and Blaschke products.  
 
The summability condition was
proposed in one-dimensional real unimodal dynamics \cite{nost}
as a weak condition which  would  guarantee
the existence of absolutely continuous measures with
respect to the one-dimensional Lebesgue measure.
M. Aspenberg proved in ~\cite{Aspenberg} that the class of 
rational functions which satisfy the Collet-Eckmann condition
is of positive Lebesgue measure in the space of all rational functions
of a given degree (see also a closely related paper of M. Rees ~\cite{rees})

From the point of view of
measurable dynamics and ergodic theory, the existence of regular
invariant measures is of crucial importance. A dynamical analogue
of the one-dimensional Lebesgue measure on Julia set is given by
the ``geometric measures'' 
(conformal measures with minimal exponents). 
We study regularity and ergodic properties of conformal
measures and determine whether the dynamics admits the existence
of absolutely continuous invariant measures with respect to a given
conformal measure. The problem is twofold and involves
both  dynamical  and measure theoretical estimates.

Another problem we look at is local connectivity of Julia sets and
the existence of wandering continua. In order to pursue the continuity
of the Hausdorff dimension of Julia sets we introduce a uniform  convergene
for rational maps satisfying the summability condition. 
Finally, we discuss applications of our theory to the study of 
 complex unicritical polynomials $z^{d}+c$.
In this setting, we formulate  a complex analogue of Jacobson
and  Benedicks-Carleson's theorems.    

\paragraph{Non-uniform hyperbolicity.}
The concept of non-uniform hyperbolicity is slightly vague and depends
on varying backgrounds and motivations. 
It is difficult to find a single formulation of this property. 
Our approach emphasizes  measure theoretical aspects of the system,
which should be hyperbolic on the average. 
Loosely speaking, given a non-hyperbolic system,
 one tries to make it hyperbolic by taking only pieces of the phase
space and a high iterate of the map on each piece. If it is possible
to find such  pieces almost everywhere, we say that the system
induces hyperbolicity or is non-uniformly hyperbolic
with respect to a given measure. 
Of course, we are interested only in natural measures such 
as the Lebesgue measure when $J=\CC$ and geometric measures 
(see Definition~\ref{def:mes} and the following discussion) 
when $J\not =\C$.
This approach originates from the work of Jakobson~\cite{jak} on the abundance
of absolutely continuous invariant measures and was also followed
in a similar way by Benedicks and Carleson~\cite{Beca,Becaa}. 
The concept of induced hyperbolicity plays also 
a central role in the proof of the real Fatou conjecture, see ~\cite{book}.

For rational functions
$F$ satisfying the summability condition
we prove induced hyperbolicity  with respect 
to unique geometric measure on $J$
(Theorem~\ref{theo:largescale}). 
The induced hyperbolicity yields that the Julia set is
of Lebesgue measure zero whenever $J\not=\CC$ , see also~\cite{BruinStrien}.
In many cases (e.g. under polynomial summability condition)
 we prove a stronger version of non-uniform hyperbolicity, namely 
the existence of a unique absolutely continuous invariant measure $\sigma$
with respect to the geometric measure. The measure $\sigma$ is ergodic,
mixing, and has positive Lyapunov exponent.


\subsection{Main concepts and statements of results}

\paragraph{Summability conditions and maximal multiplicity.}
Before stating our main theorems,
we make a few technical remarks.
For simplicity we assume that no critical point
belongs to another critical orbit.
Otherwise all theorems remain valid with the following amendment:
a ``block'' of critical points
$$F:~~c_1~{\mapsto}~\dots~{\mapsto}~c_2
~{\mapsto}~\dots~\dots
~{\mapsto}~c_k~,$$
of multiplicities $\mu_1,\,\mu_2,\,\dots,\,\mu_k$
enters the statements as if it is a single
critical point of multiplicity $\prod\mu_j$.

If the Julia set is not the whole sphere,
we use the usual Euclidean metric on the plane, changing the coordinates 
by a M\"obius transformation so that $\infty$ belongs to a periodic Fatou
component, and doing all the reasoning on a large compact containing 
the Julia set. 
Alternatively (and also when $J=\CC$) one can use the spherical metric. 

Define $\sigma_n\,:=\,\min_{c\in\Crit}\brs{\abs{\br{F^n}'(Fc)}}$,
where minimum is taken over all critical points in the Julia set
whose forward orbits do not contain other critical points.
Many properties will take into account $\mmax$ -- the maximal multiplicity
of critical points in the Julia set
(calculated as above, if there are any critical orbit relations).

\begin{defi}
Suppose that $F$ is a rational function without parabolic periodic
points. We say that $F$  satisfies the {\em summability condition} 
with exponent $\alpha$ if
$$\sum_{j=1}^{\infty}\,(\sigma_j)^{-\alpha}~<~\infty~.$$
If a stronger inequality
$$\sum_{j=1}^{\infty}\,j\cdot(\sigma_j)^{-\alpha}~<~\infty~,$$
holds, then we say that $F$ satisfies the {\em polynomial summability condition}
with exponent $\alpha$.
\end{defi} 
We recall that $F$ satisfies the {\em Collet-Eckmann condition} if there exist
$C>0$ and $\Lambda>1$ such that 
$\sigma_{n}\geq C\Lambda^{n}$ for every positive $n$.
Contrary to the Collet-Eckmann case, the 
summability condition allows strong recurrence of the critical points.
Generally, it is not true that the critical value of a 
summable rational map has infinitely many univalent passages to 
the large scale (counterexample given e.g. by a quadratic Fibonacci polynomial), 
compare Theorem~\ref{theo:largescale}.

\paragraph{Poincar\'e series.}
We call a point $z$
admissible if it does not belong to $\bigcup_{i=0}^{\infty}F^{n}(\Crit)$. 
Take an admissible point $z$ and assume that $F$ has
no elliptic Fatou components and $J\not = \hat{\C}$.
We  define the Poincar\'e series by
$$\Sig_\delta(z)~:=
~\sum_{n=1}^\infty~\sum_{y\in F^{-n}z}\abs{\br{F^n}'(y)}^{-\delta}~.$$
The series converges for every $\delta > \dpoin(z)$ and
the minimal such $\dpoin(z)$ is called the Poincar\'e exponent 
(of $F$ at the point $z$).
By standard distortion considerations, if ${\cal F}$ is a component of the Fatou set,
then for all admissible $z\in {\cal F}$
Poincar\'e exponents coincide, so we set
\[\dpoin({\cal F}):=\dpoin(z)~.\] 
 We define the {\em Poincar\'e exponent} by
$$\dpoin(J)~:=~\max~\brs{\dpoin({\cal F})\, },$$
(As Theorem~\ref{theo:poincare} shows,
 we can alternatively set $\dpoin(J):=\inf\{\dpoin(x):\,x\in\CC\}$).
A well-known area estimate shows that $\dpoin(J)\leq 2$.
A natural question arises if the Poincar\'e exponents $\dpoin({\cal F})$
in different Fatou components coincide and if $\dpoin(J)<2$.
By the analogy with the theory of Kleinian groups, we say that $F$ 
is of {\em divergent} ({\em convergent}) {\em type} if
the Poincar\'e series $\Sig_\dpoin(z)$ diverges (converges) 
for every component ${\cal F}$ of the Fatou set and
every admissible $z\in {\cal F}$.
Clearly, hyperbolic rational maps satisfy the summability condition
with any positive exponent $\alpha$. By Theorem~\ref{theo:poincare},
they are all of divergent type.
In general,  rational maps of  the divergent type can be viewed
as weakly hyperbolic systems. It would be interesting to study this
property from an abstract point of view. 

To address the above problems, we consider the Poincar\'e series 
as a function of $z\in \hat{\C}\setminus J$ and
study its limiting behavior when  $z$ approaches the Julia set.
We use the concept of a restricted Poincar\'e series to study
the dynamics of inverse branches of $F$ independently from the fact  whether
their domains intersect $J$ or not.

Let $\xx{}{z,\Delta}$ be the set of all preimages of $z$
such that the ball $B(z,\Delta)$ can be pulled back univalently
along the corresponding branch.
\begin{defi}
The {\em restricted Poincar\'e series} for a number $\Delta>0$
and $z\in\CC$ is defined by
$$
\Sigma^{\Delta}_\delta(z)~:=
~\sum_{n=1}^{\infty}~\sum_{y\in F^{-n}(z)\cap\xx{}{z,\Delta}}\,\abs{\br{F^n}'(y)}^{-\delta}~.
$$
\end{defi}
The definitions of the Poincar\'e exponents assume 
that the complement of $J$ is non-empty.
Should $J$ coincide with the whole sphere,
we set $\dpoin(J)\,:=\,2$.
We will prove that under the summability condition
the convergence of the Poincar\'e series
is an open property. This means that  $F$ is of divergent type.
The proof will use the restricted Poincar\'e series and 
a generalized ``area estimate.''  
Intriguingly, our technique allows us to compare
different $\dpoin({\cal F})$ through perturbations of the Poincar\'e
series near the critical points $c\in J$ of the maximal multiplicity. 
These points appear to be in the stability locus of $\dpoin(z)$.
 
\begin{theo}\label{theo:poincare}
Suppose that $F$ satisfies the summability condition with an exponent
$$\alpha~<~\fr{\dpoin(J)}{\mmax+\dpoin(J)}~.$$ We have that
\begin{itemize}
\item the Poincar\'e series 
with the critical exponent $\dpoin(J)$ diverges
for every  point $z\in \CC$,
\item if $z$ is at a positive distance to  the critical orbits 
and $c$ is a critical point of the maximal multiplicity then
$$\dpoin(z)~=\dpoin(c)~=~
\dpoin(J)~=~
\inf\left\{\dpoin(x):\,x\in\CC\right\}~.$$
\end{itemize}
\end{theo}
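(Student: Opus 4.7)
The plan is to exploit the restricted Poincar\'e series $\Sigma^{\Delta}_{\delta}(z)$ as the main technical device. For an admissible $z$ lying at a definite distance from the forward critical orbits, univalent pullbacks of the disk $B(z,\Delta)$ by branches of $F^{-n}$ produce pairwise disjoint disks whose total spherical area is bounded by a constant, so the Koebe distortion theorem yields $\Sigma^{\Delta}_{2}(z)<\infty$ and, by the usual radii-versus-area comparison, $\Sigma^{\Delta}_{\delta}(z)<\infty$ for every $\delta\geq \dpoin(J)$, essentially independently of the location of $z$. The non-univalent branches are precisely those whose pullback of $B(z,\Delta)$ contains a critical point, and the summability hypothesis is tailored to control their contribution.

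First I would prove the divergence statement by contradiction. Pick $w$ admissible inside a Fatou component ${\cal F}_0$ realizing $\dpoin({\cal F}_0)=\dpoin(J)$, and suppose $\Sig_{\dpoin(J)}(w)<\infty$. Decompose each $n$-step preimage $y\in F^{-n}w$ either as a univalent pullback, contributing to $\Sigma^{\Delta}_{\dpoin(J)}(w)$, or, if non-univalent, split the branch at the first time $k$ when the inverse image of $B(w,\Delta)$ under the chosen branch of $F^{-(n-k)}$ enters a small neighborhood of some critical point $c\in\Crit\cap J$. The correction factorizes into an outer factor $\abs{(F^{n-k})'(F^{k} c)}^{-\dpoin(J)}$ controlled by the summability assumption, and a local factor near $c$ of the form $r^{\mu_c(1-\delta/(\mu_c+1))}$ summed over local preimages of scale $r$. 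The inequality $\alpha<\dpoin(J)/(\mmax+\dpoin(J))$ is precisely the condition that leaves strictly positive slack in the exponents, so one may replace $\dpoin(J)$ by some $\delta'<\dpoin(J)$ and still obtain $\Sig_{\delta'}(w)<\infty$, contradicting $\dpoin(w)=\dpoin(J)$. Divergence at an arbitrary point $z$ is then deduced by transferring restricted series along short paths using Koebe distortion.

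For the equality of exponents, I would transfer the restricted series between admissible points. If $z,z'$ are admissible and both at positive distance from the critical orbits, covering a short path between them by finitely many Koebe disks gives $\Sigma^{\Delta}_{\delta}(z)\asymp\Sigma^{\Delta'}_{\delta}(z')$ with constants depending only on the path, hence $\dpoin(z)=\dpoin(z')$. For a critical point $c\in J$ of maximal multiplicity $\mmax$, the extra obstruction is the local behavior $F(w)-F(c)\sim (w-c)^{\mmax+1}$, and summing over preimages of a small disk around $c$ introduces exactly the critical-multiplicity factor whose control is encoded in $\alpha<\dpoin(J)/(\mmax+\dpoin(J))$. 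Hence $\dpoin(c)\le \dpoin(J)$, and combined with Part~1 one obtains equality; the infimum characterization $\dpoin(J)=\inf\{\dpoin(x):x\in\CC\}$ follows because any strictly smaller value at some $x\in\CC$ would, after a transfer, contradict the divergence of $\Sig_{\dpoin(J)}$ at $w$.

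The main obstacle will be the careful bookkeeping of repeated critical encounters along a single inverse branch. Each passage through a critical neighborhood amplifies the estimate, and summing over all return patterns requires organizing the decomposition by the first return into the critical neighborhood, reducing a general branch to a univalent pullback followed by one bad step and a shorter branch whose Poincar\'e mass is already under control. The threshold $\alpha<\dpoin(J)/(\mmax+\dpoin(J))$ is precisely what makes this inductive estimate self-improving rather than merely self-consistent, so that iterating it produces the strict decrease from $\dpoin(J)$ to some $\delta'<\dpoin(J)$.
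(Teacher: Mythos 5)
Your proposal follows the paper's strategy in broad outline — restricted Poincar\'e series, self-improvement under the summability hypothesis, and a contradiction from assuming $\Sig_{\dpoin(J)}$ converges — but several steps are either wrong as stated or leave the whole difficulty unresolved.

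First, the multiplicity convention is inconsistent with the paper's. The paper has $\dist{Fy,Fc}\asymp \dist{y,c}^{\mu(c)}$, i.e.\ $\mu(c)$ is the local degree (so $\mu=2$ for $z^2$), whereas you write $F(w)-F(c)\sim(w-c)^{\mmax+1}$, which is $\mmax_{\rm paper}-1$. This off-by-one error propagates into the threshold $\alpha<\dpoin(J)/(\mmax+\dpoin(J))$ and into your local factor $r^{\mu_c(1-\delta/(\mu_c+1))}$, which as written does not match the structure the argument needs: the correct form of the local estimate, $\Delta^{p(\mu(c)/\mmax-1)}$, has the crucial feature of being $\Delta$-independent when $\mu(c)=\mmax$, and it is precisely this degeneration that lets one take $\Delta\to0$ and convert a \emph{restricted} Poincar\'e bound at a maximal-multiplicity critical point into a bound on the \emph{full} series $\Sig_p(c)$. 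Your version does not vanish at $\mu_c=\mmax$ under any consistent reading, so the key passage $\dpoin(c)\le\dpoin(J)$ does not follow.

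Second, the opening paragraph claims that the disjointness/area argument plus ``radii-versus-area comparison'' gives $\Sigma^{\Delta}_{\delta}(z)<\infty$ for every $\delta\ge\dpoin(J)$. The disjointness argument gives $\delta=2$ only. Lowering the exponent from $2$ to anything $<2$ is exactly the hard content of the self-improving estimate, and you cannot invoke the conclusion you are trying to prove at this point. What replaces it in the paper is Proposition~\ref{prop:poincare}: assuming $\Sigma_q(v)<\infty$ for \emph{some} point and \emph{some} $q$, the machinery produces a bound on $\Sigma^{\Delta}_p$ for a strictly smaller $p$.

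Third, you correctly identify ``careful bookkeeping of repeated critical encounters'' as the main obstacle, but organizing by first return is where you stop and where the paper's work really begins. The paper's resolution --- the block decomposition into types $1$, $2$, $3$ with shrinking neighborhoods, the technical sequences $\{\alpha_n\},\{\gamma_n\},\{\delta_n\}$ of Lemma~\ref{lem:techseq}, and the cancellation arguments of Lemmas~\ref{lem:1t}, \ref{lem:3t}, \ref{lem:main}, and~\ref{lem:main2} --- is what controls the accumulated distortion across arbitrarily many near-critical passages, and it is only there that the precise threshold $\alpha<\dpoin(J)/(\mmax+\dpoin(J))$ is used (via $\beta=\mmax\alpha/(1-\alpha)<\dpoin(J)$). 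A first-return decomposition without this apparatus will not close: each return contributes a factor that is small only when measured against the \emph{cumulative} derivative, and without the shrinking-neighborhood / technical-sequence bookkeeping the errors from successive returns do not telescope.
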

If $J=\CC$ then by our convention, $\dpoin(J)=2$. Hence, 
the equality $$\dpoin(J)=
\inf\left\{\dpoin(x):\,x\in\CC\right\}~$$
 of Theorem~\ref{theo:poincare} can
be regarded as an alternative  definition of the Poincar\'e exponent
when $J=\CC$.

\paragraph{Conformal and geometric measures.}
Conformal or {\em {Patterson-Sullivan}} measures are  dynamical analogues
of  Hausdorff measures and capture important (hyperbolic) 
features of the underlying dynamics.   
\begin{defi}\label{def:mes}
Let $F$ be a  rational map with the Julia set $J$.
A Borel measure $\nu$ supported on $J$ 
is called {\em conformal with an exponent} $p$
(or {\em $p$-conformal}) if for every Borel set
$A$ on which $F$ is injective one has
\[ \nu(F(B))=\int_{B}|F'(z)|^{p}~d\nu~.\] 
\end{defi}
As observed in \cite{sullivan-rio}, 
the set of pairs $(p,\nu)$ with $p$-conformal measure $\nu$
is compact (in the weak-$*$ topology). 
Hence, there  exists  a conformal measure with the {\em minimal exponent} 
\[\dconf~:=~\inf\{p: \exists\; \mbox{a $p$-conformal measure on $J$}\}.\]  
The minimal exponent $\dconf$ is also called 
a {\em conformal dimension} of $J$.

However, conformal measures might have extremely bad analytical
properties, in particular  they can  be atomic. In this context 
it is rather surprising that  the most important conformal measures, 
namely these with minimal exponents, have many good analytical properties 
in the class of rational maps which satisfy the summability condition.

A hyperbolic Julia set has the Hausdorff dimension strictly
less than $2$ and a finite positive Hausdorff measure in its dimension, 
\cite{sullivan-rio}. 
In the hyperbolic case,  there is a unique conformal measure which coincides
with the normalized Hausdorff measure.
For non-hyperbolic maps the situation is much more complicated. 
A construction of conformal measures for Kleinian groups
was proposed by Patterson. The same construction was implemented
by Sullivan for rational functions \cite{sullivan-rio}. 
In Patterson-Sullivan approach  conformal measures are constructed
through the  dynamics in the Fatou set.
This external construction favors conformal measures with ``inflated'' 
exponents and can be briefly summarized as follows: 
Assume that $J\not =\hat{\C}$ and  $F$ has no neutral cycles. 
Let $z\in \hat{\C}\setminus J$.
If $\Sig_\dpoin(z)$ diverges then, for
any $p>\dpoin(z)$ we consider $v_{p}$ defined by 
\begin{equation}\label{equ:po1}
\nu_{p}:=  
\frac{1}{\Sig_p}\;
\sum_{n=1}^{\infty}\sum_{y\in F^{-n}(z)} 
\frac{1_{y}}{|(F^{n})'(y)|^{p}}~,
\end{equation}
where $1_{y}$ is a Dirac measure at $y$.
A weak limit of such atomic measures
when $p\to\dpoin(z)$ defines a $\dpoin(z)$-conformal measure on the Julia set. 
If $\Sig_\dpoin(z)$ converges then the construction can be repeated
in the same way after multiplying each term of $\Sig_{p}$ 
by a factor $h(|(F^{n})'(y)|^{-p})$, where $h$ is the Patterson function.
The function $h$ tends to $1$ subexponentialy
but still makes  the series  $\Sig_p$ divergent for $p=\dpoin(z)$. 
Clearly,
$$\dconf\leq \dpoin\;.$$

An alternative ``internal'' construction  
was carried out by Denker and Urbanski, 
\cite{denker-urbanski-sullconf,denker-urbanski-existconf}. It uses increasing
 forward invariant subcompacts inside the Julia set, free of critical points,
to distribute  probabilistic Borel measures $\nu_{n}$ with 
Jacobians bounded respectively by $|F'(z)|^{p_{n}}$. 
In the limit, these approximating measures become
conformal with exponent equal to $\sup p_{n}=\dconf$, 
\cite{denker-urbanski-sullconf,denker-urbanski-existconf}. 
Recall that the {\em hyperbolic dimension} $\HH(J)$ 
of the Julia set $J$ is equal to the supremum of the Hausdorff dimensions
of all hyperbolic subsets of $J$.
An important geometric consequence of the Denker-Urbanski
construction  is
\[\dconf=\HH(J)~\;.\]
If additionally $\dconf=\HD(J)$ 
then  every  $\dconf$-conformal measure $\nu$ is called 
a {\em geometric measure}. 
It is an important open question, whether $\dconf=\HD(J)$ always holds,
and in general very little is known
about the existence and regularity properties of geometric
measures (cf. \cite{shishikura,przytycki-ce,urbanski-bams}).
It is not even known if a geometric  measure  has to be unique and non-atomic. 
A possible  pathology is due to  
the presence of  critical points in $J$ of convergent type. Indeed, if 
$\Sig_{p}(c)$ converges for $p\leq 2$ and $c\in \Crit$ then
$\nu_{p}$ defined by (\ref{equ:po1}) with $y=c$ is a $p$-conformal atomic
measure.

We prove that if a rational function satisfies the 
summability condition with an exponent $\alpha< \frac{2}{2+\mmax}$, 
then, for every $p >\dconf$, there exists an atomic $p$-conformal
measure with an atom at a critical point. 
In contrast to that, the geometric measures are non-atomic in this class.
The  majority of work in the area is based on Denker-Urbanski construction.
We come back to the origins and focus on the Patterson-Sullivan approach.
 
Every conformal measure of  $F$ has an exponent $p\in [\dconf, \infty)$.
The set of all such $p$ forms a {\em conformal spectrum} of $F$. 
We distinguish an atomic  part of the spectrum consisting of
all $p\in [\dconf,\infty)$ for which there exist only
atomic conformal measures, a {\em continuous} part corresponding
to exponents for which there exist only non-atomic conformal measures, and
a {\em mixed part} (possibly empty) gathering all $p$
for which there exist both atomic and non-atomic $p$-conformal measures.

\begin{defi}
We say that the conformal spectrum of $F$
is hyperbolic if its mixed part is empty and continuous part is equal
to  $\{\dconf\}$.
\end{defi}
\begin{theo}\label{theo:4}
Suppose that a rational function $F$ satisfies the summability condition
with an exponent 
\[\alpha<\frac{\dpoin(J)}{\mmax+\dpoin(J)}\;.\] 
Then 
\begin{itemize}
\item there is a unique
 non-atomic conformal measure $\mu$,
\item  measure  $\mu$ is ergodic and its exponent
is equal to 
$\dconf=\dpoin(J)$,
\item if $F$ has critical points in the Julia set  
then for every $\delta >\dconf$ 
 there exists an atomic $\delta$-conformal 
measure supported on the backward orbits
of the critical points,
\item every conformal measure has no atoms outside of the set of the 
critical points of $F$.
\end{itemize}
\end {theo}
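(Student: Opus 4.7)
The plan is to combine the Patterson--Sullivan construction with the divergence statement of Theorem~\ref{theo:poincare} and the induced hyperbolicity of Theorem~\ref{theo:largescale}. Choose an admissible base point $z$ in a Fatou component. Since $\Sig_{\dpoin(J)}(z)=\infty$ by Theorem~\ref{theo:poincare}, for every $p>\dpoin(J)$ the atomic measure $\nu_{p}$ of formula~(\ref{equ:po1}) is a probability without the need of a Patterson regulariser, and every weak-$*$ accumulation point $\mu$ as $p\searrow\dpoin(J)$ is a $\dpoin(J)$-conformal probability measure on $J$. Combined with the Denker--Urbanski identification $\dconf=\HH(J)$ and the elementary inequalities $\HH(J)\le\HD(J)\le\dpoin(J)$, this pins down the exponent of $\mu$ to $\dconf=\HD(J)=\dpoin(J)$.

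Non-atomicity of $\mu$ is where the divergence is decisive. If $\mu(\brs{x_{0}})=a>0$, applying conformality to singletons along any non-critical backward branch $y\in F^{-n}(x_{0})$ gives $\mu(\brs{y})=a\,\abs{(F^{n})'(y)}^{-\dpoin(J)}$. Summing these atoms and using $\mu(J)\le 1$ would force $\Sig_{\dpoin(J)}(x_{0})<\infty$, contradicting Theorem~\ref{theo:poincare}. Backward branches obstructed by critical points are handled by passing to the restricted Poincar\'e series $\Sig_{\dpoin(J)}^{\Delta}$ developed in the previous section; an atom sitting at a critical point $c$ itself is excluded by the second bullet of Theorem~\ref{theo:poincare}, which identifies $\dpoin(c)=\dpoin(J)$ for $c$ of maximal multiplicity, with a perturbative comparison handling lower multiplicities.

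Ergodicity and uniqueness rest on Theorem~\ref{theo:largescale}: for $\mu$-a.e.\ $x$ there is a sequence $n_{k}\to\infty$ such that $B(F^{n_{k}}(x),\Delta)$ pulls back univalently to a neighbourhood of $x$ with bounded Koebe distortion. A hypothetical $F$-invariant Borel set $A$ with $0<\mu(A)<1$ would then have its relative $\mu$-density in arbitrarily small balls around a typical $x\in A$ simultaneously close to $1$ and to the global value $\mu(A)$, forcing $\mu(A)\in\brs{0,1}$. Uniqueness follows in the standard way: for two non-atomic $\dpoin(J)$-conformal measures $\mu_{1},\mu_{2}$, the Radon--Nikodym derivative of $\mu_{1}$ with respect to $\mu_{1}+\mu_{2}$ is $F$-invariant and hence a.e.\ constant, giving $\mu_{1}=\mu_{2}$ after normalisation.

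For the atomic part of the spectrum, the same restricted Poincar\'e series estimates combined with summability and the bound $\alpha<\dpoin(J)/(\mmax+\dpoin(J))$ yield $\Sig_{\delta}(c)<\infty$ for every $\delta>\dconf$ and every $c\in\Crit\cap J$; normalising the atomic sum on $\bigcup_{n\ge 0}F^{-n}(c)$ by this finite total produces the required $\delta$-conformal measure, the conformality identity reducing to the chain rule. Finally, for a general conformal measure $\nu$ of exponent $p$ with an atom at a point $x_{0}$ outside the grand orbit of $\Crit$, the pointwise relation $\nu(\brs{F(y)})=\abs{F'(y)}^{p}\nu(\brs{y})$ iterated forwards either lands on a periodic cycle, forcing $\abs{(F^{\tau})'}=1$ (excluded by the no-parabolic hypothesis), or produces an infinite wandering orbit whose atom masses, together with the backward summation of the non-atomicity step, drive the Poincar\'e series at $x_{0}$ to converge and contradict $\nu(J)<\infty$. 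The main obstacle throughout is the careful treatment of backward branches that meet critical points, which is precisely what the restricted Poincar\'e series machinery and the specific exponent bound $\alpha<\dpoin(J)/(\mmax+\dpoin(J))$ are engineered to control.
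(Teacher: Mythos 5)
Your overall plan (Patterson--Sullivan existence via divergence, non-atomicity from divergence of $\Sig_{\dpoin(J)}$, and the large-scale property for ergodicity/uniqueness) points in the same general direction as the paper, but several steps do not close.

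The exponent identification is a non-sequitur. The chain $\dconf=\HH(J)\le\HD(J)\le\dpoin(J)$, together with the existence of a $\dpoin(J)$-conformal measure, only yields $\dconf\le\dpoin(J)$; it gives nothing in the other direction, so it cannot ``pin down'' $\dconf=\dpoin(J)$. Worse, the inequality $\HD(J)\le\dpoin(J)$ is \emph{not} elementary: in the paper it is a consequence of $\dpoin(J)=\dwhit(J)=\MDsup(J)$, which in turn needs the zero-area estimate (Fact~\ref{lem:bishop} and Proposition~\ref{prop:goodcover}), all of which sits downstream of the present theorem. The paper instead rules out a conformal measure with exponent $p<\dpoin(J)$ directly: an atomic one would give $\Sig_p(x_0)<\infty$ and then Corollary~\ref{cor:critexpon} forces $\dpoin(J)<\dpoin(J)$, while a non-atomic one is excluded by Lemma~\ref{lem:erg}, which shows any two conformal measures without atoms at critical points have the same exponent and are in fact equal.

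The ergodicity argument has a genuine gap. Pulling a density point $x\in A$ forward to the large scale and pulling back shows that the $\mu$-density of $A$ in the moving balls $B(F^{n_k}x,\Delta)$ tends to $1$, but there is no reason for that density to approach the ``global value $\mu(A)$'': the centres $F^{n_k}x$ wander and the density in a ball at a wandering location is not an average. The paper's route (Lemma~\ref{lem:erg}) is different: it uses the two-sided estimate $\nu(B)\asymp r(B)^q\asymp\eta(B)$ for balls that map to the large scale, and a Besicovitch cover of an open set $E$ by such balls, to show $\eta(E)\gtrsim\nu(E)$ for \emph{any} two such conformal measures, which contradicts mutual singularity and hence proves uniqueness outright. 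Ergodicity then follows by the convex-combination observation $\nu_3=(\nu_1+\nu_2)/2$. Your Radon--Nikodym argument for uniqueness needs ergodicity of $\mu_1+\mu_2$ to conclude the density is a.e.\ constant, so it inherits the gap above, and invoking Theorem~\ref{theo:largescale} (which already presupposes the unique $\dconf$-conformal measure) rather than Proposition~\ref{prop:largescale} (which holds for \emph{any} conformal measure with no atoms at critical points) compounds the circularity. A smaller issue: in the third bullet you claim $\Sig_\delta(c)<\infty$ for \emph{every} $c\in\Crit\cap J$; the restricted Poincar\'e estimate of Proposition~\ref{prop:poincare} carries the factor $\Delta^{p(\mu(c)/\mmax-1)}$, which blows up as $\Delta\to 0$ unless $\mu(c)=\mmax$, so the paper distributes atoms only over the backward orbit of a critical point of maximal multiplicity, which is enough.
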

In particular, every rational function which satisfies the summability
condition with an exponent $\alpha<\frac{\dpoin(J)}{\mmax+\dpoin(J)}$
has a hyperbolic conformal spectrum. Since every hyperbolic rational map has
no critical points in the Julia set, its conformal spectrum is trivial
and consists of only one point $\{\dconf\}$. 

Observe that if we know that every geometric measure is ergodic,
then in fact it must be unique. Indeed, if there were two such
measures $\nu_{1}$ and $\nu_{2}$, then $\nu_{3}:=\frac{\nu_{1}+\nu_{2}}{2}$
is obviously a non-ergodic geometric measure, a contradiction.

The problem of ergodicity of conformal measures was raised before.
In  \cite{przytycki-ce} it is proved that a number of ergodic
components of conformal measures $\nu$ for the so-called Collet-Eckmann rational
maps is finite and not exceeding the number of critical points. This is not
enough to conclude uniqueness of a  geometric measure.
The assertion of Theorem~\ref{theo:4} is valid only for conformal measures
with minimal exponent. In general, if $p>\dconf$
and $F$ has more than one  critical point of the maximal multiplicity, then
there exist  non-ergodic $p$-conformal measures.
Indeed, by Theorem~\ref{theo:poincare}, if $\mu(c)=\mmax$ then  $\dpoin(c)=\dconf$. 
Measures $\nu(c)$ defined by (\ref{equ:po1}) are $p$-conformal
and their convex sum  
has exactly $\#\{c: \mu(c)=\mmax\}$ ergodic components.
\paragraph{Induced hyperbolicity.}
Consider the  set $\Jlso$ of all points $x\in J$ which 
$\epsilon$-frequently  go to
the large scale of radius $r$, namely: 
$$\exists\, n_j\to\infty:~F^{n_j} 
\mbox{~is ~univalent~on ~} F^{-n_j}\br{B\br{F^{n_j}x,r}},
~\abs{\br{F^{n_{j+1}}}'(x)}\,<\,\abs{\br{F^{n_{j}}}'(x)}^{1+\epsilon}~,$$
where $F^{-n_j}\br{B\br{F^{n_j}x,r}}$ stands for its 
connected component, containing $x$.

\begin{theo}[Strong induced hyperbolicity]\label{theo:largescale}
Suppose that a rational function $F$ satisfies the summability condition
with an exponent
$$\alpha~<~\fr{\dpoin(J)}{\dpoin(J)+\mmax}~.$$
Then there exists $r>0$ so that for every $\epsilon>0$ almost every
point with respect to a  unique $\dconf$-conformal measure $\nu$ 
goes $\epsilon$-frequently to the large scale of diameter $r$.
\end{theo}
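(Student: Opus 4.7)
The plan is to produce, for $\nu$-almost every $x \in J$, an infinite sequence of inducing times $n_j \to \infty$ along which the component $W_j$ of $F^{-n_j}(B(F^{n_j}x,r))$ containing $x$ is mapped univalently onto $B(F^{n_j}x,r)$ and the multiplicative gap condition $|(F^{n_{j+1}})'(x)| < |(F^{n_j})'(x)|^{1+\epsilon}$ holds. The three ingredients are the conformal identity $\nu(F^n(A)) = \int_A |(F^n)'|^{\dconf}d\nu$ and ergodicity of $\nu$ from Theorem~\ref{theo:4}, the restricted Poincaré series $\Sigma^r_\delta$ and the machinery underlying Theorem~\ref{theo:poincare}, and a first Borel--Cantelli lemma for the conformal measure.

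First, fix $r>0$ small enough that the balls $B(c,2r)$ around $c\in\Crit\cap J$ are pairwise disjoint and separated from a sufficiently long tail of each critical orbit. A component $W$ of $F^{-n}(B(z,r))$ is non-univalent precisely when it contains a critical point of $F^n$, which happens exactly when some iterate $F^{n-k-1}(Fc)$ of a critical value lies within $r$ of $z$ (where $F^k(z_0)=c$ for a critical $z_0\in W$). The restricted series $\Sigma^r_{\dconf}(z)$ collects the univalent preimages, and the argument underlying Theorem~\ref{theo:poincare} shows that under $\alpha<\dpoin(J)/(\mmax+\dpoin(J))$ the non-univalent contribution to the full Poincaré series is dominated, up to a universal constant, by $\Sigma^r_{\dconf}(z)$.

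Next, define
\[
B_n~:=~\brs{x\in J\, :\, F^n\text{ is not univalent on the component of }F^{-n}(B(F^nx,r))\text{ containing }x\,}.
\]
Covering $J$ by balls of radius $r/2$ and using the conformality of $\nu$ to pull masses back through $F^n$, I would bound $\nu(B_n)$ by a constant times the non-univalent tail of the Poincaré series at level $n$. Summability of critical orbits makes these tails summable in $n$, and the first Borel--Cantelli lemma then gives that $\nu$-a.e.\ $x$ lies in only finitely many $B_n$. Consequently, for a.e.\ $x$ every sufficiently large integer is a univalent large-scale return time, and one may take $\{n_j\}$ to be the tail of $\N$ beyond some $N(x)$.

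With $n_{j+1}=n_j+1$, the ratio $|(F^{n_{j+1}})'(x)|/|(F^{n_j})'(x)| = |F'(F^{n_j}x)|$ is bounded by $\sup_J|F'|$, while $|(F^n)'(x)|\to\infty$ for $\nu$-a.e.\ $x$ through a positive-Lyapunov-exponent argument for the ergodic conformal measure, itself forced by summability. Hence $|(F^{n_{j+1}})'(x)|<|(F^{n_j})'(x)|^{1+\epsilon}$ holds for all large $j$. The principal obstacle is establishing $\sum_n\nu(B_n)<\infty$: it requires a careful combinatorial decomposition of non-univalent pullback branches according to which critical orbit each branch first encounters, and the threshold $\alpha<\dpoin(J)/(\mmax+\dpoin(J))$ is tuned precisely so that this sum converges, with the $\mmax$-factor accounting for the worst-case derivative loss at a critical pullback.
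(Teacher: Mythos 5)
The linchpin of your argument is the claim $\sum_n\nu(B_n)<\infty$, where $B_n$ is the set of $x$ for which the component of $F^{-n}(B(F^nx,r))$ containing $x$ fails to map univalently onto $B(F^nx,r)$; via the first Borel--Cantelli lemma you would conclude that $\nu$-a.e.\ $x$ lies in only finitely many $B_n$, so that eventually \emph{every} integer $n$ is a univalent large-scale return time and the $\epsilon$-frequency condition is trivial with $n_{j+1}=n_j+1$. But this claim is false whenever $F$ has a critical point $c$ in $J$, which is the only non-trivial case of the theorem. Since $\nu$ is ergodic and charges every ball centered on $J$, a $\nu$-typical $x$ satisfies $F^k(x)\in B(c,\delta_0)$ for infinitely many $k$, for any $\delta_0>0$. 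Choosing $\delta_0$ small enough that $F(B(F^kx,2\delta_0))\subset B(F^{k+1}x,r)$ whenever $|F^kx-c|<\delta_0$, the disk $B(F^kx,2\delta_0)$ lies inside a single component of $F^{-1}(B(F^{k+1}x,r))$, this component contains $c$, and hence $x\in B_{k+1}$. Thus $\nu$-a.e.\ $x$ lies in infinitely many $B_n$ and
$$\sum_n\nu(B_n)~=~\int_J\#\brs{n:x\in B_n}\,d\nu(x)~=~\infty~.$$
The cofinite property you want simply does not hold; the paper itself remarks that for the quadratic Fibonacci polynomial even the critical value has only \emph{finitely} many univalent passages to the large scale.

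The paper's proof (via Propositions~\ref{prop:largescale} and~\ref{prop:freq}) accepts that non-univalent returns occur infinitely often and instead controls the derivative ratio between \emph{consecutive univalent} returns. Proposition~\ref{prop:largescale} gives $\nu(J\setminus\Jls)=0$ by a block decomposition of orbits and an integral estimate against the conformal Jacobian; Proposition~\ref{prop:freq} then gives $\nu(\Jls\setminus\Jlso)=0$ by factoring the derivative between consecutive univalent returns into a type-$2$ part and a type-$1\dots3$ part, and estimating the first by the convergence of the Poincar\'e series at exponent $\dpoin(J)+\delta$ and the second by the summability estimates of Lemma~\ref{lem:main2} at exponent $\dpoin(J)-\delta/\epsilon$. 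Neither step is a Borel--Cantelli bound on non-univalent times, and the threshold $\alpha<\dpoin(J)/(\dpoin(J)+\mmax)$ enters in a qualitatively different way than you suggest. Finally, your appeal to the restricted Poincar\'e series $\Sigma^r_{\dconf}(z)$ to dominate ``non-univalent contributions'' misreads that object: by its definition $\Sigma^\Delta_\delta$ collects precisely the \emph{univalent} branches, and the paper controls non-univalent components only through their diameters and degrees (Proposition~\ref{prop:goodcover}), not through Poincar\'e weights, which are not even well defined at critical preimages.
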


\paragraph{Invariant measures.}
The summability condition was introduced in real  dynamics in
\cite{nost} to study absolutely continuous invariant measures (shortly acim)
with respect to the Lebesgue measure.
In the conformal setting when conservative dynamics is often
concentrated on fractal sets with zero area, a concept of an acim
encounters some hurdles. 
In this situation, it is natural to study absolutely continuous invariant
measures  with respect to conformal measures.
This approach was already adopted by Przytycki, who proved
that  a rational Collet-Eckmann map has an acim with respect
to a $p$-conformal measure $\nu$, provided that $\nu$ is regular
enough along critical orbits \cite{przytyk2}. 
This regularity was verified only in particular cases 
(cf. Tsujii's condition in \cite{przytyk2})
and can be expressed by  an  integral condition 
(in \cite{przytycki-ce}  a slightly weaker condition was considered):
there exists $C>0$ so that for all $i>0$
\begin{equation}\label{def:cond}
\sup_{c\in \Crit} \int
|z-F^{i}(c)|^{-p\br{1-\frac{1}{\mmax}}}\;d\nu< C\; .
\end{equation}
The scope of validity of this condition is not known even
in the Collet-Eckmann setting.
We will call this condition the {\em integrability condition}
with an exponent $\eta:= p(1-\frac{1}{\mmax})$.

Surprisingly, we do not need to assume this condition to obtain an absolutely
continuous invariant measure with respect to a non-atomic conformal
measure, if $F$ satisfies the polynomial summability condition.

\begin{theo}\label{theo:inv}
Suppose that a rational function $F$ satisfies the polynomial summability
condition with an exponent
\[ \alpha< \frac{\dpoin(J)}{\dpoin(J)+\mmax}~.\]
Then  $F$ has a unique absolutely continuous invariant measure $\sigma$
with respect to a unique  $\dpoin(J)$-conformal measure $\nu$.
Moreover, $\sigma$ is ergodic, mixing, exact, has positive entropy and Lyapunov exponent.
\end{theo}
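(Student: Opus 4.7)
The plan is to realize $\sigma$ as a weak-$L^1(\nu)$ limit of Ces\`aro averages for the transfer operator of $\nu$, and then read off the dynamical properties from Theorems~\ref{theo:4} and~\ref{theo:largescale}. Set $p:=\dpoin(J)$ and let $\nu$ be the unique non-atomic $p$-conformal measure produced by Theorem~\ref{theo:4}. The Perron--Frobenius operator
$$ L\varphi(z)\;:=\;\sum_{y\in F^{-1}(z)}\varphi(y)\,\abs{F'(y)}^{-p} $$
is a Markov operator on $L^1(\nu)$, because conformality of $\nu$ is exactly $L^{*}\nu=\nu$. Put $h_N:=\frac{1}{N}\sum_{n=0}^{N-1} L^n\mathbf{1}$; any weak-$L^1(\nu)$ limit $h$ of a subsequence satisfies $Lh=h$, so $d\sigma:=h\,d\nu$ is automatically $F$-invariant and absolutely continuous with respect to $\nu$.

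The main step, for which polynomial summability is used, is uniform integrability of $\{h_N\}$ in $L^1(\nu)$; by the Dunford--Pettis theorem this produces a nontrivial weak limit. Duality gives $\int_A h_N\,d\nu=\frac{1}{N}\sum_{n<N}\nu(F^{-n}A)$, so it suffices to prove a bound $\nu(F^{-n}A)\leq\Phi(\nu(A))$ with $\Phi(t)\to 0$ as $t\to 0$, uniformly in $n$. Covering $A$ by a small ball $B(z,r)$ and using conformality of $\nu$, this reduces to estimating $\sum_{y\in F^{-n}(z)}\abs{\br{F^n}'(y)}^{-p}$ restricted to branches whose image lies in $B(z,r)$, and then summing over $n$. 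For inverse branches that pull $B(z,r)$ back univalently the bound follows from Koebe distortion combined with convergence of the restricted Poincar\'e series from Theorem~\ref{theo:poincare}. For branches that encounter a critical point $c$ of multiplicity $\mu\leq\mmax$ at time $k$, the standard telescope decomposition into univalent stretches separated by critical encounters bounds the contribution by a Koebe factor times $\sigma_k^{-p/(\mu+1)}$. Summing over encounters and Ces\`aro-averaging in $n$ one meets precisely the series $\sum_k k\cdot\sigma_k^{-\alpha}$; convergence of this series is the polynomial summability hypothesis, and the range $\alpha<p/(p+\mmax)$ makes the exponents match.

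Once $\sigma$ exists, uniqueness and ergodicity are inherited from ergodicity of $\nu$ (Theorem~\ref{theo:4}): any $F$-invariant set of positive $\sigma$-measure has positive $\nu$-measure, hence full $\nu$-measure, hence full $\sigma$-measure; correspondingly, the fixed-point subspace of $L$ in $L^1(\nu)$ is one-dimensional. Exactness and mixing follow from Theorem~\ref{theo:largescale} via a Rokhlin-type argument applied to the first-return map to a neighborhood of the large scale: $\nu$-a.e.\ point has infinitely many univalent returns of fixed diameter $r$, so on such returns the associated inverse branches contract up to sub-exponential slack $\epsilon$, and the tail $\sigma$-algebra of preimages becomes $\sigma$-trivial. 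The Lyapunov exponent $\chi_\sigma=\int\log\abs{F'}\,d\sigma$ is positive by the same return times: along the subsequence $n_j$ of Theorem~\ref{theo:largescale} one has $\abs{\br{F^{n_j}}'(x)}\to\infty$ with positive density, so Birkhoff's theorem forces $\chi_\sigma>0$. Positive entropy is then Ruelle's inequality $h_\sigma(F)\geq\chi_\sigma$.

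The main obstacle is the uniform integrability bound of the second paragraph. Without the external integrability condition~(\ref{def:cond}) one has no a priori control on how $\nu$ distributes mass near the critical orbit $\{F^{n_c}(c)\}$, and this control must be extracted dynamically from the Patterson--Sullivan construction together with the polynomial summability. In other words, the regularity of $\nu$ near critical orbits that Przytycki imposed as a hypothesis has here to be proved as a consequence, and the extra factor $n$ in the polynomial summability is precisely what the proof burns to replace that hypothesis.
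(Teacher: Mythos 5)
Your overall strategy matches the paper's: realize $\sigma$ as a weak limit of the Ces\`aro averages of $L^n\mathbf{1}$ (equivalently, of $d\nu\circ F^{-n}/d\nu$), use an $L^1(\nu)$-domination to make the limit nontrivial, and then pull back ergodicity, exactness, and positivity of entropy and Lyapunov exponent from the properties of $\nu$. But the pivotal estimate — and in particular \emph{why} the extra factor of $n$ in the polynomial summability condition is exactly what is needed — is not correctly identified in your sketch.

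You say that ``Ces\`aro-averaging in $n$'' produces the series $\sum_k k\,\sigma_k^{-\alpha}$. This is not where the $n$ comes from. The paper constructs a single dominating function $Z(z)$ with $L^n\mathbf{1}(z)\lesssim Z(z)$ uniformly in $n$ (so no averaging is needed to dominate); the price of $n$ is paid when showing $Z\in L^1(\nu)$. Concretely, given $y\in F^{-n}(z)$, one looks forward to the first large-scale return time $n'>n$, names $w:=F^{n'-1}(y)$, decomposes the orbit of $y$ up to $w$ by the Main Lemma with the stopping rule, and lets $v$ be the endpoint of the initial type-$2$ block. Then $Z(z):=\sum_{v}\abs{(F^{n(v,z)})'(v)}^{-\delta}$, the sum being over all $v$ that arise for some $y$ and $n$. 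When integrating $Z$ against $\nu$ one changes variables from $z$ to $w$: conformality gives $\abs{(F^{n(v,z)})'(v)}^{-\delta}\,d\nu(z)=\abs{(F^{n(v,w)})'(v)}^{-\delta}\,d\nu(w)$, and for a fixed pair $(v,w)$ there are exactly $n(v,w)$ choices of the intermediate point $z$. Summing the multiplicity of $z$'s — not Ces\`aro-averaging — is what produces $\sum_n n\,\gamma_n^{-\delta}$, and the polynomial summability is precisely what makes this converge. Your ``telescope decomposition'' is also vague: the relevant decomposition is into type $1$/$2$/$3$ blocks with the stopping rule, not arbitrary univalent stretches, and the resulting exponent is $\gamma_{k_1}\cdots\gamma_{k_j}$, not $\sigma_k^{-p/(\mu+1)}$.

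Two further points. First, you claim the proof must ``extract dynamically'' the integrability condition (\ref{def:cond}); in fact the paper never establishes it — it sidesteps it entirely, and whether it holds under polynomial summability remains unaddressed. Second, Ruelle's inequality runs the other way ($h_\sigma\le\chi_\sigma^+$), so it cannot give positive entropy from a positive Lyapunov exponent. The paper argues directly: $\Jac_\sigma>1$ $\sigma$-a.e.\ since $1=\sum_{y\in F^{-1}(F y)}\Jac_\sigma(y)^{-1}>\Jac_\sigma(y)^{-1}$, so $h_\sigma=\int\log\Jac_\sigma\,d\sigma>0$, and then positivity of $\chi_\sigma$ follows from Ma\~n\'e's theorem for ergodic invariant measures of positive entropy. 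Your Birkhoff-plus-return-times argument for $\chi_\sigma>0$ is also not immediate: subsequential blow-up of $\abs{(F^{n_j})'}$ does not by itself force $\int\log\abs{F'}\,d\sigma>0$ without an averaging control you have not supplied.
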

If the integrability condition is satisfied, 
then we have the following theorem.
\begin{theo}\label{theo:integ}
If $F$ satisfies the summability condition with an exponent
\[\alpha < \frac{\dpoin(J)}{\dpoin(J)+\mmax}~,\]
and a unique  $\dpoin(J)$-conformal measure $\nu$ is 
$\dpoin(J)(1-\frac{1}{\mmax})$-integrable then $F$
has a unique and ergodic absolutely continuous invariant measure.
\end{theo}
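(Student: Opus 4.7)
The plan is to construct the absolutely continuous invariant measure as a weak$^{*}$ subsequential limit of Ces\`aro averages of the transfer operator. Let $\mathcal{L}$ denote the Perron--Frobenius operator on $L^1(\nu)$ defined by $\int (\phi\circ F)\,\psi\,d\nu = \int \phi\cdot\mathcal{L}\psi\,d\nu$; conformality of $\nu$ with exponent $\dpoin(J)$ means concretely that $\mathcal{L}\psi(y) = \sum_{x\in F^{-1}y} \psi(x)\abs{F'(x)}^{-\dpoin(J)}$. A density $h\in L^1(\nu)$ for an $F$-invariant measure $\sigma\ll\nu$ corresponds to a fixed point $\mathcal{L}h=h$. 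I would consider $h_N := \frac{1}{N}\sum_{n=0}^{N-1}\mathcal{L}^n\mathbf{1}$, extract a weakly convergent subsequence, and verify that its limit $h$ is nonzero and $\mathcal{L}$-invariant.

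The technical heart is to bound $\mathcal{L}^n\mathbf{1}(z)$ uniformly in $n$ on a set of positive $\nu$-measure, and to rule out escape of mass to the postcritical set. Fix the large-scale radius $r>0$ from Theorem~\ref{theo:largescale} and classify preimages $y\in F^{-n}(z)$ by their last critical blockage. The ``good'' preimages — those for which $B(z,r)$ pulls back univalently along the whole orbit — give a contribution bounded by a restricted Poincar\'e type sum that is finite and bounded independently of $n$ thanks to Theorem~\ref{theo:poincare} and the summability assumption, using Koebe distortion on each univalent stage. For the remaining preimages I decompose according to the last time $k\le n$ at which a univalent pull-back of the pertinent ball is obstructed by a critical point $c$ of multiplicity $\mu\le\mmax$; there the pull-back of $F$ behaves like $\abs{w-F(c)}^{1/\mu}$, and the corresponding Jacobian factor raised to the power $\dpoin(J)$ yields the weight $\abs{w-F(c)}^{-\dpoin(J)(1-1/\mu)}$. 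Integrating this weight against $\nu$ over a neighborhood of $F^{i}(c)$ is precisely the quantity assumed bounded by the integrability hypothesis (\ref{def:cond}) with exponent $\dpoin(J)(1-1/\mmax)$. Summing over $i$ (indexing the position of the critical hit along the orbit), the pre-composed univalent factors decay at the rate $(\sigma_i)^{-\dpoin(J)}$ controlled by the summability condition $\alpha<\dpoin(J)/(\dpoin(J)+\mmax)$, yielding a geometric-type bound that is finite uniformly in $n$ and in $z$ away from the postcritical set.

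The main obstacle is exactly the bookkeeping for orbits undergoing several near-critical passages and the combination of Koebe-controlled univalent stretches with the explicit radial estimate at the critical blockage; the strength of the integrability condition is that it absorbs the degenerate profile of $\nu$ near each $F^{i}(c)$ independently of the speed of decay along the critical orbits, so that it plays exactly the role that the factor $n$ in polynomial summability plays in Theorem~\ref{theo:inv}. Once $h_N$ is shown to be locally uniformly bounded in $L^1(\nu)$ on the complement of a neighborhood of the postcritical set, Fatou's lemma and standard approximation yield a nontrivial $h\ge 0$ with $\mathcal{L}h=h$. Ergodicity and uniqueness of $\sigma:=h\,\nu$ are then inherited from the ergodicity of $\nu$ established in Theorem~\ref{theo:4}: any $F$-invariant Borel set of positive $\sigma$-measure has positive and hence full $\nu$-measure, and any second absolutely continuous $F$-invariant probability must agree with $\sigma$ on a set of full $\nu$-measure.
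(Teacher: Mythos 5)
Your plan matches the paper's proof (Proposition~\ref{prop:in1} together with the short argument that follows it): bound $\mathcal{L}^{N}\mathbf{1}(z)$ pointwise by the fixed function $\gr(z)=\sum_{k}\gamma_{k}^{-\delta}\Delta_{k}^{-\delta(1-1/\mmax)}$ via a decomposition of backward orbits into a critical ($1\dots13$) prefix near $z$ and a type-$2$ tail, invoke the integrability hypothesis to get $\gr\in L^{1}(\nu)$, pass to a weak limit of the Ces\`aro averages, and transfer uniqueness and ergodicity from Theorem~\ref{theo:4}. So the approach is essentially the same, and the core estimate — singular weight at the last (nearest to $z$) critical blockage, combined with summable expansion constants $\gamma_{k}^{-\delta}$ coming from the summability condition — is identified correctly.

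One point to fix: you bound the contribution of the ``good'' (fully univalent) preimages by a ``restricted Poincar\'e type sum... bounded independently of $n$ thanks to Theorem~\ref{theo:poincare}.'' Theorem~\ref{theo:poincare} asserts the opposite at the critical exponent: $\Sigma_{\dpoin(J)}$ \emph{diverges}, and so does the restricted series over all $n$. What actually gives the uniform bound for a \emph{fixed} $n$ is the elementary disjointness/conformality estimate (the paper's equation~(\ref{equ:only})): the disjoint univalent pullbacks of $B_{R'}(z)$ have $\nu$-masses comparable to $\abs{(F^{n})'(y)}^{-\delta}\nu(B_{R'}(z))$, so
\[
\sum_{y\in F^{-n}(z)\cap\typeii}\abs{(F^{n})'(y)}^{-\delta}\;\lesssim\;\frac{1}{\nu\bigl(B_{R'}(z)\bigr)}\;\le\;K_{R'},
\]
a constant independent of $n$. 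Replacing the Poincar\'e-series citation by this measure-theoretic estimate makes the good-preimage bound correct, and with that correction your argument coincides with the paper's.
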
 
Observe that $2$-dimensional Lebesgue measure becomes a 
geometric (conformal) measure, when $J=\CC$. 
We conclude, that in this case $F$ has an absolutely
continuous invariant measure with respect to 2-dimensional
Lebesgue measure if it satisfies
the summability condition with an exponent $\alpha< \frac{2}{2+\mmax}$.
We also study ergodic and regularity properties of the absolutely
continuous invariant measures.
\paragraph{Multimodal maps.}
Another important application of our techniques lies in
the dynamics of  analytic {\em  multimodal maps}
of a compact interval with non-empty interior. 
Contrary to the unimodal case (maps with exactly one local extremum), 
there are few results about the existence of absolutely continuous
invariant measures, \cite{BruinLuzzattoStrien}.
The only available results are either of perturbative nature 
(analogues of Jakobson's result in the quadratic family, i.e. they
require some transversality assumptions in one-parameter families)
or  impose some semi-unimodal  conditions.  
One of  the most general results  \cite{tsuji} states that
for a generic $C^{2}$ families of multimodal maps
there exist a positive set of parameters which correspond to Collet-Eckmann
maps with acims. In \cite{nost} it was proved that if  
S-unimodal map $f$ (i.e. unimodal with negative Schwarzian derivative),
and the critical point of  multiplicity $d$ 
satisfies the summability condition with the exponent $1/d$ then 
it has an absolutely continuous invariant measure 
with respect to the Lebesgue measure (recently a stronger result was
proved in \cite{BruinShenStrien}). 
The Schwarzian derivative is defined for a $C^3$ function $f$ by
\begin{equation}\label{eq:schdef}
S(f)(x) := f'''(x)/f'(x) - \frac{3}{2}\left(f''(x)/f'(x)\right)^2\;,
\end{equation}
provided $f'(x) \neq 0 $.
A prototype $S$-unimodal map is given by 
$z^{2d}+c$, with $c\in \R$ and $d$ a positive integer.

An absolutely continuous invariant measure
provides a useful information about statistical behavior of orbits.
We prove the following result for multimodal maps.
\begin{theo}\label{coro:uni}
Let $f$ be an analytic function of the unit interval with
all periodic points repelling and negative Schwarzian derivative.
If $f$ satisfies the summability condition with an exponent 
\[\alpha< \frac{1}{1+\mmax}~, \]
then $f$ has an absolutely continuous invariant measure
$\sigma$ with respect to $1$-dimensional Lebesgue
measure. 
\end{theo}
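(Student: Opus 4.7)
The plan is to reduce Theorem~\ref{coro:uni} to the one-dimensional analogue of Theorem~\ref{theo:integ}. The key observation is that for a real analytic multimodal map $f$ on $I$, the $1$-dimensional Lebesgue measure on $I$ plays the role of the conformal measure $\nu$ with exponent $p=\dpoin(J)=1$: for any Borel $B\subset I$ on which $f$ is injective, the change of variables gives $\mathrm{Leb}(f(B))=\int_B |f'(x)|\,dx$, so Lebesgue is $1$-conformal in the sense of Definition~\ref{def:mes}. Under the hypotheses (all periodic points repelling, negative Schwarzian derivative), classical real one-dimensional theory rules out wandering intervals and attracting or neutral cycles, so the ``Julia set'' here is the dynamical core (a finite union of transitive intervals) carrying Lebesgue measure as its natural $1$-conformal measure. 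The area estimate $\dpoin(J)\le 2$ becomes the length estimate $\dpoin(J)\le 1$, and together with transitivity one obtains $\dpoin(J)=1$; thus the summability hypothesis $\alpha<1/(1+\mmax)$ is precisely $\alpha<\dpoin(J)/(\dpoin(J)+\mmax)$.

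First I would verify that the integrability condition (\ref{def:cond}) is automatic in this setting. With $p=1$ the required exponent is $\eta=1-1/\mmax<1$, and for $1$-dimensional Lebesgue measure the integrals $\int_I|x-a|^{-\eta}\,dx$ are bounded uniformly in $a\in I$ because $\eta<1$. Hence no separate decay estimate along critical orbits is needed, and it is Theorem~\ref{theo:integ} rather than Theorem~\ref{theo:inv} that one should aim to apply (this is why polynomial summability is not required).

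Next I would carry the proofs of Theorems~\ref{theo:poincare}--\ref{theo:integ} over to the real analytic setting. Since $f$ is analytic, it extends holomorphically to a neighborhood of $I$ in $\mathbb{C}$, with the same critical points and multiplicities, so $\mmax$ is unambiguous. Wherever the complex proofs invoke Koebe distortion for univalent holomorphic pullbacks, one substitutes the real Koebe principle for maps of negative Schwarzian derivative, which provides uniform distortion bounds on real preimage intervals mapping diffeomorphically to a definite scale around the target; for branches that remain in the complex neighborhood of $I$, one uses the holomorphic extension directly. The restricted Poincar\'e series, the large-scale induced hyperbolicity of Theorem~\ref{theo:largescale}, and the construction of the invariant density from $\nu$ via telescoping along univalent pullbacks then all transfer verbatim.

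The main obstacle I anticipate is not a single hard estimate but the careful bookkeeping of this transfer: one has to ensure that at each critical pullback, the local inverse degree is correctly accounted for (the contribution of a turning point of order $d$ is the same as that of a complex critical point of multiplicity $d$), and that the ``large scale'' of Theorem~\ref{theo:largescale} can be chosen inside the real interval so that Koebe on the real line suffices. Once these are in place, invoking the real version of Theorem~\ref{theo:integ} produces a unique ergodic absolutely continuous invariant measure $\sigma$ with respect to Lebesgue, yielding the theorem.
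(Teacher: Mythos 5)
Your reduction is the right one and matches the paper's: normalize one-dimensional Lebesgue measure as a $1$-conformal measure $\nu$; observe that the integrability condition (\ref{def:cond}) with $p=1$ requires exponent $\eta = 1 - 1/\mmax < 1$, for which $\int_I |x-a|^{-\eta}\,dx$ is uniformly bounded, so no decay along critical orbits is needed; and then invoke the mechanism behind Theorem~\ref{theo:integ} — in fact the paper applies Proposition~\ref{prop:in1} (the transfer-operator bound ${\cal L}^N \lesssim \gr$) directly — rather than the polynomial-summability Theorem~\ref{theo:inv}. Your arithmetic $\alpha < 1/(1+\mmax) = \delta/(\delta+\mmax)$ with $\delta=1$ is exactly what the paper exploits, although the paper never bothers to define a Poincar\'e exponent for the interval: it just uses $\delta=1$ for Lebesgue and does not appeal to Theorem~\ref{theo:largescale}.

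Where you and the paper diverge is in the technical transfer to the real line, and your version of that step has a genuine gap. You propose to \emph{substitute} the real Koebe principle for the complex Koebe distortion lemma. But the shrinking-neighborhood machinery, the type-$1/2/3$ decomposition, and the Main Lemma are built around pulling back round \emph{disks} in $\C$ and detecting when those disks hit complex critical points; they cannot simply have complex Koebe swapped out for a statement about real intervals. The paper instead keeps the complex machinery intact: it works with the holomorphic extension $F$ and restricts attention to the \emph{real inverse branches} $\Fe^{-n}$ (those whose restriction to $\R$ maps into $I$). The crucial input is then the disk-preservation property of negative-Schwarzian maps, Fact~\ref{SAD1} (Propositions~3 and~4 of \cite{gss}), which yields Lemma~\ref{coro:real}: there is a uniform $L>0$ such that for every $n$ and every real inverse branch $f^{-n}$ defined on $B_L(x)\cap\R$, the corresponding $F_{\rm{rl}}^{-n}$ extends to all of $B_L(x)\subset\C$ with image inside the fixed complex neighborhood $U_F$. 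Only then can the shrinking-neighborhood estimates and Proposition~\ref{prop:in1} be run verbatim for the real part ${\cal L}_{\rm rl}$ of the transfer operator. You gesture at this issue (``for branches that remain in the complex neighborhood of $I$, one uses the holomorphic extension directly'') but you do not say why the branches remain there; without the disk property that claim is unproved, and it is precisely the non-trivial point that the negative Schwarzian hypothesis is being used to establish. Once Lemma~\ref{coro:real} is in hand, the remainder of your outline is what the paper does.
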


\paragraph{Fractal dimensions.}
We also show that for the Julia sets under considerations
are ``regular'' fractals,
in the sense that all possible dimensions coincide.

The Hausdorff dimension of a measure $\nu$ is defined as
the infimum of Hausdorff dimensions of its Borel supports:
\[\HD(\nu) := \inf\{\HD(A): \nu(A)=1\}~.\]
For the convenience of the reader we define and discuss 
basic properties of  Hausdorff, Minkowski, and Whitney dimensions
$\HD$, $\MD$, $\dwhit$
in Section 8.


\begin{theo}\label{theo:dims}
Suppose that a rational function $F$ 
satisfies the {\em summability condition}
with an exponent
$$\alpha~<~\fr{p}{\mmax+p}~,$$
where p is any (e.g., maximal) of the quantities in the formula below. Then
$$\dpoin(J)=\dwhit(J)=\MD(J)=\HD(J)=\HH(J)=\HD(\nu)~,$$
where $\nu$ is a unique $\dconf$-conformal measure.
\end{theo}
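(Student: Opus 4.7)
The strategy is to establish the chain
$$\HH(J) \leq \HD(J) \leq \MD(J) \leq \dwhit(J) \leq \dpoin(J) = \dconf = \HH(J),$$
forcing all five quantities to coincide, and then show separately that $\HD(\nu) \geq \dpoin(J)$ (the reverse $\HD(\nu) \leq \HD(J)$ is automatic since $\nu$ is supported on $J$). The first three inequalities belong to the general dimensional hierarchy for compact planar sets. The identity $\dconf = \HH(J)$ is the Denker--Urbanski result recalled in the preamble to Theorem~\ref{theo:4}, and $\dconf = \dpoin(J)$ is part of Theorem~\ref{theo:4} itself. So two non-trivial ingredients remain: (i) $\dwhit(J) \leq \dpoin(J)$, and (ii) $\HD(\nu) \geq \dpoin(J)$.

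For (i), I would fix an invariant Fatou component $\mathcal{F}$ with an admissible basepoint $z_0$ bounded away from the critical orbits, and organise a Whitney decomposition of $\mathcal{F}$ dynamically: a finite collection of Whitney squares in a fundamental region of a periodic Fatou component generates the rest of the decomposition via iterated inverse branches of $F$. Koebe distortion, applied along univalent (or bounded-degree) pullbacks, gives that a Whitney square $Q$ at generation $n$ centred near $y \in F^{-n}(z_0)$ satisfies $\diam(Q) \asymp |(F^n)'(y)|^{-1}$. Hence for every $\delta > \dpoin(J)$,
$$\sum_Q \diam(Q)^{\delta} \;\lesssim\; \sum_{n=1}^{\infty}\sum_{y \in F^{-n}(z_0)} |(F^n)'(y)|^{-\delta} \;=\; \Sig_\delta(z_0) \;<\; \infty,$$
which by the definition of Whitney dimension yields $\dwhit(\mathcal{F}) \leq \dpoin(\mathcal{F})$. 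Taking the maximum over the finitely many invariant Fatou components completes (i).

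For (ii), the plan is to bound the lower local dimension of $\nu$ pointwise from below by $\dpoin(J)$. Invoke Theorem~\ref{theo:largescale}: for the radius $r>0$ produced there and any $\epsilon>0$, at $\nu$-a.e.\ $x$ there is a sequence $n_j\to\infty$ such that $F^{n_j}$ is univalent on the pullback component $U_j \ni x$ of $B(F^{n_j}x,r)$ and satisfies the slow-growth control $|(F^{n_{j+1}})'(x)| \leq |(F^{n_j})'(x)|^{1+\epsilon}$. Koebe yields a round ball $B_j \subset U_j$ about $x$ of radius $\rho_j \asymp r|(F^{n_j})'(x)|^{-1}$ on which $F^{n_j}$ has bounded distortion; conformality of $\nu$ with exponent $\dpoin(J)$ then gives $\nu(B_j) \asymp \rho_j^{\dpoin(J)}$. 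For an arbitrary $\rho$ with $\rho_{j+1} \leq \rho \leq \rho_j$, monotonicity of $\nu$ together with $\rho_j \lesssim \rho_{j+1}^{1/(1+\epsilon)}$ (which follows from the slow-growth inequality) yields
$$\nu(B(x,\rho)) \;\leq\; \nu(B_j) \;\asymp\; \rho_j^{\dpoin(J)} \;\lesssim\; \rho_{j+1}^{\dpoin(J)/(1+\epsilon)} \;\leq\; \rho^{\dpoin(J)/(1+\epsilon)}.$$
Hence the lower local dimension of $\nu$ is at least $\dpoin(J)/(1+\epsilon)$ at $\nu$-a.e.\ point; letting $\epsilon \to 0$ and invoking the standard mass-distribution principle gives $\HD(\nu) \geq \dpoin(J)$.

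The main obstacle is (ii): the step from the ``good'' radii $\rho_j$ produced by induced hyperbolicity to \emph{all} sufficiently small $\rho$ hinges on the slow-growth clause of Theorem~\ref{theo:largescale}. Without it, conformality of $\nu$ would only pin down $\nu(B(x,\rho))$ on a sparse subsequence of scales, insufficient for a pointwise dimension estimate. By contrast, the Whitney pullback argument in (i) is comparatively soft once the restricted Poincar\'e machinery of Theorem~\ref{theo:poincare} is in place.
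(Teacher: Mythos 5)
Your outline matches the paper's in most respects: both prove $\dwhit(J)\le\dpoin(J)$ by organising a Whitney decomposition of the Fatou set dynamically (this is exactly Lemma~\ref{lem:poinwhit}), and both obtain $\HD(\nu)\ge\dpoin(J)$ from the $\epsilon$-frequent passage to large scale (Proposition~\ref{prop:freq}, packaged as Proposition~\ref{prop:confgauge}). The difficulty you flag at the end --- needing the slow-growth clause to control $\nu(B(x,\rho))$ at \emph{all} small scales, not just a sparse sequence --- is exactly why the paper introduces $\Jlso$ rather than merely $\Jls$; your handling of it is correct.

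However, there is a genuine gap, and it sits in the link you dismissed as routine. You assert that the chain
\[
\HD(J)\;\le\;\MD(J)\;\le\;\dwhit(J)
\]
``belongs to the general dimensional hierarchy for compact planar sets.'' The second inequality is false in general: the correct general statement, which is Fact~\ref{lem:bishop} (quoting Bishop), is $\dwhit(K)\le\MDsup(K)$ for \emph{every} compact $K$, with equality \emph{if and only if} $K$ has zero area. So $\MD(J)\le\dwhit(J)$ is the nontrivial direction, and without it your chain never caps $\HD(J)$ (nor $\MD(J)$) from above. In the paper this is where the zero-area result enters: under $\alpha<2/(\mmax+2)$ (which is implied by $\alpha<\dpoin(J)/(\mmax+\dpoin(J))$ once one knows $\dpoin(J)\le 2$), Proposition~\ref{prop:goodcover}/Corollary~\ref{theo:dim} give $\area(J)=0$ when $J\neq\CC$, and only then does Bishop's fact upgrade the soft inequality $\dwhit\le\MDsup$ to an equality. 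In short: your step (i) gives $\dwhit\le\dpoin$, but to close the loop you still need $\MDsup(J)\le\dwhit(J)$, and that requires citing the zero-area consequence of the summability condition --- a substantive ingredient, not a piece of general measure theory. (For $J=\CC$ everything is $2$ by convention, so the only case at issue is $J\neq\CC$, which is precisely where the area argument is needed.)
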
 

\begin{coro}
Under the hypothesis of Theorem~\ref{theo:4},
the unique $\dconf$-conformal measure $\nu$ is a geometric measure.
\end{coro}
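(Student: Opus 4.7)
The plan is to combine Theorem~\ref{theo:4} with Theorem~\ref{theo:dims}, since between them they supply both ingredients in the definition of a geometric measure: $\dconf$-conformality and the equality $\dconf=\HD(J)$.

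First, under the hypothesis of Theorem~\ref{theo:4}, namely that $F$ is summable with exponent $\alpha<\dpoin(J)/(\mmax+\dpoin(J))$, that theorem already provides a unique non-atomic conformal measure $\nu$ whose exponent is
\[
\dconf~=~\dpoin(J).
\]
So it remains only to verify that $\dconf=\HD(J)$.

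Next, observe that the hypothesis of Theorem~\ref{theo:dims} is satisfied with the choice $p=\dpoin(J)$: indeed $\alpha<p/(\mmax+p)$ is exactly the assumption of Theorem~\ref{theo:4}. Applying Theorem~\ref{theo:dims} with this choice of $p$ yields in particular
\[
\dpoin(J)~=~\HD(J).
\]
Combining the two displays gives $\dconf=\HD(J)$, so the unique non-atomic $\dconf$-conformal measure $\nu$ from Theorem~\ref{theo:4} is a geometric measure in the sense of the definition preceding the statement.

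The only subtlety worth flagging is that Theorem~\ref{theo:dims} is formulated for ``any (e.g., maximal) of the quantities'' $p$ in the dimension chain, so one must check consistency: using $p=\dpoin(J)$ is legitimate because it appears in that chain, and the assumed bound on $\alpha$ coincides with the bound required there. No further computation is needed; the corollary follows from the two cited theorems without additional input.
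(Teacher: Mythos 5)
Your proof is correct and follows the paper's (implicit) argument: the corollary is stated immediately after Theorem~\ref{theo:dims} precisely because it is the combination of that theorem (which gives $\dconf=\dpoin(J)=\HD(J)$ under the hypothesis $\alpha<\dpoin(J)/(\mmax+\dpoin(J))$) with Theorem~\ref{theo:4}. The subtlety you flag about the choice of $p$ is resolved exactly as you say; the paper's own proof of Theorem~\ref{theo:dims} is carried out with $p=\dpoin(J)$, which is the hypothesis of Theorem~\ref{theo:4}.
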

A natural question arises: 
does the claim of Theorem~\ref{theo:dims} remain valid  under any summability condition? 
This is an interesting question with possible application towards establishing
non-atomicity of conformal measures with minimal exponent.

\begin{coro}[Strong Ahlfors dichotomy]\label{theo:dim}
Suppose that a rational function $F$ satisfies the {\em summability condition}
with an exponent
$$\alpha~<~\fr{2}{\mmax+2}~.$$
Then the Julia set is either the whole sphere,
or its Minkowski dimension
is strictly less than $2$.
\end{coro}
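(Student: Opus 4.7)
The proof is by contradiction, combining Theorem~\ref{theo:dims} with the induced-hyperbolicity consequence stated around Theorem~\ref{theo:largescale}. Suppose $J\neq\CC$ and, for contradiction, $\MD(J)=2$. Since $\MD(J)$ is one of the dimensions listed in Theorem~\ref{theo:dims}, choose $p=\MD(J)=2$: the required bound $\alpha<p/(\mmax+p)=2/(\mmax+2)$ coincides with the standing assumption. Theorem~\ref{theo:dims} then forces all listed quantities to equal $2$, so in particular $\dpoin(J)=2$, and Theorem~\ref{theo:4} (whose hypothesis $\alpha<\dpoin(J)/(\mmax+\dpoin(J))$ now holds trivially) produces a unique non-atomic conformal measure $\nu$ on $J$ of exponent $\dconf=2$.

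The core of the argument is to show that such a $2$-conformal $\nu$ is absolutely continuous with respect to planar Lebesgue measure $m$. Invoking Theorem~\ref{theo:largescale}, for $\nu$-almost every $x\in J$ there is a sequence $n_j\to\infty$ along which $F^{n_j}$ is univalent on the pullback component $V_{n_j}$ of $B(F^{n_j}(x),r)$ containing $x$. The $2$-conformality of $\nu$ yields $\nu(B(F^{n_j}(x),r))=\int_{V_{n_j}}|(F^{n_j})'|^2\,d\nu$, while Koebe distortion on $V_{n_j}$ gives $|(F^{n_j})'|\asymp |(F^{n_j})'(x)|$ uniformly there. Combined with a uniform lower bound on $\nu(B(y,r))$ over $y\in\{F^{n_j}(x)\}$ (extracted from non-atomicity, ergodicity, and compactness), one obtains $\nu(B(x,\rho_j))\asymp\rho_j^2$ with $\rho_j=r/|(F^{n_j})'(x)|\to0$. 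Promoting this to a set of full $\nu$-measure by ergodicity yields that $\nu$ and $m|_J$ are mutually absolutely continuous, so $m(J)>0$. But induced hyperbolicity (via Theorem~\ref{theo:largescale} and the remark preceding it) forces $m(J)=0$ whenever $J\neq\CC$, a contradiction. Therefore $\MD(J)<2$ when $J\neq\CC$.

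The principal obstacle is the Ahlfors $2$-regularity step. The upper bound $\nu(B(x,\rho_j))\lesssim\rho_j^2$ follows directly from Koebe and $2$-conformality, but the matching lower bound $\nu(B(x,\rho_j))\gtrsim\rho_j^2$ requires a uniform lower estimate on $\nu$-measures of balls of the fixed radius $r$ centered along the orbit $\{F^{n_j}(x)\}$, which has to be extracted from the ergodicity and non-atomicity of $\nu$ provided by Theorem~\ref{theo:4}. A secondary technicality is the need to pass from the $\nu$-a.e.\ return to large scale to mutual absolute continuity with $m|_J$ on all of $J$, which relies on ergodicity of $\nu$ to exclude the possibility of the regular and singular parts coexisting on sets of positive $\nu$-measure.
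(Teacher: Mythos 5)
Your proposal takes a genuinely different and much longer route than the paper, and the key closing step is left unjustified. The paper's (implicit) argument is direct and does not go through conformal measures at all: if $J\neq\CC$, Proposition~\ref{prop:goodcover} and Lemma~\ref{lem:hdim} give $\HD(J)<2$, hence $\area(J)=0$; then Fact~\ref{lem:bishop} gives $\MDsup(J)=\dwhit(J)$, Lemma~\ref{lem:poinwhit} gives $\dwhit(J)=\dpoin(J)$, and the classical area estimate $\Sigma_2(v)<\infty$ together with the self-improving property (Corollary following Proposition~\ref{prop:poincare}, Corollary~\ref{cor:critexpon} with $q=2$) force $\dpoin(J)<2$. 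Chaining these yields $\MD(J)=\dpoin(J)<2$ directly.

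Your argument is salvageable but over-engineered, and as written it has a gap. Once you reach $\dpoin(J)=2$, you already have a contradiction without any conformal measure: Corollary~\ref{cor:critexpon} (applied with $q=2$ via the area estimate available since $J\neq\CC$) gives $\dpoin(J)<2$ under the standing assumption $\alpha<2/(\mmax+2)$. The long detour through the non-atomic $2$-conformal measure, Ahlfors $2$-regularity and $\nu\ll m$ therefore adds nothing. Worse, the last step — ``induced hyperbolicity forces $m(J)=0$ whenever $J\neq\CC$'' — is not proved but only cited from an introductory remark; the actual mechanism in the paper is exactly $\HD(J)<2$ from Lemma~\ref{lem:hdim}, which already renders the Ahlfors-regularity argument redundant. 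Finally, the ``mutual absolute continuity by ergodicity'' step is not a consequence of ergodicity per se: the clean way to get $m|_J\ll\nu$ after $\nu\ll m$ is to observe that $m|_J$ (normalized) would be a second non-atomic $2$-conformal measure, and then invoke the uniqueness clause of Theorem~\ref{theo:4}. As a final caution, the hypothesis of Theorem~\ref{theo:dims} as proved in Section~8 really requires $\alpha<\dpoin(J)/(\mmax+\dpoin(J))$; your application with ``$p=\MD(J)=2$'' is consistent only because the contradiction hypothesis already forces all listed quantities (in particular $\dpoin(J)$) to be $2$, which should be made explicit (e.g.\ via the chain $\dpoin=\dwhit=\MDsup$, valid under the standing assumption alone) before the theorem is invoked.
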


\paragraph{Unstable rational maps.}
It is known 
(by an application of $\lambda$-lemma, see \cite{MSS}), 
that the affirmative answer to the dynamical Ahlfors conjecture 
(Julia set is either the whole sphere or of zero area)
in the class of rational functions with $J\neq\CC$ implies the Fatou conjecture.
If $F$ satisfies the summability condition with an exponent
$\alpha\leq \frac{2}{2+\mmax}$ then, by
Theorem~\ref{theo:poincare}, the Julia set has zero area,
and cannot carry a non-trivial  Beltrami differential. 

\begin{defi}
We say that a set $J$ is {\em conformally removable} if
every homeomorphism $\phi$ of $\CC$ which is holomorphic
off $J$, is in fact a M{\"o}bius transformation.
\end{defi}
For Julia sets, this is a very strong property, 
which generally does not hold even for hyperbolic rational maps. 
A counterexample, which is topologically a Cantor set of circles
is constructed in \S11.8 of the book \cite{beardon-book-iteration}.
Using a recent work \cite{josm},
we can establish conformal removability
(also called holomorphic removability)
of Julia sets for polynomials and 
Blaschke products satisfying the summability condition. 
\begin{theo}\label{theo:rem}
If a polynomial 
$F$ satisfies the {\em summability condition}
with an exponent
$$\alpha~<~\fr{2}{\mmax+2}~,$$
then the Julia set is conformally removable.
\end{theo}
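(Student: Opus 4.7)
The plan is to apply the removability theorem of Jones--Smirnov from \cite{josm}, which in the form we need asserts that the boundary of a H\"older planar domain is conformally removable. Our task therefore reduces to showing that the appropriate Fatou components of $F$ are H\"older domains: removability of $J$ will then follow at once, since $J$ coincides with $\partial \Omega_\infty$ for polynomials (where $\Omega_\infty$ is the basin of infinity) and with $\partial(\CC \setminus \overline{\D})$ for Blaschke products preserving $\D$.

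To establish that $\Omega_\infty$ is H\"older, I would use the strong induced hyperbolicity of Theorem~\ref{theo:largescale}: almost every point of $J$ with respect to the unique $\dconf$-conformal measure (cf.\ Theorem~\ref{theo:4}) visits the large scale $\eps$-frequently along inverse iterates admitting univalent extensions to definite-sized disks with bounded Koebe distortion. Combined with the summability condition along the critical orbits, this yields uniform distortion estimates for the telescoping pullbacks that govern the boundary behaviour of the Riemann map $\psi\colon \D \to \Omega_\infty$; the numerical threshold $\alpha < 2/(\mmax+2)$ is exactly what makes the resulting geometric series for the modulus of continuity of $\psi$ on $\overline{\D}$ converge, delivering the H\"older property of $\Omega_\infty$. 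When $J$ is disconnected and $\Omega_\infty$ is not simply connected, the same argument is run on a suitable conformal uniformization of each complementary component of $J$, or equivalently on the pulled-back Riemann maps of pre-images of a large escaping disk around infinity.

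Once the H\"older property of the relevant components is in hand, \cite{josm} yields conformal removability of $J$ at once. The main obstacle is the H\"older estimate itself: under mere summability there is no uniform exponential expansion, so the classical telescoping argument available for Collet--Eckmann or hyperbolic maps does not apply directly. One must instead run the telescoping on the nonuniform expansion supplied by Theorem~\ref{theo:largescale}, carefully estimating the distortion incurred at each approach to a critical point, where the derivatives of inverse branches are inflated by factors controlled by the maximal multiplicity $\mmax$; this critical-passage inflation is precisely why the summability exponent in the hypothesis must be divided by $\mmax+2$ rather than by the bare area-type threshold of $2$.
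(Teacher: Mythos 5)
Your plan hinges on showing that $\Omega_\infty$ is a H\"older domain and then citing the removability of H\"older boundaries from \cite{josm}. This is the wrong criterion, and the step fails. H\"older domains correspond to an exponential gauge $H(r)=\exp(C-\eps r)$ in the quasihyperbolic boundary condition, which demands exponential expansion along inverse orbits, i.e.\ a Collet--Eckmann hypothesis. The summability condition permits the sequence $\sigma_n$ to grow only polynomially, and the telescoping then yields a polynomially decaying (merely integrable) gauge, not an exponential one. This is precisely why the paper's Theorem~\ref{prop:integrable} proves only the weaker \emph{integrable domain} property for periodic Fatou components, and even that conclusion requires the more restrictive exponent $\alpha\le 1/(\mmax+1)$ rather than $\alpha<2/(\mmax+2)$. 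There is a second difficulty: Theorem~\ref{theo:largescale} delivers frequent large-scale passages only for $\nu$-almost every point, whereas a uniform modulus of continuity for the Riemann map (which the H\"older criterion needs) is an everywhere statement; under mere summability that uniform upgrade is not available.

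The paper instead appeals to a different and more flexible criterion from the same reference, quoted as Fact~\ref{fact:js} (Theorem~5 of \cite{josm}): for a polynomial $F$ and a finite cover $\{B_j\}$ of $J_F$ by domains, removability follows from the geometric condition
$$\sum_{i,n}\,N(P_i^n)\,\diam\br{P_i^n}^2<\infty\;,$$
where $P_i^n$ ranges over the connected components of the pullbacks $F^{-n}B_j$ and $N(P_i^n)$ is the degree of $F^n$ on $P_i^n$. Proposition~\ref{prop:goodcover} produces some $p<2$ for which the weighted sum of $p$-th powers of diameters converges; since the diameters are uniformly bounded, the exponent can be raised to~$2$, and the criterion is met. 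The shrinking-neighborhood and Koebe distortion estimates you invoke are indeed the ingredients of Proposition~\ref{prop:goodcover}, but the correct target quantity is this weighted squared-diameter sum, not the modulus of continuity of the Riemann map.
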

More generally, the theorem above holds
not only for polynomials, but for rational functions
such that the Julia set is a boundary of one of the Fatou components.

The assumption above, that the Julia set coincides with the 
boundary of one of the Fatou components is essential
for conformal removability.
A more flexible concept of dynamical removability might hold
for all rational Julia sets.
\begin{defi}
We say that a Julia set $J_F$ is dynamically removable if
every homeomorphism $\phi$ of $\CC$ 
which conjugates $(\CC,F)$ with another rational dynamical system  $(\CC,H)$
and is quasiconformal off $J_F$ is globally quasiconformal.
\end{defi}
\begin{theo}\label{theo:rem1}
If a rational function $F$ satisfies the summability condition with an exponent
$$\alpha~<~\fr{1}{\mmax+1}~,$$
then the Julia set is dynamically removable.
\end{theo}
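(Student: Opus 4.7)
The plan is to reduce dynamical removability to a removability statement for a bounded holomorphic homeomorphism, and then exploit the dynamics to verify it. Define the Beltrami coefficient $\mu := \bar\partial\phi/\partial\phi$ on the Fatou set and extend it by zero across $J_F$. Since $\alpha < 1/(\mmax+1) < 2/(\mmax+2)$, Corollary \ref{theo:dim} gives $\HD(J_F) < 2$, hence $J_F$ has zero area and $\mu \in L^{\infty}(\CC)$ with $\|\mu\|_{\infty} < 1$. Moreover, $\mu$ is $F$-invariant off $J_F$, so a normalized solution $\tilde\phi$ of the Beltrami equation with coefficient $\mu$ is a global $K$-quasiconformal map conjugating $F$ to some rational $\tilde F$. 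The composition $\psi := \phi \circ \tilde\phi^{-1}$ is then a homeomorphism of $\CC$, holomorphic on $\CC \setminus E$ where $E := \tilde\phi(J_F)$, and conjugates $\tilde F$ to $H$. It suffices to show that $\psi$ extends holomorphically across $E$, for then $\psi$ is a M\"obius transformation and $\phi = \psi \circ \tilde\phi$ is globally $K$-quasiconformal.

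The extension step is the heart of the argument, and I would model it on the removability scheme of \cite{josm}. The abstract ingredient is the following: a homeomorphism of $\CC$ that is holomorphic off a compact set $E$ of zero area extends holomorphically across $E$ whenever $E$ can be covered, at arbitrarily small scales, by families of disks with a suitable dynamically controlled total perimeter. To verify this for $E = \tilde\phi(J_F)$, I would use the strong induced hyperbolicity of $\tilde F$ (Theorem \ref{theo:largescale} applied to $\tilde F$, which inherits the summability condition from $F$ via quasiconformal conjugation): at $\nu$-a.e.\ point of $E$ there is a shrinking sequence of univalent pullback disks $U_n$ of uniformly bounded shape, on which the conjugation yields $\psi = H^{-n} \circ \psi \circ \tilde F^{n}$ with $H^{-n}$ a conformal inverse branch. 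Combined with the divergence of the Poincar\'e series at the exponent $\dpoin(J_F)$ guaranteed by Theorem \ref{theo:poincare}, the collection of these pullback disks assembles into a Vitali-type covering of $E$ with estimates strong enough to force the distributional $\bar\partial \psi$ supported on $E$ to vanish.

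The principal obstacle is controlling inverse branches that pass near critical points, which is what forces the stricter bound $\alpha < 1/(\mmax+1)$ instead of the bound $\alpha < 2/(\mmax+2)$ used for conformal removability in Theorem \ref{theo:rem}. At a critical point of multiplicity $\mmax$, a branched preimage of a round disk has boundary whose length is inflated by a factor governed by the branching order; keeping the total perimeter summable across all critical backward orbits requires controlling the summability at the first power of $|(F^n)'|^{-1}$ rather than the second (area) power. The factor $\mmax$ in the denominator of $1/(\mmax+1)$ compared with $2/(\mmax+2)$ encodes precisely this passage from area to perimeter estimates, and I expect the argument to hinge on summing perimeters of pullback shells using Theorem \ref{theo:poincare} at the critical exponent $\dpoin(J_F)$, rather than areas as in the proof of Theorem \ref{theo:rem}.
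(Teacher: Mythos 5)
Your proposal takes a genuinely different route from the paper, and unfortunately the route has a gap that cannot be repaired.

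The paper's proof does not pass through conformal removability at all. It works directly with the conjugating homeomorphism $\phi$ and invokes the Kallunki--Koskela theorem (Fact~\ref{fact:kalkos}): a homeomorphism is quasiconformal provided the $\liminf$-version of the metric distortion quotient $L_\phi(x,r)/l_\phi(x,r)$ is bounded off an exceptional set of $\sigma$-finite $1$-dimensional measure. The exceptional set is supplied by Proposition~\ref{prop:largescale2}, which bounds $\HD(J\setminus\Jls)\leq\mmax\alpha/(1-\alpha)$; the threshold $\alpha<1/(\mmax+1)$ is exactly what makes this dimension $<1$. At every point of $\Jls$ there are shrinking balls mapped with bounded distortion to the fixed large scale, and conjugacy transports the metric distortion of $\phi$ on those balls to the distortion of $\phi$ at the large scale, where it is uniformly bounded by compactness. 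No Beltrami straightening, no removability of $J$ is used.

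Your proposal instead straightens the Beltrami coefficient of $\phi$ to produce $\tilde\phi$, sets $E=\tilde\phi(J_F)$, and tries to show that the map $\psi=\phi\circ\tilde\phi^{-1}$, which is a homeomorphism holomorphic off $E$, extends holomorphically across $E$. But the abstract criterion you invoke (a Jones--Smirnov type perimeter-summability condition for a homeomorphism holomorphic off $E$) is a criterion for \emph{conformal removability} of $E$. This is strictly stronger than what Theorem~\ref{theo:rem1} asserts, and it is false in the generality required: the paper explicitly notes (\S11.8 of \cite{beardon-book-iteration}) that there are hyperbolic rational maps, which satisfy the summability condition with every $\alpha>0$, whose Julia sets are Cantor sets of circles and are \emph{not} conformally removable. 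A quasiconformal image $E$ of such a Julia set is likewise non-removable. The exotic non-extendible homeomorphisms (rotating the annuli by a devil's staircase) do not conjugate the dynamics, which is precisely why dynamical removability is a genuinely weaker notion; but a purely geometric covering criterion for $E$ cannot see the dynamical constraint, so the reduction to conformal removability of $E$ cannot close. Additionally, the claim that $\tilde F$ inherits the summability condition from $F$ under quasiconformal conjugation is not substantiated: the summability condition is a quantitative bound on derivatives along critical orbits and is not manifestly invariant under a general $K$-quasiconformal conjugacy (the paper never needs such invariance, because it works with $F$ and $\phi$ directly). Your intuition that the bound $\alpha<1/(\mmax+1)$ reflects a $1$-dimensional rather than $2$-dimensional constraint is correct, but the mechanism is the Hausdorff codimension-one exceptional set in the $\liminf$ metric criterion of quasiconformality, not a perimeter version of a removability cover.
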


\begin{theo}\label{theo:rem2}
Let a rational function $F$ satisfy the 
summability condition with an exponent
$$\alpha~<~\fr{2}{\mmax+2}~.$$
Suppose that there is a quasiconformal homeomorphism $\phi$ of $\CC$
which conjugates rational dynamical systems $(\CC,F)$ and $(\CC,H)$.
\begin{itemize}
\item
If $J\neq\CC$, then the Beltrami coefficient
$\phi_{\mu}$ has to be supported on the Fatou set.
\item
If $J=\CC$, then either $\phi$ is a M\"obius transformation,
or $F$ is double covered by an integral torus endomorphism
(i.e. it is a Latt\'es example).
In the latter case the Beltrami coefficient $\mu_\phi$
lifts to a constant Beltrami coefficient
on the covering torus.
\end{itemize}
\end{theo}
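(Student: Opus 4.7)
The plan is to split into the two cases of the statement and apply distinct earlier results of the paper in each. For the case $J\neq\CC$, I would simply appeal to Corollary~\ref{theo:dim}: the hypothesis $\alpha<2/(\mmax+2)$ is exactly what is needed there to force $\MD(J)<2$, so $J$ carries no $2$-dimensional Lebesgue mass. Since the Beltrami coefficient of a quasiconformal map is only an equivalence class in $L^\infty$ modulo modification on a Lebesgue null set, I may take the representative of $\mu_\phi$ that vanishes on the closed null set $J$ without altering $\phi$ itself; then its support lies in the Fatou set, which finishes the first bullet.

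For the case $J=\CC$, I would first invoke Theorem~\ref{theo:4}. Under the convention $\dpoin(\CC)=2$ the hypothesis $\alpha<2/(\mmax+2)$ lies in the range of that theorem, and Lebesgue measure on $\CC$ is then the unique non-atomic $2$-conformal measure, and in particular ergodic for $F$. Differentiating the conjugacy $\phi\circ F=H\circ\phi$ yields the standard pullback relation
\[\mu_\phi(z)~=~\mu_\phi(F(z))\,\overline{F'(z)}/F'(z),\]
holding almost everywhere, so $|\mu_\phi|$ is $F$-invariant and hence a.e. constant by ergodicity, equal to some $k\in[0,1)$. If $k=0$, then $\phi$ is $1$-quasiconformal and hence M\"obius; otherwise $\mu_\phi/|\mu_\phi|$ defines a nontrivial measurable $F$-invariant line field on the whole sphere.

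The main obstacle is then to conclude from the existence of a measurable $F$-invariant line field on $\CC$ (with $J=\CC$) that $F$ is a flexible Latt\`es example and that $\mu_\phi$ lifts to a constant Beltrami coefficient on the covering torus. Here I would appeal to the classical no-invariant-line-field rigidity of Thurston, whose proof uses the holomorphic motion machinery of~\cite{MSS}: a rational map carrying such a line field must be doubly covered by an integral endomorphism of a complex torus. Given this, the lifting statement is routine: the Latt\`es covering is holomorphic and hence pulls back Beltrami differentials, so $\mu_\phi$ lifts to a measurable Beltrami coefficient on the torus; the lift is invariant under the torus endomorphism, and since an integral torus endomorphism preserves the flat conformal structure and acts ergodically, the lifted coefficient must be constant in the flat coordinates. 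The only genuinely external input needed is Thurston's rigidity theorem; all remaining ingredients --- the null-set argument, the ergodicity supplied by Theorem~\ref{theo:4}, and the pullback equation for $\mu_\phi$ --- are immediate within the framework the paper has already established.
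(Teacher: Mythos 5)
Your treatment of the case $J\neq\CC$ is fine and coincides with the paper's: Corollary~\ref{theo:dim} gives $\MD(J)<2$, the Julia set has zero Lebesgue measure, and since a Beltrami coefficient is only defined up to a Lebesgue-null set it may be taken to vanish on $J$ and hence be supported on the Fatou set.

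The case $J=\CC$ contains a genuine gap. You correctly reduce, via the pull-back relation and the ergodicity of Lebesgue measure from Theorem~\ref{theo:4}, to the existence of a measurable $F$-invariant line field $\mu_\phi/|\mu_\phi|$ on $\CC$, and then you appeal to ``the classical no-invariant-line-field rigidity of Thurston'' to conclude that $F$ must be a Latt\`es example. No such theorem exists. Thurston's rigidity theorem concerns postcritically finite branched covers; the statement that a rational map carrying a measurable invariant line field on its Julia set must be doubly covered by an integral torus endomorphism is precisely McMullen's ``no invariant line fields'' conjecture (equivalent to the density of hyperbolicity), which is open. In fact, what is known unconditionally (McMullen's Lemma~3.16) requires the line field to be \emph{holomorphic} on a nonempty open set, not merely measurable; passing from measurable to holomorphic is exactly where the dynamics must enter. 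The paper supplies this by Proposition~\ref{prop:largescale}: except for a set of Hausdorff dimension strictly less than $2$, every point goes to large scale infinitely often with bounded distortion. This ``almost-everywhere deep point'' structure is then fed into the uniform-twisting argument of McMullen (Theorems~3.9/3.17 of \cite{mcm}) to upgrade the measurable line field to a holomorphic one on an open set, after which Lemma~3.16 of \cite{mcm} gives the Latt\`es dichotomy and the lifting statement. So the essential ingredient you are missing is the induced hyperbolicity from Proposition~\ref{prop:largescale}; without it the invariant line field alone tells you nothing, and the argument would ``prove'' the Fatou conjecture for all rational maps with $J=\CC$.
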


\begin{coro}[No invariant line fields]
If a rational function $F$ satisfies the summability condition with an exponent
$$\alpha~<~\fr{2}{\mmax+2}~,$$
then $J$ carries no invariant line field, except when $F$ is double
covered by 
an integral torus endomorphism.
\end{coro}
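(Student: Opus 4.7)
The plan is to derive this Corollary directly from Theorem~\ref{theo:rem2} by means of the Measurable Riemann Mapping Theorem. Recall that an invariant line field on $J$ is encoded by a measurable Beltrami differential $\mu$ with $\|\mu\|_\infty = 1$, supported on a subset of $J$ of positive Lebesgue measure, and satisfying $F^*\mu = \mu$ almost everywhere. The goal is to rule out such $\mu$, except when $F$ is a Latt\`es example.

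First I would dispose of the case $J\neq\CC$. Here Corollary~\ref{theo:dim} already tells us that the Minkowski dimension of $J$ is strictly less than $2$, and in particular $J$ has Lebesgue measure zero. Since an invariant line field requires a support of positive Lebesgue measure, no such $\mu$ can exist. This case is short precisely because the strong geometric content of the earlier theorems already excludes the possibility at the measure-theoretic level.

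For the remaining case $J=\CC$, suppose toward a contradiction that a non-trivial invariant line field $\mu$ exists. For each $t\in\D$, the Beltrami differential $t\mu$ satisfies $\|t\mu\|_\infty < 1$, so by the Measurable Riemann Mapping Theorem there is a quasiconformal homeomorphism $\phi_t$ of $\CC$ with $\mu_{\phi_t}=t\mu$. The $F$-invariance of $\mu$ makes the pullback complex structure $F$-invariant, so $F_t := \phi_t\circ F\circ\phi_t^{-1}$ is holomorphic in the standard structure and hence rational of the same degree as $F$. Now I would apply Theorem~\ref{theo:rem2} to the conjugacy $\phi_t$ between $(\CC,F)$ and $(\CC,F_t)$: since $J=\CC$ the first alternative (Beltrami coefficient supported on the Fatou set) is vacuous, so the second alternative applies. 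Either $\phi_t$ is a M\"obius transformation, forcing $t\mu\equiv 0$ and contradicting non-triviality of $\mu$, or $F$ is double covered by an integral torus endomorphism, which is the Latt\`es case explicitly allowed by the statement.

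The main conceptual obstacle has already been absorbed into Theorem~\ref{theo:rem2}; what remains in this corollary is routine packaging. The only verifications worth flagging are that $F_t$ is rational once $F^*\mu=\mu$ holds (standard from $F$ preserving the new complex structure) and that the Latt\`es exception in Theorem~\ref{theo:rem2} matches the one stated in the Corollary, which is direct from its phrasing about constant Beltrami coefficients lifted to the covering torus.
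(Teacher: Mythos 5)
Your proof is correct and follows the route the paper intends: the corollary is stated immediately after Theorem~\ref{theo:rem2} with only a pointer to McMullen's Corollary~3.18, and that derivation is precisely the one you give — dispose of $J\neq\CC$ by the zero-area consequence of Corollary~\ref{theo:dim}, and for $J=\CC$ integrate $t\mu$ via the Measurable Riemann Mapping Theorem to produce a quasiconformal conjugacy of rational maps, then invoke the dichotomy of Theorem~\ref{theo:rem2}. One small phrasing quibble: the ``first alternative'' you call vacuous is really the first bullet of Theorem~\ref{theo:rem2} (the hypothesis $J\neq\CC$), which simply does not apply here rather than being vacuous; the content is nonetheless correct.
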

Compare Corollary~3.18 in \cite{mcm}.

\paragraph{Geometry, local connectivity, 
and non-wandering continua.}
If  $z^{\ell}+c$ is a unicritical polynomial with locally
connected Julia set $J$ then the dynamics on $J$ has a particularly
simple representation: it is semi-conjugate to 
the multiplication by  $\ell$ modulo  $1$ on $[0,1)$. 
The quest for local connectivity of polynomial Julia sets 
dates back to the mid-Eighties. Recently, local connectivity
was obtained for all real unicritical polynomials $z^{l}+c$ with
connected Julia sets, \cite{levin-vanstrien},
and for Collet-Eckmann polynomials and Blaschke products with connected
Julia sets, \cite{ceh}. 

The quasihyperbolic distance $\qhdist{y,z}$
between points $y,z\in\Cal F$ is defined as the infimum of
$$\qhdist{y,z}~:=~\inf_{\gamma}~\int_{\gamma}~
\fr{\abs{d\zeta}}{\dist{\zeta,\partial\Cal F}}~,$$
the infimum taken over all rectifiable curves $\gamma$
joining $y$ and $z$ in $\Cal F$.

\begin{defi}\label{def:integral}
We will call a (possibly non-simply-connected) domain $\Cal F$ {\em integrable},
if there exists $z_0\in\Cal F$ and an integrable function $H : \R_{+}\to \R_{+}$,
\[\int_{0}^{\infty}H(r)<\infty~,\]
such that $\Cal F$ satisfies the following  {\em quasihyperbolic boundary condition}:
$$\dist{z,\partial\Cal F}~\leq ~H(\qhdist{z,z_0})~,$$ 
for any $z\in\Cal F$.
The distance $\dist{z,\partial \Cal F}$ is computed in the spherical metric.
\end{defi}
H\"{o}lder domains correspond to ``exponentially fast integrable'' domains with
$H(r)=\exp(C-\epsilon r)$. We will show
that the Fatou components of rational maps satisfying the
summability condition are integrable domains.

A concept of wandering subcompacta of connected Julia sets
is directly related to the local connectivity of components of $J$.
We say that a compact set $K$ is wandering if for every $m,n>0$,
$F^{n}(K)\cap F^{m}(K)=\emptyset$ whenever $m\neq n$.

\begin{theo}\label{prop:integrable}
Let $F$ be a rational function which satisfies the summability condition
with an exponent 
\[\alpha\leq \frac{1}{\mmax+1}~.~\]
Then every periodic Fatou component $\Cal F$ is an integrable domain.
If $F$ has a fully invariant Fatou component
then every component of $J$ is locally connected and $F$ does not have
wandering continua. 
\end{theo}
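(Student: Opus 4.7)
Replace $F$ by $F^{p}$ so that $\Cal F$ is fixed; since parabolic cycles are excluded by hypothesis, $\Cal F$ is attracting, super-attracting, Siegel, or Herman. We establish integrability of every periodic component first, then deduce local connectivity and absence of wandering continua in the fully invariant case.

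Fix a basepoint $z_{0}\in\Cal F$ in the deep interior (the attracting fixed point in the attracting and super-attracting cases; a reference point on an invariant core curve in the rotational cases) and a small compact neighbourhood $K$ of $z_{0}$ with $\overline{K}\subset\Cal F$. For $z\in\Cal F$ define $n(z):=\min\{n\ge 0:\, F^{n}(z)\in K\}$. In the attracting case the Schwarz--Pick inequality and the uniform quasihyperbolic contraction of $F$ on $\Cal F\setminus K$ give $n(z)\asymp \qhdist{z,z_{0}}$; in the rotational cases the same comparison is obtained by measuring depth through the linearizing coordinate, whose level sets are quasihyperbolic horocycles.

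The heart of the argument is the shrinking-of-pull-backs estimate for the connected component $U_{n}(z)$ of $F^{-n(z)}\br{B(z_{0},r_{0})}$ containing $z$, where $r_{0}$ is chosen smaller than the inradius of $K$ at $z_{0}$. We split the orbit $z\to F(z)\to\ldots\to F^{n(z)}(z)$ according to its successive passages near the critical set $\Crit\cap J$. On the univalent segments the Koebe distortion theorem converts derivative growth into linear contraction of diameters; at each critical passage through a critical point of multiplicity $\mu\le\mmax$ a local diameter $r$ is shrunk to roughly $r^{1/(\mu+1)}$. The summability hypothesis with exponent $\alpha\le 1/(1+\mmax)$ is precisely the condition forcing $\sigma_{j}\gtrsim j^{1/\alpha}$ to dominate these critical contractions: the combined factor in each critical epoch is summable, and a telescoping of the local estimates yields
\[\diam U_{n}(z)\ \le\ \frac{C}{(1+n)^{1+\eta}},\]
for some $\eta>0$ provided by the strict slack in $\alpha<1/(1+\mmax)$. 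Combining with $\dist{z,\partial\Cal F}\le\diam U_{n(z)}(z)$ and $n(z)\asymp\qhdist{z,z_{0}}$ gives $\dist{z,\partial\Cal F}\le H\br{\qhdist{z,z_{0}}}$ for $H(r):=C(1+r)^{-(1+\eta)}$, which is integrable on $\R_{+}$; this verifies Definition~\ref{def:integral}.

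Now assume that $\Cal F$ is fully invariant, so $\partial\Cal F$ coincides with $J$. Partition $\D$ by dyadic quasihyperbolic annuli, bound the oscillation of the Riemann map $\ph:\D\to\Cal F$ on each annulus by $H$ on the corresponding quasihyperbolic shell, and sum using $\int_{0}^{\infty}H<\infty$; telescoping shows that $\ph$ has uniform modulus of continuity up to $\partial\D$ and therefore extends continuously to $\overline{\D}$. By Carath\'eodory's theorem each connected component of $J$ is locally connected. For wandering continua: the extended $\ph$ semiconjugates $F|_{\partial\Cal F}$ to a degree-$d$ self-covering $g:\partial\D\to\partial\D$, and a degree-$d$ expanding self-covering of the circle admits no wandering continuum (any candidate arc would eventually have image of length $\ge 2\pi$ and overlap its earlier iterates). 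Pulling this obstruction back through $\ph$ rules out wandering continua in $J$.

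The main obstacle is the shrinking estimate for $\diam U_{n}(z)$: one must quantitatively balance the derivative expansion $\sigma_{j}\gtrsim j^{1/\alpha}$ against the local contractions $r\mapsto r^{1/(\mu+1)}$ incurred at each critical passage, and prove that the accumulated tail decays faster than $1/n$. The threshold $\alpha<1/(\mmax+1)$ is sharp for this balance, since only when $\alpha(\mmax+1)<1$ does a strictly supra-linear decay rate $1+\eta>1$ emerge, which is exactly what converts Poincar\'e-type summability into the integrable quasihyperbolic boundary condition of Definition~\ref{def:integral}.
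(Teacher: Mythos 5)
Your proposal follows the same overall strategy as the paper (relate quasihyperbolic distance to first-hitting time, relate distance to the boundary to derivative growth along the backward orbit, use summability to get an integrable gauge $H$), but there are several genuine gaps.

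\textbf{The pointwise lower bound on $\sigma_j$ is false, and so is the polynomial gauge.} You write that the summability hypothesis forces $\sigma_j\gtrsim j^{1/\alpha}$; this does not follow. The condition $\sum_j\sigma_j^{-\alpha}<\infty$ gives no pointwise lower bound whatsoever (a positive series can be summable with terms that are not monotone and occasionally large). Consequently your claim $\diam U_{n}(z)\lesssim(1+n)^{-(1+\eta)}$ and the gauge $H(r)=C(1+r)^{-(1+\eta)}$ are unjustified. What the paper actually proves (Proposition~\ref{prop:fatouexpan}) is the existence of a sequence $\omega_n$ with $\sum_n\omega_n^{-\beta}<\infty$ for $\beta=\mmax\alpha/(1-\alpha)$ such that $\abs{(F^n)'(F^{-n}z)}\ge\Delta^{1-1/\mmax}\omega_n$. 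The hypothesis $\alpha\le1/(\mmax+1)$ is there precisely to give $\beta\le1$, hence $\sum_n\omega_n^{-1}<\infty$. Combining this with Lemma~\ref{lem:ourkoebe} (Lemma~7 of \cite{ceh}), which asserts $\dist{z,\partial\Cal F}\asymp\abs{(F^{n(z)})'(z)}^{-1}$ and $\qhdist{z,z_0}\asymp n(z)$, the Fatou component is integrable with a step-function $H$ defined from the sequence $\omega_n^{-1}$, whose integrability is simply the summability $\sum_n\omega_n^{-1}<\infty$. No polynomial rate is available or needed.

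\textbf{The local connectivity and wandering-continua arguments do not go through.} Your Carath\'eodory argument presupposes that the fully invariant component $\Cal F$ is simply connected; the paper instead proves (Lemma~\ref{lem:conver}) that the boundary of \emph{any} integrable domain has no non-trivial continuum of convergence, which together with Whyburn's criterion (Fact~\ref{fact:why}) yields local connectivity of every boundary component without invoking a Riemann map. More seriously, your no-wandering-continuum argument is broken: a wandering continuum $K\subset J$ pulls back under $\phi^{-1}$ to a closed set $E\subset\partial\D$ which is wandering under the degree-$d$ circle map $g$, but $E$ need not be an arc or even connected since $\phi$ is not injective on $\partial\D$. Wandering closed sets of zero measure for an expanding circle map do exist (e.g.\ wandering singletons), so the ``eventually length $\ge2\pi$'' obstruction does not apply to $E$. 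The paper instead uses the fact that all but finitely many iterates $F^i(K)$ have diameter $<R'/2$ (a consequence of Lemma~\ref{lem:conver}) and then Proposition~\ref{prop:shrink} (uniform shrinking of pullbacks, $\diam F^{-i}B<\tilde\omega_i^{-1}\to0$) to force $\diam K=0$. This direct diameter-contraction mechanism has no analogue in your proposal.
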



\paragraph{Uniform summability and  continuity of dimensions.}
We also study continuity properties of the Hausdorff dimension
of the Julia set as a function of $F$: $F\mapsto \HD(J_{F})$.
To this aim we consider  a certain class of  perturbations
of a rational map $F$ which satisfy the  summability condition in a  uniform way.
Since perturbations usually let critical
points escape from the Julia set, we need to take into 
account critical points of $F$ which do not belong to $J_{F}$.

Given a rational function $F$ and an  $\epsilon$-neighborhood 
$B_{\epsilon}(J)$ of its Julia set $J$ we define 
for every $c\in Crit\cap B_{\epsilon}(J)$
an {\em escape time} 
\[ E(\epsilon)=\inf\{j\in \N:\,F^{j}(c) \not \in B_{\epsilon}(J)\}\;.\]
If $F^{j}(c)\in B_{\epsilon}(J)$ for all $j$, we set $E(\epsilon):=\infty$.

\begin{defi}\label{def:unif}
Let $F$ be a rational function satisfying the summability
condition with an exponent $\alpha$. We say that a sequence 
$(F_i)$ of rational functions converges  $S(\alpha)$- uniformly to $F$ if:
\begin{enumerate}
\item  $F_i$ do not have parabolic periodic orbits and tend to $F$ uniformly on the Riemann sphere, 
\item  there exist $M,\epsilon >0$ so that for every $i$ large
enough  and  every $c\in \Crit_{F_i}\cap B_{\epsilon}(J_{F_i})$,
\begin{equation}\label{unisum}
\sum_{j=1}^{E(\epsilon)} |(F_i^{j})'(c)|^{-\alpha} < M\;.
\end{equation}

\item $\#Crit_{F}=\#Crit_{F_{i}}$ for $i$ large enough
(the critical points are counted without their multiplicities).
\end{enumerate}
\end{defi}
The notion of the $S(\alpha)$-uniform convergence  
involves only these critical points $c$  of $F_i$ which
``asymptotically'' belong to the Julia set  $J_{F}$. The 
$S(\alpha)$-uniform convergence demands  $1-1$ correspondence between 
the critical points of $F$ and $F_{i}$ for large $i$.

Theorem~\ref{theo:contin} establishes  continuity of the Hausdorff (Minkowski)
dimension for the $S(\alpha)$-uniform convergence.  

\begin{theo}[Continuity of Hausdorff Dimension]\label{theo:contin}
Suppose that a rational function $F$ satisfies the  summability condition with an exponent 
\[\alpha<\frac{\dpoin(J)}{\mmax+\dpoin(J)}\;,\]
and $(F_i)$ is a sequence of rational functions tending $S(\alpha)$-uniformly to $F$.
Then,  
\[\lim_{i \rightarrow \infty}\HD(J_{F_i})=\HD(J_{F})\;.\]
\end{theo}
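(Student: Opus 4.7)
The plan is to prove continuity at $F$ by establishing lower and upper semicontinuity of $i \mapsto \HD(J_{F_i})$ separately, using Theorem~\ref{theo:dims} to switch between $\HD$, $\HH$ and $\dpoin$ as convenient. For lower semicontinuity, Theorem~\ref{theo:dims} gives $\HD(J_F) = \HH(J_F)$. Given $\eta > 0$, fix a hyperbolic (uniformly expanding, compact, forward-invariant) set $K \subset J_F$ with $\HD(K) > \HD(J_F) - \eta$. Since $F_i \to F$ uniformly on the sphere, structural stability of hyperbolic sets yields, for every sufficiently large $i$, a hyperbolic set $K_i \subset J_{F_i}$ conjugate to $K$ by a homeomorphism $C^0$-close to the identity; Bowen's thermodynamic formalism then gives $\HD(K_i) \to \HD(K)$. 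Hence $\liminf_i \HD(J_{F_i}) \geq \HD(K) > \HD(J_F) - \eta$, and $\eta$ is arbitrary.

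For upper semicontinuity I would first verify that each $F_i$ (for $i$ large) satisfies the ordinary summability condition with exponent $\alpha$ and a bound independent of $i$. Indeed, critical points of $F_i$ lying in $B_\epsilon(J_{F_i})$ are controlled by (\ref{unisum}) with constant $M$; critical points escaping $B_\epsilon(J_{F_i})$ enter, after the escape time $E(\epsilon)$, the Fatou set of $F_i$ which has no parabolic cycles and is a uniform perturbation of the Fatou set of $F$, so their tail contribution to the critical summability series is uniformly geometric. Consequently Theorem~\ref{theo:poincare} applies to each $F_i$ with uniform constants and $\HD(J_{F_i}) = \dpoin(J_{F_i})$. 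The core of the proof is then a quantitative version of Theorem~\ref{theo:poincare}: picking $z_i$ in a periodic Fatou component of $F_i$ that persists from a periodic component of $F$, and any $p > \dpoin(J_F)$, one obtains
\[
\Sigma^{F_i}_p(z_i) \,\leq\, C(p, M, \epsilon, \mmax),
\]
uniformly in $i$. This gives $\dpoin(J_{F_i}) \leq p$ for large $i$, and letting $p \searrow \dpoin(J_F) = \HD(J_F)$ concludes $\limsup_i \HD(J_{F_i}) \leq \HD(J_F)$.

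The main obstacle is establishing the uniform bound above. I would revisit the Poincaré-series estimates used in the proof of Theorem~\ref{theo:poincare} and verify that every constant depends only on the a priori data $(\alpha, M, \epsilon, \mmax)$ and on $\|F_i - F\|_{C^0}$, by splitting each pullback into a large-scale portion (handled by Koebe distortion and normal convergence $F_i \to F$) and a near-critical portion (handled by the uniform summability assumption (\ref{unisum}) together with the maximal multiplicity $\mmax$). A delicate subpoint is the treatment of the escape-time window: a critical orbit of $F_i$ that leaves $B_\epsilon(J_{F_i})$ at time $E(\epsilon)$ must contribute only negligibly to the restricted Poincaré series after escape, which requires persistence of the periodic Fatou components of $F$ under $S(\alpha)$-uniform convergence together with the restricted Poincaré series technology of the earlier sections, applied now with parameters depending continuously on $i$ rather than on a fixed map.
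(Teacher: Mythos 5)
Your high-level plan coincides with the paper's: lower semicontinuity via hyperbolic subsets of $J_F$ (which persist under perturbation and whose dimension varies continuously), and upper semicontinuity via a uniform bound on the Poincar\'e exponents of the $F_i$. The lower-semicontinuity half is essentially the paper's argument verbatim.

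The gap is in the upper-semicontinuity half, at exactly the point you flag as the ``main obstacle.'' First, your claim that the post-escape tail of the critical summability series is ``uniformly geometric'' has the sign wrong. A critical point of $F_i$ that leaves $B_\epsilon(J_{F_i})$ converges to an attracting or superattracting cycle of $F_i$; along such an orbit the derivative $|(F_i^j)'(c)|$ tends to zero, so the terms $|(F_i^j)'(c)|^{-\alpha}$ tend to infinity and the tail diverges rather than being summable. This, however, turns out to be a red herring for what you need: the ordinary summability condition for $F_i$ concerns only critical points of $F_i$ lying \emph{in} $J_{F_i}$, and such a critical point never escapes $B_\epsilon(J_{F_i})$ --- its escape time is infinite --- so the $S(\alpha)$-bound $M$ from~(\ref{unisum}) already controls the full series without any tail argument.

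Second, and this is the substantive issue, even with uniform summability along the critical orbits in $J_{F_i}$, running the proof of Theorem~\ref{theo:poincare} as a black box does not yield the uniform bound on $\Sigma^{F_i}_p$ unless you verify that \emph{every} auxiliary constant in Sections~\ref{sec:backexpan}--\ref{sec:Fatou} (the scales $R$, $R'$, the return time $\tau$, the cut-off $\l2t$ of Lemma~\ref{lem:2t}, the block lengths $L$, $L'$, the Koebe constants, \dots) can be taken independent of $i$, and that the Main Lemma's inductive decomposition for $F_i$ never encounters a critical point it cannot control. The paper's mechanism here is the uniform version of Lemma~\ref{lem:comp}: backward orbits of points near $J_{F_i}$ stay inside an $R'/16$-neighborhood of $J_{F_i}$. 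Combined with condition (iv') --- which forces the critical points of $F_i$ corresponding to $\Crit_F\setminus J_F$ to be at distance at least $2R'$ from $J_{F_i}$ for $i$ large --- this guarantees that the decomposition only ever sees critical points in $B_\epsilon(J_{F_i})$, and only at times below their escape time, so the $S(\alpha)$ bound plugs in exactly where $\sigma_n$ did in the unperturbed proof, and Proposition~\ref{prop:poincare} (the self-improving Poincar\'e bound) carries over with constants uniform in $i$. In short: the paper's argument is designed so that escaped critical orbits are never encountered; trying to estimate their contribution, as you propose, is both unnecessary and cannot work as written.
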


\paragraph{Unicritical  polynomials.}
Let ${\cal M}_{d}$ be the connectedness locus of
unicritical  polynomials $f_{c}=z^{d}+c$,
\[{\cal M}_{d}= \{c:\, |f_{c}^{n}(c)|< \infty\}\; .\]
When $d=2$, ${\cal M}_{2}$ is better known as the Mandelbrot set.
By Shishikura's theorem \cite{shishikura} it is known that the Hausdorff dimension
as a function of $c\in \C\setminus {\cal M}_2$ does not extend continuously
to $\partial M$.
Yet typically with respect to the harmonic
measure of $\partial {\cal M}_2$ a continuous extension of $\HD(\cdot)$
along hyperbolic geodesics is possible. 

\begin{defi}\label{defi:radius}
A closure $\Gamma(c)$ of a hyperbolic geodesic in 
$\C\setminus {\cal M}_{d}$ which
contains $\infty$ and a point  $c\in \partial M_{d}$
is called an external ray. If $\Gamma(c)\cap
\partial {\cal M}_{d}=\{c\}$ then we say that $\Gamma(c)$
terminates at $c$. 
\end{defi}

We use  properties of the $S(\alpha)$-convergence with 
 $\alpha<1/(1+d)$,  and
the results of \cite{graczyk-swiatek-ce, smirnov-symbce} 
to deduce the following theorem. 
\begin{theo}\label{theo:cont} 
For almost all $c$ from ${\partial {\cal M}}_{d}$ with respect to the harmonic measure, we have
\[\lim_{\Gamma(c)\ni c' \rightarrow c}\HD(J_{c'}) = \HD(J_{c})~.\]
\end{theo}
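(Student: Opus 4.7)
The plan is to deduce Theorem~\ref{theo:cont} from Theorem~\ref{theo:contin} by showing that, at harmonic-almost every $c\in\partial{\cal M}_{d}$, the approach along the external ray $\Gamma(c)$ is automatically $S(\alpha)$-uniform for some $\alpha<1/(1+d)$.

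Step~1 (typical parameters). The theorem of Graczyk-Swiatek together with Smirnov's symbolic-dynamics construction~\cite{graczyk-swiatek-ce, smirnov-symbce} says that harmonic-a.e.\ $c\in\partial{\cal M}_{d}$ is a Collet-Eckmann parameter, so $|(f_c^n)'(c)|\geq C\Lambda^n$ for some $C>0$ and $\Lambda>1$. In particular $f_c$ is summable with every positive exponent, and a fortiori with some $\alpha<1/(1+d)\leq \dpoin(J_c)/(\mmax+\dpoin(J_c))$. A Fatou/Privalov argument for the conformal isomorphism $\widehat{\C}\setminus\overline{\D}\to\widehat{\C}\setminus{\cal M}_{d}$ gives moreover that $\Gamma(c)$ lands at $c$ for harmonic-a.e.\ $c$, so the limit in the theorem is well defined.

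Step~2 ($S(\alpha)$-uniform convergence). Fix such a $c$, choose $c_n\in\Gamma(c)$ with $c_n\to c$, and set $f_n:=f_{c_n}$. Conditions~(1) and (3) of Definition~\ref{def:unif} are automatic: $f_n\to f_c$ uniformly on $\widehat{\C}$ and each $f_n$ has a single critical point. The whole task reduces to establishing a uniform bound
\[\sum_{j=1}^{E(\epsilon)} |(f_n^j)'(c_n)|^{-\alpha}\,<\,M\]
for common $\epsilon,M>0$. I split the sum at a carefully chosen shadowing time $N_n\to\infty$. For $j\leq N_n$, a standard CE pull-back argument (backward univalent iteration of small discs around the postcritical points, enabled by the exponential derivative growth) yields $|f_n^j(0)-f_c^j(0)|\to 0$ and $|(f_n^j)'(c_n)|\geq\frac{C}{2}\Lambda^j$, so the head is dominated by the convergent series $\sum_j \Lambda^{-\alpha j}$. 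For $N_n<j\leq E(\epsilon)$ the critical orbit of $f_n$ has entered the basin of $\infty$; since $|f_n'(z)|\geq\frac{d}{2}|z|^{d-1}$ for $|z|\gg 1$, the derivative grows doubly exponentially and this tail contributes $O(1)$ uniformly in $n$. Combining the two regimes yields Definition~\ref{def:unif}(2), and Theorem~\ref{theo:contin} applied with $\alpha<\dpoin(J_c)/(\mmax+\dpoin(J_c))$ gives $\HD(J_{c_n})\to\HD(J_c)$.

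The delicate point is the quantitative shadowing in Step~2. Because the critical orbit of a CE parameter is typically recurrent, the effective shadowing time $N_n$ must be chosen essentially as $N_n\sim |\log|c_n-c||/\log\Lambda$, balancing exponential expansion along the postcritical orbit against the amplification of $|c_n-c|$ at close critical returns. This is precisely the content of the parameter-phase plane similarity and transversality statements in~\cite{graczyk-swiatek-ce, smirnov-symbce}; verifying that the handoff between the shadowing and escape regimes costs only a uniformly bounded amount is the main technical step.
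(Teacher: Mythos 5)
Your overall strategy --- reduce to Theorem~\ref{theo:contin} by establishing $S(\alpha)$-uniform convergence of $f_{c'}$ to $f_c$ along $\Gamma(c)$ at a harmonic-typical $c$ --- is exactly the paper's strategy, and Step~1 is fine. The problem is Step~2: you only import from \cite{graczyk-swiatek-ce,smirnov-symbce} that the \emph{limit} parameter $c$ is Collet-Eckmann, and then try to bootstrap a derivative lower bound for nearby $c_n\in\Gamma(c)$ via a shadowing argument. That argument is not completed, and as stated it has a genuine gap. After the shadowing time $N_n\sim|\log|c_n-c||/\log\Lambda$, all you know is that the orbits of $f_{c_n}$ and $f_c$ have begun to diverge; you assert that for $N_n<j\le E(\epsilon)$ the orbit of $f_{c_n}$ has already entered the basin of $\infty$, but that does not follow. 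Between ``loss of shadowing'' and ``escape past radius $\gg1$'' the critical orbit of $f_{c_n}$ can linger near $J_{c_n}$ for an unbounded number of steps, possibly making close returns to the critical point, and controlling $|(f_{c_n}^j)'(c_n)|$ during that intermediate phase is precisely the hard content; it is not absorbed by the $\sum\Lambda^{-\alpha j}$ head nor by the super-exponential escape tail. Deferring this to ``parameter-phase plane similarity and transversality'' is a wave at the hard step, not a proof of it.

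The paper sidesteps the whole issue: Theorem~1.2 of \cite{graczyk-swiatek-ce} (the fact actually quoted in the proof) already gives, for harmonic-a.e.\ $c_0$, constants $K>0$, $\Lambda>1$ with $|(f_c^n)'(c)|\geq K\Lambda^n$ \emph{for all $n$ and all $c$ on the ray $\Gamma(c_0)$}, i.e.\ a Collet-Eckmann estimate that is uniform along the ray. With that statement, condition~(2) of Definition~\ref{def:unif} is immediate for every $\alpha>0$ with $M=\sum_j(K\Lambda^j)^{-\alpha}$, conditions~(1) and~(3) are automatic as you note, and Theorem~\ref{theo:contin} finishes the proof. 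So the fix is simple: cite the uniform-along-the-ray version of the Collet-Eckmann estimate rather than only the pointwise one, and drop the shadowing argument entirely.
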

Radial continuity of Hausdorff dimension for postcritically finite 
quadratic polynomials was established in \cite{mcmullen-radial}. 
The set of postcritically finite polynomials is of zero harmonic measure,
\cite{graczyk-swiatek-ce, smirnov-symbce}.
 
Another  consequence of our estimates and \cite{graczyk-swiatek-ce, smirnov-symbce}
is a conformal analogue of Jakobson and 
Benedicks-Carleson's theorem~\cite{jak,Beca,Becaa}.
Let $f_{c}(z)=z^{d}+c$ and suppose that $f_{c}$ has a geometric measure.
We call a probabilistic measure $\mu$, supported on the Julia set of $f_{c}$,
a  {\em Sinai-Ruelle-Bowen},
or SRB for short, measure if it is a weak accumulation point of 
measures $\mu_{n}$ equally distributed along the orbits
$x, f_{c}(x),\dots, f_{c}^{n}(x)$ for $x$ in a positive geometric
measure set. 
\begin{theo}\label{theo:end}
For almost all $c\in \partial {\cal M}_{d}$ with respect to
the harmonic measure, 
\begin{enumerate}
\item there exists a unique geometric measure $\nu_{c}$
of $z^{d}+c$ which is a weak limit of the normalized Hausdorff measures of 
$J_{c'}$, $c'\in \Gamma(c)$.
\item $\nu_c$ is ergodic and non-atomic,
\item  $\HD(\nu_c)=\HD(J)$,
\item $z^{d}+c$ has an  invariant SRB measure with a positive
Lyapunov exponent  which is equivalent to the  geometric measure $\nu_c$.
\end{enumerate}
\end{theo}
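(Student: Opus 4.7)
The plan is to assemble Theorem~\ref{theo:end} by specializing the structural results of the paper to the family $f_c(z) = z^d + c$, using as the key external input the theorem of \cite{graczyk-swiatek-ce, smirnov-symbce} that harmonic-measure-almost every $c \in \partial{\cal M}_d$ is a Collet--Eckmann parameter. For such $c$, $\sigma_n$ grows exponentially and $f_c$ has no parabolic cycles, so $\sum_n n\,\sigma_n^{-\alpha} < \infty$ for every $\alpha > 0$. In particular the polynomial summability condition holds with any $\alpha < \dpoin(J_c) / (d + \dpoin(J_c))$, which is the common hypothesis of Theorems~\ref{theo:4}, \ref{theo:inv}, \ref{theo:dims} and \ref{theo:contin}.

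Items (2), (3) and (4) then follow directly. Theorem~\ref{theo:4} supplies a unique, ergodic, non-atomic $\dconf$-conformal measure $\nu_c$; Theorem~\ref{theo:dims} gives $\dconf = \HD(J_c)$ and $\HD(\nu_c) = \HD(J_c)$, so $\nu_c$ is a geometric measure; Theorem~\ref{theo:inv} produces a unique absolutely continuous invariant measure $\sigma_c \ll \nu_c$, ergodic, mixing, and of positive Lyapunov exponent. To match this $\sigma_c$ with the SRB measure of the statement, note that Birkhoff's theorem applied to continuous test functions gives that orbit averages converge weakly to $\sigma_c$ on a set of full $\sigma_c$-measure; absolute continuity promotes this to a set of positive $\nu_c$-measure, which is the defining property of an SRB measure.

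The substantive step is part (1): identifying $\nu_c$ as the weak-$*$ limit of the normalized $\HD(J_{c'})$-dimensional Hausdorff measures $\mu_{c'}$ along the external ray $\Gamma(c)$. Because $\Gamma(c) \subset \C \setminus {\cal M}_d$ consists of hyperbolic parameters, each $\mu_{c'}$ coincides with the unique $\HD(J_{c'})$-conformal measure of $f_{c'}$ (the Patterson--Sullivan measure), and $\mu_{c'}$ is non-atomic. The plan is to show that $f_{c'} \to f_c$ \emph{$S(\alpha)$-uniformly} along $\Gamma(c)$ for $\alpha < \dpoin(J_c)/(d + \dpoin(J_c))$: the Collet--Eckmann estimates of \cite{graczyk-swiatek-ce, smirnov-symbce} are quantitative and survive small perturbations in which the critical orbit escapes only after a long time, so the sum in (\ref{unisum}) up to the escape time $E(\epsilon)$ can be compared termwise with the corresponding Collet--Eckmann tail of $f_c$ via standard shadowing and distortion. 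Once $S(\alpha)$-uniform convergence is in hand, Theorem~\ref{theo:contin} gives $\HD(J_{c'}) \to \HD(J_c)$, and weak-$*$ compactness together with the uniqueness of the non-atomic $\HD(J_c)$-conformal measure from Theorem~\ref{theo:4} forces every weak-$*$ accumulation point of $\{\mu_{c'}\}$ to coincide with $\nu_c$.

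The main obstacle is precisely this uniform telescoping of critical derivatives along the ray, since one must balance the polynomial tails dictated by the summability exponent against the drift of the critical orbit of $f_{c'}$ away from that of $f_c$; one essentially reuses the distortion machinery of Theorem~\ref{theo:contin} and extracts the quantitative form of Collet--Eckmann for typical rays from \cite{graczyk-swiatek-ce, smirnov-symbce}. Everything else is bookkeeping that repackages the already established Theorems~\ref{theo:4}, \ref{theo:inv}, \ref{theo:dims} and \ref{theo:contin} in the unicritical family.
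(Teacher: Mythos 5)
Your overall strategy matches the paper's: treat $f_{c'}$, $c'\in\Gamma(c_0)$, as an $S(\alpha)$-uniform approximation to $f_{c_0}$, invoke Theorem~\ref{theo:contin} to get $\HD(J_{c'})\to\HD(J_{c_0})$, use weak-$*$ compactness of conformal measures plus the uniqueness of the non-atomic geometric measure (Theorem~\ref{theo:4}) to identify the limit, and read off parts~(2)--(4) from Theorems~\ref{theo:4}, \ref{theo:dims} and \ref{theo:inv}. That part is sound.

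The gap is in how you propose to establish $S(\alpha)$-uniform convergence. You treat it as the ``substantive step,'' to be obtained by ``standard shadowing and distortion'' comparing the critical orbit of $f_{c'}$ termwise with the Collet--Eckmann tail of $f_{c_0}$. This is not what the paper does, and it would not work as stated: Collet--Eckmann estimates are not structurally stable, and a shadowing argument gives at best a comparison on a time interval that degrades as the orbits separate, so it cannot by itself produce a bound $M$ in~(\ref{unisum}) that is independent of $c'$ as $c'\to c_0$. The paper instead cites Theorem~1.2 of \cite{graczyk-swiatek-ce} (reproduced as a Fact in the proof of Theorem~\ref{theo:cont}), which already asserts a \emph{uniform} Collet--Eckmann estimate along the whole ray: for a.e.\ $c_0\in\partial{\cal M}_d$ there exist $K>0$, $\Lambda>1$ such that $|(f_c^n)'(c)|\geq K\Lambda^n$ for \emph{every} $c\in\Gamma(c_0)$ and every $n>0$. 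Feeding this directly into~(\ref{unisum}) gives $\sum_{j\le E(\epsilon)}|(f_{c'}^j)'(c')|^{-\alpha}\leq\sum_{j\geq1}(K\Lambda^j)^{-\alpha}=:M<\infty$ independently of $c'$, and the other clauses of Definition~\ref{def:unif} are easy for the unicritical family. So the uniform convergence is an immediate consequence of the cited fact, not something one re-derives via shadowing; you should quote the estimate along the ray rather than try to propagate it from the endpoint.

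Two smaller remarks. First, when passing to the weak limit along $\Gamma(c_0)$ you should note explicitly that the normalized Hausdorff measures $\mu_{c'}$ are the unique conformal measures of the hyperbolic maps $f_{c'}$ with exponent $\HD(J_{c'})$, and that conformality (with the limit exponent) passes to weak-$*$ accumulation points because $f_{c'}\to f_{c_0}$ uniformly together with the compactness of pairs $(p,\nu)$ from \cite{sullivan-rio}; since every $\HD(J_{c_0})$-conformal measure is non-atomic and unique by Theorem~\ref{theo:4}, the accumulation point is forced. Second, the theorem asserts that the SRB measure is \emph{equivalent} to $\nu_{c_0}$, not merely absolutely continuous; your sketch only produces $\sigma_{c_0}\ll\nu_{c_0}$, so you should add that the density is bounded below $\nu_{c_0}$-a.e.\ (this follows from the Proposition in Section~10.2 under the integrability condition, which holds in the Collet--Eckmann case, or alternatively from ergodicity of $\nu_{c_0}$).
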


\paragraph{Acknowledgments.} 
The authors are  grateful to the referee
for carefully reading our manu\-script
and contributing numerous suggestions,
which greatly improved the exposition.

Stanislav Smirnov would like to thank
Zoltan Balogh, 
Chris Bishop, 
and Pekka Koskela 
for many useful discussions.

\paragraph{Notation and Conventions.}
We will write $A\lesssim B$ whenever $A\le C B$ with some
absolute (but depending on the equation) constant $C$. If 
$A\le CB$ and $B\le CA$ then we write $A\asymp B$.
We will denote a ball of radius $R$ around $z$ by $B_R(z)$.
We adopt the convention that $\sum_n(\omega_n)^{-\infty}<\infty$ 
means that the sequence $\omega_n$ tends to $\infty$ as $n\to\infty$. 

For simplicity and readers convenience we will write all the distortion estimates 
for the planar metric, when  Koebe distortion theorem has a more familiar formulation
(Lemma~\ref{lem:koeb}).
The estimates remain valid in the case of spherical metric,
with an appropriate version of Koebe distortion theorem
(which differs only by a multiplicative constant,
since we work with the scales smaller than some very small $R$).

Another general convention is following: we call $F^{-n}(z), \dots, z$
a sequence of preimages of $z$ by $F$ if for every $1\leq j\leq n$,
$$F(F^{-j})(z)=F^{-j+1}(z)\;.$$ 
\clearpage

\part{Poincar\'e series, induced hyperbolicity, invariant measures}

\section{Expansion along orbits}\label{sec:backexpan}

Our  goal is to estimate  the derivative of  $F^{n}$ at $z$ 
in terms of the summability condition and the injectivity radius 
of the corresponding inverse branch $F^{-n}$ at $F^{n}(z)$. 
This is attained by decomposing backward orbits into pieces
which either closely follow the critical orbits
or stay away from the critical points at a definite distance.
We provide also a technical introduction to  the theory
of the Poincar\'e series for rational maps.

Proposition~\ref{prop:fatouexpan}  can be regarded as a conditional
version of induced hyperbolicity. 
In applications, we will use a stronger statement  (the Main Lemma),
which contains more technically detailed assertions. 
The proof of the Main Lemma  will supply a  procedure of decomposing any orbit into
blocks of three different types, defined rigorously in Subsection~\ref{sec:local}. 
We will show that the derivative along each block of a given type is expansive 
up to an error term which is a function of a few dynamically defined 
parameters. The main difficulty
in proving Proposition~\ref{prop:fatouexpan} is a possibility
of  accumulation of error terms. We will prove that due to
cancellations, the expansion prevails. 

After initial  preparations in Section~\ref{sec:backexpan},
the proof of  Proposition~\ref{prop:fatouexpan}  
will be concluded in Section~\ref{sec:specif}.

\begin{prop}\label{prop:fatouexpan}
Let a  rational function $F$ satisfy the {\em summability condition} 
with an exponent $\alpha~\le~1~.$
There exist $\epsilon>0$
and  a positive sequence $\brs{\omega_n}$
summable with an exponent $-\beta\,:=\,-\fr{\mmax\alpha}{1-\alpha}\,$,
meaning $\sum_{n}\,\br{\omega_n}^{-\beta}~<~\infty$~,
so that for every point $z$ from $\epsilon$-neighborhood of the Julia set and
every univalent branch of 
$F^{-n}$ defined on the ball $B_{\Delta}(z)$ with $\Delta<\epsilon$,
$$\begin{array}{ll}
\abs{\br{F^{n}}'(F^{-n}z)}~>~\Delta^{1-\m{c}/\mmax}~\omega_n~&
{\it ~if~a~critical~point~}c\in B_{\Delta}(z)~,\\
\abs{\br{F^{n}}'(F^{-n}z)}~>~\Delta^{1-1/\mmax}~\omega_n~ &
{\it~otherwise~.}
\end{array}$$ 
\end{prop}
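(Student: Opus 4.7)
The plan is to express $\abs{(F^n)'(z_0)}$, where $z_0 := F^{-n}z$, through a Koebe distortion estimate applied to the univalent iterate $F^n$ from its pullback domain $U^{(0)}$ onto the ball $B_\Delta(z)$. Letting $U^{(j)}$ denote the connected component of $F^{-(n-j)}\br{B_\Delta(z)}$ containing $z_j := F^j(z_0)$, one has $U^{(n)} = B_\Delta(z)$, and each restriction $F:U^{(j)}\to U^{(j+1)}$ is univalent. Koebe's theorem converts the desired lower bound on $\abs{(F^n)'(z_0)}$ into an upper bound on $\diam U^{(0)}/\Delta$, so the whole problem reduces to controlling how the diameters $r_j := \diam U^{(j)}$ evolve as $j$ decreases from $n$ to $0$.

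First I would fix $\epsilon>0$ so small that the disks $B_\epsilon(c)$, $c \in \Crit\cap J$, are pairwise disjoint, and so that outside $\bigcup_c B_\epsilon(c)$ some fixed iterate of $F$ is uniformly expanding, $\abs{(F^N)'} \geq \lambda > 1$; this uses the Ma\~n\'e-type hyperbolicity of $F$ off the critical set together with the absence of parabolic cycles. Orbit segments $(z_j)$ spent entirely outside $\bigcup_c B_\epsilon(c)$ are declared \emph{Type I} (expanding) blocks, contributing to $r_{j+\ell}/r_j$ a geometric expansion of rate $\lambda^{\ell/N}$. Whenever the orbit enters a disk $B_\epsilon(c)$, a \emph{Type II} (shadowing) block begins, during which $z_j,\dots,z_{j+k}$ remains close to the critical orbit $c,\,F(c),\dots,F^k(c)$; once the orbit separates from the shadowed segment, a bounded-length \emph{Type III} transition restores the large scale. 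The crucial per-block estimate, to be established in Section~\ref{sec:specif}, is that a Type II block of length $k$ shadowing a critical point $c$ of multiplicity $\m{c}$ yields a forward ratio $r_{j+k}/r_j \gtrsim \sigma_k\cdot r_j^{\m{c}}$: the first factor is the summable derivative along the critical orbit, while the second reflects the local H\"older inversion $\abs{w-c}\asymp\abs{F(w)-F(c)}^{1/(\m{c}+1)}$ combined with the geometric constraint $r_j \lesssim \dist{z_j, c}$ forced by univalence of $F$ on $U^{(j)}$.

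The main obstacle --- and the reason the proof must be completed in Section~\ref{sec:specif} after the technical Main Lemma --- is the accumulation of error terms from arbitrarily many Type II blocks. Two features must be verified. First, the product of the local H\"older contractions $r_j^{\m{c}}$ telescopes into a single boundary factor matching the power of $\Delta$ in the conclusion: the exponent becomes $1-\m{c}/\mmax$ when a critical point $c$ already lies in $B_\Delta(z)$ (this is forced by the very first pullback step), and $1-1/\mmax$ otherwise, the factor $\mmax$ arising as the worst possible multiplicity encountered deeper along the orbit. Second, the product $\prod_j \sigma_{k_j}^{-1}$ of the summability factors, taken over the consecutive Type II blocks of lengths $k_j$ inside $[0,n]$, has to be bounded below by $\omega_n$ with $\sum_n \omega_n^{-\beta}<\infty$. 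Taking logarithms and applying a H\"older-type inequality to the concave rearrangement of the losses $-\log\sigma_{k_j}$ against the hypothesis $\sum_k \sigma_k^{-\alpha}<\infty$, one sees that the exponent transforms from $\alpha$ to $\beta = \mmax\alpha/(1-\alpha)$, which is exactly the exponent appearing in the conclusion. This algebraic cancellation --- isolating the worst combinatorial pattern of critical encounters and proving that the resulting $\omega_n$ is summable with the right exponent --- is the technical heart of the argument.
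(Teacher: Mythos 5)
Your high-level template (decompose the orbit into blocks, get a per-block estimate, then show the accumulated error is summable with exponent $-\beta$) is the right skeleton, and you correctly anticipate both the boundary factor $\Delta^{1-\m{c}/\mmax}$ at the initial scale and the exponent transformation $\alpha\mapsto\mmax\alpha/(1-\alpha)$. But there are two genuine gaps that would prevent the argument from going through as sketched.

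First, your block decomposition is organized around the \emph{forward} orbit $(z_j)$ entering $\epsilon$-neighborhoods of critical points and ``shadowing'' the critical orbit, and then exiting via a ``Type III transition.'' This is the Benedicks--Carleson binding-period framework, and it does not match the structure of the statement: the proposition is about an inverse branch, and the relevant combinatorial event is not $z_j\in B_\epsilon(c)$ but rather \emph{when the pullback of} $B_\Delta(z)$ \emph{along that branch expands to touch a critical point}. The paper's decomposition, based on shrinking neighborhoods (Przytycki's method), is driven by exactly that pullback event: one increases the radius $r$ of the ball at $z$ until some $U_k$ in the shrinking-neighborhood construction hits a critical point. This gives three types (Types 1 and 3 encode the critical encounter with bounded distortion built in via the factor $\delta_n$; Type 2 is a fixed-length large-scale block) and, crucially, the distortion control that makes the per-block estimates rigorous. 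Your forward-proximity criterion leaves two things unresolved: how long a ``shadowing'' segment lasts (a binding period, which you would have to quantify in terms of the distance to $c$ and the sequence $\sigma_k$), and how to bound the distortion of $F^n$ on the nested preimage domains $U^{(j)}$ without a shrinking-neighborhood buffer. Your per-block estimate also contains a small but telling error: the local inversion is $\abs{w-c}\asymp\abs{F(w)-F(c)}^{1/\m{c}}$, not exponent $1/(\m{c}+1)$, and the correct relation between $r_{j+k}$, $r_j$ and $\sigma_k$ carries an exponent $\m{c}-1$ rather than $\m{c}$ on the radius; this matters because the multiplicity bookkeeping across nested critical encounters is exactly what produces the $\mmax$ in the final exponent.

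Second, the summability verification is not an argument yet. Saying one applies ``a H\"older-type inequality to the concave rearrangement of the losses $-\log\sigma_{k_j}$'' gestures at the right exponent arithmetic but does not isolate the mechanism. The paper's key technical step (Lemma~\ref{lem:techseq}) is to \emph{split} $\sigma_n=(\alpha_n)^2(\gamma_n)^{\mmax}/\delta_n$ with $\gamma_n:=C\,\sigma_n^{(1-\alpha)/\mmax}/(\alpha_n')^{2/\mmax}$ so that $\sum\gamma_n^{-\beta}<\infty$ follows directly from $\sum\sigma_n^{-\alpha}<\infty$, while $\delta_n$ absorbs the shrinking-neighborhood distortion and $\alpha_n\to\infty$ absorbs all constants. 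The sequence $\omega_n$ is then defined as an explicit infimum over compositions $k_0+k_1+\cdots\in[n-{\l2t},n)$ of $K\lambda^{k_0}\prod_j\gamma'_{k_j}$, and its summability with exponent $-\beta$ is a one-line geometric-series computation. Without some such factorization (or an equivalent precise mechanism), the claim that the accumulated $\prod\sigma_{k_j}^{-1}$ is summable with exponent $-\beta$ is not established, and this is precisely ``the technical heart'' you acknowledge but leave open.

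So: the roadmap and the intended exponent accounting are right, and the reduction via Koebe to controlling $\diam U^{(0)}/\Delta$ is also correct. What is missing is (a) a decomposition criterion that is a property of the \emph{inverse} branch (pullback hitting a critical point) rather than of the forward orbit, together with the distortion control that the shrinking-neighborhood construction provides, and (b) an explicit factorization of $\sigma_n$ that converts the summability hypothesis with exponent $\alpha$ into summability of the block-expansion factors with exponent $-\beta$.
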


\begin{rem}
The statement above is well-formulated
even when when $\alpha=1$, if we recall
the convention that $\sum_n(\omega_n)^{-\infty}<\infty$ 
means that the sequence $\omega_n$ tends to $\infty$ as $n\to\infty$. 
\end{rem}

\begin{rem}
In the proof we actually obtain a specific form of $\omega_n$ 
in terms of $\{\sigma_n\}$ and Ma\~n\'e's lemma.
\end{rem}

\begin{coro}
Suppose that the assumptions of Proposition~\ref{prop:fatouexpan}
are satisfied.
Then there is $\epsilon>0$ such that for any point $z$ at a distance
$\Delta <\epsilon$ from the Julia set we have
$$\abs{\br{F^{n}}'(F^{-n}z)}~\geq ~\Delta^{1-1/\mmax}~\omega_n~,$$
where $\omega_{n}$ is given by Proposition~\ref{prop:fatouexpan}.
\end{coro}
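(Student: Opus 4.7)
The plan is to derive the corollary as a direct consequence of Proposition~\ref{prop:fatouexpan} by collapsing its two cases into a single uniform bound. First I would fix $\epsilon>0$ and the sequence $\{\omega_n\}$ as given by Proposition~\ref{prop:fatouexpan}, shrinking $\epsilon$ if necessary to ensure $\epsilon<1$. Then I would take a point $z$ at distance $\Delta<\epsilon$ from $J$, together with a univalent branch of $F^{-n}$ on $B_\Delta(z)$, which is the set-up in which the proposition applies and in which $|(F^n)'(F^{-n}z)|$ has a definite meaning.

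Next I would split into the two alternatives of Proposition~\ref{prop:fatouexpan}. If no critical point lies in $B_\Delta(z)$, the bound $|(F^n)'(F^{-n}z)|>\Delta^{1-1/\mmax}\omega_n$ is already exactly the claim and nothing further is required. If instead some critical point $c$ lies in $B_\Delta(z)$, the proposition delivers $|(F^n)'(F^{-n}z)|>\Delta^{1-\mu(c)/\mmax}\omega_n$, and the remaining task is to absorb this into the weaker-looking target $\Delta^{1-1/\mmax}\omega_n$.

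This final step is the only computation in the argument, and it is elementary. The multiplicity satisfies $1\leq\mu(c)\leq\mmax$, so $1-\mu(c)/\mmax\leq 1-1/\mmax$; since $\Delta<1$ and both exponents are non-negative, raising $\Delta$ to the smaller exponent produces a larger value, giving $\Delta^{1-\mu(c)/\mmax}\geq\Delta^{1-1/\mmax}$. Multiplying by $\omega_n$ and combining with the proposition concludes the proof in this case as well.

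I do not expect any genuine obstacle here: the corollary merely repackages Proposition~\ref{prop:fatouexpan} into a single uniform bound that is convenient for later applications where the presence or absence of a critical point in $B_\Delta(z)$ should be hidden. The only minor point to watch is the normalization $\epsilon<1$, which makes the exponent comparison run in the correct direction and costs nothing, because the conclusion of Proposition~\ref{prop:fatouexpan} is preserved under shrinking $\epsilon$.
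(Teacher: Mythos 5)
Your exponent comparison is correct, but there is a real gap. You import the hypothesis that a univalent branch of $F^{-n}$ on $B_\Delta(z)$ is given --- exactly the hypothesis of Proposition~\ref{prop:fatouexpan} --- whereas the Corollary asserts the bound for every preimage $F^{-n}z$ of $z$, without singling out such branches. What makes the Corollary more than a mere restatement of the Proposition is precisely the observation that when $\Delta = \dist{z,J} < \epsilon$ with $\epsilon$ small enough, \emph{every} branch of $F^{-n}$ at $z$ is automatically univalent on $B_\Delta(z)$, and this is what your argument skips.

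To supply it, one must check that $B_\Delta(z)$ avoids every critical value $F^j(c)$ of every iterate $F^n$. For $c \in \Crit\cap J$ these values lie in $J$, while $B_\Delta(z)\cap J=\emptyset$ since $\Delta=\dist{z,J}$. For a critical point $c$ in the Fatou set, the closure of its forward orbit is a compact subset of the Fatou set (the orbit tends to an attracting or superattracting cycle, these being the only periodic Fatou components possible here: the summability hypothesis excludes parabolic cycles, and Siegel disks and Herman rings are ruled out as in Corollary~\ref{cor:nosiegel}), hence is at positive distance $\eta>0$ from $J$; taking $\epsilon<\eta/2$ keeps $B_\Delta(z)\subset\{w:\dist{w,J}<2\epsilon\}$ disjoint from it. The same observations together with condition (iv) of Section~\ref{sec:const} show that $B_\Delta(z)$ contains no critical point at all, so only the ``otherwise'' alternative of the Proposition arises; your comparison $\Delta^{1-\mu(c)/\mmax}\geq\Delta^{1-1/\mmax}$ for $\Delta<1$ is correct but in fact not needed.
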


\begin{coro}\label{cor:nosiegel}
If $F$ satisfies the summability condition with an exponent $\alpha\leq 1$
then $F$ has neither Siegel disks nor Herman rings.
\end{coro}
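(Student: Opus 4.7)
The plan is to argue by contradiction, applying Proposition~\ref{prop:fatouexpan} to rule out any periodic Fatou component on which an iterate of $F$ acts as an irrational rotation. Suppose $F$ has a Siegel disk or a Herman ring $D$ of period $p$, and set $g:=F^{p}$. Then $g$ restricts to a conformal automorphism of $D$ conformally conjugate by some biholomorphism $\phi$ to an irrational rotation $R_\theta$ of the unit disk (Siegel case) or of a round annulus (Herman case). In particular $D$ contains no critical point of $F$: such a point would also be a critical point of $g=F^{p}$, contradicting injectivity of $g|_D$.

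For $s$ in the appropriate parameter range, set $C_s:=\phi^{-1}(\{|w|=s\})$. A standard compactness argument shows $\sup_{z\in C_s}\dist{z,\partial D}\to 0$ as $s$ approaches the boundary radius (otherwise a subsequential limit would yield $z^*\in D$ with $|\phi(z^*)|=1$). I therefore choose $s$ so that $C:=C_s$ lies in the $\epsilon$-neighborhood of $\partial D\subset J$ provided by Proposition~\ref{prop:fatouexpan}, while still keeping $\rho:=\dist{C,\partial D}>0$. Pick $z_0\in C$ and let $z_n:=g^n(z_0)\in C$. Differentiating the conjugacy $\phi\circ g=R_\theta\circ\phi$ and telescoping gives
$$|(F^{pn})'(z_0)|\;=\;|(g^n)'(z_0)|\;=\;\frac{|\phi'(z_0)|}{|\phi'(z_n)|}\;\le\;K,$$
a bound uniform in $n$, since $|\phi'|$ is bounded above and away from zero on the compact set $C\subset D$.

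On the other hand, fix $\Delta<\min(\rho,\epsilon)$. Because $g^n|_D$ is a conformal automorphism of $D$, its inverse branch sending $z_n$ back to $z_0$ extends univalently to $B_\Delta(z_n)\subset D$, a ball containing no critical point. The ``otherwise'' case of Proposition~\ref{prop:fatouexpan} therefore applies and yields
$$|(F^{pn})'(z_0)|\;\ge\;\Delta^{1-1/\mmax}\,\omega_{pn}.$$
To conclude I need $\omega_n\to\infty$: when $\alpha=1$ this is the stated convention, and when $\alpha<1$ the summability $\sum_n\omega_n^{-\beta}<\infty$ with $\beta=\mmax\alpha/(1-\alpha)>0$ forces it (a sublevel set $\{n:\omega_n\le K\}$ would have to be finite, else it would contribute $\infty$ to the sum). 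Letting $n\to\infty$ makes the right-hand side exceed $K$, contradicting the uniform bound of the preceding paragraph.

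The only delicate point is the geometric setup: the invariant curve $C$ must sit close enough to $\partial D$ for $z_n$ to lie in the $\epsilon$-neighborhood of $J$ where Proposition~\ref{prop:fatouexpan} is available, yet far enough from $\partial D$ for a ball of definite radius $\Delta>0$ around $z_n$ to remain inside $D$ (so that the inverse branch is automatically univalent and avoids critical points). The Carath\'eodory-type compactness argument above supplies both conditions simultaneously. The Herman ring case is handled identically by choosing $s$ close to either boundary radius of the model annulus.
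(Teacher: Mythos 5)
Your argument is correct and rests on exactly the same two facts the paper uses: that Proposition~\ref{prop:fatouexpan} forces $\abs{\br{F^n}'(F^{-n}z)}\geq\Delta^{1-1/\mmax}\omega_n\to\infty$, while in a rotation domain the derivative along backward orbits is bounded. The paper's proof is a two-line version of the same contradiction; you have simply spelled out the setup (choice of invariant curve $C_s$, the uniform derivative bound via the linearizing coordinate, and the choice of $\Delta<\min(\rho,\epsilon)$ ensuring the inverse branch is univalent and critical-point-free) that the paper leaves implicit.
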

\begin{proof}
If $z$ belongs to an elliptic Fatou component ${\cal F}$
(Siegel disk or Herman ring) then for every preimage $F^{-n}z\in {\cal F}$, 
$\abs{\br{F^n}'(F^{-n}z)}\asymp1$. This contradicts 
Proposition~\ref{prop:fatouexpan} since $\lim_{n\to\infty}\omega_n=\infty$.
\end{proof}

\subsection{Preliminaries}

\paragraph{Shrinking neighborhoods.}
To control the distortion, we will use the method of shrinking neighborhoods, 
introduced in \cite{przytycki-ce} (see also \cite{ceh}). 
Suppose that $\sum_{n=1}^{\infty}\delta_{n}<1/2$ and
$\delta_{n}>0$ for every positive integer $n$.
Set $\del_n\,:=\,\prod_{k\le n}\br{1-\delta_k}$.
Let $B_r$ be a ball of radius $r$ around a point $z$  and $\{F^{-n}z\}$ 
be a sequence of
preimages of $z$. We define
$U_{n}$ and $U'_{n}$ as the connected
components of $~F^{-n}\,B_{r\del_n}~$ and 
$F^{-n}\,B_{r\del_{n+1}}$, respectively, which contain 
$ F^{-n}z$.
Clearly,
$$FU_{n+1}~=~U_n'~\subset~U_n~.$$
If $U_k$, for $1\le k\le n$, do not
contain critical points then distortion of
$F^n\,:~U_n'\to B_{r\del_{n+1}}$ is
bounded (Koebe distortion lemma below) 
by a power of $\fr1{\delta_{n+1}}$, multiplied
by an absolute constant.

Since $\sum_n\delta_n\,<\,\fr12$, one also has 
$\prod_n\br{1-\delta_n}\,>\,\fr12$,
and hence always $B_{r/2} \subset B_{r\del_{n}}$.

\paragraph{The Koebe distortion lemma.}
We will use the following version of the Koebe distortion lemma.
\begin{lem}\label{lem:koeb}
Let  $f:B\rightarrow \C$ be  a univalent map from the unit  disk
into the complex plane. 
Then the image $f(B)$ contains a ball of radius $\frac14|f'(0)|$
around $f(0)$.
Moreover, for every $z\in B$  we have that
\[\frac{(1-|z|)}{(1+|z|)^{3}} \leq
 \frac{|f'(z)|}{|f'(0)|}\leq \frac{(1+|z|)}{(1-|z|)^{3}}~,\]
and
\[ |f(z)-f(0)|\leq |f'(z)|\frac{|z|(1+|z|)}{1-|z|}\;.\]
\end{lem}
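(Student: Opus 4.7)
The plan is to reduce all three inequalities to statements about the class $\mathcal{S}$ of univalent functions $g:B\to\C$ with $g(0)=0$ and $g'(0)=1$, via $g(w):=(f(w)-f(0))/f'(0)$. Each of the three assertions is invariant under this normalization, so it suffices to prove them for $g\in\mathcal{S}$. The cornerstone is Bieberbach's inequality $|a_2|\le2$ for the second Taylor coefficient of $g\in\mathcal{S}$, which I would derive from Gronwall's area theorem: since $g(z)/z$ does not vanish on $B$, the odd function $h(z):=\sqrt{g(z^2)}$ is well-defined and univalent, and the area estimate $\sum n|b_n|^2\le1$ applied to the Laurent expansion of $1/h(1/z)$ on $\{|z|>1\}$ yields $|a_2|\le2$, with equality attained only by the Koebe function $k(z)=z/(1-z)^2$.

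For the covering statement (first claim) I would use the standard auxiliary function trick: if $w\notin g(B)$, then $\tilde g(z):=wg(z)/(w-g(z))$ lies in $\mathcal{S}$, and a direct computation gives $a_2(\tilde g)=a_2(g)+1/w$. Bieberbach applied to both functions forces $|1/w|\le 4$, so $g(B)\supset B_{1/4}(0)$; rescaling back to $f$ gives the ball of radius $\tfrac14|f'(0)|$ around $f(0)$. For the distortion bounds I would fix $z\in B$ and consider the composition with a disc automorphism:
$$\psi(w)\,:=\,\fr{g(T_z(w))-g(z)}{(1-|z|^2)g'(z)},\qquad T_z(w):=\fr{z+w}{1+\bar z w}.$$
A straightforward calculation verifies $\psi\in\mathcal{S}$ and $a_2(\psi)=\tfrac12(1-|z|^2)g''(z)/g'(z)-\bar z$, so Bieberbach yields
$$\abs{(1-|z|^2)\fr{g''(z)}{g'(z)}-2\bar z}\,\le\,4.$$
Writing $z=re^{i\theta}$ and noting that $\frac{\partial}{\partial r}\log|g'(re^{i\theta})|=\mathrm{Re}\bigl(e^{i\theta}g''(z)/g'(z)\bigr)$, the bound above translates to
$$\fr{2r-4}{1-r^2}\,\le\,\fr{\dd}{\dd r}\log|g'(re^{i\theta})|\,\le\,\fr{2r+4}{1-r^2},$$
and integrating from $0$ to $r$ gives exactly $\tfrac{1-r}{(1+r)^3}\le|g'(z)|\le\tfrac{1+r}{(1-r)^3}$, which is the second claim.

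For the third claim I would recycle the same automorphism. Since $T_z(-z)=0$, we have $\psi(-z)=-g(z)/((1-|z|^2)g'(z))$. The growth estimate $|\psi(\zeta)|\le|\zeta|/(1-|\zeta|)^2$ for functions in $\mathcal{S}$ follows at once by integrating the distortion bound along a radius (using $\frac{d}{dt}\bigl(t/(1-t)^2\bigr)=(1+t)/(1-t)^3$), and applied at $\zeta=-z$ it gives
$$\fr{|g(z)|}{(1-r^2)|g'(z)|}\,\le\,\fr{r}{(1-r)^2},$$
i.e. $|g(z)|\le |g'(z)|\cdot r(1+r)/(1-r)$, which after undoing the normalization is the third inequality. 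The only step carrying genuine content is Bieberbach's bound; once $|a_2|\le2$ is in hand, the covering, distortion and growth assertions all fall out from well-chosen auxiliary functions, so that will be the main place to be careful.
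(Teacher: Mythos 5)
Your proof is correct, and the route you take for the third inequality is essentially the same as the paper's: both precompose $f$ with the M\"obius automorphism of the disk sending $0\mapsto z$ and $-z\mapsto 0$ and then invoke the growth estimate $|g(\zeta)|\le|\zeta|/(1-|\zeta|)^2$ for normalized univalent $g$ to translate it into the stated displacement bound. Your computations check out: $\psi\in\mathcal{S}$, $a_2(\psi)=\tfrac12(1-|z|^2)g''(z)/g'(z)-\bar z$, the partial-fraction integrals give exactly the claimed distortion bounds, and the normalization $g=(f-f(0))/f'(0)$ is handled correctly in all three claims.

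The only genuine difference is one of self-containedness. The paper treats the covering statement and the two-sided distortion bound as black boxes (citing Corollary 1.4 and Theorem 1.3 of Pommerenke) and only writes out the short derivation of the third inequality. You instead prove everything from the area theorem: Bieberbach's $|a_2|\le2$ via the odd square-root transform, the quarter theorem via the auxiliary function $wg/(w-g)$, the distortion bound via $|(1-|z|^2)g''/g'-2\bar z|\le 4$ and radial integration, and the growth bound by integrating the distortion bound. This buys you independence from the reference at the cost of a longer argument; since the lemma is a standard package, the paper's choice to cite is reasonable, but your version would be the appropriate one if one wanted the paper to be fully self-contained. No gaps.
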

\begin{proof}
The first statement is Corollary 1.4, 
and the next inequality is Theorem 1.3 in \cite{pomme}.

Let $M$ be a M\"obius automorphism of the unit disk which maps $0$
onto $z$ and $-z$ onto $0$. By Theorem 1.3 in \cite{pomme} applied
to $f\circ M$, we have that
\[|f\circ M(-z)-f\circ M(0)|\leq |(f\circ
M)'(0)|\frac{|z|}{(1-|z|)^{2}}\;.\]
Since $M'(0)=1-|z|^{2}$, the last claim of the lemma follows.
\end{proof}

\subsection{Technical sequences}\label{sec:techseq}

Suppose that $F$ is a rational function which satisfies the summability
condition with an exponent $\alpha$.
To simplify calculations we will introduce 
three positive sequences 
$\{\alpha_n\},\,\{\gamma_n>1\},\,\{\delta_n\}$ so that
the growth of the derivative of  $F^{n}$ will be expressed
in terms of $\gamma_{n}$, the corresponding distortion will be majorized
by $\delta_{n}$, and the constants will be controlled through $\alpha_{n}$.

\begin{lem}\label{lem:techseq} 
If a sequence $\{1/\sigma_n\}$ is summable with an exponent
$\alpha\le1$: $\sum(\sigma_n)^{-\alpha}<\infty$,
then there exist three positive sequences 
$\{\alpha_n\},\,\{\gamma_n\},\,\{\delta_n\}$,
such that
$$ \begin{array}{rlr}
\lim_{n\to\infty}\alpha_n&=~\infty~,&\\
\sum_{n}(\gamma_n)^{-\beta} &<~1/(16\deg F\cdot\mmax)~,
&\beta\,:=\,\fr{\mmax\alpha}{1-\alpha}~~,\\
\sum_{n}\delta_n~~&<~1/2~,&
\end{array}$$
and 
$$\sigma_n~\ge~(\alpha_n)^2\,(\gamma_n)^{\mmax}\,/\,\delta_n~.$$
\end{lem}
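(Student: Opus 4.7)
The goal is to split the given summability information $\sum (\sigma_n)^{-\alpha} < \infty$ into three multiplicative pieces, one contributing $\alpha_n^{-2}$ (which needs only to decay to $0$ very slowly, indeed to decay so slowly that $\alpha_n \to \infty$), one contributing $\gamma_n^{-\mmax}$ (with a sharp quantitative tail), and one contributing $\delta_n$ (with a sharp tail as well). The identity I want to realize is
\[
  \sigma_n^{-1} \;=\; \alpha_n^{-2}\,\gamma_n^{-\mmax}\,\delta_n,
\]
which after raising to the $\alpha$-th power must be compatible with the summability of $\sigma_n^{-\alpha}$. Matching exponents quickly shows that a naive split forces $\alpha_n$ to be bounded, so one needs a little bit of slack beyond bare summability.

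The key input is the classical observation that any positive summable series $\sum a_n$ can be amplified: there exists an increasing $\phi_n\to\infty$ with $\sum \phi_n a_n <\infty$ (for instance, $\phi_n := 1/\sqrt{R_n}$ with $R_n := \sum_{k\ge n} a_k$, verified by an Abel summation). Applied to $a_n := (\sigma_n)^{-\alpha}$, this yields $\phi_n\to\infty$ with
\[
  S \;:=\; \sum_{n} \phi_n\,(\sigma_n)^{-\alpha} \;<\; \infty.
\]

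With this in hand I define, for constants $c, C > 0$ to be fixed at the end,
\[
  \delta_n := c\,\phi_n\,(\sigma_n)^{-\alpha}, \qquad
  \gamma_n := C\,\phi_n^{1/(2\mmax)}\,(\sigma_n)^{(1-\alpha)/\mmax},
\]
and then force the identity above by setting
\[
  \alpha_n := \sqrt{\,\delta_n\,\sigma_n\,/\,\gamma_n^{\mmax}\,}
  \;=\; \sqrt{c/C^{\mmax}}\,\phi_n^{1/4}.
\]
A direct computation gives $\alpha_n\to\infty$ from $\phi_n\to\infty$. For the $\gamma_n$-estimate, using $\beta = \mmax\alpha/(1-\alpha)$ the exponents collapse to
\[
  (\gamma_n)^{-\beta} \;=\; C^{-\beta}\,\phi_n^{-\alpha/(2(1-\alpha))}\,(\sigma_n)^{-\alpha},
\]
which is eventually bounded by $C^{-\beta}(\sigma_n)^{-\alpha}$ because $\phi_n\to\infty$, hence $\sum (\gamma_n)^{-\beta} \le \mathrm{const}\cdot C^{-\beta}\sum(\sigma_n)^{-\alpha}$. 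The case $\alpha = 1$ is borderline: then $(1-\alpha)/\mmax = 0$ and the second factor of $\gamma_n$ degenerates to $1$, while $\gamma_n = C\phi_n^{1/(2\mmax)}\to\infty$, which under the paper's convention $\sum(\omega_n)^{-\infty}<\infty$ is precisely the desired bound.

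It remains to choose the constants. I first fix $c$ small enough that $\sum\delta_n = cS < 1/2$; then, independently, I choose $C$ large enough that the $C^{-\beta}$ (resp.\ $\phi_n\to\infty$) bound forces $\sum(\gamma_n)^{-\beta} < 1/(16\deg F\cdot\mmax)$. Since the $\alpha_n$-tendency to infinity is unaffected by these choices (it depends only on $\phi_n\to\infty$), all four conclusions of the lemma hold simultaneously. The only subtle point in the whole argument is the use of the $\phi_n$-amplification of summability; once that is in place, everything else is bookkeeping of exponents.
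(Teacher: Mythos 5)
Your proof is correct and relies on the same key idea as the paper: amplifying a summable series $\sum(\sigma_n)^{-\alpha}$ by a slowly growing $\phi_n\to\infty$ while preserving summability, then distributing the slack into $\alpha_n$, $\gamma_n$, $\delta_n$ by bookkeeping of exponents. The only difference is cosmetic — you split the $\phi_n$ weight across both $\gamma_n$ and $\delta_n$ before back-solving for $\alpha_n$, whereas the paper keeps $\delta_n'=(\sigma_n)^{-\alpha}$ unamplified and absorbs all of the slack into $\gamma_n'$ — but the two arrangements are equivalent.
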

\begin{proof}
Suppose that $\alpha <1$. 
We set $$\delta_n'\,:=\,\br{\sigma_n}^{-\alpha}~,
~~\gamma_n''\,:=\,\br{\sigma_n}^{(1-\alpha)/\mmax}~,$$
and observe that 
$\sigma_n=(\gamma_n'')^{\mmax}/\delta_n'$,
$\sum_n{\delta_n'}<\infty$, $\sum_n\br{\gamma_n''}^{-\beta}<\infty$.
It is an easy exercise, that there is a sequence $\brs{\alpha_n'}$,
 $\lim_{n\to\infty}\alpha_n'=\infty$, such that $\brs{\gamma_n'}$
defined by
$$\gamma_n'\,:=\,\br{\sigma_n}^{(1-\alpha)/\mmax}
\,/\,(\alpha_n')^{-2/\mmax}~$$
is still summable with an exponent $-\beta$.
Evidently, $\sigma_n=(\alpha_n')^2(\gamma_n')^{\mmax}/\delta_n'$.
Now, choose suitable constants $C_\gamma, C_\delta$ so that 
$\{\gamma_n\}:=\{C_{\gamma}\gamma_n'\}$ and
$\{\delta_n\}:=\{C_{\delta}\delta_n'\}$  satisfy
$$\sum_n{\delta_n}~<~1/2~,~~ \sum_n{\gamma_n}^{-\beta}~<~m~.$$
Set $C_\alpha:=\sqrt{C_\delta/C_\gamma^{\mmax}}$
and let $\alpha_n :=C_\alpha\alpha_n'$. Then
$$ \lim_{n\to\infty}\alpha_n~=~\infty~,
~~\sigma_n~\ge~(\alpha_n)^2\,(\gamma_n)^{\mmax}/\delta_{n} ~. $$
The case of $\alpha=1$ can be treated similarly.
\end{proof}

\subsection{Constants and scales}\label{sec:const}

A scale around critical points
is given in terms of fixed  numbers $R'\ll R\ll1$.
We will refer to objects which stay away from the critical
points and are comparable 
with $R'$ as the objects of the {\em large scale}. 
The proper choice of $R$'s is one of the
most important elements in the analysis of expansion along  pieces
of orbits. 

We impose the following  conditions on $R$ and $R'$
(note that $\supF<\infty$, since we use the spherical metric):

\begin{description} 
\itemm{{\rm (i)}} Any two critical points
are at least $100R$ apart
and there exists a constant $M>1$ which depends only on $R$ and so that
if $\dist{Fy,F(\Crit)}<(1+\supF)R<1$ 
or $\dist{y,\Crit}<5R^{1/\mmax}$ then
$$1/M~<~\abs{F'(y)}/\dist{y,c}^{\m{c}-1}~<~M~,$$
$$1/M~<~\dist{Fy,Fc}/\dist{y,c}^{\m{c}}~<~M~.$$
 
\itemm{\rm {(ii)}} $R$ is so small that the first return
time of the critical points in the Julia set
to $\bigcup F^{-1}B_{R}(Fc_{i})$ is
greater than a constant $\tau$ chosen so  that
$\alpha_k~>~16^{\mmax}~M^2~>~1~,$ for $k\ge\tau$.

\item{\rm{(iii)}} $R'$ is so small that
${16~R'}~{\supF}~/~\inf_{n}\br{\alpha_n}^2~\le~R~M^{-1}~.$

\item{\rm{(iv)}} $R'$ is so small that there are no critical points
in the $2R'$-neighborhood of the Julia set inside the Fatou set.
\end{description}

\subparagraph{Dictionary of  constants.}
For the sake of clarity we  list here other constants and 
indicate their mutual dependence and places of introduction.

\begin{description} 
\item{}
${\l2t}=\const(R',q)$, $K,R_{2t}=\const({\l2t},R')$, 
and $C(p)=\const({\l2t},R',p)$ 
in Lemma~\ref{lem:2t},
\item{}
$C_{3t}=\const(R_{2t})$  in Lemma~\ref{lem:3t},
\item{}
$L''=\const(C_{3t},R_{2t}),\,L'=\const(L'')$ 
in Subsection~\ref{subsec:estimates}.
\end{description}

\subsection{Types of orbits}\label{sec:local}
The general scheme of decomposing backward orbits into ``expansive'' blocks
was introduced in \cite{ceh} for Collet-Eckmann dynamics.
Despite many similarities,
our setting is substantially less hyperbolic than
that given by the Collet-Eckmann condition.
Though it is not possible to use directly the estimates of \cite{ceh}
and new results are needed,
 the strategy to capture expansion is similar:  
we classify  pieces of orbits, depending on whether they are close to
critical points or not, and derive expansion estimates.
We will obtain three different types of  estimates, which when 
combined will yield ``expansion''  of the derivative along any orbit.

\paragraph{First type.}
Our objective is to estimate expansion along pieces of the backward
orbit which ``join'' two critical points, i.e. there is a disk
in a vicinity of the first critical point which can be pulled  back 
conformally along the orbit until its boundary hits the second critical point.

The formulation of Lemma~\ref{lem:1t} has
to encompass the possibility of critical points with different
multiplicities and hence it does not guarantee immediate expansion.

\begin{defi}\label{defi:1t} 
A sequence $F^{-n}(z),\cdots, F^{-1}(z),z$ of preimages of $z$
is of the {\em first type} with respect to  critical points 
$c_1$ and $c_2$  
if there exists a radius $r<2R'$ such that
\begin{description}
\itemm{\em{1)}}~Shrinking neighborhoods~$U_k$ for $B_r(z)$, $1\le k< n$, avoid
 critical points,
\itemm{\em{2)}}~The critical point~$c_{2}\,\in\,\dd U_n$,
\itemm{\em{3)}}~The critical point~$c_{1}\,\in\,F^{-1}B_{R}(Fz)$.
\end{description}
\end{defi}
Note that given the sequence of preimages, the radius $r$
is uniquely determined by the condition 2). 
To simplify notation set  $\mu_i\,:=\,\m{c_i}$, 
$d_2\,:=\,\dist{F^{-n}z,c_2}$,
and $d_1\,:=\,\dist{z,c_1}$.
Let $r_2'$  be the maximal radius
so that
$B_{r_2'}(F^{-n}z)\,\subset\,F^{-n}(B_{r/2}(z))$.
For consistency, put $r_{1}:=r$.

\begin{lem}\label{lem:1t}
For any sequence $y=F^{-n}(z),\cdots,F^{-1}(z),z$ 
of preimages of the first type and
any $\mum\le\mmax$ we have 
\begin{description}
\itemm{\em 1)}$\abs{\br{F^n}'(y)}^{\mum}~>
~{\alpha_n}~(\gamma_n)^{\mmax}~2^{\mmax}
~~\fr{(d_2)^{\mu_2-1}}{(r_2')^{\mum-1}}
~\fr{(r_1)^{\mum-1}}{(r_1+d_1)^{\mu_1-1}}~$,
\itemm{\em 2)} $(d_{2})^{\mu_{2}}< 
(\max(r_1,d_1))^{\mu_{1}}(\gamma_{n})^{-\mmax}$,
\itemm{\em 3)}
$\dist{Fy,Fc_2}~\leq ~R~.$
\end{description}
\end{lem}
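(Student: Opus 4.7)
The plan is to combine the summability bound along the forward orbit of $Fc_2$ with Koebe distortion applied to the univalent pullback of $B_{r_1}(z)$ by $F^{n-1}$. I first decompose $(F^n)'(y)=F'(y)\cdot(F^{n-1})'(Fy)$. Condition 1) of Definition~\ref{defi:1t} makes $F^{n-1}\colon U_{n-1}'\to B_{r_1\Delta_n}(z)$ univalent, and since $c_2\in\partial U_n$ the image $Fc_2$ lies on $\partial U_{n-1}'$ with $F^n c_2\in\partial B_{r_1\Delta_n}(z)$. Once assertion~3) is in force, the local form at $c_2$ (condition~(i) of Section~\ref{sec:const}) gives $|F'(y)|\asymp d_2^{\mu_2-1}$ and $|Fy-Fc_2|\asymp d_2^{\mu_2}$, up to the multiplicative constant $M$.

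A first, purely geometric, lower bound comes from Koebe's $1/4$ theorem. Applied to the inverse of the univalent map $F^{n-1}$, it yields an inscribed disk at $Fy$ inside $U_{n-1}'$ of radius at least $r_1\Delta_n/(4|(F^{n-1})'(Fy)|)$. Since $Fc_2\in\partial U_{n-1}'$, I obtain $|(F^{n-1})'(Fy)|\cdot|Fy-Fc_2|\gtrsim r_1$, which combined with the local form at $c_2$ gives the backbone estimate $|(F^n)'(y)|\gtrsim r_1/d_2$. Running the same argument for the pullback of $B_{r_1/2}(z)$ in place of $B_{r_1\Delta_n}(z)$ yields $r_2'\asymp d_2$, so that $d_2$ can be traded for $r_2'$ wherever needed in assertion~1).

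The summability information $|(F^{n-1})'(Fc_2)|\ge\sigma_{n-1}$ enters next. The principal obstacle is that $Fc_2$ is a boundary point of $U_{n-1}'$, so Koebe distortion cannot be applied directly to compare the derivatives at $Fc_2$ and $Fy$. I would circumvent this by passing to the one-step larger shrinking neighborhood $U_{n-1}\supset U_{n-1}'$, on which $F^{n-1}$ is still univalent and on which $Fc_2$ now lies in the interior at distance $\asymp\delta_n\,\diam(U_{n-1})$ from the boundary. Standard Koebe distortion then gives $|(F^{n-1})'(Fy)|\gtrsim\sigma_{n-1}\cdot p(\delta_n)$ for an explicit polynomial factor $p$, which by Lemma~\ref{lem:techseq} combines with $\sigma_{n-1}\ge\alpha_{n-1}^{2}\gamma_{n-1}^{\mmax}/\delta_{n-1}$ and the summability of $\{\delta_n\}$ to yield $|(F^n)'(y)|\gtrsim d_2^{\mu_2-1}\alpha_n\gamma_n^{\mmax}$.

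The three assertions are then extracted in order. For~1), I write $|(F^n)'(y)|^{\mu}=|(F^n)'(y)|^{\mu-1}\cdot|(F^n)'(y)|$ and apply the geometric bound to the first factor and the summability bound to the second; the refinement $r_1^{\mu-1}/(r_1+d_1)^{\mu_1-1}$ arises from condition~3) because the local form of $F$ near $c_1$ makes the effective diameter of $F(B_{r_1}(z))$ equal to $r_1(r_1+d_1)^{\mu_1-1}$, which enters when one relates the geometry of $B_{r_1}(z)$ to Koebe estimates that reach across the initial $F$-step. For~2), the two lower bounds must be compatible, forcing $d_2^{\mu_2}\lesssim r_1(r_1+d_1)^{\mu_1-1}/(\alpha_n\gamma_n^{\mmax})\le\max(r_1,d_1)^{\mu_1}/\gamma_n^{\mmax}$, where the factor $\alpha_n>16^{\mmax}M^{2}$ from condition (ii) of Section~\ref{sec:const} absorbs the implicit Koebe and local-form constants. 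Finally~3) is immediate from~2): $d_2^{\mu_2}$ is very small by~2), so the local form at $c_2$ produces $|Fy-Fc_2|\asymp d_2^{\mu_2}\le R$, and the self-consistency of the local-form assumption throughout is ensured by the threshold choices for $R$ and $R'$.
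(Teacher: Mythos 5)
Your plan follows the paper's broad strategy (shrinking neighborhoods, Koebe distortion across the univalent pullback by $F^{n-1}$, local forms at $c_1,c_2$, the split of $\abs{\br{F^n}'(y)}^{\mum}$), but several individual steps do not go through as stated. The claim $r_2'\asymp d_2$ is unjustified and in fact fails with uniform constants: one has $r_2'\le d_2$ for free (since $c_2\notin F^{-n}(B_{r_1/2}(z))$), but the reverse inequality would require a Koebe comparison at the boundary point $F^nc_2$, which sits at relative radius $1-\delta_n$ in $B_{r_1\Delta_{n-1}}(z)$ and therefore costs a power of $1/\delta_n$; $d_2$ can exceed $r_2'$ by such a $\delta_n$-dependent factor. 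The paper never compares $r_2'$ to $d_2$; it applies the $1/4$-theorem to $F^{-n}$ on $B_{r_1/2}(z)$ directly to get $\abs{\br{F^n}'(y)}\ge r_1/(8r_2')$, which is what yields the $(r_2')^{-(\mum-1)}$ in assertion 1) and is strictly stronger than the analogue with $d_2$.

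The step $\abs{\br{F^{n-1}}'(Fc_2)}\ge\sigma_{n-1}$ also does not lead where you want: the Koebe factor at $F^nc_2$ is $\asymp\delta_n$, so you would be left with $\sigma_{n-1}\,\delta_n\ge(\alpha_{n-1})^2(\gamma_{n-1})^{\mmax}\,\delta_n/\delta_{n-1}$, and the ratio $\delta_n/\delta_{n-1}$ is not bounded below -- summability of $\{\delta_n\}$ gives no control over it. The paper instead writes $\abs{\br{F^{n-1}}'(Fc_2)}=\abs{\br{F^n}'(Fc_2)}/\abs{F'(F^nc_2)}\ge\sigma_n/\br{M(r_1+d_1)^{\mu_1-1}}$, bounding $\abs{F'(F^nc_2)}$ by the local form at $c_1$ (because $F^nc_2\in B_{r_1+d_1}(c_1)$); this is the actual source of the factor $(r_1+d_1)^{\mu_1-1}$, not an ``effective diameter of $F(B_{r_1}(z))$'', and keeping $\sigma_n$ (not $\sigma_{n-1}$) is precisely what makes the Koebe $\delta_n$ cancel the $1/\delta_n$ hidden inside $\sigma_n=(\alpha_n)^2(\gamma_n)^{\mmax}/\delta_n$. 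Finally, your ordering is circular: assertions 1) and 2) are derived under the assumption that the local form at $c_2$ is in force, which is exactly what assertion 3) licenses, and yet you deduce 3) from 2). The paper proves 3) first, from the Koebe growth estimate $\dist{Fy,Fc_2}\le\frac{(1-\delta_n)(2-\delta_n)}{\delta_n}\Delta_{n-1}r_1\abs{\br{F^{n-1}}'(Fc_2)}^{-1}$ together with $\abs{\br{F^n}'(Fc_2)}\ge\sigma_n$ and the choice of $R'$ -- no local form at $c_2$ enters that derivation -- and only then uses the local form for the remaining estimates.
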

\begin{proof}
By the construction the map $F^{-(n-1)}:\,B_{\Delta_{n-1}\,r_1}(z)\,\to\, 
U_{n-1}$ is conformal.
The Koebe distortion Lemma~\ref{lem:koeb} implies that
\begin{eqnarray}
\dist{Fy,Fc_{2}}&\le&\frac{(1-\delta_n)(2-\delta_n)}{\delta_{n}}
~\Delta_{n-1}\,r_1~\abs{\br{F^{n-1}}'(Fc_2)}^{-1}\nonumber\\
&\le&\frac{2\,R'}{\delta_{n}}
~{\supF}~\abs{\br{F^{n}}'(Fc_2)}^{-1}\nonumber\\
&\le&\frac{2\,R'}{\delta_{n}}
~{\supF}~{\delta_{n}}~/~(\alpha_n)^2~\le~R~,\label{eq:1t,1}
\end{eqnarray}
and the third inequality follows by the choice of $R'$ (see {\rm (iii)}).

We prove the first inequality. 
The condition 2) of Definition~\ref{defi:1t} implies that
$F^nc_2\in\dd B_{r\del_n}$. Hence
$$\dist{F^nc_2,c_1}~\le~\dist{F^nc_2,z}\,+\,\dist{z,c_1}
~\le~r_1\,+\,d_1~.$$
Since $\dist{F^{n+1}c_2,Fc_1}$ is small 
(less than $(1+\supF)R$), by the choice of $R$
we have 
$\abs{F'(F^{n}c_2)}\le M\dist{F^nc_2,c_1}^{\mu_1-1}$. 
Thus
\begin{equation}\label{eq:1t,2}
\abs{\br{F^{n-1}}'\br{Fc_2}}~\ge
~\abs{\br{F^{n}}'\br{Fc_2}}~\fr1{M\dist{F^nc_2,c_1}^{\mu_1-1}} 
~\ge~\fr{\sigma_n}{M(r_1+d_1)^{\mu_1-1}}~.\end{equation}

Considering the conformal map
$F^{-(n-1)}:~B_{r\del_{n-1}}(z)\,\to\,\,U_{n-1}$~, 
by the Koebe distortion lemma~\ref{lem:koeb}
we obtain that
$$\abs{\br{F^{-(n-1)}}'\br{F^nc_2}}~\ge
~\fr{\delta_n}{\br{2-\delta_n}^3}~\abs{\br{F^{-(n-1)}}'\br{z}}~,$$
and therefore
\begin{equation}\label{eq:8oc,1}
\abs{\br{F^n}'(y)}~\ge~\fr{\delta_n}{8M}~\abs{\br{F^{n-1}}'\br{Fc_2}}
~\dist{y,c_2}^{\mu_2-1}~.
\end{equation}
Together with the estimate (\ref{eq:1t,2}) this yields
$$\abs{\br{F^n}'(y)}~\ge~\fr{\delta_n}{8M^2}~\sigma_n
~\fr{(d_2)^{\mu_2-1}}{(r_1+d_1)^{\mu_1-1}}~.$$

By the Koebe lemma,
the image of the map $F^{-n}:\,B_{r_1/2}(z)\,\to\, U_{n}$ contains
a ball of radius $\fr18\,r_1\,\abs{\br{F^n}'(y)}^{-1}$
and the center $y$. Hence
\begin{equation}\label{eq:8oc,2}
\abs{\br{F^n}'(y)}~\ge~\fr{r_1}{8r_2'}~.
\end{equation}

Combining the above estimate raised to the
power $(\mum-1)$ with the previous one
we obtain
\begin{eqnarray*}
\abs{\br{F^n}'(y)}^{\mum}&\ge&
\fr{\delta_n}{8^{\mum}M^2}~\sigma_n
~~\fr{(d_2)^{\mu_2-1}}{(r_2')^{\mum-1}}
~\fr{(r_1)^{\mum-1}}{(r_1+d_1)^{\mu_1-1}}\\
&=&
\fr{\delta_n}{8^{\mum}M^2}~(\delta_n)^{-1}
~(\alpha_n)^2~(\gamma_n)^{\mmax}
~~\fr{(d_2)^{\mu_2-1}}{(r_2')^{\mum-1}}
~\fr{(r_1)^{\mum-1}}{(r_1+d_1)^{\mu_1-1}}\\
&>&
{\alpha_n}~(\gamma_n)^{\mmax}~2^{\mmax}
~~\fr{(d_2)^{\mu_2-1}}{(r_2')^{\mum-1}}
~\fr{(r_1)^{\mum-1}}{(r_1+d_1)^{\mu_1-1}}~.
\end{eqnarray*}
The last inequality holds since  by the choice of $R$,
$n\ge\tau$.

Note that using the inequality (\ref{eq:1t,2})
we can modify the estimate (\ref{eq:1t,1}) by writing
\begin{eqnarray*}
(d_2)^{\mu_2}&\le&M\,\dist{Fy,Fc}\\
&~\le~&
\frac{(1-\delta_n)(2-\delta_n)}{\delta_{n}}
~\Delta_{n-1}\,r_1~\abs{\br{F^{n-1}}'(Fc_2)}^{-1}\nonumber\\
&\le&M\,\frac{2\,r_1}{\delta_{n}}\cdot\fr{M(r_1+d_1)^{\mu_1-1}}{\sigma_n}\\
&\le&2^{\mmax}\,M^2\,\br{\max(r_1,d_1)}^{\mu_1}\,
(\alpha_n)^{-2}\,(\gamma_n)^{-\mmax}\\
&=&2^{\mmax}\,M^2\,(\alpha_n)^{-2}~\br{\max(r_1,d_1)}^{\mu_1}\,(\gamma_n)^{-\mmax}
~<~\br{\max(r_1,d_1)}^{\mu_1}\,(\gamma_n)^{-\mmax}~
\end{eqnarray*}
which completes the proof of the second inequality.
\end{proof}

\paragraph{Second type.}
A piece of a  backward orbit is of the second type
if there exists a neighborhood of size $R'$
which can be pulled back univalently  along the backward orbit.
Type two preimages yield expansion along pieces of orbits
of a uniformly bounded length. In this ``uniformly bounded'' setting, type $2$ corresponds
to pieces of backward orbits which   stay
at a definite distance  from the critical points.

\begin{defi}\label{defi:2t}
Let  $\dist{z,\J}\,\leq \,~R'/2$.
A sequence $F^{-n}(z),\cdots, F^{-1}(z),z$ of preimages of $z$
is of the {\em second type}
if the ball $B_{R'}(z)$ can be pulled back univalently along it. 
\end{defi}

\begin{lem}\label{lem:2t}
Let $\typeii{(z)}$ be the set of all preimages 
of the second type of some point $z$.
Denote $n(y):=n$ if $F^ny=z$.
\begin{enumerate}
\item 
There exists a constant ${\l2t}>0$ so that the following holds:
\begin{equation}
\inf_{y\in \typeii{(z)},\,n(y)\ge {\l2t}}~\abs{\br{F^n}'(y)}~>~6~.
\end{equation}
\item 
If the Poincar\'e series  $\Sigma_q(v)$ converges
for some point $v$, then ${\l2t}$
can be chosen so that
$$\sum_{y\in \typeii{(z)},\,n(y)\ge {\l2t}}\abs{\br{F^n}'(y)}^{-q}
~<~\fr1{36}~.$$
\item Once ${\l2t}$ is chosen 
there exist positive constants $K$ and $C(q)$ 
such that
$$\begin{array}{ll}
\sum_{y\in \typeii{(z)},n(y)~\le~
{\l2t}}\abs{\br{F^n}'(y)}^{-q}~<~C(q)~,
&{\rm~and}\\
\abs{\br{F^n}'(y)}~>~K~
&{\rm~for~every~}y\in \typeii{(z)},~n(y)\leq {\l2t}~.
\end{array}$$
\item
Once ${\l2t}$ is chosen
there is a  positive $R_{2t}$ such that
for any point $z$ and
its second type preimage $y\,=\,F^{-{\l2t}}z$ of order ${\l2t}$ we have
$$B_{2R_{2t}}(y)~\subset~F^{-{\l2t}}\br{B_{R'}(z)}~.$$
\end{enumerate}
\end{lem}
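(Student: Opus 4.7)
The plan is to tackle the items in the order (1), (3), (4), (2), because items (3) and (4) fall out as routine compactness corollaries of item (1), while item (2) is where the Poincar\'e-series hypothesis actually enters. For item (1) I would invoke Ma\~n\'e's contraction lemma: the summability hypothesis with $\alpha\le 1$ forbids parabolic cycles by definition and, by Corollary~\ref{cor:nosiegel}, also Siegel disks and Herman rings, placing us exactly in the setting where any family of univalent pullbacks of a fixed-size ball contracts uniformly. More precisely, $\diam F^{-n}(B_{R'}(z))\to 0$ uniformly over admissible $z$ in the compact set $\brs{z:\dist{z,J}\le R'/2}$, and Koebe's distortion lemma applied to $F^{-n}:B_{R'}(z)\to U_n$ then forces $\abs{\br{F^n}'(y)}\gtrsim R'/\diam U_n\to\infty$ uniformly. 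I would pick ${\l2t}$ so that this lower bound exceeds $6$ for all $n\ge{\l2t}$.

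With ${\l2t}$ fixed, items (3) and (4) are straightforward. For (3), there are at most $(\deg F)^{\l2t}$ second-type preimages of any $z$ of order at most ${\l2t}$; univalence of $F^n$ on $B_{R'}(z)$ bounds $\diam U_n$ by the diameter of the sphere, and Koebe then yields a uniform $K>0$ with $\abs{\br{F^n}'(y)}\ge K$, whence the sum is at most $(\deg F)^{\l2t}K^{-q}=:C(q)$. For (4), Koebe's quarter theorem applied to $F^{-{\l2t}}:B_{R'}(z)\to U_{\l2t}$ yields $U_{\l2t}\supset B\bigl(y,R'/(4\abs{\br{F^{\l2t}}'(y)})\bigr)$; since the set of admissible pairs $(z,y)$ with $y$ of order exactly ${\l2t}$ is compact and $\abs{\br{F^{\l2t}}'}$ is continuous, the derivative is uniformly bounded above by some $K'$, giving $R_{2t}:=R'/(8K')$.

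For item (2) I would cover the compact set $\brs{z:\dist{z,J}\le R'/2}$ by finitely many balls $B_{R'/10}(z_k)$, $k=1,\dots,N$, and pick an admissible reference point $v_k\in B_{R'/10}(z_k)$ for each $k$. For any $z$ in the $k$-th cover ball and any second-type preimage $y$ of $z$ of order $n$, the inclusion $v_k\in B_{R'}(z)$ lets me take the univalent branch of $F^{-n}$ defined on $B_{R'}(z)$ and evaluate at $v_k$, producing $\tilde y\in U_n$ with $F^n(\tilde y)=v_k$ and, by Koebe distortion, $\abs{\br{F^n}'(\tilde y)}\asymp\abs{\br{F^n}'(y)}$. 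Summing over $y$ and rearranging across the cover gives
$$\sum_{y\in\typeii(z),\,n(y)\ge {\l2t}}\abs{\br{F^n}'(y)}^{-q}~\lesssim~\sum_{k=1}^N\sum_{n\ge {\l2t}}\sum_{\tilde y\in F^{-n}(v_k)}\abs{\br{F^n}'(\tilde y)}^{-q}.$$
Once each $\Sigma_q(v_k)$ is known to converge, the right-hand side tail can be made strictly less than $1/36$ by enlarging ${\l2t}$.

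The hard part will be passing from the single hypothesis $\Sigma_q(v)<\infty$ to convergence of $\Sigma_q(v_k)$ at every reference point $v_k$. When $v_k$ lies in the same Fatou component as $v$, this is immediate from Koebe distortion along a path between them; for $v_k$ in other components I would use the freedom to choose $v_k$ inside the Fatou set together with the fact, anticipated in Theorem~\ref{theo:poincare}, that under the summability hypothesis all Fatou components share the same Poincar\'e exponent, so convergence at some admissible point of each component can be arranged. Managing the uniformity of the Koebe comparison constants across this finite covering is the only genuinely nontrivial bookkeeping in the plan.
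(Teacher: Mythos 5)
Your plans for items (1), (3), and (4) are essentially sound, but the appeal in (1) to ``Ma\~n\'e's contraction lemma'' is a hand-wave: that lemma requires the base point to avoid the $\omega$-limit sets of recurrent critical points, which you cannot assume here, and you also do not rule out Cremer cycles at this stage. The paper instead proves the needed uniform shrinking directly by compactness: if a sequence of second-type pullbacks had $n_i\to\infty$ with derivative bounded by $6$, Koebe would produce a disc of definite radius around an accumulation point $y\in J$ whose images under arbitrarily high iterates remain inside a ball of radius $R'<\diam J$, contradicting the eventually-onto property of the Julia set (Theorem~1 in \cite{CG}). You should run this compactness argument rather than cite Ma\~n\'e.

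The genuine gap is in item (2). You correctly identify the crux --- deducing $\Sigma_q(v_k)<\infty$ at each reference point of your cover from the single hypothesis $\Sigma_q(v)<\infty$ --- but your proposed fix for $v_k$ in a different Fatou component is to invoke Theorem~\ref{theo:poincare}, and that is circular: the proof of Theorem~\ref{theo:poincare} passes through Proposition~\ref{prop:poincare}, which uses Lemma~\ref{lem:bbb}, which is itself a reformulation of the present Lemma~\ref{lem:2t}. (It would also not suffice even if available: equality of Poincar\'e exponents across components does not give convergence of $\Sigma_q$ at the specific exponent $q$.) The missing idea is to take the reference points to be \emph{preimages of $v$}: since $\Sigma_q(v)<\infty$ the point $v$ has infinitely many preimages, hence is non-exceptional, so its preimages are dense in $J$; choose finitely many $v_k\in F^{-m_k}(v)$ that are $R'/4$-dense in $J$. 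For such a $v_k$ every $y\in F^{-n}(v_k)$ is also a preimage of $v$ of order $n+m_k$, and by the chain rule
$$\Sigma_q(v_k)~=~\abs{\br{F^{m_k}}'(v_k)}^{q}\,\sum_{n\ge1}\;\sum_{y\in F^{-n}(v_k)}\abs{\br{F^{n+m_k}}'(y)}^{-q}~\le~\abs{\br{F^{m_k}}'(v_k)}^{q}\,\Sigma_q(v)~<~\infty~,$$
with no appeal to later results. With this choice your Koebe comparison on $B_{R'}(z)\supset B_{R'/4}(v_k)$ and the tail-trimming by enlarging $\l2t$ go through exactly as you describe.
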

\begin{proof}
The proof of the first part is standard and follows from
the compactness argument.
Suppose that the claim  does not hold.
Then there is an infinite collection of 
sequences of the second type
$$F^{-n_{i}}(z_{i}), \dots \,F^{-1}(z_{i}), z_i$$
such that  $n_{i} \rightarrow \infty$ and 
$\abs{\br{F^{n_i}}'(F^{-n_i}(z_{i}))}\,\leq\,6$.
Consider the preimages $F^{-n_i}\br{B_{R'/2}(z'_{i})}\ni F^{-n_i}(z_{i})$, 
where $z'_i$ is the closest point  to $z_i$  in $\J$. 
Without loss of generality we can assume that $R'\ll\diam\J$.
By the Koebe distortion lemma~\ref{lem:koeb}, 
any of these preimages contains a 
round ball around $F^{-n_i}(z'_{i})$ of the radius larger than 
$\eta\,:=\,R'/(8\cdot6)$.
Let $y$ be an accumulation point of the sequence
$F^{-n_i}(z'_{i})\in\J$.
By the construction,
there is an increasing subsequence $\brs{k_j}$ of the sequence
$\brs{n_j}$ such that the images of $B_{\eta/2}(y)$ under $F^{k_j}$
are contained in $B_{R'}(z)\,\not\supset\,\J$ 
and we arrived at a contradiction, since $y\in\J$ and the Julia set has 
{\em the eventually onto} property (see Theorem 1 in  \cite{CG}). 

To prove the second part, we recall again 
that if the Poincar\'e series for $v$ converges
then $v$ must be a non-exceptional point, 
i.e. with preimages dense in the Julia set. 
We can fix finitely many of them, say
$v_1,\dots,v_n$, so that they are $R'/4$-dense in $J$
and their Poincar\'e series will also converge.
Then for any point $z$ with $\dist{z,\J}\,<\,~R'/2$
there is a point $v_j\in B_{3R'/4}(z)$. By 
the Koebe distortion lemma~\ref{lem:koeb},
we can write
$$\sum_{y\in \typeii{(z)}}\abs{\br{F^n}'(y)}^{-q}~\lesssim~
\Sigma_q(v_j)\,\le\,\max_{j}\Sigma_q(v_j)~\lesssim~\Sigma_q(v)
\,<\,\infty~,$$
and after a proper choice of ${\l2t}$
the second inequality of the lemma follows.

For the third part, we use the Koebe distortion lemma~\ref{lem:koeb},
$$1~\grtsim~\diam(F^{-n}B_{R'}(z))~\ge~\fr14~R'~\abs{\br{F^{-n}}'(z)}
~=~\fr{1}{4}~R'~\abs{\br{F^{n}}'\br{F^{-n}(z)}}~.$$
Both statements easily follow.

The fourth part is an immediate consequence of the Koebe distortion lemma. 
\end{proof}

\paragraph{Third type.}
The third type gives more leeway in choosing blocks than the types
$1$ and $2$. In \cite{ceh} a more restrictive approach  was used. 
Blocks of type $3$  connect the large scale with critical points.  

\begin{defi}\label{defi:3t}
A sequence $F^{-n}(z),\cdots,F^{-1}(z),z$ of preimages of $z$
is of the {\em third type} with respect to the critical point 
$c_2$ if there exists a radius $r<2R'$ such that
\begin{description}
\itemm{\em{1)}}~Shrinking neighborhoods~$U_k$ for $B_r(z)$, $1\le k< n$, avoid 
critical points,
\itemm{\em{2)}}~The critical point~$c_{2}\,\in\,\dd U_n$~.
\end{description}
\end{defi}
Note that given the sequence of preimages, the radius $r$
is uniquely determined by the condition 2).

The next  lemma estimates the expansion along the third 
type preimages. 
To simplify notation set  $\mu_2\,:=\,\m{c_2}$, 
$d_2\,:=\,\dist{F^{-n}z,c_2}$.
Let $r_2'$  be the maximal radius
so that
$B_{r_2'}(F^{-n}z)\,\subset\,F^{-n}(B_{r/2}(z))$.
For consistency, put $r_{1}:=r$.

\begin{lem}\label{lem:3t}
There exists $C_{3t}>0$ such that for any
sequence of preimages of the third type
$F^{-n}(z),\cdots,F^{-1}(z),z$ and any $\mum\le\mmax$ we have
\begin{description}
\itemm{\em 1)}
$ \abs{\br{F^n}'(y)}^{\mum}~>~C_{3t}
~{\alpha_n}~(\gamma_n)^{\mmax}
~~\fr{(d_2)^{\mu_2-1}}{(r_2')^{\mum-1}}
~{(r_1)^{\mum-1}}~$,
\itemm{\em 2)} $\dist{Fy,Fc_2}~\leq ~R~$,
\end{description}
If the sequence of the third type is preceded 
by a sequence of the second type of length ${\l2t}$
then we can substitute
$R_{2t}$ for $r_1$ in the estimate {\em 1)}.
\end{lem}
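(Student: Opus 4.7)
The proof of Lemma~\ref{lem:3t} follows the template of Lemma~\ref{lem:1t}, with the single substantive modification that the role of the nearby critical point $c_1$ is taken over by the trivial bound $|F'|\le\supF$. First I would establish $\dist{Fy,Fc_2}\le R$ (part 2)) by repeating the chain of inequalities (3.1): the shrinking neighborhoods $U_1,\dots,U_{n-1}$ avoid critical points by definition of type 3, so $F^{-(n-1)}$ is univalent on $B_{r_1\Delta_{n-1}}(z)$, and Lemma~\ref{lem:koeb} combined with $|\br{F^{n-1}}'(Fc_2)|\ge\sigma_n/\supF$ and condition (iii) on $R'$ yields the claim.

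For the expansion inequality (part 1)), I would replay the derivation of (3.3): the same Koebe argument together with the power-law behavior of $F$ near $c_2$ gives
$$|\br{F^n}'(y)|~\ge~\frac{\delta_n}{8M}\,|\br{F^{n-1}}'(Fc_2)|\,(d_2)^{\mu_2-1}.$$
Here the proof diverges from Lemma~\ref{lem:1t}: whereas the first-type argument exploits the near-vanishing of $|F'(F^nc_2)|$ at $c_1$ to derive $|\br{F^{n-1}}'(Fc_2)|\ge\sigma_n/\bigl(M(r_1+d_1)^{\mu_1-1}\bigr)$, we have no such $c_1$ available and must settle for the crude bound $|\br{F^{n-1}}'(Fc_2)|\ge\sigma_n/\supF$. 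This is precisely why the factor $(r_1+d_1)^{\mu_1-1}$ disappears from the conclusion. Combining with the Koebe expansion bound $|\br{F^n}'(y)|\ge r_1/(8r_2')$ raised to power $\mum-1$, substituting $\sigma_n\ge(\alpha_n)^2(\gamma_n)^{\mmax}/\delta_n$, and using $(\alpha_n)^2\ge 16^{\mmax}M^2\alpha_n$ for $n\ge\tau$ to absorb the constants $8^{\mum}M\supF$ into $C_{3t}$ produces the asserted inequality.

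Finally, for the substitution claim: if a type 2 block of length $\l2t$ precedes the type 3 block, Lemma~\ref{lem:2t}(4) supplies $B_{2R_{2t}}(z)$ inside the univalent pullback of $B_{R'}$ through the preceding block. This lets us replay the final Koebe expansion estimate with $B_{R_{2t}}(z)$ in place of $B_{r_1}(z)$, yielding $|\br{F^n}'(y)|\ge R_{2t}/(8r_2')$; since $R_{2t}$ is a fixed constant, the resulting $C_{3t}$ depends only on $R_{2t}$, consistent with the dictionary of Subsection~\ref{sec:const}. The main delicacy I anticipate is bookkeeping — verifying that the constants absorbed into $C_{3t}$ depend only on $R,R',M,\supF,\mmax,R_{2t}$ and handling the finitely many small-$n$ cases ($n<\tau$) by compactness — but no new analytic idea beyond what is already present in Lemma~\ref{lem:1t} is required.
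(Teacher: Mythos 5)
Your proof is correct and takes essentially the same route as the paper's: reproduce the Koebe and shrinking-neighborhood estimates of Lemma~\ref{lem:1t}, replace the proximity-to-$c_1$ factor by the crude bound $|F'|\le\supF$, and absorb the remaining constants into $C_{3t}$. The one detail worth tightening is the constant: since a type $3$ block need not have length $n\ge\tau$ (condition (ii) constrains return times only when the right endpoint is itself near a critical orbit), and since $(\alpha_n)^2\ge16^{\mmax}M^2\alpha_n$ does not in any case absorb $\supF$, the clean formulation is $C_{3t}:=\inf_n\alpha_n\br{8^{\mmax}M\supF}^{-1}>0$ (positive because $\alpha_n\to\infty$) --- but as you flag the small-$n$ bookkeeping yourself, this is cosmetic rather than a gap.
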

\begin{proof}
The proof of the second inequality follows from ($\ref{eq:1t,1}$).
Indeed, we did not use there the existence of a critical point close to $z$,
so the proof works for the third type  of preimages.
Equation (\ref{eq:1t,1}) implies the following estimate
$$(d_2)^{\mu_2}~\le~M\,\dist{Fy,Fc}~\le
~M\,2R'\sup\abs{F'}/(\alpha_n)^2~,$$
and therefore
\begin{equation}\label{eq:3tdecayofd}
\dist{y,c}~\rightarrow~0~,~~~\mbox{as}~~ n\to\infty~.
\end{equation}

The inequalities~(\ref{eq:8oc,1}) and (\ref{eq:8oc,2}) from 
the proof of Lemma~\ref{lem:1t}  are valid also for the third type preimages.
So using the same notation, we can write
\begin{eqnarray*}
\abs{\br{F^n}'(y)}&\ge&
\fr{\delta_n}{8M}~\abs{\br{F^{n-1}}'(Fc_2)}~{(d_2)^{\mu_2-1}}\\
&\ge&\fr{\delta_n}{8M}~\fr{\sigma_{n}}{\supF}~{(d_2)^{\mu_2-1}}
~,\end{eqnarray*}
and
$$\abs{\br{F^n}'(y)}~\ge~\fr{r_1}{8r_2'}~.$$
Combining these estimates we conclude that
\begin{eqnarray*}
\abs{\br{F^n}'(y)}^{\mum}&\ge&
\fr{\delta_n}{8^{\mum}M}~\fr{\sigma_{n}}{\supF}
~~\fr{(d_2)^{\mu_2-1}}{(r_2')^{\mum-1}}
~{(r_1)^{\mum-1}}\\
&>&C_{3t}~{\alpha_n}~(\gamma_n)^{\mmax}
~~\fr{(d_2)^{\mu_2-1}}{(r_2')^{\mum-1}}
~{(r_1)^{\mum-1}}~,
\end{eqnarray*}
where
$$C_{3t}~:=~\inf_n\brs{\alpha_n~\br{8^{\mmax} M\supF}^{-1}\,}~>~0~.$$
It remains to observe that the last assertion of the lemma is true
since if a sequence of the third type is preceded 
by a sequence of the second type of length ${\l2t}$ 
then $r_1>R_{2t}$ by  Lemma~\ref{lem:2t}.
\end{proof}

\section{Specification of orbits}\label{sec:specif}

We will estimate expansion along the backward orbits 
by decomposing them into blocks of different types described
in Section~\ref{sec:backexpan}. 

 Lemma~\ref{lem:2t}, which governs the expansion in the large scale, 
was stated in the proximity of the Julia set,
and to apply it we will need the following Lemma, which  holds
in the absence of parabolic points
(see Lemma~5 in \cite{ceh}).

\begin{lem}\label{lem:comp}
There exists $\epsilon>0$ such that the backward orbit of 
any $z$ in the $\epsilon$-neighborhood of the Julia set
stays in the $R'/2$-neighborhood of the Julia set. 
\end{lem}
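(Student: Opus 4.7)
The plan is to argue by contradiction, exploiting compactness of $\CC$, complete invariance of the Julia set, and the classification of periodic Fatou components when no parabolic cycles exist. Suppose the lemma fails; then one can find a sequence of points $z_n$ with $\dist{z_n,J}\to 0$ together with preimages $w_n$ satisfying $F^{k_n}(w_n)=z_n$ and $\dist{w_n,J}>R'/2$. Passing to a convergent subsequence, $w_n\to w$ with $\dist{w,J}\ge R'/2$, so $w$ lies in the Fatou set.

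If $\{k_n\}$ admits a bounded subsequence, we may take $k_n\equiv k$ constant, so that continuity of $F^k$ forces $F^k(w)=\lim z_n\in J$, and then complete invariance $F^{-k}(J)=J$ gives $w\in J$, a contradiction. We may therefore assume $k_n\to\infty$. Let $\Omega$ be the Fatou component containing $w$; since $F$ has no parabolic periodic points, the periodic image of $\Omega$ under forward iteration is either an attracting basin, a Siegel disk, or a Herman ring, and in each case $\{F^k\}$ is a normal family on $\Omega$.

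In the attracting subcase, $\{F^k|_\Omega\}$ converges locally uniformly to the attracting cycle, which lies at a positive distance $d>0$ from $J$; hence $F^{k_n}(w_n)$ eventually lies within $d/2$ of the cycle, contradicting $z_n\to J$. In the Siegel disk or Herman ring subcase the forward orbit $\{F^k(w)\}$ stays on a fixed finite collection of invariant rotation curves at some positive distance $\rho>0$ from $J$, and equicontinuity of $\{F^k\}$ at $w$ (the local formulation of normality on $\Omega$) yields $|F^{k_n}(w_n)-F^{k_n}(w)|<\rho/2$ for all large $n$, yielding the same contradiction. The main obstacle is precisely this last subcase, since Siegel disks and Herman rings offer no attraction to anchor the argument, so one must replace locally uniform convergence by the combination of invariance of the internal rotation curves with the equicontinuity furnished by normality.
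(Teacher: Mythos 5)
Your proof is correct and covers all cases of periodic Fatou components allowed once parabolic cycles are excluded; handling Siegel disks and Herman rings directly, as you do, is indeed necessary here, since Corollary~\ref{cor:nosiegel}, which excludes them, is proved only downstream of the present lemma via Proposition~\ref{prop:fatouexpan}. The paper itself supplies no proof, deferring to Lemma~5 of \cite{ceh}; the compactness-plus-normality argument you give is the standard route to such a statement, and a small streamlining is available since in every subcase the forward orbit closure of $w$ is a compact subset of the Fatou set at positive distance $\rho>0$ from $J$, so equicontinuity of $\{F^k\}$ at $w$ yields the contradiction uniformly without splitting off the attracting case.
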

This means that the assertions of Lemma~\ref{lem:2t} are valid
for type $2$ preimages $F^{-n}(z)$, $n>0$, 
provided $z$ belongs to an $\epsilon$-neighborhood of the Julia set.

\begin{defi}\label{defi:adhoc}
We say that a backward  orbit $y=F^{-n}(z),\dots, z$ is decomposed
into a sequence of blocks 
if there exists an increasing sequence
of integers $0=n_{0}< \dots < n_{k}=n$ so that for every
$i=0,\dots,k-1$  the orbit $F^{-n_{i+1}}(z),\dots, F^{-n_{i}}(z)$
is of type $1$, $2$, or $3$. Given a pair of integers
 $0\leq r<l\leq n$, we say that a subsequence 
$F^{-n_{l}}(y),\dots, F^{-n_{r}}(y)$
yields expansion $M$ if 
\[|(F^{n_{l}-n_{r-1}})'(y)|\geq M\;.\]
The point $F^{-n_{l}}(z)$ 
is called a {\em terminal} point of the subsequence.

\end{defi}
Recall that $\sigma_n\,:=\,\min_{c\in\Crit}\brs{\abs{\br{F^n}'(Fc)}}$
was represented as the product 
$(\alpha_n)^2\,(\gamma_n)^{\mmax}\,/\,\delta_n~$.
The sequence $\brs{\delta_n}$ will majorate the distortion in the
shrinking neighborhoods construction, $\brs{\alpha_n}$ will swallow
all remaining constants, and $\brs{\gamma_n}$ will provide
the desired expansion. 
\begin{lem}[Main Lemma]\label{lem:main}
Let $\epsilon$ be supplied  by Lemma~\ref{lem:comp}. Assume that a rational
function $F$ satisfies the summability condition with an exponent 
$\alpha\leq 1$ and set $\beta=\mmax\alpha/(1-\alpha)$.
Suppose that a point $z$ belongs to $\epsilon$-neighborhood of the Julia
set $J$ and a ball $B_{\Delta}(z)$ can be pulled
back univalently by a branch of $F^{-N}$.
We claim that there exist  positive constants 
$L'>L, K$ independent of $z, \Delta$, and $\epsilon$
such that the sequence $F^{-N}(z),\dots, z$ 
can be decomposed into blocks of types $1$, $2$, and $3$, and
\begin{itemize}
\item  every type $2$ block,
except possibly the leftmost one, 
 has the length contained in $[L,L')$ and yields expansion  $6$,
\item the leftmost type $2$ block has the length contained in $[0,L]$
and  yields expansion  $K>0$, 
\item  all subsequences of the form $1\dots13$,
except possibly the rightmost one,  yield expansion 
$$\gamma_{k_j}\dots\gamma_{k_1}\gamma_{k_0}~,$$
$k_i$ being the lengths of the corresponding blocks,
\item  the rightmost sequence of the form $1\dots 13$ yields
expansion
$$\begin{array}{ll}
 \gamma_{k_j}\dots\gamma_{k_1}\gamma_{k_0}~\Delta^{(1-\mu(c)/\mmax)}&
{\it~if~a~critical~point~}c\in B_{\Delta}(z)~,\\
\gamma_{k_j}\dots\gamma_{k_1}\gamma_{k_0}~\Delta^{(1-1/\mmax)}&
{\it~if~otherwise~.}
\end{array}$$
\end{itemize}
\end{lem}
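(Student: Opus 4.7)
My plan is to construct the decomposition algorithmically by a right-to-left sweep of the orbit $z_0 = z, z_1 = F^{-1}(z), \dots, z_N = F^{-N}(z)$, and then to verify the four bulleted expansion claims block by block using Lemmas~\ref{lem:1t}, \ref{lem:2t}, and~\ref{lem:3t}. At each stage I maintain a current position $p$ and a ball around $z_p$ that can be pulled back univalently further into the orbit. I begin at $p = 0$ with the given radius $\Delta$ and apply the shrinking-neighborhoods construction of Subsection~\ref{sec:backexpan} with the sequence $\{\delta_n\}$ supplied by Lemma~\ref{lem:techseq}, pulling back until either a shrinking neighborhood $U_{k_0}$ first touches a critical point $c_2$ on its boundary (producing a type-3 block, or a type-1 block if some critical point already lies in $B_\Delta(z_0)$) or the orbit is exhausted. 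Because $\Delta$ may be much smaller than the standard $2R'$ used in Definitions~\ref{defi:1t}~and~\ref{defi:3t}, retracing the proofs of Lemmas~\ref{lem:1t} and~\ref{lem:3t} with $r_1 = \Delta$ produces the additional factor $\Delta^{1-1/\mmax}$, or $\Delta^{1-\mu(c)/\mmax}$ in the type-1 case, which accounts precisely for the $\Delta$-correction attached to the rightmost sequence.

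Once the algorithm reaches a position $z_p$ close to a critical point, I examine the next preimage $z_{p+1}$. If $z_{p+1}$ lies close to a critical point $c_1'$ so that the type-1 configuration of Definition~\ref{defi:1t} is realized, I continue with another type-1 block, iterating until no further critical point appears near the next preimage; this assembles a complete ``$1\ldots13$'' sequence. Otherwise $z_{p+1}$ lies in the large scale, and by Lemma~\ref{lem:comp} the ball $B_{R'}(z_{p+1})$ can be pulled back univalently, so I switch to a type-2 block. A type-2 block is continued until either a critical point enters its shrinking neighborhood or a prescribed cut-off length is reached; I choose the cut-off so that the block length lies in $[L, L')$, where $L$ is taken from Lemma~\ref{lem:2t} to guarantee expansion $>6$ and $L'$ is the uniform upper bound fixed in terms of $R_{2t}$ and $C_{3t}$ via the dictionary of Subsection~\ref{sec:const}. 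The leftmost block, if of type~2, may be forced to end before reaching length $L$ because the orbit is exhausted, in which case part~3 of Lemma~\ref{lem:2t} provides the lower bound $K$.

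The per-block expansion bounds are now read directly off Lemmas~\ref{lem:1t}, \ref{lem:2t}, and~\ref{lem:3t}, and combining them across a ``$1\ldots13$'' sequence uses a telescoping argument: for the $i$-th type-1 block one selects the exponent $\mu$ in Lemma~\ref{lem:1t} adapted to the multiplicity of the critical point at the adjacent interface. The factor $(d_2^{(i)})^{\mu_2^{(i)}-1}$ produced at the left end of block $i$ combines with the factor $(r_1^{(i+1)}+d_1^{(i+1)})^{\mu_1^{(i+1)}-1}$ produced at the right end of block $i+1$, since $z_{p+1}^{(i+1)}$ is a preimage of $z_p^{(i)} \approx c_2^{(i)}$ and the choice of $R$ in condition~(i) of Subsection~\ref{sec:const} furnishes the matching relation $\dist{z_{p+1},c_1^{(i+1)}}^{\mu_1^{(i+1)}} \asymp \dist{z_p,c_2^{(i)}}$. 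After this cancellation and taking the appropriate root, the total product reduces to $\gamma_{k_j}\cdots\gamma_{k_0}$, with all remaining multiplicative constants absorbed by the $\alpha_n$'s, which tend to infinity by Lemma~\ref{lem:techseq}.

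The main obstacle I anticipate is the careful bookkeeping of the radii $r_1, r_2', d_1, d_2$ and multiplicities $\mu_1, \mu_2$ across block interfaces so that the telescoping is valid up to constants absorbed by $\alpha_n$. After each type-1/3 block one must check that the new starting ball---whose radius is determined by pulling $r_2'$ back through a critical point of multiplicity $\mu_1^{(i+1)}$, so of order $(r_2')^{1/\mu_1^{(i+1)}}$---actually satisfies the hypotheses of the next lemma invocation; this relies crucially on the local model $F(z) - Fc \asymp (z-c)^{\mu(c)}$ from condition~(i) of Subsection~\ref{sec:const}. A secondary delicate point is verifying the uniform upper bound $L'$ on type-2 block lengths: once such a block has length $\geq L$, Lemma~\ref{lem:2t} forces the pulled-back ball to have uniformly small diameter, after which Lemma~\ref{lem:comp} together with the ``eventually onto'' property of $J$ forces a critical point to enter the shrinking neighborhood within a bounded number of further iterates.
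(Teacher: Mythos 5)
Your algorithmic sweep has the right overall skeleton---shrinking neighborhoods, classification into types $1$, $2$, $3$, and the factor $\Delta^{1-\mu(c)/\mmax}$ or $\Delta^{1-1/\mmax}$ coming from the starting radius $\gtrsim\Delta$---but it omits the paper's re-shuffling step, and that is a genuine gap. Combining Lemmas~\ref{lem:1t} and~\ref{lem:3t} over a $1\dots13$ piece gives (\ref{equ:xia'}), namely $\abs{(F^k)'(y)}^{\mmax} > (R_{2t})^{\mmax-1}\,C_{3t}\,\prod_{i}\alpha_{k_i}(\gamma_{k_i})^{\mmax}$, and the small constant $(R_{2t})^{\mmax-1}C_{3t}$ is absorbed by $\prod_i\alpha_{k_i}$ only when the total length of the piece is at least $L''$. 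For shorter $1\dots13$ pieces the expansion $\gamma_{k_j}\cdots\gamma_{k_0}$ claimed in the third bullet simply fails, so the per-block bounds cannot be ``read directly off'' the local lemmas. The paper handles this with a second pass that recodes $1\dots132$ as $2$ whenever the $1\dots13$ part has length $<L''$, merging it with the preceding type-$2$ block---which in the first pass always has \emph{fixed} length $L$---into a single type-$2$ block of length in $[L,L')$ with $L':=L+L''$. This merging, not a length cut-off nor any dynamical argument, is the sole source of variable-length type-$2$ blocks and of the constant $L'$.

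Two further misconceptions feed your outline. First, Lemma~\ref{lem:comp} only guarantees that the backward orbit of a point $\epsilon$-close to $J$ stays in the $R'/2$-neighborhood of $J$ (needed so that Lemma~\ref{lem:2t} applies); it does not yield univalent pullbacks of $B_{R'}$---those are detected by testing whether the shrinking neighborhoods meet $\Crit$---and there is no ``eventually onto'' mechanism forcing a critical point into a shrinking neighborhood within boundedly many further steps (a point whose backward orbit stays at positive distance from $\Crit$ is an immediate counterexample). Second, the cancellation between adjacent type-$1$ blocks is not a power-law match $\dist{z_{p+1},c_1^{(i+1)}}^{\mu_1^{(i+1)}}\asymp\dist{z_p,c_2^{(i)}}$: consecutive blocks share their endpoint, so the two critical distances at that interface are the \emph{same} quantity, and the actual cancellation used in (\ref{equ:xia}) is the inequality (\ref{eq:oc11,2}), which relies only on $r_i'<\min(r_i,d_i)$ and $\mu_i\leq\mmax$. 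Until the recoding step is incorporated, the uniform expansion claims of the Main Lemma do not follow from the ingredients you assemble.
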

\subsection{Inductive decomposition of backward orbits}\label{sec:induc}
Let $z$ be a point which satisfies the assumptions of
Lemma~\ref{lem:main}.
We fix $N$ and a sequence of the preimages
$F^{-N}(z),\dots,F^{-1}(z),z$. 
We will split this sequence in the subsequences of 
the first, second, and third types.

Namely we will define by induction sequences $\{n_j\}$
and $\{z_{j}:= F^{-n_{j}}(z)\}$
such that $n_0=0,\,n_{m-1}>N-{\l2t},\,n_m=N$,
and 
\begin{description}
\itemm{{\mbox{I)}}}
For every $j>0$
the sequence $F^{-n_{j}}z,\dots,F^{-n_{j-1}}z$ 
is of the first, second, or third type.
\itemm{{\mbox II)}} 
For $j>0$ either the sequence
$F^{-n_{j}}z,\dots,F^{-n_{j-1}}z$ is of the
second type
(case {IIa)}), or
some critical value $F(c_j)\in B_{R}(Fz_j)$
(case {IIb)}).
\end{description}
Some additional properties will be discussed
in the process of the construction.

\paragraph{Base of induction.}
If the shrinking neighborhoods
for $B_{2R'}(z_{0})$ do not contain critical points,
we set $n_1:={\l2t}$, $z_1:=F^{-{\l2t}}z$,
and the condition IIa) is satisfied for $z_1$. 
We start from $j=1$ and continue the inductive procedure as below.

By Lemma~\ref{lem:comp}, $\dist{z_{j},J}<R'/2$, and
hence sequences of the second type will yield desired
expansion. 

Otherwise we take $r:=\del$.
By the choice of $\del$, the shrinking neighborhoods for $B_\del(z)$
omit the critical points. We increase $r$ continuously  until
certain shrinking neighborhood  $U_k$ hits some critical point $c$,
i.e. $c\in\dd U_k$.
It must happen for some $r=r_0$ with $\del<r_{0}<2R'$. Set $n_{1}:=k$.
Then $z_1$ is the third type preimage of $z_0$,
and the condition IIb) for $z_1$ is satisfied by 
 Lemma~\ref{lem:1t}.

\paragraph{Inductive procedure.}
Suppose we have already constructed $z_j$. 

\subparagraph{Case IIa.}
We enlarge the ball $B_r(z_{j})$ continuously
increasing the radius $r$ from $0$ until one 
of the following conditions occurs:
\begin{description}
\itemm{1)} for some $k$ the shrinking neighborhood
$U_k$ for $B_{r}(z_{j})$ hits some critical point $c'$, $c'\in\dd U_k$,
\itemm{2)} the radius $r$ reaches the value of $2R'$.
\end{description}

In the case 1) we put $n_{j+1}:=n_j+k$.
The condition I) is satisfied: $z_{j+1}$ is the third type preimage of $z_j$. 
The condition IIb)\,is satisfied by Lemma~\ref{lem:3t}.

In the case 2) set $n_{j+1}:=n_j+{\l2t}$.
Then $z_{j+1}$, $\dist{z_{j+1},\J}<R'/2$ is the second type preimage of $z_j$ 
of the length ${\l2t}$. Clearly, $z_{j+1}$ satisfies conditions I) and IIa). 

\subparagraph{Case IIb.} Suppose that we have IIb), but not IIa).
Set $r=0$.
The shrinking neighborhoods 
$U_{l}$ for $B_r(z_j)$, $l\le N-n_j$, do not contain 
critical points.
We increase $r$ continuously until some
domain $U_k$ hits a critical point $c'$, $c'\in\dd U_k$.
This must occur for some $r<2R'$, since IIa) is not satisfied for $z_{j}$.

Let $n_{j+1}:=n_j+k$. Then the condition I) is satisfied:
$z_{j+1}$ is the first type preimage of $z_j$.
Lemma~\ref{lem:1t} implies the condition IIb).

\paragraph{Coding.}
As a result of the inductive procedure, we have  decomposed the backward orbit
of the point $z$ into pieces of  type $1$, $2$ and $3$. This
gives a coding of backward orbits by sequences of 
$1$'s, $2$'s and $3$'s which are always read {\em from right to left}. 
Not all combinations of the entries are admissible here. By the construction,
type $3$ is always preceded by type $2$ except the coding sequence
starts with $3$.
For example we could have a sequence of the form
$$\dots11111323222211113221113~,$$
$F$ acts from the left to the right and our inductive procedure
has started from the right end. All pieces of the second type
except maybe for the very last one
have the length ${\l2t}$. 

This decomposition of backward orbits into pieces of different types
is by no means the only one satisfying the desired properties.
On the contrary, in the next Subsection we will have to reshuffle
the coding slightly to obtain the claim of Lemma~\ref{lem:main}.

\subsection{Estimates of expansion}\label{subsec:estimates}

\paragraph{Growth of the derivative along sequences containing $1\dots 13$.}
Consider a sequence $1\dots 132$ obtained in the inductive construction. 
We will estimate  expansion along a part of the orbit corresponding
to its shorter sequence $1\dots 13$. 
Suppose that in the sequence $1\dots132$
the consecutive pieces of type $1$  
have the lengths $k_{i},~ i=1,\dots, j$,
and the piece of type $3$ has the length $k_0$.
Let  $k=k_{0}+\dots+ k_{j}$.
By Lemma~\ref{lem:1t} and Lemma~\ref{lem:3t} with $\mum:=\mmax$
we have that
\begin{eqnarray}
\abs{\br{F^k}'(y)}^{\mmax}&>&
\prod_{i=1}^{j}
~{\alpha_{k_i}}~(\gamma_{k_i})^{\mmax}~2^{\mmax}
~\fr{d_{i+1}^{\mu_{i+1}-1}}{(r_{i+1}')^{\mmax-1}}
~\fr{r_i^{\mmax-1}}{(r_i+d_i)^{\mu_i-1}} \nonumber\\
&&~~~\cdot~~{}
C_{{3t}}~{\alpha_{k_1}}~(\gamma_{k_1})^{\mmax}
~~\fr{(d_2)^{\mu_2-1}} {(r_2')^{\mmax-1}}
~{(R_{2t})^{\mmax-1}}\nonumber\\
&>&\left(~C_{{3t}}~{\prod_{i=0}^{j}\alpha_{k_i}}\,(\gamma_{k_i})^{\mmax}\right)
~~\cdot~(R_{2t})^{\mmax-1}~\cdot
~\fr{d_{j+1}^{\mu_{j+1}-1}}{(r_{j+1}')^{\mmax-1}}\nonumber \\
&&~~~\cdot~~{}\left(
~\prod_{i=1}^{j}~2^{\mmax}
~\fr{d_{i}^{\mu_{i}-1}}{(r_{i}')^{\mmax-1}}
~\fr{r_i^{\mmax-1}}{(r_i+d_i)^{\mu_i-1}}\right). \label{equ:xia}
\end{eqnarray}
Since
$r_i'\,<\,\min(r_i,d_i)$ and $\mu_i\,\le\,\mmax$,  
we obtain that
\begin{equation}\label{eq:oc11,2}2^{\mmax}
~~\fr{d_{i}^{\mu_{i}-1}}{(r_{i}')^{\mmax-1}}
~\fr{r_i^{\mmax-1}}{(r_i+d_i)^{\mu_i-1}}~~>~~1~.
\end{equation}
Also  $r'_{j+1} < d_{j+1}$ and
$$\fr{d_{j+1}^{\mu_{j+1}-1}}{(r_{j+1}')^{\mmax-1}}~>~1~.$$
Combining (\ref{equ:xia}) and the estimates above, we obtain that
\begin{equation}~\label{equ:xia'}
\abs{\br{F^k}'(y)}^{\mmax} >
(R_{2t})^{\mmax-1}~\cdot~
C_{{3t}}~{\prod_{i=0}^{j}\alpha_{k_j}}\,(\gamma_{k_i})^{\mmax}~~.
\end{equation}

If  the rightmost sequence  $1\dots 13$ is not pre\-ce\-ded by $2$
then similarly as above, using Lemma~\ref{lem:1t}, we obtain that
\begin{equation}\label{eq:oc10,2}
\abs{\br{F^k}'(y)}^{\mmax} >  
~C_{{3t}}~{\prod_{i=0}^{j}\alpha_{k_j}}\,(\gamma_{k_i})^{\mmax}
~\cdot~\Delta^{\mmax-\mu(c)}~ 
\end{equation}
if there is a critical point $c$ inside  $B_{\Delta}(z)$.
Otherwise we use Lemma~\ref{lem:3t} instead to infer that  
\begin{equation}\label{eq:oc10,3}
\abs{\br{F^k}'(y)}^{\mmax} >  
~C_{{3t}}~{\prod_{i=0}^{j}\alpha_{k_j}}\,(\gamma_{k_i})^{\mmax}
~\cdot~\Delta^{\mmax-1}~.
\end{equation} 
\paragraph{Derivation of the Main Lemma.}
Consider a sequence of the preimages $F^{-m}(z),\dots,z$ with
a coding sequence of the form
$$\dots11111323222211113221113~.$$
If the lenght $k_{1}+\dots +k_j$  of a piece of the backward orbit  of the form 
 $11\dots13$ 
is large enough then the expansion prevails over
accumulation of distortion and small scale constants 
in the estimate (\ref{equ:xia'}).
Otherwise $k_{1}+\dots +k_j$ is  uniformly bounded  and the whole
piece of the backward orbit will be treated  as type $2$. Consequently,
we will convert the code $1\dots132$ into $2$.

We proceed with estimates along the above lines to complete the proof of
Lemma~\ref{lem:main}. Since $\lim_{i\rightarrow
\infty}\alpha_{i}=\infty$, there exists $\tau$ such that 
$\alpha_{i}\geq 8$ for $i\geq \tau$. Set
$$\alpha'_n~:=~\inf\brs{\prod_{j}\alpha_{i_j}:
~{i_0+i_1+i_2+\dots\ge n};~i_1,i_2,\dots\ge\tau}~,$$
and observe that $\lim_{n\to\infty}\alpha'_n\,=\,\infty$.
Now we  choose large $L''$ so that
for $n\ge L''$ one has $${\alpha'_n}\,C_{3t}\,(R_{2t})^{\mmax-1}\,\ge\,1\;.$$

A new coding of the preimages  $F^{-m}(z),\dots,z$ is designed
as follows:
take a piece of the backward orbit corresponding to 
a subsequence $1\dots132$ of the length $k$. The
 consecutive pieces (counted from the right to the left)
have the lengths $k_{i},~ i=0,\dots, j$. Consider two possible cases:
\begin{description} 
\itemm{{\rm 1)}} 
If $k<L''$ then $1\dots 132$ is replaced by $2$.
The corresponding  block of the preimages is indeed
of type $2$ and the  length $n\,:=\,{\l2t}+k$
with $n<L':=({\l2t}+L'')$.
\itemm{{\rm 2)}} 
If $k\ge L''$
then  $1\dots132$ remains unchanged and by the estimate (\ref{equ:xia'})
and the definition of $L''$, the derivative of $F^{k}(y)$
is greater than $\gamma_{k_{j}}\cdots\gamma_{k_{1}}$.
The last pair of estimates of the Main Lemma~\ref{lem:main} follows
immediately from $(\ref{eq:oc10,2})$ and  $(\ref{eq:oc10,3})$
and the definition of $L''$.
\end{description}
The proof of the Main Lemma~\ref{lem:main} is completed.

\paragraph{Strong expansion along some sequences $11\dots1$.}
Consider a subsequence  $11\dots1$ obtained in the inductive construction.
Suppose that $x$ is a terminal point of this subsequence and
the consecutive pieces of type $1$  
have lengths $k_{i},~ i=0,\dots,j$. 
Following the notation of Lemma~\ref{lem:main}, we
prove the lemma below, which will be later used in our investigation
of conformal and invariant densities.
\begin{lem}\label{lem:proxi}
Let $G$ be a set of indexes $j$ such that $d_j<r_j$ and
$\mmax'$  be the maximal multiplicity
which occurs in the sequence
$\brs{\mu_j:\,j\in G}$.
Choose an index $j$
from the  set
$G':=\brs{j\in G:\,\mu_j=\mmax'}$ and denote $k=\sum_{i=0}^{j-1}k_i$.
Then
$$\abs{\br{F^k}'(x)}~>~\prod_{i=0}^{j-1}
\gamma_{k_i}~.$$
\end{lem}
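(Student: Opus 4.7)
My plan is to apply Lemma~\ref{lem:1t} separately to each of the first $j$ consecutive type-1 pieces $P_0,\dots,P_{j-1}$ comprising the backward orbit from $x$ (with the $i$-th piece having length $k_i$) and then take the product. The key observation, and the reason the lemma is interesting, is that unlike in the derivation of (\ref{equ:xia'}) where the parameter $\mum=\mmax$ is used uniformly, here I should apply Lemma~\ref{lem:1t} with $\mum=\mmax'$. This is permitted because Lemma~\ref{lem:1t} allows any $\mum\le\mmax$, and it is efficient because every critical point $c_i$ with $i\in G$ has $\mu_i\le\mmax'$ by the very definition of $\mmax'$; for $i\notin G$ one has $d_i\ge r_i$, which gives an equivalent bound via $r_i+d_i\le 2d_i$. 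Using the smaller exponent $\mmax'$ converts the extra multiplicity budget into better per-piece expansion after taking $\mmax'$-th roots.

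Concretely, Lemma~\ref{lem:1t} applied to piece $P_i$ with $\mum=\mmax'$ gives
\[\abs{\br{F^{k_i}}'}^{\mmax'}~>~\alpha_{k_i}\,\gamma_{k_i}^{\mmax}\cdot 2^{\mmax}\cdot \frac{d_{i+1}^{\mu_{i+1}-1}}{(r_{i+1}')^{\mmax'-1}}\cdot\frac{r_i^{\mmax'-1}}{(r_i+d_i)^{\mu_i-1}}.\]
Multiplying for $i=0,\dots,j-1$ and regrouping, the interior distortion factors telescope exactly as they did in the derivation of (\ref{equ:xia}). The analog of the pointwise inequality (\ref{eq:oc11,2}) with $\mmax$ replaced by $\mmax'$ holds whenever $\mu_i\le\mmax'$ (which covers $i\in G$), and for $i\notin G$ it remains valid up to an absolute constant because $r_i+d_i\le 2d_i$ together with $r_i'<\min(r_i,d_i)$. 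The distinguished boundary bound comes from the choice $j\in G'$: since $d_j<r_{j-1}$ and $\mu_j=\mmax'$, the factor $r_{j-1}^{\mmax'-1}/(r_{j-1}+d_j)^{\mu_j-1}$ is pinned by $2^{1-\mmax'}$. At the opposite (terminal) end the factor $d_0^{\mu_0-1}/(r_0')^{\mmax'-1}$ is estimated using $r_0'<\min(r_0,d_0)$, and any residual loss of a fixed geometric type is absorbed below.

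Combining everything gives
\[\abs{\br{F^k}'(x)}^{\mmax'}~\gtrsim~\prod_{i=0}^{j-1} \alpha_{k_i}\,\gamma_{k_i}^{\mmax},\]
where the implicit geometric constants of the form $C^j$ coming from the interior and boundary distortion ratios are absorbed by $\prod_{i}\alpha_{k_i}$, since each type-1 block has length at least $\tau$ so that $\alpha_{k_i}\ge 16^{\mmax}M^2$ by condition (ii) on the choice of $R$. Taking $\mmax'$-th roots and using $\gamma_{k_i}\ge 1$ (Lemma~\ref{lem:techseq}) together with $\mmax\ge\mmax'$, we have $\gamma_{k_i}^{\mmax/\mmax'}\ge \gamma_{k_i}$, yielding $\abs{\br{F^k}'(x)}> \prod_{i=0}^{j-1}\gamma_{k_i}$ as claimed.

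The main obstacle is the accounting at the leftmost (terminal) boundary, where no large-scale lower bound of the type $R_{2t}$ (as in the preceding-type-2 scenario behind (\ref{equ:xia'})) is available; the substitution $\mum=\mmax'$ in place of $\mmax$ is precisely what leaves enough room in the growing $\alpha_{k_i}$-factors to absorb this loss, and this trade-off is the conceptual content of the lemma.
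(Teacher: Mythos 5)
Your proposal is correct and follows essentially the same route as the paper: apply Lemma~\ref{lem:1t} with $\mum=\mmax'$ to each type-1 piece, regroup the telescoping distortion factors, and check the three cases $i\in G$, $i\notin G$, and $j\in G'$ at the boundary before taking $\mmax'$-th roots. The paper in fact obtains the per-piece factors as exactly $\ge 1$ (the $2^{\mmax}$ from Lemma~\ref{lem:1t} absorbs the $2^{\mu_i-1}$ loss for $i\notin G$, since $\mu_i\le\mmax$), so no $C^j$-type loss needs to be absorbed into the $\alpha_{k_i}$ factors, but your more cautious bookkeeping is harmless and leads to the same conclusion.
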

\begin{proof}
If $\mum:=\mmax'$ then  Lemma~\ref{lem:1t} and Lemma~\ref{lem:3t} 
imply a counterpart of the estimate (\ref{equ:xia}),
\begin{eqnarray}\label{equ:xia1}
\abs{\br{F^k}'(x)}^{\mmax'}&>&
\prod_{i=0}^{j-1}
~{\alpha_{k_i}}~(\gamma_{k_i})^{\mmax'}~2^{\mmax'}
~\fr{d_{i+1}^{\mu_{i+1}-1}}{(r_{i+1}')^{\mmax'-1}}
~\fr{r_i^{\mmax'-1}}{(r_i+d_i)^{\mu_i-1}} \nonumber\\
&>&\left(~{\prod_{i=0}^{j-1}\alpha_{k_i}}\,(\gamma_{k_i})^{\mmax'}\right)
~~\cdot~~\fr{d_{0}^{\mu_{0}-1}}{(r_{0}')^{\mmax'-1}}\\
&&~\cdot~{}\left(
~\prod_{i=0}^{j-1}~2^{\mmax'}
~\fr{d_{i}^{\mu_{i}-1}}{(r_{i}')^{\mmax'-1}}
~\fr{r_i^{\mmax'-1}}{(r_i+d_i)^{\mu_i-1}}\right)~
\cdot~\,\fr{r_j^{\mmax'-1}}{(r_j+d_j)^{\mu_j-1}}\;.\nonumber
\end{eqnarray}
We cannot proceed exactly as in (\ref{equ:xia}),
since it was essential  that $\mmax$ was 
the maximal multiplicity. 
Instead, we use the properties of $\mmax'$ and 
the set $G$ as follows:
\begin{description}
\item{\rm (i)} If $i\in G$ then 
$r_i'\,<\,\min(r_i,d_i)$ and $\mu_i\,\le\,\mmax'$. We see that the estimate
(\ref{eq:oc11,2}) holds with $\mmax$
replaced by $\mmax'$,
$$2^{\mmax'}~~\fr{d_{i}^{\mu_{i}-1}}{(r_{i}')^{\mmax'-1}}
~\fr{r_i^{\mmax'-1}}{(r_i+d_i)^{\mu_i-1}}~~>~~1~.$$
\item{\rm (ii)} If $i\notin G$ then $d_i\ge r_i>r_i'$ and 
the same estimate is still valid,
$$2^{\mmax'}~\fr{d_{i}^{\mu_{i}-1}}{(r_{i}')^{\mmax'-1}}
~\fr{r_i^{\mmax'-1}}{(r_i+d_i)^{\mu_i-1}}~\ge~
2^{\mmax}~\fr{d_{i}^{\mu_{i}-1}}{(2d_i)^{\mu_i-1}}
~\fr{r_i^{\mmax'-1}}{(r_{i}')^{\mmax'-1}}~>~1~.$$
\item{\rm(iii)} By our choice, $j\in G'$. This means that
 $d_j<r_j$ and $\mu_j=\mmax'$. Hence,
$$2^{\mmax'}\,\fr{r_j^{\mmax'-1}}{(r_j+d_j)^{\mu_j-1}}~\ge~
2^{\mmax}~\fr{r_j^{\mmax'-1}}{(2r_j)^{\mmax'-1}}~>~1~.$$
\item{\rm(iv)} By the definition, $r'_{0} < d_{0}$ and
$$\fr{d_{0}^{\mu_{0}-1}}{(r_{0}')^{\mmax'-1}}~>~1~.$$
\end{description}
Inserting the estimates ${\rm(i)-(iv)}$ into $(\ref{equ:xia1})$
we obtain the claim of the lemma.
\end{proof}

\paragraph{Proof of Proposition~\ref{prop:fatouexpan}.}
Fix a point $z$ sufficiently close to the  Julia set
and a branch of $F^{-n}$ at $z$.
By Lemma~\ref{lem:techseq},
$$\sum_{n}(\gamma_n)^{-\beta}~<~1/4~,~~~\beta\,
:=\,\fr{\mmax\alpha}{1-\alpha}~.$$
Let
$\gamma_n'\,:=\,\inf\brs{\prod_{j}\gamma_{i_j}:\,i_0+i_1+i_2+\dots= n}$.
By simple algebra, 
\begin{eqnarray*}
\sum_{n}(\gamma_n')^{-\beta}
&<&\sum_{n}(\gamma_n)^{-\beta}+\br{\sum_{n}(\gamma_n)^{-\beta}}^2
+\br{\sum_{n}(\gamma_n)^{-\beta}}^3+\dots\\
&<&\fr14+\br{\fr14}^2+\br{\fr14}^3+\dots~=~\fr13~.
\end{eqnarray*}

Lemma~\ref{lem:main} gives a decomposition
of the orbit  $F^{-n}(z),\dots,F^{-1}(z),z$
into pieces of type $1$, $2$, and $3$,
with the following properties
(we restate them using new notation):
\begin{description}
\item{(i)}
each piece of the form $1\dots13$ of length $k$,
except the rightmost one,
yields expansion  $\gamma_k'$~,
\item{(ii)}
the rightmost piece of the form $1\dots13$  
of length $k$ yields expansion $\gamma_k'\Delta^{1/\mmax-1}$~,
\item{(iii)}
each piece of the form $2$, except possibly the leftmost one,
has the length $l\in[L,L')$ and yields expansion  $6\ge\lambda^l$,
where $\lambda\,:=\,6^{1/L'}\,>\,1$~,
\item{(iv)}
the leftmost piece of the form $2$,
has the  length $l\in[0,L)$ and yields expansion $K$~.
\end{description}
If we set
$$\omega_n~:=~\inf
\brs{\,K\,\lambda^{k_0}\,\prod_{j\ge1}\gamma_{k_j}'\,:
~k_0+k_1+k_2+\dots\in{[n-{\l2t},n)}}~,$$
then properties (i)--(iv) above clearly imply
$$\abs{\br{F^{n}}'(F^{-n}z)}~>~\Delta^{1-1/\mmax}~\omega_n~.$$
On the other hand, 
\begin{eqnarray*}
\sum_{n}(\omega_n)^{-\beta}
&<& K^{-\beta}\,
\br{1+\lambda^{-\beta}+\lambda^{-2\beta}+\dots}\\
&&~~\cdot\br{\sum_{n}(\gamma_n')^{-\beta}+\br{\sum_{n}(\gamma_n')^{-\beta}}^2
+\br{\sum_{n}(\gamma_n')^{-\beta}}^3+\dots}\\
&<&K^{-\beta}\,\br{1-\lambda^{-\beta}
}^{-1}\,\br{\fr13+\br{\fr13}^2+\br{\fr13}^3+\dots}\\
&<&K^{\beta}\,\br{1-\lambda^{-\beta}}^{-1}\,{\fr12}~<~\infty~,
\end{eqnarray*}
which completes the proof of the first inequality
of Proposition~\ref{prop:fatouexpan}. The proof of the second,
when a critical point $c\in B_{\Delta}(z)$, is very much the same.

\subsection{Summability along backward orbits}
Fix a point $z$ and a positive number $\Delta$.
Let $\xx{}{z,\Delta}$ stand for 
the set of all preimages of $z$ 
such that a ball $B_{\Delta}(z)$ can be pulled
back univalently along the corresponding branch.
By Lemma~\ref{lem:main}, every backward orbit of  $z$
which terminates at $y \in \xx{}{z,\Delta}$ 
can be decomposed into blocks of type $1$, $2$, or $3$.

\begin{defi}\label{defi:symbol}
In the decomposition of the Main Lemma, let $x\in\xx{}{z,\Delta}$
be a point which starts  a type $3$ block. 
Denote by $\typei(x|z)=\typei^{\Delta}(x|z)$
the set of all $y\in\xx{}{z,\Delta}$ 
which are the endpoints of  type $1$ blocks preceded by exactly 
one type $3$ block beginning at $x$.
For example,  preimages of $x$ which are the endpoints of blocks
 $13$, $113, \dots$ belong to $\typei(x|z)$.
 
Let $L'>L$ be the constants supplied by the Main Lemma.
In the decomposition of the Main Lemma, let $x\in\xx{}{z,\Delta}$
be a point which starts a type $2$ block.
Denote by $\typeii_{l}(x|z)$ and $\typeii_{s}(x|z)$, respectively, the sets of all 
``long'' (of order $L'>n(y)\ge L$) and ``short'' (of order $n(y)<L$)
type $2$ preimages $y$ of $x$.
\end{defi}

Note that the definition of $\typei(x|z)=\typei^{\Delta}(x|z)$ depends on the choice of $\Delta$.
Also, the definitions of $\typeii_{l}(x|z)$ and $\typeii_{s}(x|z)$  depend on the choice of $\Delta$,
however all estimates  from Lemma~\ref{lem:2t} are independent of $\Delta$,
so we simplify the notation by omitting $\Delta$.

Note also that $z$ is its own preimage of order zero
(since formally $F^{0}(z)=z$),
so we write, e.g. $y\in\typei(z|z)$
if $y\in\xx{}{z,\Delta}$ 
is the endpoint of a type $1$ block preceded by exactly 
one type $3$ block beginning at $z$.

We will drop $z$ from the notation
of  $\typei(x|z), \typeii_{s}(x|z),\typeii_{l}(x|z)$
whenever no confusion can arise.

\begin{lem}\label{lem:main2}
Let $\beta=\mmax\alpha/(1-\alpha)$.
If a rational function $F$ satisfies the summability condition with an exponent 
$\alpha\leq 1$ 
then there exists $\epsilon>0$ so that for every point $z$ from
$\epsilon$-neighborhood of the Julia set $J$ and every set  
$\typei(x|z)=\typei^{\Delta}(x|z)$,
$$\begin{array}{ll}
\sum_{y\in\typei(x|z)}\abs{\br{F^{n(y)}}'(y)}^{-\beta}~<~\fr1{3}
&{\it ~if~}x\neq z~,\\
\sum_{y\in\typei(x|z)}\abs{\br{F^{n(y)}}'(y)}^{-\beta}
~<~\fr13~\Delta^{\beta(1/\mmax-1)}~
&{\it~if~}x=z~, \\
\sum_{y\in\typei(x|z)}\abs{\br{F^{n(y)}}'(y)}^{-\beta}
~<~\fr13~\Delta^{\beta(\mu(c)/\mmax-1)}~
&{\it~if~}x=z{\it~and~a~critical~point~}c\in B_{\Delta}(z).
\end{array}$$
\end{lem}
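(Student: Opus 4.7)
My plan is to transform the sum into a sum over the block-length data supplied by the Main Lemma~\ref{lem:main} and then dominate it by a geometric series whose ratio is small thanks to Lemma~\ref{lem:techseq}. For $y\in\typei(x|z)$, the backward sub-orbit from $x$ to $y$ decomposes as a single $1\dots13$ sequence with block lengths $(k_0,k_1,\dots,k_j)$; applying Lemma~\ref{lem:1t} and Lemma~\ref{lem:3t} (exactly as in the derivation of (\ref{equ:xia'})) gives the lower bound $|(F^{n(y)-n(x)})'(y)|\ge\gamma_{k_0}\gamma_{k_1}\cdots\gamma_{k_j}$, with an extra factor $\Delta^{1-1/\mmax}$ or $\Delta^{1-\mu(c)/\mmax}$ when $x=z$ and the Main Lemma's rightmost clause applies. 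The preceding backward portion from $z$ to $x$ is itself a concatenation of blocks, each of which yields expansion bounded below by an absolute positive constant; I would absorb this constant into later estimates and obtain
\[
|(F^{n(y)})'(y)|^{-\beta}\;\le\;C_x^{-\beta}\prod_{i=0}^{j}\gamma_{k_i}^{-\beta},
\]
with $C_x=1$ in the case $x\ne z$ and $C_x=\Delta^{1-1/\mmax}$ (resp.\ $\Delta^{1-\mu(c)/\mmax}$) in the case $x=z$.

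The combinatorial step, which I expect to be the main obstacle, is to count the elements of $\typei(x|z)$ sharing a given length tuple. By Lemmas~\ref{lem:1t} and~\ref{lem:3t}, each type~1 or type~3 block ends in backward time at a preimage lying close to some critical point, which sits on the boundary of the associated shrinking neighborhood. Thus every $y\in\typei(x|z)$ is parameterized by its length tuple $(k_0,\dots,k_j)$, an ordered tuple of terminal critical points $(c^{(1)},\dots,c^{(j+1)})\in\Crit^{j+1}$, and, at each critical point, a choice of at most $\mu(c^{(i)})\le\mmax$ local inverse branches. The technical point needing careful verification is that this parameterization is injective and exhaustive; once it is established, the number of $y$'s realizing a given tuple $(k_0,\dots,k_j)$ is bounded by $(\#\Crit\cdot\mmax)^{j+1}\le(2\deg F\cdot\mmax)^{j+1}$.

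Combining the two ingredients and summing first over the choices of critical points and inverse branches, then over the length tuples, yields
\[
\sum_{y\in\typei(x|z)}|(F^{n(y)})'(y)|^{-\beta}\;\le\;C_x^{-\beta}\sum_{j\ge0}(2\deg F\cdot\mmax)^{j+1}\Bigl(\sum_{k}\gamma_k^{-\beta}\Bigr)^{j+1}.
\]
Lemma~\ref{lem:techseq} was calibrated precisely so that $\sum_k\gamma_k^{-\beta}<1/(16\deg F\cdot\mmax)$, which ensures that the common ratio of this geometric series is less than $1/8$ and the total sum is at most $(1/8)/(1-1/8)=1/7<1/3$. This yields the three bounds claimed by the lemma, in each case with the appropriate $C_x^{-\beta}$ prefactor, completing the plan.
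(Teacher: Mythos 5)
Your proof follows essentially the same route as the paper's: decompose the $y\to x$ portion as a $1\dots13$ sequence, use the Main Lemma to bound $\abs{(F^{n})'(y)}$ from below by $\prod_i\gamma_{k_i}$ (with the extra $\Delta$-power when $x=z$), count preimages per length tuple via the critical-point multiplicity bound, and sum the resulting geometric series, which the calibration of Lemma~\ref{lem:techseq} makes small.

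One point to clean up: in the lemma's statement, $n(y)$ is the order of $y$ as a preimage of $x$ (so that $F^{n(y)}y=x$), following the same local convention as for $\typeii_l(x|z)$ in Lemma~\ref{lem:bbb} and used when these sums are nested in the proof of Proposition~\ref{prop:poincare}. Thus $(F^{n(y)})'(y)$ is already the derivative over the $1\dots13$ block, and your paragraph proposing to ``absorb'' the expansion along the portion from $z$ to $x$ is addressing a non-issue. Moreover, had $n(y)$ really meant the distance to $z$, the observation that each block ``yields expansion bounded below by an absolute positive constant'' would not by itself give $C_x\ge 1$ — the leftmost type~$2$ block only yields a constant $K>0$ which may well be less than $1$ — so that step as phrased would not close; you are rescued because it is not needed. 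With the correct reading of $n(y)$, the remainder of your argument — parameterizing $\typei(x|z)$ by length tuples, terminal critical points, and local branches, bounding the multiplicity per tuple by $(2\deg F\cdot\mmax)^{j+1}$, and summing to $1/7<1/3$ — is sound and matches the paper, which uses the slightly sharper count $\sum_c\mu(c)<4\deg F$ to land on exactly $1/3$.
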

\begin{proof}
We will work with sequences $\alpha_{n}$, $\gamma_{n}$,
and $\delta_{n}$ supplied by  Lemma~\ref{lem:techseq}.
Recall that $\sum_{n}(\gamma_{n})^{-\beta}\,<\,1/(16\deg F)$.

Observe that any point $y\in F^{-k}(z)$ has 
at most $4\deg F$ preimages of a given length which are
of the first or the third type. In fact, since pull-backs 
to the critical values are univalent, there is only one
way to hit a specific critical value after particular number
of steps, and thus only $\mu(c)$ ways to hit a critical point $c$,
but 
\begin{equation}\label{equ:deg}
\sum_{c}\mu(c)\,=\,\#\{\Crit\}+\sum_{c}(\mu(c)-1)
\,\le\,2(\deg F-1)+2(\deg F-1)\,<\,4\deg F~.
\end{equation}
Therefore, for every sequence $k_0,k_1,\dots,k_m$ of positive integers
there are at most $(2\deg F)^{m+1}$ sequences $1\dots13$
with the corresponding lengths of the pieces of type $1$ and $3$.
By the Main Lemma~\ref{lem:main}, 
if $x\not= z$ then  for every $y\in \typei(x)$ 
\[\abs{\br{F^{n(y)}}'(y)} \geq  \gamma_{k_0}\gamma_{k_1}\dots\gamma_{k_m}\]
and
\begin{eqnarray}\label{eq:1tpoincare}
\sum_{y\in\typei(x)}\abs{\br{F^{n(y)}}'(y)}^{-\beta}
&<&\sum_{m,k_0,k_1,\dots,k_m}~(4 \deg F)^{m+1}
~\br{\gamma_{k_0}\gamma_{k_1}\dots\gamma_{k_m}}^{-\beta}\nonumber\\
&<&{\textstyle 4 \deg F\,\sum_{k}\gamma_{k}^{-\beta}
\,+\,(4 \deg F\,\sum_{k}\gamma_{k}^{-\beta})^2
\,+\,(4 \deg F\,\sum_{k}\gamma_{k}^{-\beta})^3
\,+\,\dots}\nonumber\\
&<&\fr14~+~\br{\fr14}^2~+~\br{\fr14}^3~+~\dots
~=~\fr1{3}~.
\end{eqnarray}

If $x=z$ then  the rightmost sequence begins
with $3$.  Similarly as before, using the estimates of 
the Main Lemma, we obtain that
\begin{eqnarray}
\sum_{y\in\typei(z)}\abs{\br{F^{n(y)}}'(y)}^{-\beta}
&<~\fr13~\Delta^{\beta(1/\mmax-1)}~,
&\quad{\rm or~}\label{typeisum}\\
\sum_{y\in\typei(z)}\abs{\br{F^{n(y)}}'(y)}^{-\beta}
&<~\fr13~\Delta^{\beta(\mu(c)/\mmax-1)}~,
&\quad{\rm if~ a ~critical~point~}c\in B_{\Delta}(z)~.\nonumber
\end{eqnarray}
This completes the proof of Lemma~\ref{lem:main2}.
\end{proof}

\begin{lem}\label{lem:bbb}
Assume that the Poincar\'e series  with exponent $q$ is summable
for some point $v\in \CC$.
Then there exists $\epsilon>0$ so that for every point $z$ from
$\epsilon$-neighborhood of the Julia set $J$ and every set  
$\typeii_{l}(x|z)$ and $\typeii_{s}(x|z)$,
$$\begin{array}{ll}
\sum_{y\in\typeii_{{l}}(x|z)}\abs{\br{F^{n(y)}}'(y)}^{-q}
~<~\fr1{36}~&,\\
\sum_{y\in\typeii_{{s}}(x|z)}\abs{\br{F^{n(y)}}'(y)}^{-p}
~<~C(p)&{\rm~for~any~}p~.
\end{array}$$
\end{lem}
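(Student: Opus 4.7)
The plan is to observe that Lemma~\ref{lem:bbb} is essentially a direct reformulation of the already-proven Lemma~\ref{lem:2t}, applied at the intermediate point $x$ rather than at $z$. Indeed, by Definition~\ref{defi:symbol}, every $y \in \typeii_l(x|z) \cup \typeii_s(x|z)$ is a type $2$ preimage of $x$ in the sense of Definition~\ref{defi:2t}, and the quantity $|(F^{n(y)})'(y)|$ in the statement is precisely the derivative over the type $2$ block connecting $y$ to $x$—exactly the quantity estimated in Lemma~\ref{lem:2t}. So the strategy is to verify that the hypotheses of Lemma~\ref{lem:2t} apply with $x$ in the role of the base point, and then read off both estimates.

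First, I would fix $\epsilon$ to be the constant supplied by Lemma~\ref{lem:comp}, so that for every $z$ in the $\epsilon$-neighborhood of $J$ the entire backward orbit, including the point $x$, lies in the $R'/2$-neighborhood of $J$. This is exactly the standing hypothesis $\dist{x,J} \leq R'/2$ required to invoke Definition~\ref{defi:2t} and Lemma~\ref{lem:2t} at $x$.

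Next, for the first inequality, I would appeal to part 2 of Lemma~\ref{lem:2t} with the exponent $q$ and the point $v \in \CC$ whose Poincaré series $\Sigma_q(v)$ converges. The crucial point is uniformity in $x$: the proof of Lemma~\ref{lem:2t} chooses finitely many $R'/4$-dense preimages $v_1, \dots, v_n$ of $v$ in $J$ so that for any $x$ with $\dist{x,J} < R'/2$ some $v_j \in B_{3R'/4}(x)$, and then uses Koebe distortion to transfer the convergence $\Sigma_q(v_j) < \infty$ to a uniform bound on the sum over type $2$ preimages of $x$. Thus the threshold $L$ can be chosen independently of $x$, and the bound $\frac{1}{36}$ holds for $\typeii_l(x|z) \subset \{y: n(y) \geq L\}$.

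For the second inequality, I would invoke part 3 of Lemma~\ref{lem:2t} applied with exponent $p$, which—once $L$ is fixed—produces a constant $C(p)$ bounding the sum over all second-type preimages of order less than $L$, uniformly in the base point. Since $\typeii_s(x|z)$ consists precisely of type $2$ preimages of $x$ with $n(y) < L$, the bound $C(p)$ transfers without change. No substantial obstacle arises; the lemma is really just a bookkeeping statement that packages Lemma~\ref{lem:2t} into the vocabulary of Definition~\ref{defi:symbol}, ready to be combined with Lemma~\ref{lem:main2} in subsequent Poincaré series estimates.
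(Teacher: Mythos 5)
Your proposal is correct and matches the paper's own argument, which simply states that Lemma~\ref{lem:bbb} is a reformulation of Lemma~\ref{lem:2t} in the notation of Definition~\ref{defi:symbol}; you have merely unpacked why the reformulation is legitimate (Lemma~\ref{lem:comp} guarantees the hypothesis $\dist{x,J}\le R'/2$ at the intermediate point $x$, and parts 2 and 3 of Lemma~\ref{lem:2t} then give exactly the two stated bounds, uniformly in $x$).
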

\begin{proof}
This is  a reformulation of Lemma~\ref{lem:2t} in the new notation.
\end{proof}

\section{{Poincar\'e series}}\label{sec:Fatou}
In this Section we analyze Poincar\'e series,
particularly proving a self-improving property of the Poincar\'e exponent. 
Theorem~\ref{theo:poincare} is a direct consequence of this property.
We recall that $\xx{}{z,\Delta}$
stands for the set of all preimages $F^{-n}z$, $n\in\N$, such that
the ball $B_\Delta(z)$ can
be pulled back univalently along the corresponding branch of $F^{-n}$.

\begin{prop}[Self-improving property  of the Poincar\'e exponent]\label{prop:poincare}
Suppose that a rational function $F$ satisfies the summability condition 
with an exponent $$\alpha~<~\fr{q}{\mmax+q}~,$$
$q>0$, and the Poincar\'e series with exponent $q$ 
converges for some point $v$,
$\Sigma_q(v)<\infty$.
Then there exist $p<q$, $\epsilon>0$, and $C(\epsilon,p)$  
so that for every point 
$z$ in the $\epsilon$-neighborhood of the Julia set
$$\begin{array}{ll}
\sum_{y\in\xx{}{z,\Delta}}\abs{\br{F^n}'(y)}^{-p}~<~C~
\Delta^{p(\fr{\m{c}}{\mmax}-1)}~&
{~if~a~critical~point~}c\in B_{\Delta}(z)~,\\
\sum_{y\in\xx{}{z,\Delta}}\abs{\br{F^n}'(y)}^{-p}~<
~C~\Delta^{p(\fr1{\mmax}-1)}~&
{~otherwise~.}
\end{array}$$
\end{prop}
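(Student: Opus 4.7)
The plan is to decompose each backward branch in $\xx{}{z,\Delta}$ into blocks via the Main Lemma~\ref{lem:main} and estimate the restricted Poincar\'e sum block by block using Lemmas~\ref{lem:main2} and~\ref{lem:bbb}. Set $\beta:=\mmax\alpha/(1-\alpha)$; the hypothesis $\alpha<q/(\mmax+q)$ is equivalent to $\beta<q$, so there is room to pick $p$ with $\beta\le p<q$, to be fixed below. First I would extract per-block (block-derivative) bounds by repeating the geometric-series calculations that appear inside the proofs of Lemmas~\ref{lem:main2} and~\ref{lem:bbb}. For $p\ge\beta$, since $\sum_n\gamma_n^{-p}\le\sum_n\gamma_n^{-\beta}$, one obtains $\sum_{y\in\typei(x|z)}|(F^{n(y)-n(x)})'(y)|^{-p}\le 1/3$ for $x\ne z$, and $\le(1/3)\,\Delta^{p(\mu(c)/\mmax-1)}$ (resp.\ $\Delta^{p(1/\mmax-1)}$) for $x=z$, thanks to the boundary factor $\Delta^{1-\mu(c)/\mmax}$ (resp.\ $\Delta^{1-1/\mmax}$) supplied by the Main Lemma. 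Since long type~II blocks have bounded length $l\in[L,L')$, $|(F^l)'|^{-p}\le|(F^l)'|^{-q}(\sup|F'|)^{L'(q-p)}$, and Lemma~\ref{lem:bbb} then yields $\sum_{\typeii_l(x|z)}|\cdot|^{-p}\le(\sup|F'|)^{L'(q-p)}/36$, which can be made $\le 1/18$ by choosing $p$ sufficiently close to $q$. Short type~II blocks contribute at most a constant $C(p)$.

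Next I would exploit the structural observation from the Main Lemma that a short type~II block appears at most once in any decomposition and only as the leftmost block. Split
$$\Phi_p(z,\Delta)\ =\ \tilde\Phi_p\ +\ \Phi_p^s,$$
where $\tilde\Phi_p$ is the sum over $y$ whose decomposition contains no short type~II block, and introduce $\tilde A:=\{z\}\cup\bigl\{y\in\xx{}{z,\Delta}:\mbox{no short II in $y$'s decomposition}\bigr\}$. The leftmost short~II block of any $y$ contributing to $\Phi_p^s$ starts at some $x\in\tilde A$, so by the chain rule
$$\Phi_p^s\ \le\ C(p)\sum_{x\in\tilde A}|(F^{n(x)})'(x)|^{-p}\ =\ C(p)\bigl(1+\tilde\Phi_p\bigr).$$

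To bound $\tilde\Phi_p$ I would set up a self-contained recursive inequality. For each $y$ counted in $\tilde\Phi_p$ the leftmost block is type~I or long type~II, and it starts at some $x_y\in\tilde A$; by the chain rule $|(F^{n(y)})'(y)|^{-p}=|(F^{n(x_y)})'(x_y)|^{-p}\cdot|(F^{n(y)-n(x_y)})'(y)|^{-p}$. Grouping by $x_y$ and inserting the per-block bounds from the first paragraph yields
$$\tilde\Phi_p\ \le\ \underbrace{\tfrac13\,\Delta^{p(\mu(c)/\mmax-1)}+\tfrac{1}{18}}_{x_y=z}\ +\ \underbrace{\bigl(\tfrac13+\tfrac{1}{18}\bigr)\,\tilde\Phi_p}_{x_y\in\tilde A\setminus\{z\}},$$
where I used $\sum_{x\in\tilde A\setminus\{z\}}|(F^{n(x)})'(x)|^{-p}=\tilde\Phi_p$. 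Since $\tfrac13+\tfrac{1}{18}=\tfrac{7}{18}<1$, solving for $\tilde\Phi_p$ gives $\tilde\Phi_p\le C\,\Delta^{p(\mu(c)/\mmax-1)}$ (for $\Delta$ small, so that the large $\Delta$-factor absorbs the additive $\tfrac1{18}$), and combining with the bound on $\Phi_p^s$ yields $\Phi_p\le C'\,\Delta^{p(\mu(c)/\mmax-1)}$, together with the analogous $\Delta^{p(1/\mmax-1)}$ estimate when no critical point lies in $B_\Delta(z)$.

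The main obstacle is to balance the two competing constraints on the exponent $p$: the type~I estimate needs $p\ge\beta$, whereas the type~II long estimate is phrased at exponent $q$ in Lemma~\ref{lem:bbb} and must be transferred to $p$. The length-boundedness of long type~II blocks is precisely what enables the transfer, at the cost of forcing $p<q$ strictly. A secondary issue is that short type~II blocks produce only the constant $C(p)$ with no smallness factor; the single-appearance property turns this into a harmless multiplicative correction $1+C(p)$ rather than destroying the geometric series.
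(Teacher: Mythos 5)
Your proof is correct and follows the same strategy as the paper's: decompose each backward branch via the Main Lemma, extract per-block estimates from the calculations inside Lemmas~\ref{lem:main2} and~\ref{lem:bbb}, transfer the long type~II estimate from exponent $q$ to $p<q$ using the uniform length bound $n(y)<L'$, and close with a geometric decay over the block decomposition. The two differences are cosmetic: you transfer $q\to p$ via the elementary bound $|(F^l)'|^{q-p}\le(\sup|F'|)^{L'(q-p)}$, whereas the paper uses the power-means inequality together with the cardinality bound $\#\typeii_l(x)\le(\deg F)^{L'}$; and you organize the count as a renewal-type recursion grouping on the start $x_y$ of the \emph{leftmost} block (splitting off the at-most-one short type~II contribution $\Phi_p^s$ up front), whereas the paper expands outward from $z$ by peeling off the \emph{rightmost} block and lets the short type~II term reappear at every level as the terminal summand --- the two bookkeepings are algebraically equivalent. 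One technicality you should make explicit: solving $\tilde\Phi_p\le A+B\tilde\Phi_p$ for $\tilde\Phi_p$ presupposes $\tilde\Phi_p<\infty$; establish the inequality first for the finite truncations $\tilde\Phi_p^N:=\sum_{n(y)\le N}$ (each manifestly finite, with the bound uniform in $N$) and pass to the limit, or, as the paper does, sum the resulting geometric series directly so that finiteness is a conclusion rather than an assumption.
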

\begin{coro}\label{cor:critexpon}
Assume that $F$ satisfies the summability condition with an exponent
$$\alpha~<~\fr{q}{\mmax+q}~,$$
$q>0$, and there exists a point $z\in \CC$  so that
the Poincar\'e series $\Sigma_q(z)$ converges.
Then \begin{itemize}
\item $\dpoin(w)\,<\,q$ if $w$ is at a positive distance from 
the orbits
of the critical points,
\item $\dpoin(c)\,<\,q$ if $c$ is a  critical point of the maximal
 multiplicity.
\end{itemize}
\end{coro}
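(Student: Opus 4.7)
The plan is to derive both assertions directly from Proposition~\ref{prop:poincare}, which supplies an exponent $p<q$ and a uniform bound on the restricted Poincar\'e series. In each case the only remaining task is to pass from the restricted series to the full Poincar\'e series.

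For the first assertion, let $\eta>0$ be the distance from $w$ to $\bigcup_{k\ge 0}F^k(\Crit)$ and fix $\Delta<\min(\eta,\epsilon)$, where $\epsilon$ is the radius supplied by Proposition~\ref{prop:poincare}. Then $B_\Delta(w)\cap F^k(\Crit)=\emptyset$ for every $k\ge 1$, so any failure of univalence of a branch of $F^{-n}$ on $B_\Delta(w)$ would produce a critical value of $F^n$ inside $B_\Delta(w)$, i.e.\ some point $F^k(c')\in B_\Delta(w)$ with $c'\in\Crit$ and $1\le k\le n$, contradicting the choice of $\Delta$. Hence $\xx{}{w,\Delta}$ exhausts the preimages of $w$, the full and restricted Poincar\'e series coincide, and Proposition~\ref{prop:poincare} applied in the ``otherwise'' case (no critical point in $B_\Delta(w)$) yields
$$\Sigma_p(w)~=~\Sigma^\Delta_p(w)~\le~C\,\Delta^{p(1/\mmax-1)}~<~\infty~,$$
so $\dpoin(w)\le p<q$.

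For the second assertion, I apply Proposition~\ref{prop:poincare} to the base point $z:=c$. Since $c\in B_\Delta(c)$ trivially and $\mu(c)=\mmax$, the critical-point case of the estimate reads
$$\Sigma^\Delta_p(c)~\le~C\,\Delta^{p(\mmax/\mmax-1)}~=~C~,$$
independently of $\Delta$. The sets $\xx{}{c,\Delta}$ form an increasing family as $\Delta\to 0$ and eventually contain every preimage of $c$ along which a sufficiently small ball around $c$ admits a univalent pullback. Monotone passage to the limit transfers the uniform bound to the Poincar\'e series at $c$ (understood, as is natural at a critical base point, after discarding degenerate branches along which some iterate of $F$ has vanishing derivative), yielding $\dpoin(c)\le p<q$.

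The main obstacle lies in the second case: the Poincar\'e series at a critical point requires careful interpretation because inverse branches may factor through other critical points of $F$. The numerical coincidence $p(\mu(c)/\mmax-1)=0$ at $\mu(c)=\mmax$ is exactly what permits the $\Delta\to 0$ limit; for critical points of strictly smaller multiplicity the right-hand side of Proposition~\ref{prop:poincare} blows up, and this direct argument collapses, which explains why the corollary is confined to critical points of maximal multiplicity.
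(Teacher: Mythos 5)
Your proof is correct and follows essentially the same route as the paper's: both assertions are deduced directly from Proposition~\ref{prop:poincare} by showing (for $w$) that the restricted series equals the full one once $\Delta$ is small, and (for $c$) that the bound $C\Delta^{p(\mu(c)/\mmax-1)}=C$ is $\Delta$-independent, so letting $\Delta\to 0$ bounds the full series. The extra care you take in explaining why the restricted sets exhaust the preimages and why the argument hinges on $\mu(c)=\mmax$ are useful amplifications of the paper's terser proof, not deviations from it.
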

\begin{proof}
If the distance of  $w$ to the critical orbits in $J$ is positive
then all preimages of $w$ belong to $\xx{}{w,\Delta}$ with $\Delta$
sufficiently small. This yields  $\Sigma_p(w)<\infty$.

If $c$ is a critical point of the maximal multiplicity $\mu(c)=\mmax$
then 
$$\sum_{y\in\xx{}{c,\Delta}}\abs{\br{F^n}'(y)}^{-p}~<~C~
\Delta^{p(\fr{\m{c}}{\mmax}-1)}~=~C~,$$
and letting $\Delta$ go to zero  we obtain that
$$\sum_n\,\sum_{y\in F^{-n}c}\,\abs{\br{F^n}'(y)}^{-p}~<~C~.$$
\end{proof}
\begin{coro}
If a rational function $F$ satisfies the summability condition 
with an exponent $\alpha\,<\,{2}/({\mmax+2})$
and its Julia set is not the whole sphere, 
then there exists $p<2$ so that the conclusion 
of Proposition~\ref{prop:poincare} holds.
\end{coro}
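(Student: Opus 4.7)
The plan is to apply Proposition~\ref{prop:poincare} and observe that one can take its output exponent $p$ close to $\beta := \mmax\alpha/(1-\alpha)$, which under the hypothesis $\alpha<2/(\mmax+2)$ is strictly less than $2$.

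First I would verify the hypothesis of Proposition~\ref{prop:poincare} for some $q$ with $\alpha<q/(\mmax+q)$. Since $J\neq\hat{\C}$, the Fatou set is non-empty, and by Corollary~\ref{cor:nosiegel} it contains no Siegel disks or Herman rings; so I pick an admissible point $v$ in an attracting basin, at positive spherical distance from both $J$ and the forward orbits of the critical points. A small ball $B_r(v)$ is then disjoint from the postcritical set, and every branch of $F^{-n}$ is defined and univalent on $B_r(v)$. By the Koebe distortion Lemma~\ref{lem:koeb}, each preimage $y\in F^{-n}(v)$ sits inside a univalent disc of spherical area comparable to $r^{2}|(F^{n})'(y)|^{-2}$. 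The classical spherical-area estimate — exploiting the nested structure of these Koebe discs together with the finite total area of $\hat{\C}$ — yields $\dpoin(v)\le 2$, i.e., $\Sigma_q(v)<\infty$ for every $q>2$.

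Next, observe that $\alpha<2/(\mmax+2)$ is equivalent to $\beta<2$, and a fortiori gives $\alpha<q/(\mmax+q)$ for every $q\ge 2$. Fixing $q>2$ slightly above $2$, Proposition~\ref{prop:poincare} produces an exponent $p<q$ together with the required restricted Poincaré estimate in a neighborhood of $J$. The key observation is that in the proof of Proposition~\ref{prop:poincare} there is genuine flexibility in the choice of $p$: the type~$1{+}3$ contributions are controlled by Lemma~\ref{lem:main2} precisely at the threshold $\beta$, while the type~$2$ contributions are controlled at $q$ by Lemma~\ref{lem:bbb} and remain bounded for any $p<q$ since type~$2$ blocks have lengths in $[L,L')$ with derivatives bounded below. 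One may therefore take any $p\in(\beta,q)$, and in particular any $p\in(\beta,2)$, which is a non-empty interval.

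The main obstacle is making explicit that the output exponent of Proposition~\ref{prop:poincare} is really governed by $\beta$ rather than being only marginally less than $q$. If one prefers to avoid inspecting that proof, the same conclusion may be obtained by iteration: the output of the proposition gives $\dpoin(w)\le p<q$ at admissible $w$ far from critical orbits (since there all preimages lie in $\xx{}{w,\Delta}$), so the proposition can be re-applied with $q$ replaced by the freshly produced $p$; the iterates form a strictly decreasing sequence which cannot accumulate above $\beta$, and hence crosses $2$ in finitely many steps, producing the desired $p<2$.
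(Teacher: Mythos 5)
Your plan — find a point $v$ where the Poincar\'e series converges and invoke Proposition~\ref{prop:poincare} — is the right one, but two of the steps you use to close the argument are not actually supported by the proof of that proposition, and the resulting gap is real.

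First, the claim that in the proof of Proposition~\ref{prop:poincare} one ``may take any $p\in(\beta,q)$'' is false as stated. The type~$2$ contribution $\sum_{y\in\typeii_l(x)}|(F^{n(y)})'(y)|^{-p}$ must be made smaller than~$1/3$ for the geometric-series estimate at the end of that proof to converge; ``bounded'' is not enough. The only device used there to achieve this is the power-means inequality, which yields the constraint $p>q-q\log 2/\bigl(L'\log(\deg F)\bigr)$ and therefore pins $p$ to a short interval just below $q$, with length governed by $L'$ (which itself depends on $q$ through $L=L_{2t}$ in Lemma~\ref{lem:2t}). Nothing there allows one to descend all the way to $\beta$. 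Second, the iteration's assertion that the resulting sequence ``cannot accumulate above $\beta$'' is not justified: since the allowed decrement $q_n-p_n$ depends on $L'(q_n)$, which could in principle blow up as $q_n$ decreases, nothing you have stated prevents the sequence from stalling at some $q^*\ge 2$. The way to control $L'$ uniformly for $q_n\ge 2$ is precisely to know that $\Sigma_2(v)<\infty$ — and that is exactly the point you gave away.

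This is the crux: the classical area argument gives the stronger conclusion $\Sigma_2(v)<\infty$, not merely $\dpoin(v)\le 2$ (i.e., convergence for $q>2$). Since $v$ lies in the Fatou set, which by Corollary~\ref{cor:nosiegel} has no elliptic components, one may pick $v$ away from the closure of the postcritical set with a small ball $B_\delta(v)$ whose preimages under all branches of all $F^{-n}$ are \emph{pairwise disjoint}. Koebe distortion then gives disjoint round balls of radius $\asymp\delta|(F^n)'(y)|^{-1}$ around the preimages $y\in F^{-n}(v)$, and summing areas against the finite area of $\CC$ yields $\sum_n\sum_y|(F^n)'(y)|^{-2}<\infty$ directly, i.e., $\Sigma_2(v)<\infty$. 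With this in hand one simply applies Proposition~\ref{prop:poincare} at $q=2$ (for which $\alpha<2/(\mmax+2)$ is precisely the hypothesis), and the proposition itself delivers $p<2$ — no inspection of the interior of the proof and no iteration are needed. This is the paper's route, and it makes your second and third paragraphs unnecessary.
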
 
\begin{proof}
If the Julia set is not the whole sphere and the Fatou set does not
contain elliptic components
then there exists a point $v\in \CC\setminus J$
such that the Poincar\'e series
$\Sigma_2(v)$ converges. This a classical area argument, \cite{sullivan-rio}.
It is enough to notice that there exists 
a small ball $B_\delta(v)$ 
free from the critical orbits and  with preimages pairwise disjoint.
\end{proof}

\paragraph{Proof of Theorem~\ref{theo:poincare}.}
Suppose that the Poincar\'e series $\Sigma_q(v)$ converges
for a point $v\in \CC$. 
If $q\le\dpoin(J)$ then, by Corollary~\ref{cor:critexpon}, 
there exist $\epsilon>0$ so that
$\dpoin(J)\le q-\epsilon<\dpoin(J)$, a contradiction. 
This means that for $q=\dpoin(J)$, 
the Poincar\'e series $\Sigma_q(z)$ diverges for every point $z\in \CC$.
Hence, $\dpoin(z)\geq \dpoin(J)$ for every $z\in \CC$. 

By the definition of the Poincar\'e exponent $\dpoin(J)$,
for any $\epsilon>0$ there exist $q<\dpoin(J)+\epsilon$
and a point $v\in \CC$ so that the  Poincar\'e series $\Sigma_q(v)$
converges. By Corollary~\ref{cor:critexpon}, for all points which are at 
the positive distance to the critical orbits and for all critical points of
maximal multiplicity one has $\dpoin(z)<\dpoin(J)+\epsilon$
and Theorem~\ref{theo:poincare} follows.
\begin{flushright}
$\Box$
\end{flushright}

\paragraph{Proof of Proposition~\ref{prop:poincare}.}
We use the inductive decomposition of backward orbits  
described in Section~\ref{sec:induc}. 
Let $z$ be a point from an $\epsilon$-neighborhood of the Julia set. 
By  Lemma~\ref{lem:bbb}, 
$$\sum_{y\in\typeii_{l}(x)}
\abs{\br{F^{n(y)}}'(y)}^{-q}~<~\fr1{36}~.$$
But there are at most $(\deg F)^{L'}$
points in $\typeii_{l}(x)$,
since these preimages are all of the order at most $(L'-1)$.
Therefore by power means inequality 
(see e.g. Section 2.9 in \cite{halipo})
we have for $p<q$ sufficiently close to $q$
(namely $p>q-q\log2\br{L'\log\br{\deg F}}^{-1}\,$):
\begin{eqnarray*}
\sum_{y\in\typeii_{l}(x)}
\abs{\br{F^{n(y)}}'(y)}^{-p}
&<&\br{\sum_{y\in\typeii_{l}(x)}
\abs{\br{F^{n(y)}}'(y)}^{-q}}^{\fr pq}
~\cdot~\br{\deg F}^{L'\fr {q-p}q}\\
&<&\br{\fr1{36}}^{\fr pq}
~\cdot~\br{\deg F}^{L'\fr {q-p}q} \\
&<&\fr1{6}~\cdot~\br{\deg F}^{L'\fr {q-p}q}
~<~\fr13~.
\end{eqnarray*}
Also by  Lemma~\ref{lem:bbb}
$$\sum_{y\in\typeii_{s}(x)}\abs{\br{F^{n(y)}}'(y)}^{-p}~<
~C~=~C(p)~.$$

We expand $\sum_{y\in\xx{}{z,\Delta}}\abs{\br{F^n}'(y)}^{-p}$
by grouping preimages of the same kind into clusters. 
We begin with $z$ obtaining preimages
of three kinds: $\typei(z)=\typei^\Delta(z)$, $\typeii_{l}(z)$ and $\typeii_{s}(z)$.
Points in $\typeii_{s}(z)$ are terminal while
preimages $y$ of the points in
$\typei(z)$ and $\typeii_{l}(z)$
are divided further. 
We proceed in  this fashion down the tree of preimages of $z$.
If there is no critical point in $B_{\Delta}$ we obtain that
\begin{eqnarray*}
\sum_{y\in\xx{}{z,\Delta}}\abs{\br{F^n}'(y)}^{-p}
&=&
\sum_{z'\in\typeii_{s}(z)}\abs{\br{F^{n(z')}}'(z')}^{-p}
~+~\sum_{z'\in\typei,\typeii_{l}(z)}\abs{\br{F^{n(z')}}'(z')}^{-p}\\
&\cdot&
\Biggm(\,\sum_{z''\in\typeii_{s}(z')}\abs{\br{F^{n(z'')}}'(z'')}^{-p}
~+~\sum_{z''\in\typei,\typeii_{l}(z')}\abs{\br{F^{n(z'')}}'(z'')}^{-p}\\
&\cdot&
\Biggl(\,\sum_{z'''\in\typeii_{s}(z'')}\abs{\br{F^{n(z''')}}'(z''')}^{-p}
~+~\dots~~~\Biggl)\Biggm)\\
&\le&C\,+\,\br{\fr13+\fr13(\Delta)^{p(1/\mmax-1)}}
\br{C\,+\,\fr23\br{C\,+\,\dots}}\\
&=&C\,+\,\fr13\br{1+(\Delta)^{p(1/\mmax-1)}}C\br{1+\fr23+\br{\fr23}^2+\dots}\\
&=&\br{2+(\Delta)^{p(1/\mmax-1)}}C~<~3C\,(\Delta)^{p(1/\mmax-1)}~.  
\end{eqnarray*}
Otherwise, we have a stronger estimate
$$\sum_{y\in\xx{}{z,\Delta}}\abs{\br{F^n}'(y)}^{-p}
~<~3C\,(\Delta)^{p(\m{c}/\mmax-1)}~ .$$
This proves Proposition~\ref{prop:poincare}.
\begin{flushright}
$\Box$
\end{flushright}

\section{Induced hyperbolicity and conformal measures}\label{sec:induced}

\subsection{Inductive procedure with a stopping rule}\label{sec:stop}

We decompose a sequence  of preimages
 $F^{-N}(z),\dots,F^{-1}(z), z$ into blocks
of  types $2$ and $1\dots13$ using the 
usual inductive procedure 
with the following new stopping rule: at the first
occurrence of a type $2$ sequence we stop the induction.  
For the reader's convenience 
we will describe  the construction.

\paragraph{Construction.} We take shrinking neighborhoods $\{U_{k}\}$ for
$B_{2R'}(z)$. If they do not contain the critical points we form
one block of type $2$ of the length $N$. Otherwise, we set $r=0$
and increase it continuously until some shrinking neighborhood $U_{k}$
hits a critical point $c$, $c\in \partial U_{k}$. It must happen
for some $0<r< 2R'$. Set
$r_{0}:=\dist{F^{k}(c),z}$,  $n_{1}:=k$,  and $z_{1}:=F^{-n_{1}}(z)$.
Then $z_{1}$ is a third type preimage of $z$ and the ball $B_{r_{0}}$
can be pulled back univalently by $F^{N}$ along the backward orbit.

\subparagraph{Inductive procedure.}
Suppose we have already constructed $z_j=F^{-n_{j}}(z)$
 which is of type $1$ or $3$.
We enlarge  the ball $B_r(z_{j})$ continuously
increasing the radius $r$ from $0$ until one of the following
conditions is met:
\begin{description}
\itemm{1)} 
for some $k\leq N-n_{j}$ the shrinking neighborhood
$U_k$ for $B_{r}(z_{j})$ hits a critical point $c\in \Crit$, $c\in\dd U_k$,
\itemm{2)} radius $r$ reaches the value of $2R'$.
\end{description}
In the case 1) we put $n_{j+1}:=n_j+k$. Clearly,
 $z_{j+1}:=F^{-n_{j+1}}(z)$ is a type $1$ preimage of $z_{j}$.
If 2) holds, we set $z_{j+1}:= F^{-N}(z)$ 
which is a type $2$ preimage of $z_{j}$. This terminates the
construction in this case.

\subparagraph{Coding.}
As a result of the inductive procedure, we can  decompose the backward orbit
of a point $z$ into pieces of  type $1$, $2$ and $3$. This
gives a coding of backward orbits by sequences of 
$1$'s, $2$'s and $3$'s. By the construction, only the following
three types of  codings are allowable: $2, 1\dots 3, 21\dots1 3$.
We recall that according to our convention, during the inductive
procedure we  put symbols in the coding from the right to the left. 

We attach to every sequence of preimages of $z$  the sequence 
$k_l, \dots, k_0$ of the lengths  of the blocks of preimages of a
given type in its coding. Again our convention requires that
$k_{0}$ stands always for the length of the rightmost block of preimages in
the coding. Clearly, $k_{0}+\cdots + k_{l}=N$.  

\subsection{Most points go to large scale infinitely often}
We recall that a Jacobian
of a $\delta$-conformal  measure $\nu$  
is equal to $\abs{F'}^\delta$ (see Definition~\ref{def:mes}), 
$$d\nu(F(z))~=~\abs{F'(z)}^\delta\,d\nu(z)~.$$
Consider a subset of points in $J$ which infinitely often go to
the large scale of size $R'$ with bounded distortion: 
$$\Jls \,:=\,\brs{z\in J:\, \exists~n_j\to\infty, \mbox{~with~}F^{n_j} 
\mbox{~univalent~on ~} F^{-n_j}\br{B\br{F^{n_j}z,R'}}}~.$$
Note that the value of $R'$ is already fixed and does not depend on the point.

\begin{prop}\label{prop:largescale}
Suppose that a rational function $F$ satisfies the {\em summability condition}
with an exponent
$$\alpha~<~\fr{p}{\mmax+p}~.$$
Then for any $p$-conformal measure $\nu$ with no atoms at critical points
$\nu(J\setminus \Jls )=0$.
\end{prop}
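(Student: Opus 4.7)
The plan is a Borel--Cantelli argument based on $p$-conformality of $\nu$ and the self-improving Poincar\'e estimate (Proposition~\ref{prop:poincare} and Lemma~\ref{lem:main2}). For each $n\ge 1$ and each $z\in J$, apply the stopping-rule block decomposition of Section~\ref{sec:stop} to the preimage sequence $F^{-n}(F^n z),\dots,z$; this produces a unique coding $\sigma(z,n)$ of one of the three admissible forms $2$, $1\cdots 3$, $21\cdots 13$. By construction, when $\sigma(z,n)=2$ the shrinking neighborhoods of $B_{2R'}(F^n z)$ pull back univalently to $z$, which in particular forces the pullback of $B(F^n z,R')$ to be univalent. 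Setting $W_n:=\{z\in J:\sigma(z,n)\ne 2\}$ therefore gives $J\setminus\Jls\subset\liminf_n W_n$, so it suffices to prove $\sum_n\nu(W_n)<\infty$ and invoke the first Borel--Cantelli lemma.

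To estimate $\nu(W_n)$, partition $W_n$ by the coding data $\sigma=(c_{i_0},\dots,c_{i_j};k_0,\dots,k_j)$, writing $W_n=\bigsqcup_\sigma W_{n,\sigma}$. Each $W_{n,\sigma}$ corresponds to a specific branch of the pullback on which $F^n$ is injective, and the shrinking-neighborhood construction within each type-$1$ or type-$3$ subblock provides univalent pullbacks of smaller disks to which the Koebe distortion lemma (Lemma~\ref{lem:koeb}) applies. Composing block by block, the total distortion of $F^n$ on $W_{n,\sigma}$ stays uniformly bounded thanks to $\sum\delta_k<1/2$. Combined with $p$-conformality of $\nu$ and the fact that $F^n(W_{n,\sigma})$ is contained in a disk of radius $\asymp R'$ at the large scale, this yields
\[
\nu(W_{n,\sigma})~\le~C\cdot\abs{\br{F^n}'(z_\sigma)}^{-p}
\]
for any $z_\sigma\in W_{n,\sigma}$, with $C$ absolute. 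The Main Lemma (Lemma~\ref{lem:main}) furnishes the lower bound $\abs{\br{F^n}'(z_\sigma)}\gtrsim\prod_{l=0}^j\gamma_{k_l}$.

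Summing over codings, the number of critical sequences $(c_{i_0},\dots,c_{i_j})$ compatible with fixed block lengths is at most $(4\deg F)^{j+1}$ by the count in equation~(\ref{equ:deg}), so
\[
\sum_n\nu(W_n)~\le~C\sum_{j\ge 0}(4\deg F)^{j+1}\br{\sum_{k\ge 1}\gamma_k^{-p}}^{j+1}.
\]
The hypothesis $\alpha<p/(\mmax+p)$ is equivalent to $\beta:=\mmax\alpha/(1-\alpha)<p$, and since $\gamma_k\ge 1$, Lemma~\ref{lem:techseq} gives $\sum_k\gamma_k^{-p}\le\sum_k\gamma_k^{-\beta}<1/(16\deg F\cdot\mmax)$. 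The geometric series converges, whence $\sum_n\nu(W_n)<\infty$ and $\nu(J\setminus\Jls)=0$.

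The principal difficulty is the cell measure estimate $\nu(W_{n,\sigma})\lesssim\abs{\br{F^n}'}^{-p}$: the branch $F^n|_{W_{n,\sigma}}$ crosses critical points between consecutive blocks, where $F$ is locally $\m{c}$-to-$1$ with a power-law contraction profile, so one cannot directly invoke Koebe on the entire cell. One must instead verify that the composition of block-by-block distortion bounds remains uniformly controlled across all $\sigma$ (for which the summability of the shrinking factors $\delta_k$ is essential), and use the standing non-atomicity hypothesis of $\nu$ at critical points to discard boundary contributions at each critical passage, so that the branched conformal transfer argument goes through.
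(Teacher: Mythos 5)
The Borel--Cantelli strategy is fatally flawed at its first step: the series $\sum_n\nu(W_n)$ diverges, so the argument cannot get off the ground. To see this, fix a critical point $c\in J$ with multiplicity $\mu$ and observe that whenever $F^{m}(z)$ enters the fixed neighborhood $B(c,(2R')^{1/\mu})$ of $c$, the very first shrinking neighborhood $U_1$ for $B_{2R'}(F^{m+1}z)$ pulled back to $F^{m}(z)$ already contains $c$, so $\sigma(z,m+1)\neq 2$. By ergodicity (or simply recurrence of the conformal measure $\nu$ near $c$), $\nu$-a.e.\ $z$ has $F^{m}(z)$ entering that neighborhood infinitely often, hence $\sigma(z,n)\neq 2$ for infinitely many $n$, i.e.\ $\nu(\limsup_n W_n)=1$. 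In particular $\sum_n\nu(W_n)=\int\#\{n:\sigma(z,n)\neq 2\}\,d\nu(z)=\infty$. The conclusion $\nu(J\setminus\Jls)=0$ is perfectly compatible with this: points that do reach the large scale infinitely often still spend infinitely many instants $n$ shadowing a critical orbit, which is exactly why a naive Borel--Cantelli bound overcounts.

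The bookkeeping error that makes your displayed sum come out finite is that you sum only over the type-$1/3$ data $(c_{i_0},\dots,c_{i_j};k_0,\dots,k_j)$ and ignore the length $l\geq 0$ of the leading type-$2$ block. In the stopping-rule decomposition of Section~\ref{sec:stop} that block is \emph{unbounded}; for each fixed type-$1\dots13$ structure there is an allowed coding for every $l$, and each contributes a cell measure of the same order $\asymp\prod_i\gamma_{k_i}^{-p}$, since the type-$2$ factor $|(F^l)'|$ is only bounded below by a constant. Thus the right-hand side of your inequality must carry a factor $\sum_{l\geq 0}1=\infty$. (Separately, the pointwise cell estimate $\nu(W_{n,\sigma})\lesssim|(F^n)'(z_\sigma)|^{-p}$ is itself not justified: $W_{n,\sigma}$ is not contained in a single univalent pullback of a fixed disk with bounded distortion, because the landing radius in the type-$3$ block, the branch over the type-$2$ block, and the position of $F^n(z)$ all vary with $z$ within the cell.)

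The paper's proof is structurally different precisely to avoid this overcount. It does not estimate $\sum_n\nu(W_n)$; instead, it fixes the minimal recurring type-$2$ block length $k(x)$ to get a countable partition $J\setminus\Jls=\bigcup_k X_k$, reduces to $\nu(X_0)=0$ (using non-atomicity at critical points to pass between $X_k$ and $F^k X_k\subset X_0$), and extracts from the infinitely many $1\cdots1$ codings terminating at an $x\in X_0$ a single infinite backward coding. It then only counts the sparse subset $G'(x)$ of ``deep'' return indices of maximal recurring multiplicity, for which Lemma~\ref{lem:proxi} provides the clean bound $|(F^k)'(x)|>\prod\gamma_{k_i}$ \emph{without} the unfavorable factor $\Delta^{1-1/\mmax}$ that the Main Lemma attaches to the rightmost block. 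The Fubini-type integral $\int_{X_0'}\#G'(x)\,d\nu(x)$ is then finite, contradicting $\#G'(x)=\infty$. Your $\sum_n\nu(W_n)$ plays the role of that integral, but because it counts every $n$ rather than only the deep returns, and uses the weaker Main Lemma bound, it diverges. If you want to salvage a Borel--Cantelli-flavored proof you would have to replace $W_n$ by a set indexed by deep approaches (in the spirit of $G'$), not by the uniform time $n$.
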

\begin{proof}
For every $x\in J$ and every $k\in \N$, we 
use the inductive procedure of  Section~\ref{sec:stop}
to decompose the sequence  $x,\dots, F^k(x)$ of preimages of $F^{k}(x)$
into blocks of either $211\dots113$ or $2$.
The procedure is stopped at the first occurrence of type $2$ block,
which might be of arbitrary length.
In particular, it might be of length zero which means that a block
of type $2$ does not occur and the sequence ends with type $1$.

Denote by ${\cal E}_{x}$ the set of all codes obtained for $x$.
Points in $\Jls $ are precisely those for which we get infinitely
many different type $2$ sequences.
Hence, if  $x\in J\setminus \Jls $, then $x$ is  
a terminal point of an infinite number of
sequences $2111\dots113$ with only a finite choice of type $2$ blocks.
Let $k(x)$ be the minimal number for which  infinitely many 
sequences from ${\cal E}_{x}$ have the same type $2$ block of  
length $k(x)$. Denote $X_k:=\brs{x:k(x)=k}$ and observe that 
the sets $\{X_{k}:k=0,1,\dots\}$ form  
a countable partition of $J\setminus \Jls$. 

If $\nu$ has no atoms at critical points and
$F^{k}(X)$ is measurable then
$$\nu(F^kX)=0\iff \nu(X)=0\iff\nu(F^{-k}X)=0.$$
Since $F^k(X_k)\subset X_0$ and consequently  
$J\setminus \Jls\subset\cup_k F^{-k}(X_0)$, 
it is sufficient to prove that $\nu(X_0)=0$. 
Without loss of generality we can exclude from $X_0$
all preimages of the critical points since they are of zero $\nu$ measure.
Every  point  $x\in X_0$  must  be terminal 
for infinitely many different subsequences $1\dots1$ . 
Otherwise the orbit of $x\in X_0$ would pass near the critical points
only finitely many times and hence its distance to the set  $\Crit$
would be positive. Another consequence of the finitness of $1\dots 1$
subsequences would be an unbounded length of type $3$ blocks
in ${\cal E}_x$.  The estimate (\ref{eq:3tdecayofd})  yields
$\dist{x,\Crit}=0$, a contradiction.

By very much the same argument, using that the
distance from $x\in X_0$ to $\Crit$ is positive, we obtain that
the length of  the leftmost blocks of type $1$
in  ${\cal E}_{x}$ must be bounded and therefore 
we can choose infinitely many sequences from ${\cal E}_{x}$
with the same leftmost block. 
Next, we consider the second block of type $1$  from the left and repeat the
above argument  to  produce  infinitely many sequences in  
${\cal E}_{x}$ with the same two leftmost blocks. We continue
in this fashion until we build by induction an infinite sequence $1111\dots $
terminating at $x$.
Denote corresponding parameters by 
$d_j,\,r_j,\,r_j',\,c_j,\,\mu_j,\,n_j$ 
with $j=0,-1,-2,\dots$
(we use negative integers to preserve convention of enumerating
from the right to the  left).

Let $G$ be the set of indexes $j$ such that $d_j<r_j$.
The second inequality of Lemma~\ref{lem:1t} implies  that if $j\notin G$ then
$(d_{j+1})^{\mu_{j+1}}<(d_j)^{\mu_j}(\gamma_{n_j})^{-\mmax}$. This means
that  $G$ is infinite since  otherwise $\lim_{j\to-\infty}d_j=\infty$.
Now set $\mmax'$ to be the maximal multiplicity
which occurs infinitely often in the sequence
$\brs{\mu_j:\,j\in G}$.
Let $X_0'(k)$ stand for the set of all points $x\in X_{0}$ such
that there are no points of larger than $k=\mmax'(x)$ multiplicity in $G$.
We see that 
$$X_0\subset\bigcup_{k=2}^{\mmax}\bigcup_{i=0}^{\infty} 
F^{-i}(X_0'(k))\;.$$
Therefore, it is sufficient to show that $\nu(X_0'(k))=0$ for $k=2,\dots,\mmax$.
We fix $k=\mmax'$ and drop it from the notation of $X_0'(k)$.

Fix a point $x\in X_0'$ and
take an index $j$
in the infinite set $G':=\brs{j\in G:\,\mu_j=\mmax'}$.
Set $k=\sum_{i=-1}^{j}k_i$. Then,  by
 Lemma~\ref{lem:proxi},
$$\abs{\br{F^k}'(x)}^{p}~>~\prod_{i=j+1}^{-1}
(\gamma_{k_i})^{p}~,$$
and hence
$$d\nu(x)~=~\abs{\br{F^k}'(x)}^{-p}d\nu(F^kx)
~<~\prod_{i=j+1}^{-1}(\gamma_{k_i})^{-p}d\nu(F^kx)~.$$
Assuming that $\nu(X_0')$ is positive, we proceed 
similarly as in  the proof of Proposition~\ref{prop:poincare}
(note that parameters $k_i$, $j$  depend on $x$),
\begin{eqnarray*}
+\infty&=&\int_{X_0'}\#G'(x)d\nu(x)
~=~\int_{X_0'}\sum_{j\in G'(x)}1\, d\nu(x)\\
&<&\int_{X_0'}~~\sum_{j\in G'(x)}~~
\prod_{i=j+1}^{-1}(\gamma_{k_i})^{-p}~d\nu(F^kx)\\
&\le&\int_{J}~~\sum_{F^kx=z,j\in G'(x)}~~
\prod_{i=j+1}^{-1}(\gamma_{k_i})^{-p}~d\nu(z)\\
&\le&\int_{J}~\sum_{x\in\typei(z)}~
\prod_{i=j(x)+1}^{-1}(\gamma_{k_i(x)})^{-p}~d\nu(z)\\
&\le&\int_{J}~~\sum_{j,k_{-1},k_{-2},\dots,k_{j+1}}~(4 \deg F)^{\abs{j-1}}
~\br{\gamma_{k_{-1}}\gamma_{k_{-2}}\dots\gamma_{k_{j+1}}}^{-p}~d\nu(z)\\
&<&\int_{J}~~\left({\textstyle 4 \deg F\,\sum_{k}\gamma_{k}^{-\beta}
\,+\,(4 \deg F\,\sum_{k}\gamma_{k}^{-\beta})^2
\,+\,(4 \deg F\,\sum_{k}\gamma_{k}^{-\beta})^3
\,+\,\dots}\nonumber\right)d\nu(z)\\
&~<~&\int_{J}\br{\fr1{4}+\br{\fr14}^2+\dots}d\nu(z)~=~\int_{J}\fr1{3}d\nu(z)~<~+\infty~.
\end{eqnarray*}
This yields  a contradiction and proves the proposition.
\end{proof}

\subsection{Conformal measures}

The  notion of conformal measures 
was introduced to rational dynamics by D.~Sullivan
following an analogy with Kleinian groups, see Definition~\ref{def:mes}.
Loosely speaking,  a probabilistic measure $\nu$, supported on the Julia set,
 is {\em conformal with exponent} $\delta$, if its Jacobian
is equal to $\abs{F'}^\delta$, i.e.
$$d\nu(F(z))~=~\abs{F'(z)}^\delta\,d\nu(z)~.$$
D.~Sullivan proved in \cite{sullivan-rio} that for every Julia set
there exists a conformal measure with an exponent
$\delta\in(0,2]$. For hyperbolic Julia sets, there exists
only one conformal measure which coincides with a normalized 
$\HD(J)$-dimensional Hausdorff measure.
In  general, it is more difficult to describe analytical properties 
of conformal measures.
For example, it is  an open problem  whether there exists
a non-atomic conformal measure for a given rational function.

We recall that a conformal dimension $\dconf(J)$ of $J$ is 
defined as 
$$\dconf(J)~=~\inf\brs{\delta:~\exists~\delta-{\mathrm conformal~measure}}~.$$

A simple compactness argument (see \cite{sullivan-rio})
 shows that infimum is attained in the definition above.
The following lemma is a version of standard
Patterson-Sullivan construction of conformal measures
(cf. \cite{sullivan-rio}):

\begin{lem}\label{lem:existconf}
Let $z$ be either a critical point of the maximal
multiplicity in the Julia set, or a point at a positive distance
from the orbits of the critical points. Then there exists
a $\dpoin(z)$-conformal measure.
\end{lem}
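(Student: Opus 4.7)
The plan is to run the classical Patterson--Sullivan construction already sketched in the introduction (see equation~(\ref{equ:po1})). For each $p > \dpoin(z)$ set
$$\nu_p ~:=~ \frac{1}{\Sigma_p(z)}\,\sum_{n=1}^{\infty}\,\sum_{y\in F^{-n}(z)}\, \frac{1_y}{|(F^n)'(y)|^p}~,$$
a probability measure on $\hat{\Bbb C}$, and let $\nu$ be any weak-$*$ accumulation point obtained as $p\downarrow\dpoin(z)$. I would then show that $\nu$ is $\dpoin(z)$-conformal and supported on $J$.

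The computational heart of the argument is the ``almost-conformality'' identity: for every Borel set $A$ on which $F$ is injective,
$$\int_A |F'|^p\,d\nu_p ~=~ \nu_p(F(A))~+~\frac{1_{F(A)}(z)}{\Sigma_p(z)}~.$$
This is a direct consequence of the chain rule $(F^n)'(y) = F'(y)\cdot(F^{n-1})'(F(y))$ (valid when $y$ is not a critical point) combined with the injectivity of $F$ on $A$, after the reindexing $n\mapsto n-1$; the discrepancy is precisely the missing ``$n=0$ atom'' at $z$. Provided $\Sigma_p(z)\to\infty$ as $p\downarrow\dpoin(z)$, the correction vanishes and weak-$*$ continuity passes $\dpoin(z)$-conformality to $\nu$. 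To place $\nu$ on $J$, note that atoms of $\nu_p$ live on the countable set $\bigcup_n F^{-n}(z)$; for any compact $K$ in the Fatou set disjoint from the set of forward-limit points of the backward orbit of $z$, only finitely many atoms of uniformly bounded order lie in $K$, while the denominator $\Sigma_p(z)\to\infty$ kills their combined mass. Here one uses that backward orbits of a non-exceptional point accumulate on all of $J$.

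It remains to guarantee $\Sigma_p(z)\to\infty$. In the divergent case this is automatic. In the convergent case I would invoke Patterson's trick: build a positive slowly-varying function $h:[1,\infty)\to\R_+$ with $h(\lambda t)/h(t)\to 1$ as $t\to\infty$ for every $\lambda>0$, such that the weighted series $\widetilde\Sigma_p(z) := \sum_{n,y}h(|(F^n)'(y)|^p)\,|(F^n)'(y)|^{-p}$ still converges for $p>\dpoin(z)$ but diverges at $p=\dpoin(z)$. Repeating the construction with these new weights produces measures $\widetilde\nu_p$, and the slow variation of $h$ makes the error introduced in the chain-rule identity --- a ratio of the form $h(|F'(y)|^p\cdot t)/h(t)$ --- tend to $1$, so the conformal relation still passes to the weak limit. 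The two cases for $z$ enter here: if $z$ lies at positive distance from $\bigcup_i F^i(\Crit)$, no preimage of $z$ is ever critical and the chain rule is immediate; if $z=c$ is a critical point of maximal multiplicity, the vanishing of $F'(z)$ is harmless because $\{z\}$ has $\nu$-measure zero in the limit (otherwise the mass balance would contradict $\widetilde\Sigma_p(z)\to\infty$).

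The main obstacle is the convergent case: constructing Patterson's weight $h$ carefully enough that one simultaneously destroys convergence of the Poincar\'e series at $\dpoin(z)$, retains the slow-variation property uniformly on bounded multiplicative scales so that the conformality error tends to zero in the weak limit, and prevents all the mass from concentrating at the finite set of seeds $\{F^{-n}(z):\,n\text{ bounded}\}$. The first two points are the classical Patterson lemma; the third is then an immediate consequence of $\widetilde\Sigma_p(z)\to\infty$ together with the probability normalization $\widetilde\nu_p(\hat{\Bbb C})=1$.
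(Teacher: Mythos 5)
Your Patterson--Sullivan construction is sound and, for a point $z$ at positive distance from the critical orbits, is exactly what the paper means by ``standard arguments of \cite{sullivan-rio}.'' But for the critical-point case the paper does something both different and simpler, and it is worth comparing. The paper includes the $n=0$ term in the sum (the convention that $z$ is its own preimage of order zero), so one works with $\tilde\nu_q:=\bigl(1+\Sigma_q(z)\bigr)^{-1}\bigl(1_z+\sum_{n\ge1}\sum_{y\in F^{-n}z}\abs{(F^n)'(y)}^{-q}1_y\bigr)$. Run your chain-rule computation for this measure: the atom at $z$ contributes $|F'(z)|^q\,1_A(z)=0$ to $\int_A|F'|^q\,d\tilde\nu_q$ because $z$ is a critical point, while on the other side it supplies precisely the missing $m=0$ term of $\tilde\nu_q(F(A))$. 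The defect $1_{F(A)}(z)/\Sigma_q(z)$ you carefully identified therefore cancels \emph{identically}, not just asymptotically, and $\tilde\nu_q$ is exactly $q$-conformal for every $q>\dpoin(z)$. One then extracts a $\dpoin(z)$-conformal measure by a weak-$*$ limit using the compactness of the set of pairs $(q,\nu_q)$ from \cite{sullivan-rio}, with no need for Patterson's weight $h$ and no need to discuss whether $\Sigma_q(z)\to\infty$. This is why the lemma singles out critical points: they give exact conformal measures for free, which the paper exploits later (Corollary~\ref{cor:confnoatom}) to produce atomic $q$-conformal measures for all $q>\dconf$.

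Two smaller remarks. First, your parenthetical that ``$\{z\}$ has $\nu$-measure zero in the limit, otherwise the mass balance would contradict $\widetilde\Sigma_p(z)\to\infty$'' is not clearly correct and in any case unnecessary: an atom at a critical point does \emph{not} violate the conformal relation, since $|F'|^p$ vanishes there. What you actually need at a critical point $z$ is that no $y\in F^{-n}z$ with $n\ge1$ is critical, which follows from $z$ being non-periodic in $J$ together with the paper's standing assumption that no critical point lies in another critical orbit. Second, note that in the critical-point case the preimages of $z$ already lie on $J$ (backward invariance), so the support argument you give is only needed when $z$ sits in the Fatou set.
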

\begin{proof}
If $z$ is a critical point, then for any $q>\dpoin(z)$ there is 
an atomic conformal measure  supported on the preimages of $z$.
To see this,  normalize
$$\sum_n~\sum_{y\in F^{-n}z}~\abs{\br{F^n}'(y)}^{-q}~1_y~,$$
where $1_{y}$ is  a Dirac measure at $y$, to be a  probabilistic measure.

If $z$ is a point at a positive  distance from the critical orbits
then standard arguments
of \cite{sullivan-rio} apply.
\end{proof}

\begin{lem}\label{lem:erg}
Suppose that there exist a $p$-conformal measure $\eta$
and a $q$-conformal measure $\nu$ which have no
atoms at critical points. 
If $F$ satisfies the summability condition with an exponent
$$\alpha~<~\fr{\max\brs{p,q}}{\mmax+\max\brs{p,q}}
~=~\max\brs{\fr{p}{\mmax+p},\fr{q}{\mmax+q}}~,$$
then $p=q$ and $\eta=\nu$.
\end{lem}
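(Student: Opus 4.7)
We argue by first reducing to the case $p \geq q$, whereupon the hypothesis becomes $\alpha < p/(\mmax+p)$. Proposition~\ref{prop:largescale} applied to the $p$-conformal measure $\eta$ (which has no atoms at critical points) yields $\eta(\Jls) = 1$.

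\textbf{Step 1: $p = q$.} Suppose for contradiction $p > q$. For each $x \in \Jls$ take the sequence $n_j\to\infty$ of univalent large-scale returns, and let $g_j$ denote the inverse branch of $F^{n_j}$ on $B_r(F^{n_j}x)$ with a fixed $r \le R'/2$. Conformality combined with the Koebe distortion lemma gives
$$\eta(g_j(B_r)) \asymp |(F^{n_j})'(x)|^{-p}\,\eta(B_r(F^{n_j}x)), \qquad \nu(g_j(B_r)) \asymp |(F^{n_j})'(x)|^{-q}\,\nu(B_r(F^{n_j}x)).$$
Since the support of a conformal measure of positive exponent is completely invariant under $F$ and hence (excluding the classical Latt\`es-type exceptions) equals the whole Julia set, $\nu(B_r(y)) \ge c_0 > 0$ uniformly for $y \in J$, while $\eta(B_r(\cdot)) \le 1$. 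Proposition~\ref{prop:fatouexpan} applied to $g_j$ forces $|(F^{n_j})'(x)| \to \infty$. Therefore $\eta(g_j(B_r))/\nu(g_j(B_r)) \lesssim |(F^{n_j})'(x)|^{q-p} \to 0$. Given $\delta > 0$, for each $x \in \Jls$ select an index $j(x)$ so that $A_x := g_{j(x)}(B_r)$ satisfies $\eta(A_x) \le \delta\,\nu(A_x)$. Each $A_x$ is, by Koebe, sandwiched between two Euclidean balls centered at $x$ of comparable radii, so Besicovitch's covering theorem extracts a countable subcover $\{A_{x_k}\}$ of $\Jls$ with multiplicity bounded by a universal constant $M$. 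Then
$$1 = \eta(\Jls) \le \sum_k \eta(A_{x_k}) \le \delta \sum_k \nu(A_{x_k}) \le \delta\,M.$$
Sending $\delta \to 0$ gives a contradiction, so $p = q$.

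\textbf{Step 2: $\eta = \nu$.} With $p = q$, Proposition~\ref{prop:largescale} applies also to $\nu$, so $\nu(\Jls) = 1$. Set $\mu := \eta+\nu$, which is $p$-conformal by linearity, and $f := d\eta/d\mu \in [0,1]$. Applying the change-of-variables formula for the conformal measures $\eta$ and $\mu$ on any Borel set $A$ where $F$ is injective yields
$$\int_A |F'|^p\,f(F(x))\,d\mu(x) = \eta(F(A)) = \int_A |F'|^p\,f(x)\,d\mu(x),$$
whence $f\circ F = f$ holds $\mu$-a.e. The Koebe pullback computation from Step 1, now with equal exponents, gives at each $\mu$-Lebesgue point $x \in \Jls$ the identity
$$f(x) = \lim_j \frac{\eta(g_j(B_r))}{\mu(g_j(B_r))} \asymp \lim_j \frac{\eta(B_r(F^{n_j}x))}{\mu(B_r(F^{n_j}x))}.$$
Invariance $f \circ F = f$ combined with the density of backward orbits of a generic point in $J$ (the ``eventually onto'' property invoked in the proof of Lemma~\ref{lem:2t}) forces the limit to be independent of $x$, hence $f$ is $\mu$-a.e. equal to a constant. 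Normalization $\eta(\CC) = \nu(\CC) = 1$ then yields $f \equiv 1/2$ and thus $\eta = \nu$.

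The principal obstacle is the covering argument in Step 1: one must confirm that the Koebe-warped pullback sets $A_x$ admit a Besicovitch-type extraction with a universal multiplicity constant, which relies on the uniform eccentricity bound guaranteed by the restriction $r \le R'/2$ keeping each $B_r$ strictly inside the univalence disk $B_{R'}$. The secondary difficulty is promoting the pointwise invariance $f\circ F = f$ in Step 2 to genuine constancy; here the self-improving Poincar\'e estimate (Proposition~\ref{prop:poincare}) together with backward-orbit density propagates local information and closes the gap.
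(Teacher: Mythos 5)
Your Step 1 reproduces the paper's argument almost verbatim (the paper WLOG takes $p<q$ and applies Proposition~\ref{prop:largescale} to the $q$-conformal measure $\nu$, while you take $p>q$ and apply it to $\eta$; both use the conformal scaling $\eta(B)\asymp r^p$, $\nu(B)\asymp r^q$ for balls mapped to the large scale and then invoke Besicovitch). The worry you flag about Koebe-distorted pullbacks is in fact moot: the paper works directly with round balls $B_j$ of radius $R_j\to 0$ centered at $z$, which is cleaner and avoids the eccentricity issue entirely.

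Step 2, however, contains a genuine gap. You correctly show $f\circ F=f$ $\mu$-a.e., and the differentiation/Koebe comparison does give $f$ bounded above and below on $\Jls$, i.e.\ $\eta\asymp\nu$. But the assertion that ``the density of backward orbits\ldots forces the limit to be independent of $x$, hence $f$ is constant'' is not a proof. First, the comparison is only $f(x)\asymp\eta(B_r(F^{n_j}x))/\mu(B_r(F^{n_j}x))$, an inequality up to Koebe constants, not an equality, so even if the right-hand ratio were constant in $y$ you would not recover constancy of $f$. Second, density of backward (or forward) orbits is a topological statement and does not propagate values of a merely measurable invariant function; the assertion that $f$ is $\mu$-a.e.\ constant is exactly the statement that $\mu=\eta+\nu$ is ergodic, which is one of the things this very lemma is being used to establish (Corollary~\ref{cor:confnoatom} derives ergodicity from uniqueness). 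As written the step is therefore either unproved or circular. The paper avoids this by a Hahn--Jordan decomposition: if $\eta\neq\nu$, then $(\nu-\eta)_{\pm}$, suitably normalized, are two mutually singular $q$-conformal measures with no atoms at critical points, and the same Besicovitch large-scale comparison (now with equal exponents) yields $\eta(E)\gtrsim\nu(E)$ for all Borel $E$, contradicting mutual singularity. Your $\eta\asymp\nu$ bound, applied not to $(\eta,\nu)$ but to the normalized positive and negative parts of $\nu-\eta$, would also close the gap; but as currently stated the constancy of $f$ is simply asserted.
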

\begin{proof}
If a  ball $B$ of radius $r(B)$ is mapped with a bounded distortion to the large
scale, i.e. $F^{n}(B)=A$, then
$$\nu(B) ~\asymp~\br{\fr{r(B)}{\diam (A)}}^{q}\,\nu(A)
~\asymp~r(B)^{q}~.$$

Assume first that $p$ and $q$ are different,
without loss of generality $p<q$.
Then, by  Proposition~\ref{prop:largescale},
$\nu$-almost every point goes infinitely often to the large scale
with bounded distortion.
This implies  that for $\nu$-almost 
every point $z$ there is a sequence of balls $B_{j}$
of radius $R_{j}\to 0$ centered at $z$ so that 
$$\eta(B_j)~\asymp~(R_j)^p~=~(R_j)^{p-q}~(R_j)^q~\asymp
~(R_j)^{p-q}~\nu(B_j)~.$$
Let ${\Cal B}$ be a collection of all  balls of  radius  less than $r$ which
are mapped with a uniformly bounded distortion to the large scale. By the
Besicovitch covering theorem
(see Section 2.7 in \cite{mattila}) there exists a subcollection
$\Cal B'$ of $\Cal B$ so that 
$\nu$-almost all points  of $J$ are contained in $\bigcup_{B\in {\Cal B}'}$ 
and  every point in $\C$ is covered by at most $P$ balls from $\cal B'$.
Then
\begin{eqnarray*}
\eta(J)&\ge&P^{-1}\sum_{B\in{\Cal B'}}\eta(B)
~\gtrsim~\sum_{B\in{\Cal B'}}r(B)^{p-q}\,\nu(B)\\
&\ge&r^{p-q}\sum_{B\in{\Cal B'}}\nu(B)~\ge~r^{p-q}\,\nu(J)~
\end{eqnarray*}
which (for sufficiently small $r$)
contradicts the fact that $\eta(J)=\nu(J)=1$.

Hence $p=q$. If $\nu$ and $\eta$ are different probabilistic measures
then their difference $\nu-\eta$ has a non-trivial
positive and negative part. After normalization,
$(\nu-\eta)_-$ and $(\nu-\eta)_+$ become $q$-conformal
measures which are mutually singular.
Therefore,  without loss of generality,  we can assume that $\nu$ and $\eta$
are mutually singular.

If $E\subset J$ is an open set then, by the  Besicovitch covering theorem,
we can choose a cover $\Cal B'$ of
$\nu$-almost all points of $E$ 
such that every point in $\C$ is covered by at most $P$ balls
and no points outside $E$ are covered.
Then
$$\eta(E)~\ge~P^{-1}\sum_{B\in{\Cal B'}}\eta(B)~\asymp~
\sum_{B\in{\Cal B'}}(r(B))^{q}~\asymp~
~\sum_{B\in{\Cal B}}\nu(B)~\ge~\nu(E)~$$
and consequently  $\eta(E)~\gtrsim~\nu(E)$ for every Borel set $E$.
This contradicts the mutual singularity of $\eta$ and $\nu$,
and completes the proof.
\end{proof}

\begin{coro}\label{cor:confnoatom}
If  $F$ satisfies the summability condition with an exponent
$$\alpha~<~\fr{\dpoin(J)}{\mmax+\dpoin(J)}~$$
then 
\begin{enumerate}
\item{}
There is a unique $\dpoin(J)$-conformal measure.
It is ergodic and  non-atomic.
This is the only conformal measure  with no atoms at the critical points.
In particular, there are no non-atomic measures
with exponents different from $\dpoin(J)$.
\item{}
There are no conformal measures
with exponents less than $\dpoin(J)$, i.e. $\dpoin(J)=\dconf(J)$.
\item{}
For every $q>\dpoin(J)$ there exists
an atomic  $q$-conformal measure 
supported on the backward orbits of the critical points.
Every conformal measure has  no atoms at other points.
\end{enumerate}
\end{coro}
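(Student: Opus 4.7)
The plan is to derive each of the three assertions from three ingredients already in place: the construction of a $\dpoin(z)$-conformal measure at a suitable base point (Lemma~\ref{lem:existconf}), the uniqueness result (Lemma~\ref{lem:erg}), and the divergence/convergence dichotomy for the Poincar\'e series encoded in Theorem~\ref{theo:poincare} and Proposition~\ref{prop:poincare}. First I would produce a candidate $\dpoin(J)$-conformal measure $\nu$ by applying Lemma~\ref{lem:existconf} at any $z$ at positive distance from the critical orbits; Theorem~\ref{theo:poincare} gives $\dpoin(z)=\dpoin(J)$, so the exponent is as required. The heart of Claim (1) is to show that $\nu$ gives no mass to any critical point $c\in J$. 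I would argue by contradiction: if $\nu(\{c\})>0$, then conformality applied to small disks around each regular preimage $y\in F^{-n}(c)$ (one whose backward orbit avoids $\Crit$) forces $\nu(\{y\})=|(F^{n})'(y)|^{-\dpoin(J)}\nu(\{c\})$. Summing over regular preimages yields a lower bound of $\nu(\{c\})$ times a subseries of $\Sig_{\dpoin(J)}(c)$; Theorem~\ref{theo:poincare} provides divergence of $\Sig_{\dpoin(J)}$ at every point, while the critical preimages are sparse enough that the regular subseries still diverges, contradicting $\nu(J)=1$.

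Once $\nu$ has no atoms at critical points, Lemma~\ref{lem:erg} immediately gives uniqueness among conformal measures without atoms at critical points, regardless of exponent; in particular any non-atomic conformal measure must equal $\nu$, giving the second half of Claim (1) and excluding non-atomic measures at exponents other than $\dpoin(J)$. For ergodicity I would use the standard splitting: an $F$-invariant Borel set $A$ with $0<\nu(A)<1$ would produce two normalized conformal measures $\nu|_A/\nu(A)$ and $\nu|_{A^c}/\nu(A^c)$ of the same exponent, both free of atoms at critical points, again contradicting Lemma~\ref{lem:erg}.

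For Claim (2), suppose there were a $p$-conformal $\eta$ with $p<\dpoin(J)$. By the uniqueness of $\nu$, $\eta$ cannot be free of atoms at critical points, so some critical point $c$ is an atom of $\eta$; but then the pull-back argument from the previous step gives $\Sig_p(c)<\infty$, whereas Theorem~\ref{theo:poincare} (via $\dpoin(c)\geq\dpoin(J)>p$) forces divergence, a contradiction. The reverse inequality $\dconf(J)\leq\dpoin(J)$ is immediate from the existence of $\nu$. For Claim (3), fix $q>\dpoin(J)$. Proposition~\ref{prop:poincare} supplies $\Sig_{q}(c)<\infty$ for every critical $c\in J$, so
$$\nu_{q}\,:=\,\frac{1}{1+\Sig_{q}(c)}\left(1_{c}+\sum_{n\geq1}\sum_{y\in F^{-n}(c)}\frac{1_{y}}{\abs{(F^{n})'(y)}^{q}}\right)$$
is a well-defined probability measure, and the scaling $|F'(y)|^{q}$ between the atoms at $y$ and $F(y)$ verifies $q$-conformality on any set of injectivity. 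For the last assertion that no conformal measure carries atoms outside the grand orbit of $\Crit$, I would repeat the pull-back argument: an atom at a point $y$ whose entire forward orbit avoids $\Crit$ forces $\Sig_{p}(y)<\infty$ with mass $\nu(\{y\})$ propagating to every preimage, and combined with Theorem~\ref{theo:poincare} this pins the atomic support to $\bigcup_{n\geq0}F^{-n}(\Crit)$.

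The main obstacle, as in the proof of Lemma~\ref{lem:erg}, is the rigorous propagation of an atom at a critical point to all regular preimages: one must choose shrinking disks around each $y\in F^{-n}(c)$ that separate $y$ from all other preimages and from the critical orbit, verify the conformality identity in the limit despite possible branching of intermediate iterates, and show that the sparse subset of preimages hitting other critical points can be discarded without destroying the divergence guaranteed by Theorem~\ref{theo:poincare}. Once this propagation is established cleanly, the three claims fall out as outlined.
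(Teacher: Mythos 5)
Your proposal follows the same skeleton as the paper: existence via Lemma~\ref{lem:existconf}, non-atomicity via divergence of the Poincar\'e series, and uniqueness/ergodicity via Lemma~\ref{lem:erg}, together with the standard splitting argument. Claims~(1) and~(2) are argued essentially as in the paper (the paper routes the atom-at-$c$ contradiction through Corollary~\ref{cor:critexpon} rather than through a ``regular subseries of $\Sig_{\dpoin(J)}(c)$'', but this is cosmetic: under the paper's standing convention that no critical point lies on another critical orbit, the preimage tree of $c$ is clean, so there is no ``sparse critical subset'' to discard and your worry about it is moot).

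There is, however, a genuine gap in your treatment of the last assertion of Claim~(3). You argue that an atom of a $q$-conformal measure at a point $y$ whose forward orbit avoids $\Crit$ gives $\Sig_{q}(y)<\infty$ ``combined with Theorem~\ref{theo:poincare}''. But Theorem~\ref{theo:poincare} asserts divergence of the Poincar\'e series only at the critical exponent $\dpoin(J)$; for $q>\dpoin(J)$ the series $\Sig_{q}(y)$ is perfectly allowed to converge (indeed, it does at a maximal-multiplicity critical point, which is what makes the atomic measures in the first half of Claim~(3) exist), so no contradiction arises from this route. The paper closes this case differently: it observes that if $\eta(\{y\})>0$ and the forward orbit of $y$ avoids $\Crit$, then the whole grand orbit of $y$ avoids $\Crit$, because $\eta(\{F^{n}(c)\})=|(F^{n})'(c)|^{q}\,\eta(\{c\})=0$ for any critical $c$, while $\eta(\{F^{m}(y)\})=|(F^{m})'(y)|^{q}\,\eta(\{y\})>0$; restricting $\eta$ to this fully invariant grand orbit and normalizing produces a conformal measure with no atoms at critical points, which by Lemma~\ref{lem:erg} must coincide with the non-atomic $\nu$, a contradiction. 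You would need to replace your appeal to Theorem~\ref{theo:poincare} with this (or an equivalent) argument. A minor secondary slip: ``Proposition~\ref{prop:poincare} supplies $\Sig_{q}(c)<\infty$ for every critical $c\in J$'' overstates what that proposition gives --- its bound is $\Delta$-uniform only when $\mu(c)=\mmax$, which is all the construction of the atomic measure requires.
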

\begin{proof}\begin{enumerate}
\item{}
By Lemma~\ref{lem:existconf}, there is
a $\dpoin(J)$-conformal measure. It cannot have  atoms 
since otherwise the corresponding Poincar\'e series  converges
and by Corollary~\ref{cor:critexpon}, $\dpoin(J)<\dpoin(J)$.
Now, uniqueness and ergodicity  follow from Lemma~\ref{lem:erg}.
\item{}
There are no atomic measures by Corollary~\ref{cor:critexpon}
and no non-atomic measures by  Lemma~\ref{lem:erg}.
\item{}
To obtain an atomic $q$-conformal measure, $q>\dpoin$, 
distribute atoms at all preimages 
of a critical point of the maximal multiplicity.
If there is a conformal measure  with an atom at a point
whose orbit omits the critical points then
we can easily produce a conformal measure  which has
no atoms at the critical points. 
By Lemma~\ref{lem:erg}, the latter  coincides with a unique
$\dpoin$-conformal measure which is non-atomic, a contradiction.
\end{enumerate}
\end{proof}

Corollary~\ref{cor:confnoatom} implies Theorem~\ref{theo:4}.

\subsection{Frequency of passages to the large scale}
In this Section we give a proof of Theorem~\ref{theo:largescale}.
Consider the  set $\Jlso$ of all points $x\in J$ which 
$\epsilon$-frequently  go to
the large scale of size $R'$, namely: 
$$\exists\, n_j\to\infty:~F^{n_j} 
\mbox{~univalent~on ~} F^{-n_j}\br{B\br{F^{n_j}x,R'}},
~\abs{\br{F^{n_{j+1}}}'(x)}\,<\,\abs{\br{F^{n_{j}}}'(x)}^{1+\epsilon}~.$$
Note that the value of $R'$ is already fixed and does not depend on a point.

\begin{prop}\label{prop:freq}
Suppose that a rational function $F$ satisfies the {\em summability condition}
with an exponent
$$\alpha~<~\fr{p}{\mmax+p}~$$
and
for every  $q>p$
there exists a point $v$ such that the Poincar\'e series $\Sigma_q(v)$
converges. 
If a  $p$-conformal measure $\nu$ has
no atoms at the critical points then 
$\nu(J\setminus\Jlso)=0$.
\end{prop}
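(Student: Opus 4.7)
The plan is to strengthen Proposition~\ref{prop:largescale} by exploiting the additional Poincar\'e convergence hypothesis. By Proposition~\ref{prop:largescale}, $\nu(J \setminus \Jls) = 0$, so it suffices to show $\nu(\Jls \setminus \Jlso) = 0$. Setting $T_x := \{n \ge 1 : F^n \text{ is univalent on } F^{-n}B(F^n x, R')\}$ and
\[
B_m := \{x : m \in T_x \text{ and } |(F^{m'})'(x)| \ge |(F^m)'(x)|^{1+\epsilon} \text{ for every } m' \in T_x \cap (m,\infty)\},
\]
one checks that $\Jls \setminus \Jlso = \bigcup_m B_m$, so the goal reduces to $\nu(B_m) = 0$ for each $m$.

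For any $q > p$ the hypothesis yields a $v$ with $\Sigma_q(v) < \infty$, and since $\alpha < p/(\mmax+p) < q/(\mmax+q)$, Proposition~\ref{prop:poincare} provides $p^* \in (p,q)$ satisfying a uniform restricted Poincar\'e bound $\sum_{y \in \xx{}{z,R'}} |(F^n)'(y)|^{-p^*} \lesssim 1$ on a neighborhood of $J$. Integrating this against $d\nu(z)$ and applying the $p$-conformality identity $d\nu(z) = |(F^n)'(y)|^p\, d\nu(y)$ on each univalent pullback branch yields, after Fubini,
\[
\int_J \sum_{n \in T_x} |(F^n)'(x)|^{-q'}\, d\nu(x) < \infty, \qquad q' := p^* - p > 0.
\]
In particular $|(F^n)'(x)| \to \infty$ along $T_x$ for $\nu$-a.e.\ $x$.

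Fix $m$ and partition $B_m = \bigsqcup_s B_m^s$ by $|(F^m)'(x)| \in [2^s, 2^{s+1})$; write $M := 2^s$ and $y := F^m x$. Since $m \in T_x$, $F^m$ is univalent on $V_x = F^{-m}B(y, R')$. The key geometric observation is: for any $k \in T_y$ whose Koebe pullback $U_y = F^{-k}B(F^k y, R')$ lies inside $B(y, R')$---which holds once $|(F^k)'(y)| \ge c_0$ for a fixed Koebe constant $c_0$---the composition $F^{m+k} = F^k \circ F^m$ is univalent on the component of $F^{-m}U_y$ through $x$, so $m+k \in T_x$ and the $B_m$-condition forces $|(F^k)'(y)| \ge M^\epsilon$. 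Hence
\[
y \in A_s := \{y \in J : T_y \cap \{k : |(F^k)'(y)| \in [c_0, 2^{s\epsilon})\} = \emptyset\}.
\]
Since $|(F^k)'(y)| \to \infty$ along $T_y$ for $\nu$-a.e.\ $y$, the decreasing family $A_s$ has $\nu$-null intersection, and $\nu(A_s) \downarrow 0$ by monotone convergence.

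By the conformal area identity $\nu(B_m^s) \asymp 2^{-sp} \int_J |B_m^s \cap F^{-m}(y)|\, d\nu(y)$, together with the pointwise bound $|B_m^s \cap F^{-m}(y)| \le 2^{(s+1)p^*} \sum_{x} |(F^m)'(x)|^{-p^*} \lesssim 2^{sp^*}$ (applying Proposition~\ref{prop:poincare} pointwise at $y \in A_s$, using $F^m(B_m^s) \subset A_s$), one obtains
\[
\nu(B_m^s) \lesssim 2^{sq'}\, \nu(A_s).
\]
The hardest part is turning this into summability in $s$: the qualitative decay $\nu(A_s) \downarrow 0$ is not enough, and one needs a quantitative rate $\nu(A_s) \lesssim 2^{-s\delta}$ with $\delta > q'$. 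Extracting this rate is the technical heart of the argument; it is obtained by iterating the integrated Poincar\'e estimate of the previous paragraph---applied to the observable $y \mapsto \inf\{|(F^k)'(y)| : k \in T_y,\ |(F^k)'(y)| \ge c_0\}$ via a Chebyshev-type argument---and invoking the ergodicity of $\nu$ from Theorem~\ref{theo:4} to absorb the fixed-measure set of $y$ with early small-derivative $T_y$-visits into a null set.
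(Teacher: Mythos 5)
Your overall reduction (via Proposition~\ref{prop:largescale} to showing $\nu(\Jls\setminus\Jlso)=0$) and your integrated Poincar\'e bound
$\int_J\sum_{n\in T_x}|(F^n)'(x)|^{-q'}\,d\nu<\infty$ are both sound, and the geometric observation that $F^m(B_m^s)\subset A_s$ (Koebe pull-back containment plus the $B_m$-condition) is a genuinely nice point not used by the paper. However, there are two real gaps, and they are not cosmetic.

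First, the step you yourself flag as ``the hardest part'' is not filled and, as described, cannot be filled. You need $\nu(A_s)\lesssim 2^{-s\delta}$ with $\delta>q'$, and you propose obtaining it by a Chebyshev argument applied to the observable $g(y)=\inf\{|(F^k)'(y)|: k\in T_y,\ |(F^k)'(y)|\ge c_0\}$ together with ergodicity. But $A_s=\{g\ge 2^{s\epsilon}\}$ is where $g$ is \emph{large}, so a Chebyshev bound would require an a~priori control on $\int g^{\,q'}d\nu$ or $\nu(g\ge\lambda)$; your integral bound controls $\sum_{k\in T_y}|(F^k)'(y)|^{-q'}$, which is \emph{small} exactly when $g$ is large, so the inequality points the wrong way. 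Moreover $A_s$ is not $F$-invariant, so ergodicity of $\nu$ gives you nothing directly about $\nu(A_s)$. I do not see how to repair this within your framework.

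Second, and independently, even if the decay estimate held, your argument would only give $\nu(B_m)\lesssim\sum_s 2^{s(q'-\delta)}\nu(A_s)\le C<\infty$ uniformly in $m$, which is trivially true since $\nu$ is a probability measure; you never obtain $\nu(B_m)=0$. The reduction ``it suffices to show $\nu(B_m)=0$ for each $m$'' is therefore not what your chain of estimates delivers. (There is a potential repair: one can check that every $x\in\Jls\setminus\Jlso$ in fact lies in $B_m$ for \emph{infinitely many} $m$, so it would suffice to prove $\sum_m\nu(B_m)<\infty$; this requires a quantitative $m$-decay of $\nu(B_m)$, which you could try to extract from $|(F^m)'(x)|\gtrsim\omega_m$ for $x$ going to large scale at time~$m$. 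But this is another missing ingredient, and it imposes yet another constraint on $\delta$.)

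For comparison, the paper avoids the need for any quantitative measure-decay estimate by a factorization trick: for each $x\in X:=\Jls\setminus\Jlso$ it produces infinitely many pairs $n_j<m_j$ of large-scale times with $|(F^{m_j})'(x)|\ge|(F^{n_j})'(x)|^{1+\epsilon}$, and then splits the conformal Jacobian as
$$|(F^{m_j})'(x)|^{-p}\ \le\ |(F^{n_j})'(x)|^{-(p+\delta)}\,|(F^{m_j-n_j})'(F^{n_j}x)|^{-(p-\delta/\epsilon)}\ .$$
The first factor is handled by the convergent Poincar\'e series at exponent $p+\delta$ (over type-$\typeii$ preimages), the second by Lemma~\ref{lem:main2} at exponent $p-\delta/\epsilon$ (over type-$\typei$ preimages), and the infinite multiplicity in $j$ forces $\nu(X)=0$ after integration. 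This exponent-splitting is the key mechanism your proposal lacks; you would likely need to import it (or an equivalent) to close the argument.
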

\begin{proof}
We say that 
a point $x$ goes
to the large scale of size $R'$ univalently at time $k$  if
\begin{equation}\label{lscale}
F^{k}\mbox{~is~univalent~on~} F^{-k}\br{B\br{F^{k}x,R'}}~.
\end{equation}
Assume that a point $x$ goes to the large scale at a time $m$
and apply the inductive procedure of Section~\ref{sec:stop}
for the sequence  $x,\dots,F^{m-1}(x)$. As a result we obtain 
a sequence of blocks of the form $21\dots13$, $21\dots1$, or $2$
(blocks of type $2$ might be of zero length, meaning that we end with a type $1$ block).
Suppose that  $y:=F^n(x)$ is the first point which belongs
to a block of type $2$ or equivalently is a terminal point
of the longest sequence of the form $1\dots13$ in the decomposition
into blocks of the orbit $x,\dots, F^{m-1}(x)$.
By the definition of $m$, a ball of radius $R'$
around $F^{m}(x)$ can be pulled univalently back to $x$.
Hence, the same is true for a ball of radius $\Delta:=R'/(4M)$,
$M=\sup_{y\in J}|F'(y)|$, around $z:=F^{m-1}(x)$. 
Therefore, $y\in\typei^\Delta(z)=\typei^\Delta(z|z)$
-- recall that $\typei^\Delta(z)$ stands for the set 
of the first type preimages $y\in\xx{}{z,\Delta}$ of $z$,
 obtained in the course of the inductive procedure for $z$.

By  Proposition~\ref{prop:largescale},  
we already know that $\nu$-almost all
points in $J$ go to the 
large scale infinitely often  so it is sufficient to show
that $\nu(X)=0$ for $X:=\Jls\setminus\Jlso$.

Suppose that for every  $x\in X$ there are two increasing sequences 
$\{n_j\}$ and  $\{m_j:=m(n_j)\}$ such that 
$$\abs{\br{F^{m_j}}'(x)}\,\ge\,\abs{\br{F^{n_{j}}}'(x)}^{1+\epsilon}~.$$ 
Therefore,
$\abs{\br{F^{m_j-n_j}}'(F^{n_j}(x))}\,\ge
\,\abs{\br{F^{n_{j}}}'(x)}^{\epsilon}$.
Denote $y=y_j(x):=F^{n_j}x$ and
$z=z_j(x):=F^{m_j}x$.  Then 
\begin{eqnarray*}
\abs{\br{F^{m_j}}'(x)}^{-p}
&=&\abs{\br{F^{n_j}}'(x)}^{-p}\,\abs{\br{F^{m_j-n_j}}'(y)}^{-p}\\
&\le&\abs{\br{F^{n_j}}'(x)}^{-(p+\delta)}
\,\abs{\br{F^{m_j-n_j}}'(y)}^{-(p-\delta/\epsilon)}~.
\end{eqnarray*}
Choose $\delta$ so small that
$\alpha<\beta/(\mmax+\beta)$
for $\beta:=p-\delta/\epsilon$.
Then,  by Lemma~\ref{lem:main2},
\begin{equation}\label{equ:obs1} 
\sum_{y\in\typei^\Delta(z)}
\abs{\br{F^{n(y)}}'(y)}^{-(p-\delta/\epsilon)}~<~\const(\Delta)~.
\end{equation}

By the assumptions, the Poincar\'e series for $q:=p+\delta$
converges for a point $v$ whose
preimages are dense in the Julia set.
We can choose finitely many of them, say
$v_1,\dots,v_n$, so that they are $R'/4$-dense in $J$ and
their Poincar\'e series are  also convergent.
Now, for every point $z$ with $\dist{z,\J}\,<\,~R'/2$,
there is a point $v_j\in B_{3R'/4}(z)$. By 
the Koebe distortion lemma~\ref{lem:koeb},
we have that
$$\sum_{y\in \typeii{(z)}}\abs{\br{F^n}'(y)}^{-q}~\lesssim~
\Sigma_q(v_j)\, \le\,\max_{j}\Sigma_q(v_j)~\lesssim~\Sigma_q(v)
\,<\,\infty~,$$
and 
\begin{equation}\label{equ:obs2}
\sup_{y}\,\sum_{x\in\typeii(y)}
\abs{\br{F^{n(x)}}'(x)}^{-(p+\delta)}~<~\const~<~\infty~.
\end{equation}
Combining the estimates (\ref{equ:obs1}) and (\ref{equ:obs2}),
we obtain that
\begin{eqnarray*}
\infty\cdot\nu(X)&=&
\int_{X}\sum_{j}1\;d\nu(x)~=~
\int_{X}~\sum_{j}~~\abs{\br{F^{m_j}}'(x)}^{-p}\,d\nu\br{F^{m_j}x}\\
&=&\int_{J}~\sum_{j,x:\,z=z_j(x)}~~\abs{\br{F^{m_j(x)}}'(x)}^{-p}\,d\nu\br{z}\\
&\le&\int_{J}~\sum_{j,x:\,z=z_j(x)}~
~\abs{\br{F^{n_j}}'(x)}^{-(p+\delta)}\,
\abs{\br{F^{m_j-n_j}}'(y_j(x))}^{-(p-\delta/\epsilon)}~d\nu\br{z}\\
&\le&\int_{J}~\sum_{x,j:z=z_j(x)}\abs{\br{F^{n_j}}'(x)}^{-(p+\delta)}~
\sum_{y:\exists x,\,y=y_j(x),\,z=z_j(x)}
\,\abs{\br{F^{m_j-n_j}}'(y)}^{-(p-\delta/\epsilon)}d\nu\br{z}\\
&\le&\br{\sup_{y}\,\sum_{x\in\typeii(y)}
\abs{\br{F^{n(x)}}'(x)}^{-(p+\delta)}}
~\cdot~\int_{J}~
\sum_{y\in\typei(z)}\abs{\br{F^{n(y)}}'(y)}^{-(p-\delta/\epsilon)}
d\nu\br{z}\\
&\le&\const\,\int_{J}~\const(\Delta)~d\nu(z)~~<~~\infty~.
\end{eqnarray*}
Therefore $\nu(\Jls\setminus\Jlso)=0$ and the proposition follows.
\end{proof}
\paragraph{Proof of Theorem~\ref{theo:largescale}.}
Theorem~\ref{theo:largescale} is a consequence of Theorem~\ref{theo:4}
and Proposition~\ref{prop:freq}.

\section{Invariant measures}\label{sec:inv}

\subsection{Polynomial summability condition}

In this Subsection we begin the proof of Theorem~\ref{theo:inv},
establishing existence of an absolutely continuous invariant measure,
provided the {\em polynomial summability condition} holds.
We start with the geometric measure $\nu$
with exponent $\delta:=\dpoin(J)$, 
which exists by Theorem~\ref{theo:4}.
It is sufficient to find $Z\in L^1(\nu)$
such that for all $n$
\begin{equation}\label{eq:invmeas}
\frac{d\nu\circ F^{-n}}{d\nu}(z)~\lesssim~Z(z)~.
\end{equation}
In fact, any 
weak subsequential limit of
$$\frac1{n}\sum_{k=1}^n d\nu\circ F^{-k}~,$$
is an invariant measure,
and (\ref{eq:invmeas}) implies that its
density is majorated by $Z(z)$, and hence
it is absolutely continuous with respect to $\nu$.

To find $Z$ and establish (\ref{eq:invmeas}), we proceed as follows.
Given two points $y,\,z$ with $z=F^n(y)$
we define points $v=v(y,z),\,w=w(y,z)$ by the following construction,
which is feasible for $\nu$-almost every $z$.

Since we are interested only in $\nu$-generic points, we can assume,
by Proposition~\ref{prop:largescale},
that $y$ goes to the large scale infinitely often.
Let $n'$ be the first time $n'>n$ when $y$ goes to the large scale,
and denote $w:=F^{n'-1}(y)$. By the choice of $n'$, 
the ball of radius $R'$ around $F(w)$ can be pulled univalently back to $y$.
The same is of course true for the ball of radius $\Delta:=R'/(4M)$ around $w$,
$M=\sup_{y\in J}|F'(y)|$. Now we carry out 
the inductive procedure from the Main Lemma~\ref{lem:main}
for the preimages of $w$ of order $<n'$ till we get a block of type $2$.
By the definition of $n'$, a code of the sequence $y,\,F(y),\dots,\,z,\dots,w$
is  of the form $21\dots13$. Let $v=v(y,z):=F^l(y)$ be the point which
starts the block  of type $2$ (in other words, $v$ ends the blocks $1\dots13$).
Note that $y\in\typeii(v)$ and  $v\in\typei(w)=\typei^\Delta(w|w)$.
We recall that $\typei^\Delta(w|w)$ stands for the set 
of first type preimages of $w$ belonging to $\xx{}{w,\Delta}$ 
 obtained in the course of the inductive procedure for $w$.

Below, we assume that $l=l(v)$ is chosen so that $v=F^l(y)$,
and $j=j(v)=n-l$.
If $F^n(y)=z$, we denote $n(y,z):=n$.
Recalling that $\delta=\dpoin(J)$ is the exponent of
the conformal measure $\nu$,
we can write
\begin{eqnarray*}
\frac{d\nu\circ F^{-n}}{d\nu}(z)
&=&\sum_{y\in F^{-n}(z)}~\abs{(F^n)'(y)}^{-\delta}\\
&=&\sum_{v:\,\exists y\in F^{-n}(z),\,v=v(y,z)}
~\sum_{y\in F^{-l}(v)}~\abs{(F^l)'(y)}^{-\delta}~\abs{(F^{n-l})'(v)}^{-\delta}\\
&\le&\sum_{v:\,\exists y\in F^{-n}(z),\,v=v(y,z)}
\br{\sup_{x}\sum_{y\in\typeii(x),\,F^{l}(y)=x}
~\abs{(F^l)'(y)}^{-\delta}}~\abs{(F^{n-l})'(v)}^{-\delta}\\
&\lesssim&\sum_{v:\,\exists y,\,v=v(y,z)}
~\abs{(F^{n(v,z)})'(v)}^{-\delta}~=:~Z(z)~.
\end{eqnarray*}
The estimate above is possible since
for a fixed $n$ and $z$ every point $v\in F^{-j}z$
is counted only if it is $v(y,z)$ for some $y\in F^{-n}z$,
and in this case $l=n-j$ is fixed (and independent of $y$).
However, once the summation is done, $n$ disappears from
the estimate and does not figure in the definition of $Z$.
Note also that summation set satisfies
$$\brs{v:\,\exists n,y\in F^{-n}(z),\,v=v(y,z)}
\subset \brs{v:\,\exists w,\,y\in\typei(w),\,
y\in F^{-n}(z)\cap F^{-m}(w),\,m\ge n}~.$$
Thus it suffices to prove that $Z\in L^1(\nu)$,
which we can do
by writing
\begin{eqnarray*}
\int Z(z)d\nu(z)
&=&\int\sum_{v:\,\exists y,\,v=v(y,z)}
~\abs{(F^{n(v,z)})'(v)}^{-\delta}d\nu(z)\\
&\le&\int \sum_{v,z:\,\exists y,v=v(y,z),w=w(y,z)}~\abs{(F^{n(v,w)})'(v)}^{-\delta}d\nu(w)\\
&\le&\int \sum_{v\in\typei(w)}~n(v,w)~
\abs{(F^{n(v,w)})'(v)}^{-\delta}d\nu(w)\\
&\lesssim&\int \sum_{n}~n~
\sum_{i,k_1,\dots,k_i:\,k_1+\dots+k_i=n}~\br{\gamma_{k_1}\dots\gamma_{k_i}}^{-\delta}\\
&\lesssim&\int \sum_{n}~n~{\gamma_{n}}^{-\delta}~<~\infty~,
\end{eqnarray*}
the last inequality being true
 since $F$ satisfies the polynomial summability condition
with an exponent $\alpha<\delta/(\delta+\mmax)$.
We also use above that for given $v$ and $w$ there are at most $n(v,w)$
possible choices of $z$,
namely $v,F(v),\dots,F^{n(v,w)-1}(v)=F^{-1}(w)$.
This concludes the proof of the existence
of an absolutely continuous invariant measure.

\subsection{Ergodic properties}\label{sec:lyap}
 
In this Section we complete the proof of Theorem~\ref{theo:inv},
establishing that an absolutely continuous invariant measure
is unique, ergodic, mixing, exact, has positive entropy and Lyapunov exponent.
We do not require the polynomial summability condition of 
Theorem~\ref{theo:inv}:
it is sufficient to assume the corresponding summability condition 
and existence of an absolutely continuous invariant measure.

If an absolutely continuous invariant measure exists,
its ergodicity and uniqueness follow immediately
from the ergodicity of the geometric measure
asserted by Theorem~\ref{theo:4}.

\paragraph{Lyapunov exponents.}
A {\it Lyapunov exponent} of $F$ at $z$ is defined as
\[\chi(z):=\lim_{n\rightarrow \infty} \frac{1}{n}\log|(F^{n})'(z)| ~ ,\]
provided that the limit exists.
A {\it Lyapunov exponent} of an invariant measure $\sigma$ 
is defined as $\chi_{\sigma} =\int \log |F'|\,d\sigma$. 
Birkhoff's ergodic theorem implies that if $\sigma$ is ergodic then for almost every
point $z$ with respect to $\sigma$ the Lyapunov  exponent
$\chi(z)$ exists and is equal to $\chi_{\sigma}$. 
The next lemma is based on standard reasoning (see e.g. \cite{demelo-vanstrien}).

\begin{lem}
Let $\nu$ be a geometric measure of a rational function $F$ which
satisfies the summability condition with an exponent 
\[\alpha<\frac{\dpoin(J)}{\dpoin(J)+\mmax}~.\]
Suppose that $\sigma$ is an absolutely continuous invariant measure with
respect to $\nu$. Then $\sigma$ has positive entropy and Lyapunov exponent
and for almost every point $z$ with respect to $\nu$,
\[\chi(z)=\int \log|F'|\;d\sigma >0.\]
\end{lem}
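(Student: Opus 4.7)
I plan to deduce both positivity claims from the induced hyperbolicity established in Theorem~\ref{theo:largescale} and Rokhlin's formula, after first upgrading $\sigma\ll\nu$ to equivalence. Writing $\rho=d\sigma/d\nu$, the $F$-invariance of $\sigma$ combined with the conformality $d\nu(Fz)=|F'(z)|^\delta\,d\nu(z)$ yields the transfer-operator identity $\rho(z)=\sum_{F(y)=z}\rho(y)|F'(y)|^{-\delta}$ for $\nu$-a.e.\ $z$, whence $\{\rho=0\}$ is backward $F$-invariant. The ergodicity of $\nu$ from Theorem~\ref{theo:4} forces this set to be $\nu$-null (since $\sigma$ is a probability measure), so $\sigma\sim\nu$; in particular, the $\sigma$-a.e.\ and $\nu$-a.e.\ notions coincide and $\sigma$ inherits ergodicity. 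Next, $\log|F'|\in L^1(\nu)$, since its only singularities, the logarithmic ones at critical points, are integrable against the non-atomic measure $\nu$ (using $\nu(B(c,r))\lesssim r^{\delta}$, which follows from Theorem~\ref{theo:dims}); the density $\rho$ is dominated by the $L^1(\nu)$ function $Z$ constructed in the existence argument of Subsection~5.1, so $\log|F'|\in L^1(\sigma)$, and Birkhoff's ergodic theorem gives
\[\chi(z)=\lim_n\frac{1}{n}\log|(F^n)'(z)|=\int\log|F'|\,d\sigma\]
for $\sigma$-a.e., and hence $\nu$-a.e., $z$.

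The heart of the argument is the positivity of $\chi_\sigma$, which I would obtain via a standard induced first-return construction. Fix $r$ as in Theorem~\ref{theo:largescale} and choose a small ball $U$ around a density point of $\sigma$, with $\diam U$ much smaller than $r$. Let $G\subset U$ consist of those $z\in U$ admitting a ``hyperbolic first return'' to $U$: a time $R(z)<\infty$ with $F^{R(z)}z\in U$, with $F^{R(z)}$ univalent on the connected component of $F^{-R(z)}(B(F^{R(z)}z,r))$ containing $z$, and with $|(F^{R(z)})'(z)|\ge\lambda>1$. Theorem~\ref{theo:largescale} combined with a Pliss-type selection of hyperbolic return times and Poincar\'e recurrence applied to the ergodic measure $\sigma$ imply $\sigma(G)=\sigma(U)>0$, while the Koebe distortion lemma applied to the univalent pullback produces the uniform expansion bound. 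The first-return map $F_G:G\to G$ is thus uniformly expanding, and Abramov's formula together with Kac's lemma yield $\chi_\sigma\ge\sigma(G)\log\lambda>0$.

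Positive entropy then follows from Rokhlin's formula. The Jacobian of $F$ with respect to $\sigma$ is $J_\sigma=|F'|^\delta(\rho\circ F)/\rho$, so
\[h_\sigma=\int\log J_\sigma\,d\sigma=\delta\chi_\sigma+\int\log\frac{\rho\circ F}{\rho}\,d\sigma=\delta\chi_\sigma>0,\]
the second integral vanishing by the $F$-invariance of $\sigma$ (with $\log\rho\in L^1(\sigma)$ following by standard truncation from $\rho\in L^1(\nu)$). The principal obstacle lies in the second paragraph: converting the asymptotic large-scale return frequency of Theorem~\ref{theo:largescale} into genuine uniform expansion on returns to a fixed reference ball requires a careful Pliss-type selection, synchronised with recurrence of $\sigma$-typical orbits to a neighbourhood of positive $\sigma$-measure. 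These are, however, standard tools in non-uniformly hyperbolic dynamics, cf.~\cite{demelo-vanstrien}.
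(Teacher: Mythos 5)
Your first paragraph is fine and even adds something the paper leaves implicit (upgrading $\sigma\ll\nu$ to $\sigma\sim\nu$ via the transfer-operator identity and ergodicity of $\nu$). The Rokhlin computation in the last paragraph is also the right normal form. The problem is that you have reversed the logical order relative to the paper and in doing so created a gap in your central step.

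The paper proves $h_\sigma>0$ \emph{first}, by a purely measure-theoretic argument: the invariance of $\sigma$ gives $\Jac_\sigma\ge1$, while for small sets $A$ avoiding $\Crit$ one has $\Jac_\sigma|_A\asymp\nu(FA)/\nu(A)\in(0,\infty)$, so $1=\sum_{y'\in F^{-1}F(y)}\Jac_\sigma(y')^{-1}$ forces $\Jac_\sigma(y)>1$ for $\sigma$-a.e.\ $y$ once $F$ has at least two preimages at a generic point; hence $h_\sigma=\int\log\Jac_\sigma\,d\sigma>0$. Positivity of $\chi_\sigma$ then comes for free from Ma\~n\'e's theorem (essentially the Ruelle inequality $h_\sigma\le2\chi_\sigma$ for rational maps). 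No induced map is needed. Your scheme instead wants to prove $\chi_\sigma>0$ \emph{first} from an induced uniformly expanding return map, and this is where the argument breaks: a Pliss-type selection requires a positive Birkhoff average of $\log|F'|$ to produce hyperbolic times, which is exactly the statement $\chi_\sigma>0$ you are trying to prove. Theorem~\ref{theo:largescale} by itself only gives, at large-scale return times $n_j$, $|(F^{n_j})'(z)|\gtrsim r\tilde\omega_{n_j}\to\infty$ (via Koebe and Proposition~\ref{prop:shrink}), and since $\tilde\omega_n$ is only known to satisfy a summability condition it may grow subexponentially; such growth along a sparse subsequence of times is entirely consistent with $\chi_\sigma=0$. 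Moreover, synchronising the large-scale return times with Poincar\'e returns to a fixed ball $U$ of positive $\sigma$-measure is not automatic: two infinite sets of return times need not intersect, and no selection mechanism is given. So the sentence ``$\sigma(G)=\sigma(U)>0$'' is unjustified and the overall argument is circular.

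A secondary issue: ``$\rho$ is dominated by $Z\in L^1(\nu)$, so $\log|F'|\in L^1(\sigma)$'' is a non sequitur. From $\rho\lesssim Z\in L^1(\nu)$ and $\log|F'|\in L^1(\nu)$ you cannot conclude $\log|F'|\cdot\rho\in L^1(\nu)$ without some H\"older-type control (e.g.\ $Z\in L^{\zeta}(\nu)$ for $\zeta>1$, which is only established under the extra integrability hypotheses of Corollary~\ref{theo:inreg}). The integrability of $\log|F'|$ against an invariant measure giving zero mass to $\Crit$ is a true fact (Przytycki/Ma\~n\'e) but needs its own argument; the paper sidesteps this by citing \cite{mane}. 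Similarly, $\int\log(\rho\circ F/\rho)\,d\sigma=0$ requires $\log\rho\in L^1(\sigma)$, and $\rho\in L^1(\nu)$ alone does not give $\rho\log\rho\in L^1(\nu)$; this is another spot where appeal to the general theory is needed rather than ``standard truncation.'' I would suggest reorganising the proof to go the way the paper does: entropy positivity first via the Jacobian, then Lyapunov via the Ruelle/Ma\~n\'e inequality.
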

\begin{proof}
The entropy is given by the formula
$h_{\sigma}=\int \log \Jac_{\sigma}\, d\sigma$
where the Jacobian is defined as the Radon-Nikodym
derivative: $\Jac_{\sigma}\,:=\,d\sigma\circ F/d\sigma$.
The latter is always $\ge1$  since
$\sigma$ is invariant. 
In our case for sufficiently small
sets $A$ which do not contain  the critical points of $F$  we can write
$$\Jac_{\sigma}|_A\asymp\frac{\sigma(F(A))}{\sigma(A)} 
\asymp\frac{\nu(F(A))}{\nu(A)}>0~,$$
and hence
$$1~=~\sum_{y\in F^{-1}F(y)}\frac1{\Jac_\sigma(y)}~>~
\frac1{\Jac_\sigma(y)}~$$
for $\sigma$-almost every $y$.
We conclude that $\sigma$-almost everywhere
$\Jac_\sigma>1$ and hence entropy of $\sigma$ is positive.
Since $\sigma$ is invariant and ergodic,
the remaining statements follow from \cite{mane}. 
\end{proof}  

\paragraph{Exactness.}
Recall that a measure preserving endomorphism $F$ is called {\em mixing} if
for every two measurable sets $A$ and $B$
\[\lim_{n\rightarrow \infty}\sigma(A\cap T^{-n}(B))=\sigma(A)\sigma(B)~.\]
A measure preserving  endomorphism $F$ is {\em exact} if for every measurable $A$, $0<\nu(A)<1$,
there is no sequence of sets $A_{n}$ so that $A=F^{-n}(A_{n})$.
\begin{lem}
Suppose that $F$ satisfies the summability condition 
with an exponent
$$\alpha<\frac{\dpoin(J)}{\dpoin(J)+\mmax}~, $$
and has an absolutely continuous invariant measure $\sigma$.
Then $$\limsup_{n\rightarrow \infty}\sigma(F^{n}(A))=1$$ for
every measurable set $A$ of positive $\sigma$-measure, 
and hence $F$ is exact and mixing.
\end{lem}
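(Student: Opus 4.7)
The plan is to exploit strong induced hyperbolicity (Theorem~\ref{theo:largescale}) to spread any set of positive measure across the Julia set under forward iteration, and then deduce exactness and mixing from this spreading property. First, given $A$ with $\sigma(A)>0$, since $\sigma\ll\nu$ I have $\nu(A)>0$; combining the Lebesgue--Besicovitch density theorem for the Borel probability measure $\nu$ with Theorem~\ref{theo:largescale}, I pick a point $z_0\in A$ which is simultaneously a $\nu$-density point of $A$ and goes to the large scale of some fixed diameter $r$ along a sequence $n_j\to\infty$, i.e.\ $F^{n_j}$ is univalent on the component $V_{n_j}$ of $F^{-n_j}(B(F^{n_j}z_0,r))$ containing $z_0$.

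Next, by the Koebe distortion lemma the maps $F^{n_j}\colon V_{n_j}\to B(F^{n_j}z_0,r)$ have distortion bounded uniformly in $j$, so each $V_{n_j}$ is a quasi-disk of bounded eccentricity sandwiched between concentric round balls of comparable radii $\asymp |(F^{n_j})'(z_0)|^{-1}r$ around $z_0$. The density property at $z_0$ gives $\nu(V_{n_j}\setminus A)/\nu(V_{n_j})\to 0$, and the $\dpoin(J)$-conformality of $\nu$ combined with the uniform distortion bound transports this estimate forward to
\[\frac{\nu\bigl(B(F^{n_j}z_0,r)\setminus F^{n_j}(A)\bigr)}{\nu\bigl(B(F^{n_j}z_0,r)\bigr)}~\longrightarrow~0~,\]
so that $F^{n_j}(A)$ exhausts almost all the $\nu$-mass of a ball of fixed size centered on a point of $J$.

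To pass from a ball to the whole Julia set I would use the eventually-onto property of $F|_J$ (Theorem~1 in \cite{CG}): by compactness of $J$ there is a uniform integer $k_0=k_0(r)$ with $F^{k_0}(B(w,r))\supset J$ for every $w\in J$. Applying $F^{k_0}$ to the previous step and invoking $\sigma\ll\nu$ together with the fact that forward images of $\nu$-small sets are $\sigma$-small (finiteness of the degree), one obtains $\sigma(F^{n_j+k_0}(A))\to 1$, which is precisely $\limsup_n\sigma(F^n(A))=1$. Exactness follows by a short abstract argument: if $A=F^{-n}(A_n)$ for every $n$, then $F^n(A)\subset A_n$, while invariance gives $\sigma(A_n)=\sigma(F^{-n}(A_n))=\sigma(A)$, so the $\limsup$ forces $\sigma(A)=1$ whenever $\sigma(A)>0$; triviality of the tail $\sigma$-algebra is exactness, and mixing is its standard consequence (via Rokhlin).

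The main obstacle I expect is the first transfer step, namely justifying rigorously that the density of $A$ at $z_0$ survives passage through the non-round pullbacks $V_{n_j}$ and is correctly pushed forward to the round ball $B(F^{n_j}z_0,r)$. This rests on the uniform eccentricity bound from Koebe, on the comparability $\nu(F^{n_j}E)\asymp|(F^{n_j})'(z_0)|^{\dpoin(J)}\nu(E)$ for $E\subset V_{n_j}$ coming from conformality plus bounded distortion, and on enough regularity of $\nu$ on balls centered at points of $J$ to apply Besicovitch's differentiation theorem through the sandwiching round balls. Once this technical sandwich is in place, the remaining steps are essentially ergodic-theoretic bookkeeping.
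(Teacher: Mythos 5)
Your argument is correct and follows essentially the same route as the paper's proof: pick a $\nu$-density point of $A$ that goes to large scale infinitely often, push forward the density through a Koebe-bounded-distortion pullback to a fixed-size neighborhood, and use the eventually-onto/compactness property of $J$ to cover the Julia set; exactness and mixing then follow by the standard abstract argument. The only small difference is that you invoke Theorem~\ref{theo:largescale} (the $\epsilon$-frequency version), whereas the paper uses the weaker Proposition~\ref{prop:largescale}, which already suffices since only infinitely-many passages to large scale are needed here.
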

\begin{proof}
The proof that exactness implies mixing can be found in~\cite{walters}.
Also it is clear that ($\sigma$ is absolutely continuous
with respect to an ergodic $\nu$)
it is sufficient to prove the same statement for $\nu$:
 $\limsup_{n\rightarrow \infty}\nu(F^{n}(A))=1$.

By Proposition~\ref{prop:largescale}, 
there exists $R'>0 $  so that
for almost every point $z\in J$ with respect to
$\sigma$ there is a sequence of integers $n_{j}$
and sequences of balls $B_{r_{j}}(z)$ and 
topological disks $D_{j}(F^{n_{j}}(z))\supset B_{R'}(F^{n_{j}}(z))$
so that $F^{n_{j}}:B_{r_{j}}(z)\mapsto D_{j}(F^{n_{j}}(z))$
is a univalent function with bounded distortion. 
Let $z$ be a density point of $A$ with respect to $\nu$. 
The bounded distortion of $F^{n_{j}}$ implies that for every $\epsilon>0$ 
there exist $j$ so that
\[\frac{\nu(A\cap  D_{j}(F^{n_{j}}(z)) )}{\nu(  D_{j}(F^{n_{j}}(z)) )}~
\geq~ (1-\epsilon)\frac{\nu(A\cap B_{r_{j}}(z))}{\nu(B_{r_{j}}(z))}~\geq1-2\epsilon~.\]
By compactness, there exists $N=N(R')$ such that every disk $B_{R'}(y)$,
$y\in J$, is mapped onto $J$ by $F^{N}$. Hence, 
$$\lim_{j\rightarrow \infty}\nu(F^{n_{j}+N}(A))~\ge~
\lim_{j\rightarrow \infty}
\frac{\nu(A\cap  D_{j}(F^{n_{j}}(z)) )}{\nu(  D_{j}(F^{n_{j}}(z)) )}
\,\nu(J)~=~1~,$$
and the lemma follows.
\end{proof}

This concludes the proof of Theorem~\ref{theo:inv}.

\clearpage

\clearpage
\part{Geometry, rigidity, perturbations}

\section{Fractal structure}\label{sec:geometry}

In this Section we will prove that the geometry of the 
Julia sets satisfying appropriate summability conditions
is effectively fractal and self-similar.
Namely, every sufficiently small ball shrinks under the pull-backs and hence
its geometry is infinitely many times reproduced at different scales.
Moreover, it is ``usually'' (i.e. around most points and for most scales)
reproduced with bounded distortion.

\subsection{Average contraction of preimages}

\begin{prop}\label{prop:goodcover}
Suppose that a rational function $F$ satisfies the summability condition
with an exponent
$$\alpha~<~\fr{2}{\mmax+2}~,$$
and the Julia set is not the whole sphere.
Then there is $p<2$ such that
for every sufficiently small ball $B$ with center on the Julia set
$$\sum_{n}\,\sum_{F^{-n}}~\br{\diam\br{F^{-n}B}}^{p}
~\degg{F^n}{F^{-n}B}~<~\infty~,$$
where $\degg{F^n}{F^{-n}B}$ denotes the degree of $F^n$ restricted to 
the connected component ${F^{-n}B}$
of the preimage of $B$ under $F^{-n}$.
\end{prop}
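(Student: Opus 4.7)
The plan is to combine the self-improving property of the Poincar\'e series with a degree-counting identity that converts the sum over preimage components into a sum over preimages of a single generic point in $B$, where Koebe's distortion lemma directly controls diameters. Since $J\neq\CC$ and $F$ has no parabolic cycles (by the summability hypothesis), the classical area estimate gives $\dpoin(J)\le 2$, and together with $\alpha<2/(\mmax+2)$, Proposition~\ref{prop:poincare} applied with $q=2$ produces an exponent $p<2$ and $\epsilon>0$ such that for all $\Delta<\epsilon$ and $z$ in the $\epsilon$-neighborhood of $J$,
$$\sum_{y\in\xx{}{z,\Delta}}\abs{\br{F^n}'(y)}^{-p}\,\le\,C(\epsilon,p)\,\Delta^{p(1/\mmax-1)},$$
with the sharper bound $C\,\Delta^{p(\m{c}/\mmax-1)}$ when a critical point $c$ lies in $B_\Delta(z)$.

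Next I would fix $B=B_R(z)$ with $z\in J$ and $R$ small, and pick a generic $\tilde z\in B$ not belonging to the (countable) set of branch values of the iterates $F^n$. Then each component $B'$ of $F^{-n}B$ contains exactly $\degg{F^n}{B'}$ preimages of $\tilde z$, each giving the same $\diam B'(y)=\diam B'$, so that
$$\sum_{B'}\br{\diam B'}^p\,\degg{F^n}{B'}\,=\,\sum_{y\in F^{-n}\tilde z}\br{\diam B'(y)}^p,$$
where $B'(y)$ denotes the component of $F^{-n}B$ containing $y$. For each preimage $y$, let $\rho(y)$ be the largest radius with $y\in\xx{}{\tilde z,\rho(y)}$. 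When $\rho(y)\ge R$ the branch at $y$ extends univalently over $B$, and Koebe gives $\diam B'(y)\asymp R\,\abs{\br{F^n}'(y)}^{-1}$; summing this contribution via the restricted Poincar\'e series yields
$$\sum_n\sum_{y\in\xx{}{\tilde z,R}}\br{\diam B'(y)}^p\,\lesssim\,R^p\cdot R^{p(1/\mmax-1)}\,=\,R^{p/\mmax}\,<\,\infty.$$

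When $\rho(y)<R$ the univalent pullback fails at a first critical encounter, namely a step $k$ where the shrinking neighborhood of the branch hits a critical point $c\in\Crit$. I would then follow the inductive block decomposition of the Main Lemma~\ref{lem:main}: the univalent piece up to $V_{k-1}$ contracts by Koebe, the step through $c$ produces the local factor $(\diam V_{k-1})^{1/\m{c}}$ coming from $|F(x)-F(c)|\asymp|x-c|^{\m{c}}$, and the argument then iterates by re-centering at $F(c)$, where the sharper restricted Poincar\'e estimate with the exponent $p(\m{c}/\mmax-1)$ becomes available. Summing over all possible sequences of such critical encounters reproduces a nested geometric-type series exactly of the same shape as the one appearing in the proof of Proposition~\ref{prop:poincare}, convergent because $p<2$ provides just enough slack to absorb the $R^p$ factor from Koebe.

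The hard part will be the combinatorial bookkeeping for the non-univalent preimages: organizing the sum over all possible nested critical events without double-counting, and showing that the accumulated contributions from the local critical geometry combine with the sharper Poincar\'e estimates at each critical value to produce a convergent geometric series. In essence one carries out the same block-by-block analysis as in Proposition~\ref{prop:fatouexpan} but tracks diameters of preimage components rather than derivatives at orbit points.
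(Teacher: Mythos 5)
Your approach is essentially the same as the paper's: both reduce the problem to the restricted Poincar\'e series bound of Proposition~\ref{prop:poincare} and control diameters of preimage components by a recursion at critical encounters, using the observation that each critical passage produces a multiplicative contraction factor $\gamma_{k_j}^{-1}$ on diameters.  The one genuine variation is your degree-counting identity: by choosing a generic $\tilde z\in B$ outside the countable set of critical values of the iterates, you convert $\sum_{B'}(\diam B')^p\degg{F^n}{B'}$ into $\sum_{y\in F^{-n}\tilde z}(\diam B'(y))^p$ and eliminate the degree factor entirely.  The paper instead recurses around the center $z\in J$ and bounds $\degg{F^n}{F^{-n}B}\le(\mmax)^{l-1}$ ($l$ being the number of critical encounters), a factor that is then absorbed into the geometric series because the sequence $\gamma_n$ from Lemma~\ref{lem:techseq} is chosen so small that $\sum_n\gamma_n^{-\beta}<1/(16\deg F\cdot\mmax)$.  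Your trick trades that small but nontrivial combinatorial absorption for the cleaner formula; in return the paper's version avoids genericity considerations altogether.  A minor technical point: since $\tilde z$ is not the center of $B$, the condition $\rho(y)\ge R$ does not quite guarantee that the branch is univalent over $B$ — you should take $\tilde z$ within, say, $R/2$ of $z$ and work with $B_{2R}(\tilde z)\supset B$ (with room to apply Koebe).  The recursion itself (your ``hard part'') is not carried out; the paper does it in Lemmas~\ref{lem:1again} and~\ref{lem:3again}, which track the nested radii $r_j$ via $(r_{j+1})^{\mu_{j+1}}<(r_j)^{\mu_j}(\gamma_{k_j})^{-\mmax}$ and then apply Proposition~\ref{prop:poincare} once, at the terminal point $z_l$ with $\Delta=r_l$, using the sharper exponent $p(\m{c_l}/\mmax-1)$ there — not at every critical encounter, as your sketch of ``re-centering at $F(c)$'' seems to suggest.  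Your outline contains the right ideas and points to the right lemmas, so the gap is one of bookkeeping detail rather than concept.
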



We continue to work  with sequences 
$\{\alpha_n\}$, $\{\gamma_n\}$, $\{\delta_n\}$ of   Lemma~\ref{lem:techseq}.
To control the diameters we will need a new decomposition procedure.

\paragraph{Local Analysis.}

First we prove the analogues of Lemma~\ref{lem:1t} and Lemma~\ref{lem:3t}.

\begin{lem}\label{lem:1again}
Suppose that
\begin{description}
\itemm{\em{1)}}~Shrinking neighborhoods~$U_k$ for $B_{4r}(z)$, $1\le k< n$, avoid 
critical points and $(r)^{\mu_1}\,<\,R$\,,
\itemm{\em{2)}}~a critical point~$c_{2}\,\in\,U_n$~,
\itemm{\em{3)}}~a critical point~$c_{1}\,\in\,B_{r}(z)$~.
\end{description}
To simplify notation set  $\mu_i\,:=\,\m{c_i}$, 
$r_2:=\br{\diam\br{U_n}}$ and, for consistency, $r_{1}:=r$.

Then
$$ (r_2)^{\mu_2}~<~(r_1)^{\mu_1}~(\gamma_n)^{-\mmax}~,$$
in particular, $~(r_2)^{\mu_2}~<~R~.$
\end{lem}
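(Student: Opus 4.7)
The plan is to follow closely the argument of part 2) of Lemma~\ref{lem:1t}, adapted to three new features of the hypothesis: the base ball has radius $4r$ rather than $r$, the critical point $c_1$ lies strictly inside $B_r(z)$, and $c_2$ sits inside $U_n$ rather than on its boundary. Accordingly, the conclusion concerns $r_2=\diam(U_n)$ in place of a distance to a specific preimage.

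The first step is to control the return of $c_2$ near $c_1$. Since $c_2\in U_n$ we have $F^n c_2\in B_{4r\Delta_n}(z)\subset B_{4r}(z)$, so $\dist{F^n c_2,c_1}\le 5r$. The hypothesis $r^{\mu_1}<R$ forces $r\le R^{1/\mu_1}\le R^{1/\mmax}$, placing us in the regime where condition (i) of Subsection~\ref{sec:const} applies at $c_1$ and gives $|F'(F^n c_2)|\le M(5r)^{\mu_1-1}$. Combining this with the chain rule and the summability bound $|(F^n)'(Fc_2)|\ge\sigma_n$ yields, as in (\ref{eq:1t,2}),
\[
|(F^{n-1})'(Fc_2)|\;\ge\;\frac{\sigma_n}{M\,(5r)^{\mu_1-1}}\,.
\]
Applying the Koebe distortion lemma to the conformal map $F^{n-1}:U_{n-1}\to B_{4r\Delta_{n-1}}(z)$ at $Fc_2$, with the usual $\delta_n$-margin, then gives $\diam(U_{n-1}')\lesssim (r/\delta_n)\,|(F^{n-1})'(Fc_2)|^{-1}$.

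The genuinely new ingredient, replacing the boundary-distance estimate in Lemma~\ref{lem:1t}, converts $\diam(U_{n-1}')$ into $\diam(U_n)$ via the local branched-cover structure at $c_2$. Because the shrinking neighborhoods $U_k$ for $k<n$ are critical-point-free, $c_2$ is the unique critical point of $F$ in $U_n$, and so $F:U_n\to U_{n-1}'$ is a proper branched cover of degree $\mu_2$ ramified only at $c_2$. Once $\diam(U_{n-1}')$ is small enough (of order $R/\sigma_n$ for $n\ge\tau$), the second half of condition (i) applies uniformly on $U_n$ and yields $\dist{y,c_2}^{\mu_2}\le M\,\dist{Fy,Fc_2}\le M\,\diam(U_{n-1}')$ for each $y\in U_n$, whence
\[
(r_2)^{\mu_2}\;\le\;2^{\mu_2}\,M\,\diam(U_{n-1}')\,.
\]

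Chaining the three estimates and unpacking $\sigma_n=(\alpha_n)^2(\gamma_n)^{\mmax}/\delta_n$ from Lemma~\ref{lem:techseq} cancels the $\delta_n$-factors and produces $(r_2)^{\mu_2}\le C(\mmax)\,M^2\,(\alpha_n)^{-2}\,(\gamma_n)^{-\mmax}\,r^{\mu_1}$. The principal obstacle is absorbing $C(\mmax)M^2$ into $(\alpha_n)^{-2}$: this is exactly what condition (ii) of Subsection~\ref{sec:const} provides, since it guarantees $\alpha_n\ge 16^{\mmax}M^2$ as soon as $n\ge\tau$, and the fact that $F^{n+1}c_2$ lies within $M(5r)^{\mu_1}\le M\cdot 5^{\mmax}R$ of $Fc_1$ means the orbit of $c_2$ has already returned to $\bigcup F^{-1}B_R(Fc_i)$ by step $n$, forcing $n\ge\tau$. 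The ``in particular'' assertion $(r_2)^{\mu_2}<R$ is then immediate from $\gamma_n>1$ and $r^{\mu_1}<R$.
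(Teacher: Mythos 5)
Your proof is correct and follows the paper's argument step for step: bound $\dist{F^n c_2, c_1}$ by $5r_1$, convert to a lower bound on $|(F^{n-1})'(Fc_2)|$ via condition (i) and $\sigma_n$, apply Koebe to $F^{-(n-1)}:B_{4r_1\Delta_{n-1}}(z)\to U_{n-1}$ to control $\diam(U_{n-1}')$, then transfer back to $\diam(U_n)$ using the local model at $c_2$, unpacking $\sigma_n=(\alpha_n)^2(\gamma_n)^{\mmax}/\delta_n$ and absorbing constants into $(\alpha_n)^{-2}$. One small point of difference: the paper writes the last conversion as $(r_2)^{\mu_2}\le M\,\diam(U_{n-1}')$ while you carefully extract the extra $2^{\mu_2}$ factor from passing between $\sup_{y\in U_n}\dist{y,c_2}$ and the diameter $r_2$; your version is more honest, and the extra $2^{\mmax}$ is still comfortably absorbed by $\alpha_n>16^{\mmax}M^2$. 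Be aware, though, that your closing justification of $n\ge\tau$ (that $\dist{F^{n+1}c_2,Fc_1}\le M\cdot 5^{\mmax}R$ puts $F^n c_2$ in the return set $\bigcup F^{-1}B_R(Fc_i)$) does not literally close, since $M\cdot5^{\mmax}R>R$; this gap is inherited from the paper, which invokes condition (ii) without spelling out the return-time argument, and is presumably meant to be resolved by a slightly more generous choice of the return set (or smaller $R$) when the constants in Section 2.3 are fixed.
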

\begin{proof}
First note, that $F^nc_2\in B_{4r\del_n}$, hence
$$\dist{F^nc_2,c_1}~\le~\dist{F^nc_2,z}\,+\,\dist{z,c_1}
~\le~5\,r_1~<~5 R^{1/\mmax}~,$$
and by the choice of $R$
we have 
$\abs{F'(F^{n}c_2)} \stackrel{M}{\asymp} \dist{F^nc_2,c_1}^{\mu_1-1}$. 
Therefore,
$$\abs{\br{F^{n-1}}'\br{Fc_2}}^{-1}~\le
~{M\dist{F^nc_2,c_1}^{\mu_1-1}}~\abs{\br{F^{n}}'\br{Fc_2}}^{-1} 
~\le~\fr{M(5r_1)^{\mu_1-1}}{\sigma_n}~.$$ 

We recall that $U_{n}\supset U_{n-1}'=F(U_{n})$.
By the Koebe distortion theorem
(see Lemma~\ref{lem:koeb}) applied 
to the conformal map 
$F^{-(n-1)}:\,B_{4r_1\Delta_{n-1}}(z)\,\to\, U_{n-1}$
we obtain that
\begin{eqnarray*}
\diam\br{U_{n-1}'}&\le&2\,\frac{(1-\delta_n)(2-\delta_n)}{\delta_{n}}
~\Delta_{n-1}\,4r_1~\abs{\br{F^{n-1}}'(Fc_2)}^{-1}\\
&\le&\frac{16\,r_1}{\delta_{n}}
~\fr{M(5r_1)^{\mu_1-1}}{\sigma_n}\\
&\le& 16^{\mmax}~M~(\alpha_n)^{-2}~(r_1)^{\mu_1}
~(\gamma_n)^{-\mmax}\\
&\le& (r_1)^{\mu_1}~(\gamma_n)^{-\mmax}
~(\alpha_n)^{-1}~.
\end{eqnarray*}
The last inequality is true by our choice of $\alpha_{n}$
and $R$, see  condition {\rm (ii)} in Section~\ref{sec:const}. 
In particular,
$\diam\br{U_{n-1}'}\,<\,(r_1)^{\mu_1}\,<\,R$ and  again
by condition {\rm (i)} of Section~\ref{sec:const} we have that
\begin{eqnarray*}
(r_2)^{\mu_2}&\le&M\,\diam\br{U_{n-1}'}\\
&\le& M~(r_1)^{\mu_1}~(\gamma_n)^{-\mmax}
~(\alpha_n)^{-1}\\
&\le&(r_1)^{\mu_1}~(\gamma_n)^{-\mmax}~ <~R,
\end{eqnarray*}
which completes the proof.
\end{proof}

\begin{lem}\label{lem:3again}
Suppose that
\begin{description}
\itemm{\em{1)}}~shrinking neighborhoods~$U_k$ for $B_{4r}(z)$, $1\le k< n$, avoid 
critical points and $r\,<\,R'$~,
\itemm{\em{2)}}~a critical point~$c_{2}\,\in\,U_n$~.
\end{description}
Set $\mu_2:=\m{c_2}$ and $r_2:=\br{\diam\br{U_n}}$.
For consistency, put $r_{1}:=r$.
Then
$$(r_2)^{\mu_2}~<~(\gamma_n)^{-\mmax}~,$$
and $(r_2)^{\mu_2}~<~R~.$
\end{lem}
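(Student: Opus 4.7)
My plan is to mimic the argument of Lemma~\ref{lem:1again} verbatim, with the sole modification that the derivative factor $\abs{F'(F^nc_2)}$ is controlled by the trivial bound $\supF$ rather than by the estimate $\dist{F^nc_2,c_1}^{\mu_1-1}$ that was available in the first-type setting. This corresponds exactly to the relationship between Lemma~\ref{lem:1t} and Lemma~\ref{lem:3t}, where the proof of the second inequality for the third type was deduced from the first-type argument by the same observation.

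More concretely, I would first apply the Koebe distortion lemma (Lemma~\ref{lem:koeb}) to the univalent map
$F^{-(n-1)}:B_{4r_1\Delta_{n-1}}(z)\to U_{n-1}$, which is guaranteed by hypothesis 1). This yields
$$\diam\br{U_{n-1}'}~\le~\fr{16\,r_1}{\delta_n}~\abs{\br{F^{n-1}}'(Fc_2)}^{-1}~,$$
exactly as in the corresponding step of Lemma~\ref{lem:1again}. Next I would estimate $\abs{\br{F^{n-1}}'(Fc_2)}$ from below by writing $\abs{\br{F^n}'(Fc_2)}\ge\sigma_n$ and bounding the single derivative factor $\abs{F'(F^nc_2)}\le\supF$, obtaining
$$\abs{\br{F^{n-1}}'(Fc_2)}^{-1}~\le~\fr{\supF}{\sigma_n}~.$$
Here lies the only genuine difference from Lemma~\ref{lem:1again}: we cannot localize at a second critical point, so we pay $\supF$ instead of a polynomially small power of $r_1$.

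Substituting the representation $\sigma_n=(\alpha_n)^2(\gamma_n)^{\mmax}/\delta_n$ from Lemma~\ref{lem:techseq}, the $\delta_n$'s cancel and the bound becomes
$$\diam\br{U_{n-1}'}~\le~\fr{16\,r_1\,\supF}{(\alpha_n)^2\,(\gamma_n)^{\mmax}}~\le~\fr{R}{M}~(\gamma_n)^{-\mmax}~,$$
where the last inequality uses the choice of $R'$ in condition~{\rm (iii)} of Section~\ref{sec:const} together with $r_1<R'$. In particular $\diam(U_{n-1}')<R$, and since $Fc_2\in U_{n-1}'$ every $Fy\in U_{n-1}'$ satisfies $\dist{Fy,F(\Crit)}\le\diam(U_{n-1}')<R$, so the hypotheses of condition~{\rm (i)} are met.

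Finally, applying condition~{\rm (i)} as in the last display of Lemma~\ref{lem:1again} gives
$(r_2)^{\mu_2}\le M\,\diam(U_{n-1}')\le(\gamma_n)^{-\mmax}\le1$, and since $(\gamma_n)^{-\mmax}(\alpha_n)^{-1}\cdot R$ is in particular $<R$, the second assertion $(r_2)^{\mu_2}<R$ follows as well. The main (mild) obstacle is the bookkeeping with the constants $M,\,\supF,\,R,\,R'$, but this is already prearranged by the choices in Section~\ref{sec:const}, so the argument goes through line by line.
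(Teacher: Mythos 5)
Your proof is correct and takes essentially the same route as the paper: apply Koebe to $F^{-(n-1)}$, bound $\abs{(F^{n-1})'(Fc_2)}^{-1}$ by $\supF/\sigma_n$ instead of localizing at a second critical point, substitute $\sigma_n=(\alpha_n)^2(\gamma_n)^{\mmax}/\delta_n$, and invoke conditions (iii) and then (i). The only blemish is the stray factor $(\alpha_n)^{-1}$ in your final sentence, which belongs to Lemma~\ref{lem:1again} and does not appear in your own chain of estimates; your actual bound $R(\gamma_n)^{-\mmax}$ is already $<R$ since $\gamma_n>1$, so the conclusion stands regardless.
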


\begin{proof}
Applying the Koebe distortion 
Lemma~\ref{lem:koeb}
we obtain that
\begin{eqnarray*}
\diam\br{U_{n-1}'}
&\le&2\,\frac{(1-\delta_n)(2-\delta_n)}{\delta_{n}}
~\Delta_{n-1}\,4r_1~\abs{\br{F^{n-1}}'(Fc_2)}^{-1}\\
&\le&\frac{16\,r_1}{\delta_{n}}
~\fr{\supF}{\sigma_n}\\
&\le&16~R'~\supF~\br{\alpha_n}^{-2}~\br{\gamma_n}^{-\mmax}\\
&\le&R~{M}^{-1}~\br{\gamma_n}^{-\mmax}~\le~R~.
\end{eqnarray*}
The last inequality is true by the choice of $R'$.
Particularly, $U_{n-1}'$ is close to $Fc_2$ and
\begin{eqnarray*}
(r_2)^{\mu_2}&\le&M\,\diam\br{U_{n-1}'}\\
&\le& M~{M}^{-1}~R~\br{\gamma_n}^{-\mmax}\\
&=&R~\br{\gamma_n}^{-\mmax}~.
\end{eqnarray*}
\end{proof}

\paragraph{Proof of Proposition~\ref{prop:goodcover}.}
Let $z$ be  a point from the Julia set
and fix an inverse branch of $F^{-n}$ so that
$F^{-n}(z)\mapsto\dots\mapsto F^{-1}(z)\mapsto z$.
Next, take a ball $B=B_{r_1}(z)$ of sufficiently small radius $r_1<R'$
and consider the shrinking neighborhoods for the  $4$ times larger
ball
$B_{4r_1}(z)$.
Let $k_1$ be the first time when $ U_{k_1}$ catches a  critical point
$c_{2}$.
Then, by Lemma~\ref{lem:3again}, $r_2:=\diam\br{F^{-k_1}B_{r_1}}$,
we have that 
$$(r_2)^{\mu_2}~<~(\gamma_{k_1})^{-\mmax}~.$$

Consider now the shrinking neighborhoods for the ball $B_{4r_2}(z_2)$ with
$z_2:=F^{-k_1}z$.
Let $k_2$ be the first time when  $U_{k_2}$ hits a critical point $c_{3}$.
Again, by Lemma~\ref{lem:1again}, $r_3:=\diam\br{F^{-k_2}B_{r_2}}$,
we obtain that 
$$(r_3)^{\mu_3}~<~(r_2)^{\mu_2}~(\gamma_{k_2})^{-\mmax}~.$$

We continue in the same fashion, taking shrinking neighborhoods
for  $B_{4r_3}(z_3)$ with 
$z_3:=F^{-k_2}z_2$, and so on.
Observe, that during the construction we always  have
$$F^{-(k_1+k_2+\dots+k_j)}B~\subset
~F^{-(k_2+\dots+k_j)}B_{r_2}(z_2)~\subset
~\dots~\subset~B_{r_{j+1}}(z_{j+1})~,$$
and there is a bound for the degree:
$$\degg{F^{(k_1+k_2+\dots+k_j)}}{F^{-(k_1+k_2+\dots+k_j)}B}
~\le~\br{\mmax}^j~.$$

We can repeat the above construction until  we meet
 a  ball $B_{4r_l}(F^{-k_{l-1}}z)$ whose  
shrinking neighborhoods do not contain critical points. This  means that
the ball $B_{2r_l}(z_l)$ can be pulled back univalently along the 
considered branch. We will call $z_{l}$ a terminal point.
In more general notation, $y:=z_{l}$ with parameters
$r(y)=r_l$, $l(y)=l$, $c_y=c_l$.

Now, we look at the backward orbit of $z$ for all possible 
inverse branches of $F$ and
denote  by ${\Cal Y}(z)$ the set of all terminal points.

By the Koebe distortion theorem,
\begin{eqnarray*}
\diam\br{F^{-n}B}&<&
\diam\br{F^{-m}B_{r_l}(z_l)}\\
&<&16~\abs{\br{F^{-m}}'(z_l)}~r_l\\
&=&16~r_l~\abs{\br{F^{m}}'(x)}^{-1}~,
\end{eqnarray*}
where $m=(n-k_1-\dots-k_{l-1})$ and $x=F^{-m}z_l=F^{-n}z$.
Note, that $x\in\xx{}{z_l,r_l}$ in the terminology of 
Proposition~\ref{prop:poincare}, and
$\degg{F^{n}}{F^{-n}B}~\le~\br{\mmax}^{l-1}$.

Now, using the result of Proposition~\ref{prop:poincare}
we can expand (for $p<2$ close to $2$)
\begin{eqnarray*}
\sum_{n}&{\displaystyle\sum_{F^{-n}}}&
\br{\diam\br{F^{-n}B}}^{p}\,\degg{F^n}{F^{-n}B}\\
&<&
\sum_{y\in{\Cal Y}(z)}\,\sum_{x\in\xx{}{y,r(y)}}
\,16^p\,\br{r(y)}^p\,\abs{\br{F^{n(x)}}'(x)}^{-p}\,\degg{F^n}{F^{-n}B}\\
&<&
\sum_{y\in{\Cal Y}(z)}\,16^p\,\br{r(y)}^p\,
C\,\br{r(y)}^{p(\m{c_{y}}/\mmax-1)}\,\degg{F^n}{F^{-n}B}\\
&<&
16^p\,C\,\sum_{y\in{\Cal Y}(z)}
\,\br{r(y)}^{p\m{c_{y}}/\mmax}\,\br{\mmax}^{l(y)-1}\\
&<&
16^p\,C\,\sum_{y\in{\Cal Y}(z)}\,
\br{\gamma_{k_{l-1}}^{-\mmax}\dots\gamma_{k_1}^{-\mmax}}^{p/\mmax}
\,\br{\mmax}^{l-1}\\
&<&
16^p\,C~\sum_{l,k_1,\dots,k_{l-1}}~(2 \deg F)^{l}
~\br{\gamma_{k_1}\dots\gamma_{k_{l-1}}}^{-p}~\br{\mmax}^{l-1}\\
&\le&
16^p\,C~\sum_l\,\br{2 \deg F\,\mmax\sum_k\gamma_k^{-p}}^l\\
&<&C\,16^p\,\sum_l\br{\fr12}^l~=~C\,16^p~<~\infty
~ ,
\end{eqnarray*}
which  proves  Proposition~\ref{prop:goodcover}.
{\begin{flushright}$\Box$\end{flushright}}

Note, that  substuting into the last formula Lemma~\ref{lem:main2}
(instead of  Proposition~\ref{prop:poincare}), we 
can arrive at a better estimate
(where the sum is taken only over some preimages):
\begin{coro}\label{cor:goodcover}
Taking $\beta=\mmax\alpha/(1-\alpha)$ and using the notation above,
we get
$$
\sum_{y\in{\Cal Y}(z)}\,\sum_{x\in\typei^{r(y)}(y)}
\br{\diam\br{F^{-n(x,z)}B}}^{\beta}
~<~\infty~.$$
\end{coro}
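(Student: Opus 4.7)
The plan is to mirror the proof of Proposition~\ref{prop:goodcover} step by step, substituting Lemma~\ref{lem:main2} for Proposition~\ref{prop:poincare} at the single point where the Poincar\'e-type bound is invoked. The restriction of the inner sum to $\typei^{r(y)}(y)$ is precisely what permits the sharper exponent $\beta = \mmax\alpha/(1-\alpha)$ in place of $p$.

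First I would keep verbatim the iterative construction of terminal points $y\in\Cal Y(z)$ with associated parameters $r(y), l(y), c_y$ and block lengths $k_1,\dots,k_{l(y)-1}$, together with the telescoping diameter bound
\[ r(y)^{\mu(c_y)} \;<\; \br{\gamma_{k_1}\cdots\gamma_{k_{l(y)-1}}}^{-\mmax} \]
obtained by iterating Lemmas~\ref{lem:1again} and \ref{lem:3again}. The Koebe control from the proof of Proposition~\ref{prop:goodcover} continues to give, for any $x\in\typei^{r(y)}(y)$,
\[ \diam\br{F^{-n(x,z)}B} \;\lesssim\; r(y)\,\abs{\br{F^{n(x,y)}}'(x)}^{-1}, \qquad \degg{F^{n(x,z)}}{F^{-n(x,z)}B} \;\le\; \mmax^{l(y)-1}. \]

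Next, for each terminal $y$ I would apply Lemma~\ref{lem:main2} with $\Delta := r(y)$. By the very construction of the inductive decomposition, a critical point of multiplicity $\mu(c_y)$ lies within a definite multiple of $r(y)$ of $y$, so the third alternative of that lemma is available and yields
\[ \sum_{x\in\typei^{r(y)}(y)} \abs{\br{F^{n(x,y)}}'(x)}^{-\beta} \;\lesssim\; r(y)^{\beta(\mu(c_y)/\mmax - 1)}. \]
Multiplying by the Koebe factor $r(y)^{\beta}$ and by the degree weight $\mmax^{l(y)-1}$ collapses the contribution of each $y$ to $r(y)^{\beta\mu(c_y)/\mmax}\,\mmax^{l(y)-1}$.

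Summing over $\Cal Y(z)$ exactly as in the closing lines of Proposition~\ref{prop:goodcover}, the telescoping diameter bound together with the crude combinatorial count of at most $(4\deg F)^{l}$ codings of length $l$ reduces the total to a convergent geometric series
\[ \sum_{y\in\Cal Y(z)}\sum_{x\in\typei^{r(y)}(y)} \br{\diam\br{F^{-n(x,z)}B}}^{\beta} \;\lesssim\; \sum_{l\ge 1}\br{4\deg F\cdot\mmax}^{l}\Bigl(\sum_k \gamma_k^{-\beta}\Bigr)^{l-1}, \]
which is finite by the choice $\sum_k \gamma_k^{-\beta} < 1/(16\deg F\cdot\mmax)$ from Lemma~\ref{lem:techseq}. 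The only delicate point is verifying that the relevant critical point sits in the precise ball $B_{r(y)}(y)$ needed to trigger the third alternative of Lemma~\ref{lem:main2}, rather than in the slightly larger ball $B_{4r(y)}(y)$ used in the capture step of the inductive construction; this is a book-keeping matter of matching the $4r$-versus-$r$ asymmetry between the two procedures, and is absorbed by an innocuous rescaling of $r(y)$ that does not affect any of the other estimates.
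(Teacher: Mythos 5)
Your proposal is correct and matches the paper's intended proof: the paper's own justification is the one-sentence remark immediately preceding the corollary, instructing the reader to rerun the closing computation of Proposition~\ref{prop:goodcover} with Lemma~\ref{lem:main2} replacing Proposition~\ref{prop:poincare}, which is precisely your plan. The only small inaccuracy is that you carry along the degree weight $\mmax^{l(y)-1}$ even though the corollary (unlike Proposition~\ref{prop:goodcover}) omits the factor $\degg{F^n}{F^{-n}B}$; since this only enlarges the upper bound it does no harm, and your book-keeping caveat about whether $c_y$ lies in $B_{r(y)}(y)$ or a fixed multiple thereof is genuine but is absorbed by the multiplicative constant in Lemma~\ref{lem:main2} exactly as you indicate.
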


\subsection{Contraction of preimages}

\begin{prop}\label{prop:shrink}
Suppose that a rational function $F$ satisfies 
the summability condition with an exponent
$$\alpha~\le~1~.$$
Then there exist a positive sequence $\brs{\tilde\omega_n}$,
summable with an exponent $-\beta\,:=\,-\fr{\mmax\alpha}{1-\alpha}$:
$$\sum_{n}\,\br{\tilde\omega_n}^{-\beta}
~<~\infty~,$$
such that for every sufficiently small 
(of radius less than $R'$)
ball $B$ centered on the Julia set, every $n$,
and every branch of  ${F^{-n}}$ we have
$$\diam\br{F^{-n}B}~<~(\tilde\omega_n)^{-1}~.$$
\end{prop}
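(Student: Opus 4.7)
The plan is to combine the inductive decomposition of backward orbits from the proof of Proposition~\ref{prop:goodcover} with the derivative lower bound of Proposition~\ref{prop:fatouexpan}, crucially using that at the terminal stage a critical point lies inside the current ball so that the stronger variant of Proposition~\ref{prop:fatouexpan} applies. Fix a branch of $F^{-n}$ and a ball $B=B_{r_1}(z)$ of radius $r_1<R'$ centered on $J$. Running the construction of Proposition~\ref{prop:goodcover}, I produce points $z_1=z,z_2,\dots,z_l$, pull-back lengths $k_1,\dots,k_{l-1}$, critical points $c_2,\dots,c_l$ and radii $r_2,\dots,r_l$ such that $z_{j+1}=F^{-k_j}z_j$, $c_{j+1}$ lies in the shrinking neighborhood $U_{k_j}$ of $B_{4r_j}(z_j)$, and $r_{j+1}=\diam(U_{k_j})$. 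The procedure halts at stage $l$ where the $m:=n-k_1-\cdots-k_{l-1}$ remaining shrinking neighborhoods of $B_{4r_l}(z_l)$ avoid critical points; hence $F^{-m}$ is univalent on $B_{2r_l}(z_l)$. Moreover $c_l$ and $z_l$ both lie in $U_{k_{l-1}}$ of diameter $r_l$, so $c_l\in\overline{B_{r_l}(z_l)}\subset B_{2r_l}(z_l)$.

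Telescoping Lemmas~\ref{lem:3again} and \ref{lem:1again} yields the shrinking estimate
\[r_l^{\mu_l}~\le~(\gamma_{k_1}\gamma_{k_2}\cdots\gamma_{k_{l-1}})^{-\mmax},\quad \mu_l:=\m{c_l}.\]
Applying the stronger form of Proposition~\ref{prop:fatouexpan} (using $c_l\in B_{2r_l}(z_l)$) to the univalent branch $F^{-m}$ and then the Koebe distortion lemma gives
\[\diam(F^{-n}B)~\lesssim~r_l~\abs{\br{F^m}'(F^{-m}z_l)}^{-1}~\lesssim~r_l^{\mu_l/\mmax}~\omega_m^{-1}~\le~(\gamma_{k_1}\cdots\gamma_{k_{l-1}})^{-1}\omega_m^{-1},\]
where the last inequality is the shrinking estimate raised to the power $1/\mmax$. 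The degenerate case $l=1$ (no critical hit during the pull-back) follows directly from Proposition~\ref{prop:fatouexpan} and gives $\diam(F^{-n}B)\lesssim\omega_n^{-1}$. Setting
\[\tilde\omega_n~:=~C^{-1}\inf\brs{(\gamma_{k_1}\cdots\gamma_{k_{l-1}})\,\omega_m~:~l\ge1,~k_i,m\ge1,~k_1+\cdots+k_{l-1}+m=n}\]
(with empty product understood as $1$), one has $\diam(F^{-n}B)<\tilde\omega_n^{-1}$ for every branch. Summability is then obtained by dominating the supremum in $\tilde\omega_n^{-\beta}$ by a sum over decompositions and evaluating a geometric series:
\[\sum_n\tilde\omega_n^{-\beta}~\le~C^{\beta}\sum_{l\ge1}\br{\sum_k\gamma_k^{-\beta}}^{l-1}\sum_m\omega_m^{-\beta}~<~\infty,\]
since $\sum_k\gamma_k^{-\beta}<1/(16\deg F\cdot\mmax)<1$ by Lemma~\ref{lem:techseq} and $\sum_m\omega_m^{-\beta}<\infty$ by Proposition~\ref{prop:fatouexpan}.

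The main obstacle lies in extracting the sharp exponent on $r_l$. Had one instead used the weaker form of Proposition~\ref{prop:fatouexpan} (valid when no critical point is in the ball), the bound would degrade to $\diam\lesssim(\gamma_{k_1}\cdots)^{-1/\mmax}\omega_m^{-1}$, and summability would require $\sum_k\gamma_k^{-\beta/\mmax}<\infty$, a strictly stronger condition not implied by our hypotheses. The essential geometric input is therefore that the critical point $c_l$ terminating the previous stage always lies inside $B_{2r_l}(z_l)$, so that the factor $r_l^{\mu_l/\mmax}$ produced by Proposition~\ref{prop:fatouexpan} exactly cancels the accumulated shrinkage $r_l^{\mu_l}\le(\gamma_{k_1}\cdots)^{-\mmax}$. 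The boundary case $\alpha=1$ (with $\beta=\infty$) is covered by the convention that $(\tilde\omega_n)^{-\infty}$ summable means $\tilde\omega_n\to\infty$, which is immediate from the above since every factor diverges.
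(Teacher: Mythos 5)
Your proposal is correct and follows essentially the same route as the paper: run the decomposition from Proposition~\ref{prop:goodcover} to obtain the telescoped bound $r_l^{\mu_l}\le(\gamma_{k_1}\cdots\gamma_{k_{l-1}})^{-\mmax}$, apply the strong ($c\in B_\Delta$) case of Proposition~\ref{prop:fatouexpan} at the terminal point, and sum the geometric series for $\tilde\omega_n^{-\beta}$. Your explicit remark that the exponent cancellation $r_l^{\mu_l/\mmax}\le(\gamma_{k_1}\cdots)^{-1}$ requires $c_l\in B_{2r_l}(z_l)$ (via $c_l,z_l\in U_{k_{l-1}}$) makes visible a point the paper leaves tacit, and is a welcome clarification.
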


\begin{rem}
The proof of Proposition~\ref{prop:shrink} given below will actually imply that 
$$\diam\br{F^{-n}B}~<~\const
~(\tilde\omega_n)^{-1}~\br{\diam\br{B}}^{1/\mmax}~.$$
\end{rem}

Also, for any periodic point $z:~F^{k}(z)=z$,
by the proposition above we can find such $n$ that for
the branch of $F^{-kn}$, fixing $z$, and a small ball $B(z,\rho)$ one has
$$F^{-nk}B(z,\rho)~\subset~B(z,\rho/2)~.$$
By a standard use of the Schwartz lemma, the latter implies
$\abs{\br{F^k}'(z)}>1$, and we arrive at the following 

\begin{coro}\label{cor:nocremer}
Under the assumptions as above, $F$ has no Cremer points.
\end{coro}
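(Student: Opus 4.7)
}
The plan is to argue by contradiction, essentially fleshing out the sketch given by the authors in the paragraph preceding the corollary. Suppose $z$ is a Cremer periodic point of period $k$. By definition $F^k(z)=z$, the multiplier $\lambda:=(F^k)'(z)$ satisfies $|\lambda|=1$, and $F^k$ is not locally linearizable near $z$; this last property places $z$ in the Julia set $J$. Because $\lambda\neq 0$, the point $z$ is not a critical point of any iterate, so for every $n\ge 1$ there is a univalent inverse branch $\phi_n$ of $F^{nk}$, defined on some disk around $z$ and fixing $z$.

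Next, I would invoke Proposition~\ref{prop:shrink} with the ball $B=B(z,\rho)$ centered at the Julia set point $z$, with $\rho<R'$ chosen small enough that $\phi_n$ extends univalently to all of $B$ for every $n$ (which is possible since $z$ has a neighborhood on which all inverse iterates fixing $z$ are defined). The proposition produces a sequence $\tilde\omega_N$ summable with exponent $-\beta$, hence $\tilde\omega_N\to\infty$, and gives the diameter bound
\[
\diam\bigl(F^{-N}B\bigr)\,<\,\tilde\omega_N^{-1}
\]
for every branch of $F^{-N}$. Choose $n$ so large that $\tilde\omega_{nk}^{-1}<\rho/2$, and take $\phi:=\phi_n$. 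Since $\phi$ is univalent on $B(z,\rho)$, fixes $z$, and its image has diameter less than $\rho/2$, we get
\[
\phi\bigl(B(z,\rho)\bigr)\,\subset\,B(z,\rho/2)\,\subset\,B(z,\rho).
\]

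Finally, I would apply the Schwarz lemma. After rescaling $B(z,\rho)$ to the unit disk by $w=(\zeta-z)/\rho$, the map $\phi$ becomes a holomorphic self-map of $\D$ fixing $0$ whose image lies in the disk of radius $1/2$. Hence $|\phi'(z)|\le 1/2$. On the other hand,
\[
\phi'(z)\,=\,\bigl((F^{nk})'(z)\bigr)^{-1}\,=\,\lambda^{-n},
\]
so $|\lambda|^n\ge 2$, i.e.\ $|\lambda|\ge 2^{1/n}>1$, contradicting $|\lambda|=1$. Therefore no Cremer points exist.

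There is no substantive obstacle here: the entire content of the argument is already packaged in Proposition~\ref{prop:shrink}. The only minor point to verify carefully is that the ball $B(z,\rho)$ can be taken small enough so that, for the (fixed) $n$ we eventually choose, the specific inverse branch of $F^{nk}$ fixing $z$ is defined univalently on all of $B(z,\rho)$; this follows because $z$ is not critical for any iterate of $F$, together with the fact that preimages of any sufficiently small disk around $z$ by the chosen branch shrink (again by Proposition~\ref{prop:shrink}), so no obstruction from critical values arises in the pullback.
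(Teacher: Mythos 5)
Your overall route is exactly the paper's: apply Proposition~\ref{prop:shrink} to a small ball centered at the periodic point, conclude that some pullback along the branch fixing $z$ lands in $B(z,\rho/2)$, and then use the Schwarz lemma to force $|\lambda|^n\ge 2$, contradicting $|\lambda|=1$. The rescaling computation and the identity $\phi'(z)=\lambda^{-n}$ are correct, and this is precisely the argument sketched in the paragraph preceding the corollary.

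The step you yourself single out, however, is not justified as written, and it is the only step that needs care. First, the parenthetical reason ``$z$ has a neighborhood on which all inverse iterates fixing $z$ are defined'' begs the question: for an honest Cremer point no such neighborhood exists (if all branches $\phi_n$ were defined and univalent on a fixed disk, then $|\phi_n'(z)|=1$ together with Koebe distortion would make the family normal near $z$ and force linearizability of $F^k$ at $z$), so inside the proof by contradiction this property can only be extracted from the summability hypothesis, not quoted as a general fact about non-critical periodic points. Second, your fallback — shrink $\rho$ ``for the (fixed) $n$ we eventually choose'' — is circular, since $n$ was chosen from $\rho$ via $\tilde\omega_{nk}^{-1}<\rho/2$; shrinking $\rho$ afterwards invalidates that choice of $n$. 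The gap is repairable by ordering the choices correctly. The branch is univalent on $B(z,\rho)$ once the component $U_n$ of $F^{-nk}(B(z,\rho))$ containing $z$ meets no critical point of $F^{nk}$, i.e. once $F^i(U_n)\cap\Crit=\emptyset$ for all $i<nk$. Writing $i=jk+r$ with $0\le r<k$ and using $F^{jk}(U_n)\subset U_{n-j}$, Proposition~\ref{prop:shrink} gives $\diam F^r(U_m)\le (\sup|F'|)^{k}\,\tilde\omega_{mk}^{-1}$, so first fix $m_0$ (independently of $\rho$ and $n$) with $(\sup|F'|)^{k}\sup_{m\ge m_0}\tilde\omega_{mk}^{-1}$ smaller than the distance from the cycle of $z$ to $\Crit$; this handles all $i\le (n-m_0)k$. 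The finitely many remaining sets $F^r(U_m(\rho))$, $m<m_0$, $r<k$, shrink to points of the cycle as $\rho\to 0$, so next choose $\rho$ (depending only on $m_0$) so that they avoid $\Crit$, and only then choose $n$ with $\tilde\omega_{nk}^{-1}<\rho/2$. With this ordering $F^{nk}\colon U_n\to B(z,\rho)$ is an unbranched proper map onto a disk, hence univalent, and your Schwarz step goes through verbatim. (The paper itself compresses all of this into ``a standard use of the Schwarz lemma,'' so your outline does match its proof; but the justification you supplied for the univalence is the part that would not survive scrutiny without the above repair.)
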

\begin{proof}
To prove Proposition~\ref{prop:shrink},
take a ball $B_{r}(z)$ of a small radius $r_1<R'$ 
and proceed as in the proof of
Proposition~\ref{prop:goodcover}
-- we preserve the notation.
Then, with the help 
of Proposition~\ref{prop:fatouexpan},
(the sequence $\brs{\omega_n}$ was constructed there)
we obtain that
\begin{eqnarray*}
\diam\br{F^{-n}B}&<&
\diam\br{F^{-m}B_{r_l}(z_l)}\\
&<&16~\abs{\br{F^{-m}}'(z_l)}~r_l\\
&<&16~r_l~(r_l)^{\m{c_l}/\mmax-1}~(\omega_m)^{-1}\\
&<&16~(r_l)^{\m{c_l}/\mmax}~(\omega_m)^{-1}\\
&<&16~\br{(\gamma_{k_1})^{-\mmax}\dots
(\gamma_{k_l})^{-\mmax}}^{1/\mmax}~(\omega_m)^{-1}\\
&<&16~(\gamma_{k_1})^{-1}\,\dots\,
(\gamma_{k_l})^{-1}~(\omega_m)^{-1}~,
\end{eqnarray*}
where $k_1+\dots+k_l+m=n$.

It means that setting
$$\tilde\omega_n~:=~\inf
\brs{\gamma_{k_1}\,\dots\,\gamma_{k_l}\,\omega_m\,/\,16~:
~~k_1+\dots +k_l+m=n}~,$$
we have that
$$\diam\br{F^{-n}B}~<~(\tilde\omega_n)^{-1}~.$$


On the other hand
(for $-\beta\,:=\,-\fr{\mmax\alpha}{1-\alpha}$) 
\begin{eqnarray*}
\sum_{n}(\tilde\omega_n)^{-\beta}
&<&16^{\beta}\,\br{\sum_m(\omega_m)^{-\beta}}
\cdot\sum_{l=0}^{\infty}
\br{\sum_k(\gamma_k)^{-\beta}}^l\\
&<&16^{\beta}\,{\sum_m(\omega_m)^{-\beta}}
\cdot\sum_{l=0}^{\infty}\br{\fr12}^l
~=~16^{\beta}\,2\,{\sum_m(\omega_m)^{-\beta}}~<~\infty~.
\end{eqnarray*}
which completes the proof of Proposition~\ref{prop:shrink}.
\end{proof}

\subsection{Most points go to large scale infinitely often}
We will prove that the Hausdorff dimension of points which do not
``go to a large scale infinitely often'' is small provided
$F$ satisfies the summability condition. This should be compared with
Proposition~\ref{prop:largescale} 
where it is shown that most points go to a large scale infinitely often 
with respect to conformal measure.

The definition of the subset of points in $J$ which infinitely 
often go to the large scale of size $R'/2$ 
is as follows: 
$$\Jls \,:=\,\brs{z\in J:\, \exists~n_j\to\infty, {\em~with~}F^{n_j} 
{\em~univalent~on ~} F^{-n_j}\br{B\br{F^{n_j}x,R'/2}}}~.$$
Note that the value of $R'$ is already fixed and does not depend on a
point.

\begin{prop}\label{prop:largescale2}
Suppose that a rational function $F$ satisfies the summability condition
with an exponent $\alpha<1$, then
$$\HD(J\setminus \Jls )~\le~\fr{\mmax\alpha}{1-\alpha}~.$$
\end{prop}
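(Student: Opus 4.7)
The plan is to cover $J\setminus\Jls$ by non-univalent pullback components and estimate their total $\beta$-dimensional Hausdorff content using Proposition~\ref{prop:shrink}. By definition, $x\in J\setminus\Jls$ iff there exists $N(x)$ such that for every $n\ge N(x)$ the connected component $V_n(x)$ of $F^{-n}(B(F^nx,R'/2))$ containing $x$ fails to be mapped univalently by $F^n$; equivalently, $V_n(x)$ contains some $z$ with $F^k z=c\in\Crit(F)$ for some $0\le k<n$. Choosing $k$ minimal and denoting by $W_m(c)$ the connected component of $F^{-m}(B(F^mc,R'))$ containing $c$, one checks using $F^{n-k}c=F^nz\in B(F^nx,R'/2)\subset B(F^{n-k}c,R')$ that $F^k(V_n(x))\subset W_{n-k}(c)$, so $x$ lies in a connected component of $F^{-k}(W_{n-k}(c))$.

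Next I would apply Proposition~\ref{prop:shrink} to bound the diameters: $\diam V_n(x)\le \tilde\omega_n^{-1}$ and $\diam W_m(c)\le \tilde\omega_m^{-1}$, where $\sum_n\tilde\omega_n^{-\beta}<\infty$ with $\beta=\mmax\alpha/(1-\alpha)$. For each $N\ge 1$, the family $\{V_n(x):x\in J\setminus\Jls,\;n\ge N\}$ therefore covers $J\setminus\Jls$ by sets of small diameter. After extracting a minimal subcover and parametrising its elements by triples $(c,k,m)$ with $m+k=n\ge N$, the summability of $\tilde\omega_n^{-\beta}$ should yield
\[
\Cal H^\beta_{\tilde\omega_N^{-1}}(J\setminus\Jls)\;\lesssim\;\sum_{n\ge N}\tilde\omega_n^{-\beta}\;\longrightarrow\;0\quad\mbox{as }N\to\infty,
\]
which gives the conclusion $\HD(J\setminus\Jls)\le\beta$.

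The principal obstacle is the combinatorial count: the naive bound of $(\deg F)^n$ pullback components at level $n$ is far too crude, since multiplying it by $\tilde\omega_n^{-\beta}$ need not give a summable series. The remedy is to use that only the non-univalent components actually cover $J\setminus\Jls$, and to enumerate them via the auxiliary pullback trees of Proposition~\ref{prop:goodcover} rooted at critical values of $F$ rather than at arbitrary points of $J$. Applying Corollary~\ref{cor:goodcover} with each critical point $c$ playing the role of the base point $z$ and $W_m(c)$ in place of the base ball $B$ provides exactly the $\beta$-summability along the chains of type 1 and type 3 preimages that encode the relevant non-univalent components, completing the estimate.
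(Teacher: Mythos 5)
The high-level strategy is reasonable and close in spirit to the paper's: cover $J\setminus\Jls$ by non-univalent pullback components, bound their diameters via Proposition~\ref{prop:shrink}, and control the combinatorics via Corollary~\ref{cor:goodcover}. However, you identify the crucial combinatorial obstruction yourself and then propose a fix that does not actually close the gap.

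The specific problem is your final sentence: ``Applying Corollary~\ref{cor:goodcover} with each critical point $c$ playing the role of the base point $z$ and $W_m(c)$ in place of the base ball $B$.'' Corollary~\ref{cor:goodcover} is an estimate for a \emph{fixed} small ball $B$ centered on $J$, and the $\beta$-summability it provides runs over the tree of pullback chains emanating from that one ball. If you substitute $W_m(c)$ (itself a pullback, shrinking to a point as $m\to\infty$) for $B$, you obtain, at best, one instance of the corollary for each value of $m$, with nothing to control the additional sum over $m$. Moreover, a component of $F^{-k}(W_m(c))$ is a component of $F^{-(k+m)}(B(F^m c,R'))$, and the reference ball $B(F^m c,R')$ moves along the critical orbit — so it cannot be absorbed into a single application of the corollary rooted at $c$. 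What the paper's proof actually does is quite different: it fixes a \emph{finite} collection of balls $B_j=B(w_j,R'/2)$ centered at prescribed points $w_j\in J$, decomposes the backward orbit of each $w_j$ into blocks of types $1^*,3^*,1,2,3$ (with the $1^*\dots1^*3^*$ part mimicking the inductive procedure of Proposition~\ref{prop:goodcover}), and then covers the bad set by the specific preimages $F^{-n(v,w_j)}(B_j)$ with $v\in\typei^{r(y)}(y)$ — these are exactly the terms of Corollary~\ref{cor:goodcover} applied to the fixed data $(w_j,B_j)$. Rooting the trees at critical points, as you suggest, breaks this correspondence.

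There are two further omissions. First, your covering sets $V_n(x)$ are pullback components of $B(F^nx,R'/2)$, a ball centered at a moving point, which produces an uncountable covering family; one must first pass to a finite cover $\{B_j\}$ of $J$ (this is a small fix). Second, and more importantly, you skip the reduction to the subset $X_0$: points of $J\setminus\Jls$ may admit a leading type $2$ block whose length $k(x)$ is unbounded, and the paper handles this by writing $J\setminus\Jls\subset\bigcup_k F^{-k}(X_0)$ and using the countable stability of Hausdorff dimension, working then only with $X_0$ where the type $2$ block is absent. Without this reduction, your Hausdorff-content argument must sum over an extra unbounded parameter and the estimate you wrote down does not obviously converge. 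So while the skeleton of your argument parallels the paper's, the crucial enumeration step and the reduction to $X_0$ are both missing, and the proposed remedy points in the wrong direction.
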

\begin{proof}
The proof is a modification of the proof of 
Proposition~\ref{prop:largescale}.
Denote $\beta:=\fr{\mmax\alpha}{1-\alpha}$.

Take a finite cover $\{B_j:j=1,\dots, K\}$
of the Julia set by balls of radii $R'/2$ centered at points $w_j\in J$.
For  every $x\in J$ and every $k\in \N$, we 
decompose the sequence $x,\dots,F^k(x)$ into blocks of new types 
$1^{*}, 3^{*}$, and blocks of old types $1,2,3$. An inductive
procedure ascribing a code to the sequence $x,\dots ,F^{k}(x)$ will be
defined  only for  preimages of  the center of the ball $B_j\ni F^{k}(x)$. 
By the definition, the sequence $x,\dots,F^k(x)$ inherits the code of
the corresponding sequence of preimages $F^{-k}(w_{j}),\dots ,w_{j}$.

We start by defining blocks of type $1^{*}$ and $3^{*}$ for the preimages 
of $w_{j}$. To this aim
we invoke the inductive procedure from Proposition~\ref{prop:goodcover}.
Namely, we start by picking a ball
$B_j\subset B_{R'}(F^k(x))$, denoting $z_1:=w_j$, $r_1:=R'/2$, 
and considering the shrinking neighborhoods for the $4$ 
times larger  ball $B_{4r_1}(z)$.
Let $k_1$ be the first time when $U_{k_1}$ hits a  critical point
$c_{2}$.
We set $r_2:=\diam\br{F^{-k_1}B_{r_1}}$, $z_2:=F^{-k_1}(z_1)$,
and proceed by induction.
The construction is repeated until we meet
a ball $B_{4r_l}(F^{-k_{l-1}}z)$ whose  
shrinking neighborhoods do not contain critical points. 
This  means that the ball $B_{2r_l}(z_l)$ can be pulled back 
univalently along the corresponding branch  of $F^{-k+(k_{1}+\dots+k_{l})}$.
Summarizing, our construction leads to a decomposition
of the sequence   $z_l,F(z_l),\dots,z_1$
into  blocks  $1^{*}\dots 1^{*}3^{*}$.
We see that the symbol $3^{*}$ 
stands for the initial  sequence
of preimages of type $3$ with $r=2R'$ (see Definition~\ref{defi:3t}). 
The terminal point of the type $3^{*}$ sequence is $z_{2}$. After, only
type $1^{*}$ sequences are allowed with terminal points $z_{3}, \dots ,z_{l}$,
respectively.

Having defined $z_{l}$, we apply to
$F^{-k+k_1+\dots+k_{l-1}}(z_l),\dots,z_l$
the inductive procedure 
with a stopping rule (see Section~\ref{sec:stop}).
This yields 
a decomposition of the sequence  into blocks of the form $21\dots3$ or $2$.
Finally,  we can represent the orbit $F^{-k}(z_1),\dots,z_1$,
as a sequence of blocks $21\dots111^{*}1^{*}\dots1^{*}3^{*}$.
By our convention, the orbit  $x,\dots,F^k(x)$ has the same decomposition.
Note that if we have no blocks of type $1^{*}$, i.e. $z_l$ coincides with 
$F^{-k}(w_{j})$, 
then the ball of radius $R'$ can be pulled back univalently 
along the corresponding branch of $F^{-k}$, and we have
no blocks $1$ either. This means that the sequence $x, \dots, F^{k}(x)$
is  encoded as  $2$.
Note also that the corresponding endpoints of blocks from the orbits
 $x,\dots,F^k(x)$ and $F^{-k}(z_1),\dots,z_1$ are $R$-close to each other.

Following the proof of  Proposition~\ref{prop:goodcover},
we denote  by ${\Cal Y}(w_j)$ the set of all possible terminal points
$z_l$ for all inverse branches of $F$ defined on the ball $B_j$. 
Introducing  more general notation, we set $y:=z_{l}$, 
$r(y)=r_l$, $l(y)=l$, and $c_y=c_l$.

Denote by ${\cal C}_{x}$ the set of all codes obtained for $x$.
If for some point $x$ we get infinitely many different type  $2$ sequences
then $x$ must belong to $\Jls$.
Indeed, a type $2$ sequence means that an $R'$-ball
around a point $R'/2$-close to some image of $x$ can be  pulled back
univalently. Hence, the same is true for $R'/2$-ball around the image of $x$.

Therefore, if  $x\in J\setminus \Jls $ then $x$ is  
a terminal point of an infinite number of
sequences $211\dots111^{*}1^{*}\dots1^{*}1^{*}3^{*}$ 
with only a finite number of choices for type $2$ blocks.
Let $k(x)$ be a minimal number for which  infinitely many 
sequences from ${\cal C}_{x}$ have the same type $2$ block of  
length $k(x)$. Denote $X_k:=\brs{x:k(x)=k}$ and observe that 
the sets $\{X_{k}:k=0,1,\dots\}$ form  
a countable partition of $J\setminus \Jls$. 

Obviously, for any Borel set
$X\subset J$ we have
$\HD(F^kX)=\HD(X)=\HD(F^{-k}X)$.
Since $F^k(X_k)\subset X_0$ and consequently  
$J\setminus \Jls\subset\cup_k F^{-k}(X_0)$, 
it is sufficient to prove that $\HD(X_0)\le \mmax\alpha/(1-\alpha)$. 

Every  point  $x\in X_0$  must  be a  terminal point 
for infinitely many different subsequences of the form
 $1\dots11^{*}\dots1^{*}3^{*}$,
containing  at least one block $1^{*}$.
Thus every  point  $x\in X_0$ is covered by infinitely many preimages
$$F^{-n(v,w_j)}(B_j)~,~~~j= 1,\dots, K,~~~y\in{\cal
Y}_j,~~~v\in\typei^{r(y)}(y)~.$$
Corollary~\ref{cor:goodcover} implies that for every $j=1,\dots, K$ and
$\beta=\mmax\alpha/(1-\alpha)$,
$$\sum_{y\in{\Cal Y}(w_j)}\,\sum_{v\in\typei^{r(y)}(y)}
\br{\diam\br{F^{-n(v,w_j)}B_{j}}}^{\beta}~<~\infty~.$$
We conclude that 
$\HD{(X_0)}\le\fr{\mmax\alpha}{1-\alpha}$ which proves the proposition.
\end{proof}

\section{Dimensions and conformal measures}\label{sec:dim}
\subsection{Fractal dimensions}

First we will remind the definitions of various dimensions, used in this paper.
For properties of the Hausdorff and Minkowski measures, contents, and dimensions
one can consult the monographs
\cite{mattila} and \cite{federer}.


Assume that we are given a compact subset $K$ of the complex plane
(or a complex sphere with the spherical metric).

\begin{defi}
For positive $\delta$ the {\em Hausdorff measure} $\ha_\delta$ is defined by
$$\ha_\delta(K)
~:=~\lim_{\rho\to0}~\inf_{\Cal B_\rho}
~\sum_{B\in\Cal B_\rho}~r(B)^{\delta}~,$$
the infimum taken over all covers $\Cal B_\rho\,=\,\{B\}$
of the set $K$ by balls $B$ of radii $r(B)\le\rho$.
\end{defi}
Usually the measure above is normalized by some factor, depending on $\delta$,
but this is not necessary for our purposes.

It is easy to show that there exists some number
$\delta'\in[0,2]$, such that  $\ha_\delta(K)$ is infinite for $\delta<\delta'$
and zero for $\delta>\delta'$.
The latter is called the Hausdorff dimension:

\begin{defi}
The {\em Hausdorff dimension} of a set $K$ is defined by
$$\HD(K)~:=~\inf~\brs{\delta\,:~\ha_\delta(K)=0}~.$$
The {\em Hausdorff dimension} of a Borel measure  $\nu$ is defined 
as the infimum of the dimensions of its Borel supports:
$$\HD(\nu)~:=~\inf~\brs{\HD(E)\,:~E{\mathrm~is~Borel~and~}\nu(E^c)=0}~.$$
\end{defi}

The upper and lower Minkowski dimensions can be defined similarly using the
corresponding Minkowski contents.
Equivalently, one can take a shortcut and
define them as follows:

\begin{defi}
Let $N(K,\rho)$ be the minimal number of the balls of radius $\rho$
needed to cover $K$.
The {\em upper} and {\em lower} {\em Minkowski dimensions}
are defined as
$$\begin{array}{ccc}
\MDsup(K)&:=&\limsup_{\rho\to0}~\frac{\log N(K,\rho)}{\log 1/\rho}~,\\
\MDinf(K)&:=&\liminf_{\rho\to0}~\frac{\log N(K,\rho)}{\log 1/\rho}~.
\end{array}$$
\end{defi}
If those dimensions coincide, their common value 
is called the {\em Minkowski dimension} $\MD(K)$.

\begin{rem}
Since we restricted ourselves to a smaller collection of coverings,
than in the definition of the Hausdorff measure, one clearly has
$$\HD(K)~\le~\MDinf(K)~\le~\MDsup(K) ,$$
for arbitrary compact set $K$.
\end{rem}

In the absence of the dynamics, the Whitney exponent
can be regarded as a substitute for the Poincar\'e exponent.
One can decompose domain $\Omega\,:=\,\C\setminus K$ in the complex plane
into a collection $\brs{Q_j}$ of non-overlapping dyadic squares 
so that $\dist{Q_j,K}\,\asymp\,\diam(Q_j)$ up to a constant of $4$
(consult \cite{Stein} for this classical fact, the Whitney decomposition).

\begin{defi}
{\em Whitney exponent} is defined as an exponent of convergence
$$\dwhit(K)~:=~\inf~\brs{\delta\,:
~\sum_{Q_j:\,\diam(Q_j)\le1}\,\diam(Q_j)^{\delta}\,<\,\infty}~.$$
\end{defi}
Note that in the Whitney decomposition the smaller the squares, the closer they are to the
set, so to describe its geometry it is enough to work with the small squares only.
Thus the large squares are dropped from the series above so that it becomes convergent.

Clearly, this definition admits the following integral reformulation
(and hence does not depend on the choice of Whitney decomposition):
$$\dwhit(K)~:=~\inf~\brs{\delta\,:
~\int_{\Omega}~\dist{z,K}^{\delta-2}\,dm(z)\,<\,\infty}~,$$
where $m$ denotes area, and we use the spherical metric.
One can restrict integration to some neighborhood of $K$
(and should do so if working with the  planar metric).
\begin{rem}
The definitions of Whitney and Poincar\'e exponents assume 
that the complement of $K=J$ is non-empty.
Should $K=J$ coincide with the whole sphere,
we set $\dwhit(K)\,=\,\dpoin(K)\,:=\,2$.
\end{rem}

\subsection{Multifractal analysis}
The following is Lemma~2.1 in \cite{bishop-poincare}, where it was used
in similar situation, involving Poincar\'e exponent of a Kleinian group
and Minkowski dimension of its limit set.
We thank Chris Bishop for bringing it to our attention.

\begin{fact}\label{lem:bishop}
For any compact set $K$, $\dwhit(K)\,\le\,\MDsup(K)$.
If, in addition, $K$ has zero area, then $\dwhit(K)\,=\,\MDsup(K)$.
\end{fact}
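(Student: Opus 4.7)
The plan is to group the Whitney squares by dyadic size. Let $W_n := \brs{Q_j : \diam(Q_j) = 2^{-n}}$, so
\[\sum_{Q_j:\,\diam(Q_j)\le 1} \diam(Q_j)^{\delta} ~\asymp~ \sum_{n\ge 0} |W_n|\,2^{-n\delta},\]
and the question reduces to the asymptotic behavior of $|W_n|$. Because $\dist{Q,K}\asymp 2^{-n}$ for $Q\in W_n$, each such square sits in the tube $T_n:=\brs{z:\dist{z,K}\le C\cdot 2^{-n}}$, and dilating a minimal $2^{-n}$-cover of $K$ by a bounded factor covers $T_n$. Since a disk of radius $C'\cdot 2^{-n}$ contains only $O(1)$ non-overlapping squares of side $2^{-n}$, this yields the universal bound $|W_n|\le C\,N(K,2^{-n})$, valid for any compact $K$.

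For $\dwhit(K)\le \MDsup(K)$, fix $\delta>\MDsup(K)$ and choose $\delta'\in(\MDsup(K),\delta)$. Then $N(K,2^{-n})\le 2^{n\delta'}$ for large $n$, so
\[\sum_{n}|W_n|\,2^{-n\delta}~\lesssim~\sum_{n}2^{n(\delta'-\delta)}~<~\infty,\]
and taking $\delta\downarrow \MDsup(K)$ proves the first assertion.

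For the reverse inequality under the zero-area hypothesis, I would combine two complementary facts. First, a standard packing-covering argument gives $m(T_\rho)\asymp N(K,\rho)\,\rho^2$ for small $\rho$, where $T_\rho:=\brs{z:\dist{z,K}\le\rho}$. Second, the Whitney squares with $\diam(Q_j)\le c\cdot 2^{-N}$ tile $T_{2^{-N}}\setminus K$; zero area of $K$ promotes this to a tiling of $T_{2^{-N}}$ up to a null set, whence
\[\sum_{n\ge N}|W_n|\,2^{-2n}~\asymp~m(T_{2^{-N}})~\asymp~N(K,2^{-N})\,2^{-2N}.\]
Factoring $2^{N(2-\delta)}$ out of the tail (assuming $\delta<2$, else the inequality is trivial) gives
\[\sum_{n\ge N}|W_n|\,2^{-n\delta}~\ge~2^{N(2-\delta)}\sum_{n\ge N}|W_n|\,2^{-2n}~\gtrsim~N(K,2^{-N})\,2^{-N\delta}.\]
If $\delta<\MDsup(K)$, pick $\delta'\in(\delta,\MDsup(K))$: there exist $N_k\to\infty$ with $N(K,2^{-N_k})\ge 2^{N_k\delta'}$, so the right-hand side is $\gtrsim 2^{N_k(\delta'-\delta)}\to\infty$. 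Since tails of a convergent series must vanish, the Whitney sum at exponent $\delta$ diverges, and taking $\delta\uparrow\MDsup(K)$ gives $\dwhit(K)\ge\MDsup(K)$.

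The only genuinely delicate step is the lower comparison of the Whitney tail with $m(T_{2^{-N}})$: without the zero-area hypothesis the Whitney cubes miss the bulk of the tube (namely $K$ itself), the comparison collapses, and the reverse inequality indeed fails in general. Everything else is a routine dyadic rearrangement.
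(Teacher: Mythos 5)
Your proof is correct, and it reconstructs the standard tube-area argument underlying the cited result; the paper itself gives no proof of this Fact, deferring instead to Lemma~2.1 of Bishop's paper \cite{bishop-poincare}. One small imprecision: the Whitney squares of diameter at most $2^{-N}$ do not literally tile $T_{2^{-N}}\setminus K$, since such a square may protrude beyond $T_{2^{-N}}$. The correct statement is the sandwich $T_{2^{-N}}\setminus K\subset\bigcup_{\diam Q_j\le 2^{-N}}Q_j\subset T_{C\,2^{-N}}$ for a fixed Whitney constant $C$; since $m(T_{C\rho})\asymp m(T_\rho)$ (both sides being $\asymp N(K,\rho)\rho^2$ with $N(K,\cdot)$ doubling in the radius) and $m(T_{2^{-N}}\setminus K)=m(T_{2^{-N}})$ once $m(K)=0$, this sandwich produces exactly the two-sided bound $\sum_{n\ge N}|W_n|\,2^{-2n}\asymp m(T_{2^{-N}})$ that you invoke, so the argument goes through as written.
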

By taking a cover of $J$ by a finite number of small balls
and applying Proposition~\ref{prop:goodcover}
to each of them,
we easily obtain the following

\begin{lem}\label{lem:hdim}
Suppose that a rational function $F$ satisfies the summability condition
with an exponent
$$\alpha~<~\fr{2}{\mmax+2}~.$$
If the Julia set is not the whole sphere,
then its Hausdorff dimension
is strictly less than $2$.
\end{lem}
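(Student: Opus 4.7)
The plan is to combine Proposition~\ref{prop:goodcover} with the total invariance $F^{-n}(J)=J$ in order to produce, at every scale $\rho>0$, a cover of $J$ whose $p$-diameters sum to an arbitrarily small quantity. This forces $\ha_p(J)=0$ for some $p<2$ and hence $\HD(J)<2$.

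First I would fix a finite cover $B_1,\dots,B_K$ of $J$ by open balls centered on $J$ and of radii less than the scale $R'$ from Section~\ref{sec:const}. Proposition~\ref{prop:goodcover} provides $p<2$ (a single such $p$ works simultaneously for all $K$ balls, by taking a maximum) such that
$$S_i \;:=\; \sum_n \sum_{F^{-n}B_i}\br{\diam\br{F^{-n}B_i}}^{p} \;<\;\infty, \qquad i=1,\dots,K,$$
the inner sum running over the connected components of the $n$-th preimage. Convergence of each $S_i$ implies that the level-$n$ partial sum
$$a_n^{(i)} \;:=\; \sum_{F^{-n}B_i}\br{\diam\br{F^{-n}B_i}}^{p}$$
tends to $0$ as $n\to\infty$, and hence so does $\sum_{i=1}^{K} a_n^{(i)}$.

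Next I would invoke total invariance: since $\bigcup_i B_i \supseteq J$ and $F^{-n}(J)=J$,
$$J \;=\; F^{-n}(J) \;\subseteq\; \bigcup_{i=1}^K F^{-n}(B_i),$$
so the collection $\brs{F^{-n}B_i:\,1\le i\le K,\ \text{all branches}}$ of connected components covers $J$ for every $n$. By Proposition~\ref{prop:shrink}, given any $\rho>0$ one may choose $n$ so large that every such component has diameter at most $\rho$. Enclosing each component in a ball of comparable diameter gives a valid competitor in the definition of the $p$-dimensional Hausdorff content of $J$ at scale $\rho$, so
$$\ha_p(J) \;\lesssim\; \liminf_{n\to\infty}\sum_{i=1}^K a_n^{(i)} \;=\; 0.$$
Therefore $\HD(J)\le p<2$.

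There is essentially no substantive obstacle beyond Proposition~\ref{prop:goodcover} itself; the two geometric ingredients needed --- total invariance of $J$, to upgrade a finite cover by balls into a cover at every time level, and uniform shrinking of preimages from Proposition~\ref{prop:shrink}, to make these covers arbitrarily fine --- are both entirely mechanical.
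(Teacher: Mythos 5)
Your proof is correct and follows exactly the route the paper indicates: the paper's entire justification is ``By taking a cover of $J$ by a finite number of small balls and applying Proposition~\ref{prop:goodcover} to each of them, we easily obtain the following,'' and your argument supplies precisely the details behind that sentence (total invariance to get covers at each level, and Proposition~\ref{prop:shrink} to make those covers arbitrarily fine). No discrepancy.
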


It seems to be folklore   that for rational maps without neutral
orbits $\dpoin(J)=\dwhit(J)$. We were unable to find a reference
to this fact  in the literature and thus we supply the proof below.
The following is an analogue of Lemma~3.1 in \cite{bishop-poincare}:

\begin{lem}\label{lem:poinwhit}
For any rational function $F$ 
without Siegel disks, Herman rings, or parabolic points one has
$$\dpoin(J)~=~\dwhit(J)~.$$
\end{lem}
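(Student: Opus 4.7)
The plan is to interpret both $\dpoin(J)$ and $\dwhit(J)$ as the common critical exponent of an auxiliary series of diameters of pullbacks. Let $\Cal F_1,\dots,\Cal F_k$ be representatives of the grand orbit classes of periodic Fatou components, and define
$$\Xi_\delta~:=~\sum_{j=1}^k\,\sum_{n\ge 0}\,\sum_{\mathrm{branches}\,\mathrm{of}\,F^{-n}}\,\br{\diam F^{-n}(A_j)}^\delta,$$
where $A_j\subset\Cal F_j$ is an appropriately chosen fundamental annulus. The goal is to establish $\Xi_\delta\asymp\sum_j\Sigma_\delta(z_j)$ for suitable base points $z_j\in A_j$, and simultaneously $\Xi_\delta\asymp\int_{\C\setminus J}\dist{z,\J}^{\delta-2}\,dm(z)$ up to a bounded additive term, which forces the two critical exponents to coincide.

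Under the hypothesis of no parabolic, Siegel, or Herman cycles, every periodic Fatou component is the immediate basin of an attracting or super-attracting cycle. Using Koenigs coordinates in the attracting case and B\"ottcher coordinates in the super-attracting case, I would choose $A_j\subset\Cal F_j$ to be a fundamental annulus separated in spherical distance from both $\J$ and the cycle, with the property that every forward orbit in $\Cal F_j$ meets $A_j$ exactly once per period of the cycle, and that the preimages $\brs{F^{-n}(A_j)}_{n\ge 0}$ cover $\Cal F_j$ minus a relatively compact neighborhood of the cycle with multiplicity bounded by an absolute constant. Fix an admissible point $z_j\in A_j$ so that a small disk $B(z_j,r)\subset A_j$ admits univalent pullback along every branch of $F^{-n}$; the Koebe distortion lemma~\ref{lem:koeb} then gives
$$\diam F^{-n}(A_j)~\asymp~\abs{(F^n)'(y)}^{-1},\qquad y=F^{-n}(z_j),$$
so that $\Xi_\delta\asymp\sum_j\Sigma_\delta(z_j)$, whose exponent of convergence is $\max_j\dpoin(z_j)=\dpoin(J)$ since every Fatou component is preperiodic to one of the $\Cal F_j$.

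On the other hand, for $n$ large each $F^{-n}(A_j)$ is a small topological disk of diameter comparable by Koebe to its distance from $\J$, giving $\br{\diam F^{-n}(A_j)}^\delta\asymp\br{\diam F^{-n}(A_j)}^{\delta-2}\,m\br{F^{-n}(A_j)}$. Summing with the bounded-multiplicity property yields $\Xi_\delta\asymp\int_{\C\setminus J}\dist{z,\J}^{\delta-2}\,dm(z)$ modulo a finite contribution from fixed neighborhoods of the cycles, and the exponent of convergence of this integral is by definition $\dwhit(J)$. The hard part will be verifying the bounded-multiplicity claim for the pullback cover: one must check that, both within a single basin and across different basins, the pieces $F^{-n}(A_j)$ overlap only boundedly, a structural property that relies crucially on the absence of neutral periodic points, whose presence would let forward orbits in the Fatou set accumulate on $\J$ and destroy the clean tiling needed here.
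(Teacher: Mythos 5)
Your proposal essentially reproduces the paper's argument. Both fix a fundamental domain or annulus inside each cycle of periodic Fatou components (all (super-)attracting by hypothesis), observe that the pullbacks under all branches of $F^{-n}$ cover a neighborhood of $J$ inside the Fatou set (with bounded multiplicity in your version, or as a tiling up to a null set in the paper's), and use Koebe distortion together with the comparison $\dist{x,J}\asymp\abs{(F^n)'(y)}^{-1}$ (supplied in the paper by Lemma~\ref{lem:ourkoebe}) to convert the Whitney integral $\int\dist{x,J}^{\delta-2}\,dm(x)$ into the Poincar\'e series. The intermediate quantity $\Xi_\delta$ you introduce is a presentational device, not a different route.

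One estimate needs tightening: from univalent pullback of only a small disk $B(z_j,r)\subset A_j$, the Koebe $1/4$-theorem yields the lower bound $\diam F^{-n}(A_j)\gtrsim\abs{(F^n)'(y)}^{-1}$, but not the two-sided comparison you assert, nor the bounded-geometry estimate $m\br{F^{-n}(A_j)}\asymp\br{\diam F^{-n}(A_j)}^2$ on which your second comparison relies. These require the pullbacks of a thickening of the whole annulus $A_j$ to be univalent, so $A_j$ must be chosen to avoid a neighborhood of the forward critical orbits contained in the Fatou set. This is feasible because those orbits converge to the attracting cycle and meet any fundamental annulus only finitely often; the paper explicitly builds this property into its choice of $U_j$ (``critical points never enter some neighborhoods of $U_j$''), which is what licenses the distortion estimate there.
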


\begin{rem}
The proof below can be modified to work for parabolic points as well.
However, in the presence of Siegel disks or Herman rings 
the introduced version of Poincar\'e series
does not work well.

Under an additional assumption that the Julia set has zero area, the
lemma 
together with Fact~\ref{lem:bishop} imply
that the Poincar\'e exponent coincides with the upper Minkowski dimension.
\end{rem}
\begin{proof}
Fix points $\{z_j\}$ used in the definition of $\dpoin(J)$
--  one inside each cycle of periodic Fatou components.
From Lemma~7 of \cite{ceh} (Lemma~\ref{lem:ourkoebe}) follows  
that for any $y\in F^{-n} z_j$
one has
\begin{equation}\label{eq:dynmetric}
\dist{y,J}~\asymp~\abs{\br{F^{n}}'(y)}^{-1}~,
\end{equation}
up to a constant depending on $z_j$ only.


Knowing that only (super) attractive  Fatou components are possible, 
we can choose ``fundamental'' domains $z_j\in U_j$, so that their preimages under
all possible branches of $F^{-n}$ are disjoint and cover almost all
of some neighborhood $U$ of $J$ inside the Fatou set.
Also $z_j$ and then $U_j$ can be chosen so that under iteration
critical points never enter some neighborhoods of $U_j$, and hence
by distortion theorems, up to a constant $\const(z_j,U_j)$
\begin{equation}\label{eq:dynmetrica}
\dist{x,J}~\asymp~\abs{\br{F^{n}}'(x)}^{-1}~\asymp
~\abs{\br{F^{n}}'(y)}^{-1}~,
\end{equation}
for any $x\in F^{-n}U_j$ and $y$ being the corresponding
preimage of $z_j$: $y\in F^{-n} z_j$.

Hence, for any $\delta\ge0$ we can write
(here $V\in F^{-n}U_j$ means that $V$ is one of the components of connectivity
of the latter)
\begin{eqnarray*}
\int_{U}~\dist{x,J}^{\delta-2}\,dm(x)&=&
\sum_j\,\sum_{n=1}^{\infty}\,\sum_{V\in F^{-n}U_j}
\int_{V}~\dist{x,J}^{\delta-2}\,dm(x)\\
&\asymp&\sum_j\,\sum_{n=1}^{\infty}\,\sum_{V\in F^{-n}U_j}\,
\int_{V}~\abs{\br{F^n}'(x)}^{2-\delta}\,\abs{\br{F^n}'(x)}^{-2}\,dm(F^n(x))\\
&\asymp&\sum_j\,\sum_{n=1}^{\infty}\,\sum_{V\in F^{-n}U_j}\,
\int_{V}~\abs{\br{F^n}'(F^{-n}z_j)}^{-\delta}\,dm(F^n(x))\\
&\asymp&\sum_j\,\sum_{n=1}^{\infty}\,\sum_{y\in F^{-n}z_j}\,\abs{\br{F^n}'(y)}^{-\delta}
\int_{U_j}\,dm(z)
~\asymp~\Sigma_\delta(J,\{z_j\})~,
\end{eqnarray*}
which clearly implies the desired equality.
\end{proof}

The definitions  and our discussion so far imply two chains
of inequalities: 
\[ \HD(J)\leq \MDinf(J)\leq \MDsup(J)\;\]
and
\[\dpoin(J)=\dwhit(J)\, \leq \,\MDsup(K)~.\]

\begin{prop}[Gauge function estimate for conformal measure]\label{prop:confgauge}
Suppose that a rational function $F$ satisfies the summability condition
with an exponent
$$\alpha~<~\fr{q}{\mmax+q}~,$$
and $\nu$ is $q$-conformal measure with no atoms at critical points.
Then for $\nu$-almost every $x\in J$ and any $\epsilon>0$
there are  constants $C_x>0$ and $C_{x,\epsilon}>0$,
such that for any ball $B(x,r)\,,~r<1,$ centered at $x$ one has
$$C_{x}\,r^q~<~\nu(B(x,r))~<~C_{x,\epsilon}\,r^{q-\epsilon}~.$$
\end{prop}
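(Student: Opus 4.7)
My plan is to transport the $q$-conformality of $\nu$ from balls of definite size on $J$ back to small balls around $x$ via the univalent pull-backs supplied by Theorem~\ref{theo:largescale}. Fix $\epsilon>0$ and a small $\epsilon'>0$ to be adjusted later. For $\nu$-a.e.\ $x\in J$, Theorem~\ref{theo:largescale} produces a sequence $n_j\to\infty$ so that $F^{n_j}$ is univalent on $V_j:=F^{-n_j}(B(F^{n_j}x,R'))$ and $|(F^{n_{j+1}})'(x)|<|(F^{n_j})'(x)|^{1+\epsilon'}$. Setting $\eta_j:=|(F^{n_j})'(x)|^{-1}$, Koebe (Lemma~\ref{lem:koeb}) yields absolute constants $c_0<c_1$ with $B(x,c_0\eta_j)\subset F^{-n_j}(B(F^{n_j}x,R'/2))\subset B(x,c_1\eta_j)$, and uniform distortion of $F^{n_j}$ on this set. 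Since $\nu$ is ergodic, non-atomic, and fully supported on the compact set $J$ (Theorem~\ref{theo:4}), one has $c_\ast:=\inf_{y\in J}\nu(B(y,R'/2))>0$; $q$-conformality then gives $\nu(F^{-n_j}(B(F^{n_j}x,R'/2)))\asymp\eta_j^q$, whence
\[ \nu(B(x,c_0\eta_j))\ \lesssim\ \eta_j^{\,q}\ \lesssim\ \nu(B(x,c_1\eta_j)) \]
at every ``good'' scale $\eta_j$.

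For the upper bound, given $r$ small, choose $j$ with $c_0\eta_{j+1}<r\le c_0\eta_j$; monotonicity gives $\nu(B(x,r))\lesssim\eta_j^{\,q}$. The condition $\eta_{j+1}\ge c\,\eta_j^{1+\epsilon'}$ together with $\eta_{j+1}<r/c_0$ yields $\eta_j\le C r^{1/(1+\epsilon')}$, so $\nu(B(x,r))\lesssim r^{q/(1+\epsilon')}$. Picking $\epsilon'$ small enough that $q/(1+\epsilon')>q-\epsilon$ (i.e.\ $\epsilon'<\epsilon/(q-\epsilon)$, assuming $\epsilon<q$) will deliver the desired upper bound $\nu(B(x,r))<C_{x,\epsilon}\,r^{q-\epsilon}$.

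For the lower bound I would not rely on the naive inclusion $B(x,c_1\eta_{j+1})\subset B(x,r)$ (which only produces $\nu(B(x,r))\gtrsim r^{q(1+\epsilon')}$), but instead transport conformally: bounded distortion of $F^{n_j}$ on $B(x,c_1\eta_j)$ implies that for $r\le c_1\eta_j$ the image $F^{n_j}(B(x,r))$ is a nearly round topological disk about $F^{n_j}x$ of diameter $\asymp r/\eta_j$, and $q$-conformality then reads
\[ \nu(B(x,r))\ \asymp\ \eta_j^{\,q}\,\nu\bigl(B(F^{n_j}x,\,c\,r/\eta_j)\bigr). \]
Therefore $\nu(B(x,r))\gtrsim r^{q}$ follows from the uniform estimate $\nu(B(y,s))\ge c_\ast$ \emph{provided} I can select, for every small $r$, an index $j=j(r)$ for which the ratio $r/\eta_j$ lies in a fixed compact subinterval of $(0,c_1]$; when that ratio is bounded below by a positive constant, $\nu(B(F^{n_j}x,cr/\eta_j))$ is bounded below by $c_\ast$.

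The hard part is precisely this last point: the $\epsilon'$-frequency of Theorem~\ref{theo:largescale} only prevents $\eta_{j+1}/\eta_j$ from decaying faster than $\eta_j^{\epsilon'}$, which still permits many scales $r$ to fall deep inside a single gap $(c_1\eta_{j+1},c_1\eta_j]$. My proposed remedy is to upgrade Theorem~\ref{theo:largescale} to a density statement: for $\nu$-a.e.\ $x$ the sequence of large-scale returns can be refined to one with $\eta_{j+1}/\eta_j$ bounded below by a positive constant. I would obtain this via a Pliss/Borel--Cantelli argument along the forward orbit, combining the uniform lower mass $c_\ast$ with a Kac-type integrability of return times into a large-scale set of positive $\nu$-measure, in the spirit of the induced hyperbolicity machinery already developed in Sections~\ref{sec:backexpan}--\ref{sec:induced}. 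With that density in hand, the conformal transport above closes the argument.
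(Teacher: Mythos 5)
Your route matches the paper's --- the proof is indeed supposed to follow from Proposition~\ref{prop:freq}, which is exactly the tool you invoke --- and the upper-bound half of your argument is correct and complete: interpolating between the scales $\eta_j:=|(F^{n_j})'(x)|^{-1}$ using the $\epsilon'$-frequency $\eta_{j+1}>\eta_j^{1+\epsilon'}$ gives $\nu(B(x,r))\lesssim r^{q/(1+\epsilon')}$, and choosing $\epsilon'<\epsilon/(q-\epsilon)$ closes that side.

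The gap is in the lower bound, and while you have located it correctly, you have not closed it. The interpolation gives $\nu(B(x,r))\gtrsim\eta_{j+1}^{q}>\eta_j^{q(1+\epsilon')}\gtrsim r^{q(1+\epsilon')}$, which after optimizing over $\epsilon'$ yields only $\nu(B(x,r))\ge C_{x,\epsilon}\,r^{q+\epsilon}$ for every $\epsilon>0$, not the stated $C_x r^q$. Your proposed remedy --- refining $(n_j)$ so that $\eta_{j+1}/\eta_j$ is bounded below --- is neither carried out nor plausible under mere summability: since $\eta_{j+1}/\eta_j=|(F^{n_{j+1}-n_j})'(F^{n_j}x)|^{-1}$, this amounts to geometrically comparable consecutive univalent return scales, a Collet--Eckmann-type uniformity that the summability condition is specifically designed to relax (the paper stresses that summable maps may have strongly recurrent critical orbits, e.g.\ the quadratic Fibonacci polynomial, and that the TCE class is strictly smaller). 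So you should not expect to extract bounded-ratio returns from the machinery of Sections 2--5. Note, however, that what you do prove --- the two-sided comparison $\nu(B(x,c_0\eta_j))\asymp\eta_j^q$ along the good scales, together with $C_{x,\epsilon}\,r^{q+\epsilon}<\nu(B(x,r))<C_{x,\epsilon}\,r^{q-\epsilon}$ for all small $r$ --- already delivers the only downstream use of the proposition, namely $\HD(\nu)=q$ in Corollary~\ref{cor:hdconf}: $\HD(\nu)$ is the essential supremum over $\nu$ of $\liminf_{r\to0}\log\nu(B(x,r))/\log r$, and this $\liminf$ may be evaluated along the subsequence $r=c_0\eta_j$. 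The exact lower exponent $q$ claimed in the proposition requires an argument beyond the one you sketch, and you should either find one or be explicit that you are only obtaining the $\epsilon$-weakened lower bound.
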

\begin{proof}
It is a straight-forward use of Proposition~\ref{prop:freq}.
\end{proof}

It is easy to see, that the proposition above implies that
$\HD(J)\,\ge\,\HD(\nu)\,=\,q$.
Combining this
with the Corollary~\ref{cor:confnoatom}, we obtain the following
\begin{coro}\label{cor:hdconf}
Assume that $F$ satisfies the summability condition with an exponent
$$\alpha~<~\fr{\dpoin(J)}{\mmax+\dpoin(J)}~,$$
then 
$$\HD(J)~\ge~\dpoin(J)~=~\HD(\nu)~.$$
\end{coro}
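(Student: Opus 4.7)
The plan is to deduce Corollary~\ref{cor:hdconf} directly from Proposition~\ref{prop:confgauge} together with Corollary~\ref{cor:confnoatom}, using standard mass distribution arguments relating local dimensions of a measure to its Hausdorff dimension. By Corollary~\ref{cor:confnoatom}, under the stated summability hypothesis there exists a unique non-atomic conformal measure $\nu$ with exponent $q := \dconf(J) = \dpoin(J)$, so Proposition~\ref{prop:confgauge} applies and yields, for $\nu$-almost every $x \in J$ and every $\epsilon>0$,
\[ C_x \, r^q ~<~ \nu(B(x,r)) ~<~ C_{x,\epsilon} \, r^{q-\epsilon}, \qquad 0<r<1. \]

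First I would use the upper bound to show $\HD(\nu) \ge \dpoin(J)$. Taking logarithms and passing to the limit $r \to 0$, the lower local dimension satisfies
\[ d_\nu^-(x) ~:=~ \liminf_{r\to 0}\,\frac{\log \nu(B(x,r))}{\log r} ~\ge~ q-\epsilon \qquad \text{for } \nu\text{-a.e. } x. \]
Since $\epsilon>0$ is arbitrary, $d_\nu^-(x) \ge q$ for $\nu$-a.e. $x$. By the standard Billingsley/Frostman principle (see e.g. Chapter 4 of \cite{mattila}), this implies $\HD(\nu) \ge q = \dpoin(J)$. Because $\nu$ is supported on $J$, we immediately obtain $\HD(J) \ge \HD(\nu) \ge \dpoin(J)$, which is the first assertion of the corollary.

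To establish the matching upper bound $\HD(\nu) \le \dpoin(J)$, I would invoke the lower estimate $C_x r^q < \nu(B(x,r))$, which gives
\[ d_\nu^-(x) ~\le~ \limsup_{r \to 0}\,\frac{\log \nu(B(x,r))}{\log r} ~\le~ q \qquad \text{for } \nu\text{-a.e. } x. \]
Again by the standard mass distribution principle, $d_\nu^-(x) \le q$ for $\nu$-a.e. $x$ implies $\HD(\nu) \le q$. Combined with the previous paragraph, this yields $\HD(\nu) = \dpoin(J)$, completing the proof.

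At this stage the argument is essentially soft, as all the dynamical work has been absorbed into Proposition~\ref{prop:confgauge}; no further distortion or summability estimates are required. The only subtle point is to make sure one quotes the correct form of the Billingsley-type lemma (that $\nu$-a.e.\ control of $d_\nu^-$ translates into a two-sided bound for $\HD(\nu)$, not merely for $\HD$ of a specific support); this is classical and requires no new input, so I do not anticipate a genuine obstacle here.
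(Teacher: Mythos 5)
Your proposal is correct and follows essentially the same route as the paper: the paper derives the corollary by observing that Proposition~\ref{prop:confgauge} gives the pointwise gauge estimates $C_x r^q < \nu(B(x,r)) < C_{x,\epsilon} r^{q-\epsilon}$ for $\nu$-a.e.\ $x$, hence $\HD(J)\ge\HD(\nu)=q$, and then identifies $q=\dpoin(J)$ via Corollary~\ref{cor:confnoatom} --- which is precisely the local-dimension/Billingsley argument you spell out in detail.
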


\paragraph{Proof of Theorem~\ref{theo:dims}.}
By now we have
$$\dpoin(J)~=~\dconf(J)~=~\dwhit(J)~=~\MD(J)~\ge~\HD(J)~\ge~\dpoin(J)~,$$
and hence all these dimensions coincide,
provided that a rational function $F$ satisfies the  summability condition
with an exponent
$$\alpha~<~\fr{\dpoin(J)}{\mmax+\dpoin(J)}~.$$
By the work of M.~Denker, F.~Przytycki, and M.~Urbanski,
the hyperbolic and dynamical dimensions will also 
be equal to the dimensions above.


\section{Removability and rigidity} 

In this Section, we prove Theorems~\ref{theo:rem}, \ref{theo:rem1}, and \ref{theo:rem2}.

\subsection{Conformal removability and strong rigidity}

The notion of {\em conformal removability} (also called holomorphic removability)
appears naturally in holomorphic dynamics:
often one can show that two dynamical systems are conjugated by a homeomorphism
which is (quasi)conformal outside the Julia set,
and conformal removability of the latter ensures  global (quasi)conformality
of the conjugation.

\begin{defi}
We say that a compact set $J$
is {\em conformally removable}
if any homeomorphism of the Riemann sphere $\hat{\C}$,
which is conformal outside $K$,
is globally conformal
and hence is a M\"{o}bius transformation.
\end{defi}
The quasiconformal removability is defined similarly.
An easy application of the measurable Riemann mapping theorem
shows that the two notions are equivalent.
The problem of geometric characterization of  removable sets is open,
see \cite{josm} for discussion and relevant references.
Sets of positive area are non-removable,
as are Cartesian products of intervals with Cantor sets 
of positive length.
On the other hand, quasicircles and sets of $\sigma$-finite length are removable.
Note that there are removable sets of Hausdorff dimension $2$
and non-removable of dimension $1$.

In \cite{josm} a few geometric criteria for removability are given,
some close to being optimal and well-adapted for dynamical applications.
We will use the following fact (which is Theorem~5 in \cite{josm}):
\begin{fact}\label{fact:js}
Suppose that $F$ is a polynomial,
and $\{B_j\}$ is a finite collection of domains whose closure covers $J_F$.
Denote by $\{P_i^n\}$ the collection of all components of connectivity of pullbacks
$F^{-n}B_j$, and by $N(P_i^n)$ the degree of $F^n$ restricted to $P_i^n$.
Then the geometric condition,
\begin{equation}\label{jscondition}
\sum_{i,n}\,N(P_i^n)\,\diam\br{P_i^n}^2\ <\ \infty\ ,
\end{equation}
is sufficient for  conformal removability of the Julia set.
\end{fact}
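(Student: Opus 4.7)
The plan is to prove conformal removability of $J_F$ by showing that every orientation-preserving homeomorphism $\phi$ of $\hat{\C}$ which is conformal on $\hat{\C}\setminus J_F$ must be a M\"obius transformation. The standard reduction is that it suffices to prove $\phi\in W^{1,2}_{\mathrm{loc}}(\hat{\C})$ and $|J_F|=0$: the distributional Cauchy--Riemann equation then holds pointwise on the full-measure set $\hat{\C}\setminus J_F$ and extends across $J_F$, whence Weyl's lemma forces $\phi$ to be holomorphic on $\hat{\C}$.

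First, I would verify that $J_F$ has planar Lebesgue measure zero. For every $n$ the family $\{P_i^n\}_i$ covers $J_F$, so
\[
|J_F|~\le~C\sum_i\diam(P_i^n)^2~\le~C\sum_{m\ge n}\sum_iN(P_i^m)\,\diam(P_i^m)^2,
\]
and the right side is a tail of a convergent series, hence vanishes as $n\to\infty$. Next, since $\phi$ is conformal on $\hat{\C}\setminus J_F$, the pointwise derivative $\phi'$ exists a.e.\ and $\int_U|\phi'|^2\,dA=\area\,\phi(U\setminus J_F)\le\area\,\phi(U)<\infty$ on every bounded $U$, so $|\phi'|\in L^2_{\mathrm{loc}}$.

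The core of the argument is then to upgrade this to $\phi\in W^{1,2}_{\mathrm{loc}}$ by proving the ACL property: for a.e.\ horizontal (resp.\ vertical) line $L$, the restriction $\phi|_L$ is absolutely continuous. On $L\setminus J_F$ this is automatic from conformality, so I must bound the total variation of $\phi|_L$ at points of $L\cap J_F$. Given a line $L$, each piece $P_i^n$ meeting $L$ contributes to the oscillation at most $\diam\phi(P_i^n)$, while its projection onto $L$ has length at most $\diam P_i^n$. Averaging over a transverse family of lines by Fubini, the total oscillation across $J_F$ is controlled by the sum $\sum_{i,n}\diam(P_i^n)\,\diam\phi(P_i^n)$, which I would handle by Cauchy--Schwarz,
\[
\sum_{i,n}\diam(P_i^n)\,\diam\phi(P_i^n)
~\le~\Big(\sum_{i,n}N(P_i^n)\diam(P_i^n)^2\Big)^{1/2}\Big(\sum_{i,n}\tfrac{\diam\phi(P_i^n)^2}{N(P_i^n)}\Big)^{1/2}.
\]
The first factor is finite by the hypothesis. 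For the second, the key observation is that $F^n:P_i^n\to B_j$ is a branched covering of degree $N(P_i^n)$; hence for fixed $n$ the preimages $\phi(P_i^n)$, which are disjoint topological disks, satisfy the multiplicity-weighted area bound $\sum_iN(P_i^n)^{-1}\area\phi(P_i^n)\le\sum_j\area\phi(B_j)\le 4\pi$, and summability in $n$ is recovered from the geometric series implicit in the assumption.

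The main obstacle will be converting the area bound to a diameter bound, i.e.\ establishing an inequality of the form $\diam\phi(P_i^n)^2\lesssim\area\phi(P_i^n)$ with a constant uniform in $i,n$. A priori $\phi$ is only a homeomorphism on $P_i^n$ and we cannot invoke the classical length-area inequality directly, since $P_i^n\cap J_F$ is precisely where conformality fails. I would address this by uniformizing the Fatou part of $P_i^n$ and applying Koebe to the resulting conformal branch of $\phi$, using that $P_i^n$ is itself the image of a fixed domain $B_j$ under a degree-$N(P_i^n)$ branched cover; the factor $N(P_i^n)$ in the hypothesis is precisely what absorbs the distortion introduced by the branching, and this is the technical point that drives the exact form of the summability condition.
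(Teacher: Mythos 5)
Note first that the paper does not prove this statement: it is quoted verbatim as Theorem~5 of \cite{josm}, so your proposal must be measured against the Jones--Smirnov argument. Your overall skeleton (reduce removability to $\phi\in W^{1,2}_{\mathrm{loc}}$ plus $|J_F|=0$, prove the ACL property by bounding the oscillation of $\phi$ across $J_F$ on almost every line, and close with a Cauchy--Schwarz between a geometric factor and a $\phi$-image factor) is the right one, and your measure-zero and $L^2$ steps are fine. The gap is in the second Cauchy--Schwarz factor. You need
$\sum_{i,n}\diam\phi(P_i^n)^2/N(P_i^n)<\infty$, and this is false in general, for two independent reasons. First, the sets $P_i^n$ straddle $J_F$, which is exactly where $\phi$ is \emph{not} conformal, so there is no Koebe-type inequality $\diam\phi(P_i^n)^2\lesssim\area\,\phi(P_i^n)$: a homeomorphism can send $P_i^n$ to an arbitrarily long, thin set, and $\diam^2$ dominates area, not the reverse. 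Second, even granting such an area bound, the pieces $P_i^n$ are essentially nested across $n$ (each $P_i^{n+1}$ sits inside some $P_{i'}^{n}$), so $\sum_n\sum_i\area\,\phi(P_i^n)$ diverges like $\sum_n \area\,\phi(\CC)$. Your proposed repair does not touch either problem: the inequality $\sum_i N(P_i^n)^{-1}\area\,\phi(P_i^n)\le\sum_j\area\,\phi(B_j)$ has no basis, since $\phi$ does not conjugate $F$ to anything and the degree of $F^n|_{P_i^n}$ says nothing about how $\phi$ distorts areas; and ``uniformizing the Fatou part of $P_i^n$'' does not control the oscillation of $\phi$ on $P_i^n\cap J_F$, which is the quantity you actually need.

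The actual proof resolves both difficulties by separating the two roles you have conflated. The quantities $\diam\phi(\cdot)$ are evaluated not on the pullbacks but on the Whitney squares $Q$ of the Fatou component(s): these are pairwise disjoint and compactly contained in the region where $\phi$ is conformal, so Koebe distortion gives $\diam\phi(Q)^2\asymp\area\,\phi(Q)$ and hence $\sum_Q\diam\phi(Q)^2\lesssim\area\,\phi(\CC)<\infty$ over \emph{all} scales at once --- this is the finite second factor. The pullbacks $P_i^n$ enter only through the first factor: one shows that the ``shadow'' on $J_F$ of a Whitney square at dynamical depth $n$ is contained in boundedly many pieces $P_i^n$, with at most $\const\cdot N(P_i^n)$ squares sharing a given piece, so the hypothesis $\sum_{i,n}N(P_i^n)\diam(P_i^n)^2<\infty$ is precisely the statement $\sum_Q\diam(\mathrm{SH}(Q))^2<\infty$ needed for the Cauchy--Schwarz and the Fubini argument over lines. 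The oscillation of $\phi$ across a point of $L\cap J_F$ is then bounded by summing $\diam\phi(Q)$ along a chain of Whitney squares whose shadows contain that point, rather than by $\diam\phi(P_i^n)$ directly. Without this relocation of $\diam\phi$ from the pullbacks to the Whitney squares, the estimate cannot close.
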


If a polynomial satisfies the summability condition
with an exponent $\alpha\,<\,\fr{2}{\mmax+2}$
then Proposition~\ref{prop:goodcover} 
implies condition (\ref{jscondition})
for a cover by  sufficiently small balls $B_j$.
By  Fact~\ref{fact:js}, the Julia set is conformally removable
and Theorem~\ref{theo:rem} follows.
Similar reasoning works for every Julia set, 
which is a boundary of one of the Fatou components.

\subsection{Dynamical removability and rigidity}

The assumption that the Julia set coincides with the 
 boundary of one of the Fatou components is essential
for conformal removability.
Indeed, there are hyperbolic rational functions
with non-removable Julia sets.
An example of a non-removable Julia set, 
which is topologically a Cantor set of circles,
is constructed in \S11.8 of the book \cite{beardon-book-iteration}.
It is a classical observation that these type of  sets 
are not conformally removable.
An exotic homeomorphism is given by rotating
annuli between circles by a devil's staircase of angles:
the resulting homeomorphism is conformal on each annulus,
globally continuous since the devil's staircase is,
but clearly is not M\"obius (i.e. not globally conformal).

Even though such Julia sets are not conformally removable,
they will be removable for all ``dynamical'' conjugacies.
To make this precise,
consider a homeomorphism $\phi$ which conjugates 
a rational dynamical system $(\CC,F)$ to
another dynamical system $(\CC,G)$
and assume that $\phi$ is quasiconformal outside the Julia set $J$.

Recall a metric definition of quasiconformality
(which states that images of circles look like circles themselves):
a homeomorphism $\phi$ is quasiconformal,
if there is a constant $H$ such that for every  point $x\in \CC$
\begin{equation}\label{eq:qcdef}
\limsup_{r\to0}\frac{L_\phi(x,r)}{l_\phi(x,r)}~\le~H~<~\infty~,
\end{equation}
where
\begin{eqnarray*}
L_\phi(x,r)&:=&\mbox{sup}\,\brs{\abs{\phi(x)-\phi(y)}\,:~|x-y|\leq r}~,\\ 
l_\phi(x,r)&:=&\mbox{inf}\,\brs{\abs{\phi(x)-\phi(y)}\,:~|x-y|\geq r}~. 
\end{eqnarray*}

If a rational function $F$ is hyperbolic,
then every sufficiently small ball with center at the Julia set 
is mapped univalently by some iterate of $F$ to a large scale
with bounded distortion,
and the inequality (\ref{eq:qcdef}) holds by a compactness argument
implying a (global) quasiconformality of $\phi$.

For non-hyperbolic maps the property of 
``going to large scale with bounded distortion'' fails for many
small balls. In these circumstances one has to resort
to more subtle tools in the theory of quasiconformal maps.
A  theorem of great use for complex dynamical systems
was proved recently by J.~Heinonen and P.~Koskela
 in \cite{heinonen-koskela-def}.
They have shown, that for Euclidean spaces 
the upper limit ``$\limsup$'' in the metric definition of
the quasiconformality can be replaced by ``$\liminf$.''
J.~Heinonen and P.~Koskela's result was immediately applied
by F.~Przytycki and S.~Rohde \cite{przytycki-rohde-rigidity}
to deduce rigidity of Julia sets
satisfying the topological Collet-Eckmann condition (shortly TCE).
The argument of ~\cite{przytycki-rohde-rigidity} goes as follows:
for every point $x\in J$ there is 
a sequence of radii $r_j\to0$
such that the balls $B_{r_j}(x)$
are mapped by  some iterates of $F$ to a large scale
with bounded distortion (though no longer univalently but with uniformly
bounded criticality), and the inequality (\ref{eq:qcdef}) 
for $\liminf$ holds again by a compactness argument
implying  a (global) quasiconformality of $\phi$.

Rational maps which satisfy the summability condition have weaker properties
than TCE maps (in the unicritical case the latter class
is strictly smaller), so we need an even stronger theorem than that
of J.~Heinonen and P.~Koskela.
It is a well-known fact, that the 
metric definition (with ``$\limsup$'') of quasiconformality allows
for an exceptional set.
Partially motivated by the perspective applications to our paper,
S.~Kallunki and P.~Koskela \cite{kallunki-koskela}
established very recently
that one can also have an exceptional set in the ``$\liminf$'' definition
of quasiconformality.
The following is  Theorem 1 of \cite{kallunki-koskela}:
\begin{fact}\label{fact:kalkos}
Let $\Omega\subset{\R}^n$ be a domain and suppose that
$\phi:\Omega \rightarrow \phi(\Omega)\subset{\R}^n$ is a homeomorphism. 
If there is a set $E$ of $\sigma$-finite $(n-1)$- dimensional Hausdorff measure so that
$$\liminf_{r\to0}\frac{L_\phi(x,r)}{l_\phi(x,r)}~\le~H~<~\infty~,$$
for each $x\in\Omega\setminus E,$
then $\phi$ is quasiconformal in $\Omega.$ 
\end{fact}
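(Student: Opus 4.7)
The plan is to derive the analytic definition of quasiconformality of $\phi$ on $\Omega$ (namely, $\phi \in W^{1,n}_{\mathrm{loc}}$ with $\|D\phi(x)\|^n \le K\,J_\phi(x)$ a.e.) from the liminf metric hypothesis, via an ACL-modulus argument that treats the exceptional set $E$ as negligible for $n$-modulus computations.

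The first step is a removability input: if $E\subset\Omega$ has $\sigma$-finite $(n-1)$-dimensional Hausdorff measure, then the family $\Gamma_E$ of locally rectifiable curves in $\Omega$ meeting $E$ has $n$-modulus zero. Writing $\sigma$-finite $E$ as a countable union of pieces $E_k$ with $\Cal H^{n-1}(E_k)<\infty$ and invoking a Marstrand/Fubini-type slicing theorem for Hausdorff measures, almost every line in almost every direction misses each $E_k$ entirely, and a subadditivity-of-modulus argument shows $\Gamma_E$ is negligible.

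The second step is to invoke the Heinonen--Koskela theorem (the $E=\emptyset$ case of the statement being proved) as a template. There, the liminf distortion inequality together with the homeomorphism property is shown, via a careful dyadic covering argument on almost every line, to imply the ACL property of $\phi$ along with a pointwise distortion bound $g_\phi(x)^n \le H^n J_\phi(x)$ almost everywhere, where $g_\phi(x):=\limsup_{r\to 0}L_\phi(x,r)/r$. Gehring's theorem then delivers $K$-quasiconformality with $K$ depending only on $H$ and $n$. The technical heart is the conversion of liminf data, available only along a subsequence of radii at each point, into an honest length estimate along generic lines.

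The third step is to transport this scheme across the exceptional set $E$. By the slicing step, for every fixed direction almost every line misses $E$ entirely, so the Heinonen--Koskela ACL argument applies verbatim along those lines and yields ACL of $\phi$ on $\Omega$. The pointwise distortion bound holds a.e.\ off $E$ by the same argument, and since $\Cal H^{n-1}$-$\sigma$-finite sets have Lebesgue measure zero, it then holds a.e.\ in $\Omega$. The main obstacle will be making the ACL-type argument survive on those lines which \emph{do} meet $E$: here one uses that $\Cal H^0(\ell\cap E)<\infty$ on a.e.\ such line (again by slicing), so only finitely many bad points appear along $\ell$; continuity of $\phi$ as a global homeomorphism then prevents spurious concentration of length at these points, and a Heinonen--Koskela covering argument restricted to the complement of the bad points on $\ell$ closes the estimate without loss of the constant $H$. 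Combining this with the modulus-zero property of $\Gamma_E$ from the first step ensures that the resulting quasiconformality constant is independent of $E$.
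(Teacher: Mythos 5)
First, note that the paper does not prove this statement: it is quoted verbatim as Theorem~1 of Kallunki--Koskela (\emph{Amer.\ J.\ Math.} 122 (2000)), so there is no internal proof to compare against; what follows is an assessment of your sketch on its own terms. Your overall strategy (slice into lines, run the Heinonen--Koskela liminf argument on good lines, control the bad lines) has the right shape, but two of your key claims are false as stated. The family $\Gamma_E$ of curves \emph{meeting} a set of $\sigma$-finite $(n-1)$-dimensional Hausdorff measure does \emph{not} have $n$-modulus zero: take $E$ to be a bounded piece of a hyperplane; the segments crossing it orthogonally form a subfamily of positive modulus. Likewise it is false that almost every line misses $E$ entirely (same example). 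The correct slicing statement, which you do eventually invoke, is the Eilenberg/integral-geometric inequality: for $E_k$ with ${\cal H}^{n-1}(E_k)<\infty$, almost every line in a fixed direction meets $E_k$ in a finite set. Consequently the entire weight of the proof falls on lines that \emph{do} meet $E$, in finitely many points, and nothing is gained from the (false) modulus-zero claim.

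Second, your treatment of those lines is where the proof actually lives, and ``continuity of $\phi$ prevents spurious concentration of length at these points'' is not an argument. Adding finitely many points to the image does not increase its ${\cal H}^1$ measure, so a \emph{length} bound survives; but the ACL property requires \emph{absolute continuity} of $\phi$ along the line, and a continuous map can have finite-length image while failing to be absolutely continuous (Cantor-function behaviour can in principle be hidden at, or accumulate towards, the exceptional points, where the liminf distortion hypothesis gives no information and hence no admissible radii can be selected for the covering argument). Making the Heinonen--Koskela covering argument close up across these finitely many bad points, without loss of the constant and without assuming any regularity of $\phi$ there, is precisely the technical content of the Kallunki--Koskela paper, and your sketch does not supply it. As written, the proposal reduces the theorem to its hardest step rather than proving it.
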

This theorem fits very well into our framework.
By Proposition~\ref{prop:largescale}, if $F$ satisfies the 
summability condition with  an exponent $\alpha<\fr1{\mmax+1}$,
then except for a set $E$ of Hausdorff dimension $<1$ 
every  point $x\in J$ ``go to a large scale'' 
infinitely often.
More precisely, for every $x\in J$ there exists (a point-dependent)
sequence of radii $r_j\to 0$
such that the balls $B_{r_j}(x)$
are mapped by iterates $F^{k_j}$
to the large scale of size  $\asymp R'$ univalently
and with a bounded distortion. 
Thus for every $x\in J\setminus E$
(cf. \cite{przytycki-rohde-rigidity})
one has
\begin{eqnarray*}
\liminf_{r\to0}\frac{L_\phi(x,r)}{l_\phi(x,r)}&\le&
\liminf_{j\to\infty}\frac{L_\phi(x,r_j)}{l_\phi(x,r_j)}~\lesssim~
\liminf_{j\to\infty}\frac{L_\phi(F^{n_j}(x),R')}{l_\phi(F^{n_j}(x),R')}\cr
&\lesssim&\sup_{x\in J}\frac{L_\phi(x,R')}{l_\phi(x,R')}
~=:~H~<~\infty~,\end{eqnarray*}
the latter quantity being finite by a compactness argument.
We infer that the homeomorphism $\phi$ is globally quasiconformal,
thus deducing Theorem~\ref{theo:rem1}.

Consider now a quasiconformal homeomorphism $\phi$ 
which conjugates a rational dynamical system $(\CC,F)$ 
to another dynamical system $(\CC,G)$ and assume that
$F$ satisfies the (weaker) summability condition 
with an exponent $\alpha<\fr2{\mmax+2}$.

If $J\neq\CC$ then the area of the Julia set is zero,
by the Corollary~\ref{theo:dim}, and
an  invariant  Beltrami coefficient $\phi_{\mu}$ 
has to be supported on the Fatou set.
There are two interesting special settings 
when $\phi$ is automatically  a M\"obius transformation.
If $\phi$ is conformal outside the Julia set,
then the Beltrami coefficient is identically zero,
and $\phi$ is  M\"obius.
Also, if there is only one simply-connected Fatou component
(e.g. this is the case for polynomials with
all critical points in the Julia set),
it has to be super-attracting,
and by a standard argument it does not  support non-zero
$F$-invariant Beltrami coefficients,
and $\phi$ is M\"obius again.

If $J=\CC$ then by Proposition~\ref{prop:largescale}
 except for a set $E$ of Hausdorff Dimension $<2$, 
all points ``go to a large scale'' infinitely often.
Then a standard technique (see the proof of 
either Theorem 3.9 or Theorem 3.17 in \cite{mcm})
implies that the Beltrami coefficient $\mu_\phi$ has to be holomorphic,
and by Lemma~3.16 from \cite{mcm}, we have that  
$\mu_\phi\equiv 0$  or $F$ is a  double
cover of an integral torus endomorphism, i.e. it is a Latt\'es example.
This concludes the proof of Theorem~\ref{theo:rem2}.

\section{Integrability condition and invariant measures.}
A natural question arises whether a rational map $F$ 
has invariant measures absolutely
continuous with respect to conformal measures. 
We will make two different types of assumptions. Firstly, we make
a general assumption about  regularity of 
conformal measures. This will guarantee that a
conformal measure is not too singular with respect to
the corresponding  Hausdorff
measure (integrability condition). 
Secondly, we demand that $F$ has some expansion property,
usually given in the form of a suitable summability condition.

Let $\nu$ be a conformal measure with an exponent $\delta$
defined on the Julia set $J$.
Assume also that $\nu$ satisfies the 
uniform integrability condition with an exponent $\eta$.
We recall that this means that
there exist positive $C$ and $\eta$  such that for 
all positive integers $n$ and every $c\in \Crit $,
\begin{equation}\label{equ:inte2}
\int\frac{d\nu}{\dist{x,F^{n}(c)}^{\eta}}~<~C~<~\infty~.
\end{equation}
\subsection{Ruelle-Perron-Frobenius transfer operator.}
We study the existence of an absolutely continuous invariant measure
$\sigma$ through analysis of the Ruelle-Perron-Frobenius operator ${\cal L}$
which ascribes to every measure $\nu$, the density
of $F_{*}(\nu)$ with respect to $\nu$. The $N$-th iterate of 
${\cal  L}(\nu)$ evaluated at $z$ is equal to 
\[{\cal L}^{N}(\nu)(z):=\frac{dF^{N}_{*}(\nu)}{d\nu}=
\sum_{y\in F^{-N}(z)}\frac{1}{|(F^{N})'(y)|^{\delta}}\;.\]

For simplicity, we will drop $\nu$ from the notation of the
Ruelle-Perron-Frobenius operator.

\begin{prop}\label{prop:in1}
Assume that a rational function $F$ satisfies the summability
condition with an exponent
\[\alpha < \frac{\delta}{\delta+\mmax}\]
and $\nu$ is a  $\delta$-conformal measure  on $J$. 
Let $\Delta_{k}:=\dist{f^{k}(\Crit), z}$ and 
\[{\mathrm g}(z):= \sum_{k=1}^{\infty}
\gamma_{k}^{-\delta}\Delta_{k}^{-(1-\frac{1}{\mmax})\delta}\;.\]
There exists a positive constant $K$ such that for every 
$z\not \in \bigcup_{n=1}^{\infty} F^{n}(\Crit)$ and  every positive
integer $N$, 
\[{\cal L}^{N}(z) < K\; {\mathrm g}(z)~.\]
The sequence $\gamma_{k}^{-1}$ (defined in \mbox{Lemma~\ref{lem:techseq}})
is  summable with an exponent $\beta=\frac{\mmax\alpha}{1-\alpha}<\delta$.
\end{prop}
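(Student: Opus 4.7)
The strategy is to apply the inductive decomposition of Section~\ref{sec:stop} to each preimage $y\in F^{-N}(z)$ and to group the preimages by the first critical interaction encountered when pulling back from $z$. First, the hypothesis $\alpha<\delta/(\delta+\mmax)$ gives $\beta:=\mmax\alpha/(1-\alpha)<\delta$; since $\gamma_k>1$ for $k$ large and $\sum_k\gamma_k^{-\beta}<\infty$ by Lemma~\ref{lem:techseq}, it follows that $\sum_k\gamma_k^{-\delta}<\infty$, which proves the last assertion of the proposition. By readjusting the constants in Lemma~\ref{lem:techseq} we may arrange $4\deg F\cdot\sum_k\gamma_k^{-\delta}<1/4$ so as to enable the geometric-series bounds used below.

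Given $y\in F^{-N}(z)$, the inductive procedure of Section~\ref{sec:stop} produces a decomposition of the orbit $y,F(y),\dots,z$ whose code, read from $z$ backward, is one of $2$, $1\ldots13$, or $21\ldots13$. When a rightmost type~$3$ block is present, of length $k_0$ and associated with a critical point $c$, the pull-back radius $r_0<2R'$ determined by the construction satisfies $r_0\asymp\dist{F^{k_0}(c),z}$, hence $r_0\geq\Delta_{k_0}/C$. Applying the Main Lemma~\ref{lem:main} to this rightmost $1\ldots13$ sequence in its ``otherwise'' case (the weaker alternative, valid regardless of whether $B_{r_0}(z)$ contains a critical point) gives for the endpoint $y_*$ of the sequence
\[
\big|(F^{k_0+k_1+\cdots+k_j})'(y_*)\big|^{-\delta}\;\leq\;C\,\gamma_{k_0}^{-\delta}\cdots\gamma_{k_j}^{-\delta}\,r_0^{-(1-1/\mmax)\delta}.
\]
Summing over the type~$1$ block lengths $(k_1,\dots,k_j)$ and the at most $(4\deg F)^j$ corresponding branch choices (as counted in \eqref{equ:deg}), a geometric-series bound in the spirit of~\eqref{eq:1tpoincare} yields
\[
\sum_{\text{tail with fixed }(k_0,c)}\big|(F^{k_0+\cdots+k_j})'(y_*)\big|^{-\delta}\;\lesssim\;\gamma_{k_0}^{-\delta}\,\dist{F^{k_0}(c),z}^{-(1-1/\mmax)\delta}.
\]

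The main obstacle is the contribution of the leftmost type~$2$ block, of some length $M$, present in codes $2$ and $21\ldots13$. Because the Poincar\'e series with exponent $\delta=\dpoin(J)$ diverges at every point by Theorem~\ref{theo:poincare}, Lemma~\ref{lem:bbb} is not directly available, and we instead invoke the $\delta$-conformality of $\nu$ itself. For a type~$2$ preimage $y$ of order $M$ of some base point $z_j\in J$, the pull-back $V_y:=F^{-M}(B_{R'}(z_j))$ is univalent, so the Koebe distortion lemma together with $\delta$-conformality give $\nu(B_{R'}(z_j))\asymp|(F^M)'(y)|^{\delta}\,\nu(V_y)$; as the sets $V_y$ are pairwise disjoint in $J$,
\[
\sum_{y\in\typeii(z_j)}\big|(F^M)'(y)\big|^{-\delta}\;\lesssim\;\frac{\nu(J)}{\nu(B_{R'}(z_j))}\;\leq\;\frac{1}{\inf_{w\in J}\nu(B_{R'}(w))}.
\]
The denominator is strictly positive: by Corollary~\ref{cor:confnoatom} the measure $\nu$ has full support $J$, and covering the compact set $J$ by finitely many balls $B_{R'/2}(w_i)$ with $w_i\in J$ and $\nu(B_{R'/2}(w_i))>0$ yields the uniform positive lower bound.

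Combining the three contributions and summing over critical points $c\in\Crit$ and times $k_0$,
\[
{\cal L}^N(z)\;\leq\;C_1\;+\;C_2\sum_{c\in\Crit}\sum_{k_0=1}^{\infty}\gamma_{k_0}^{-\delta}\,\dist{F^{k_0}(c),z}^{-(1-1/\mmax)\delta}\;\leq\;K\,g(z),
\]
uniformly in $N$. The additive constant $C_1$ coming from pure type~$2$ decompositions is absorbed into $K\,g(z)$ using the uniform lower bound $g(z)\geq\gamma_1^{-\delta}\,(\diam\CC)^{-(1-1/\mmax)\delta}>0$, and the final inequality uses that $\sum_c \dist{F^{k_0}(c),z}^{-(1-1/\mmax)\delta}\leq \#\Crit\cdot\Delta_{k_0}^{-(1-1/\mmax)\delta}$.
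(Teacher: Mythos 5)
Your proposal is correct and follows essentially the same route as the paper: decompose each backward orbit via the stopping-rule procedure of Section~\ref{sec:stop} into a code of the form $2$, $1\dots13$, or $21\dots13$; bound the type~$2$ factor using $\delta$-conformality of $\nu$ together with Koebe distortion and disjointness of the univalent pull-backs (exactly the paper's estimate~(\ref{equ:only})); bound the $1\dots13$ factor using the Main Lemma with $\Delta\asymp r_0\geq\Delta_{k_0}$; and sum via the geometric-series bound $4\deg F\sum_k\gamma_k^{-\delta}<1/4$ enabled by $\beta<\delta$. The extra care you give to justifying $\inf_{w\in J}\nu(B_{R'}(w))>0$ (via full support of $\nu$) and to absorbing the pure type~$2$ additive constant into $K\,g(z)$ are both sound refinements of steps the paper leaves implicit, but they do not change the underlying argument.
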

\begin{proof}
We will use the estimates of the Main Lemma for a modified
decomposition procedure, as in Subsection~\ref{sec:stop}. 

\paragraph{Construction.} Let $z\in X$ and a sequence
\[ F^{-N}(z),\dots, F^{-1}(z), z\]
form a chain of preimages, that is $F(F^{-i}(z))=F^{-i+1}(z)$ for
$i=1,\dots, N$ and $F^{-N}(z)\in X$. 
We decompose the chain into blocks of preimages
of the types $2$ and $1\dots13$. 
This is done using the procedure
of the Main Lemma with the following stopping rule: at the first
occurrence of a type $2$ block we stop the procedure of decomposing
the chain, so this type $2$ block can be arbitrarily long. 
See Subsection~\ref{sec:stop} for the details.

The inductive procedure gives a coding of backward orbits by sequences of 
$1$'s, $2$'s and $3$'s, with  only the following
three types of  codings  allowable: $2, 1\dots 3, 21\dots1 3$.
We recall that according to our convention, during the inductive
procedure we  put symbols in the coding from the right to the left. 

We attach to every chain of preimages of $z$  the sequence 
$k_l, \dots, k_0$ of the lengths  of the blocks of preimages of a
given type in its coding. Again our convention requires that
$k_{0}$ always stands  for the length of the rightmost block of preimages in
a coding. Clearly, $k_{0}+\cdots + k_{l}=N$.  
\paragraph{Estimates.}
We recall that the Main Lemma implies that every sequence of the form
$11\dots 3$ with the length of the corresponding pieces $k_{l},\dots
k_{0}$ yields expansion
\[\gamma_{k_{l}}\cdot\dots\cdot \gamma_{k_{0}}\;
\Delta_{k_{0}}^{1-\frac{1}{\mmax}}\;,\]
where $\Delta_{k_{0}}=\dist{f^{k_{0}}(\Crit), z}$.

For singleton sequences $\{2\}$ of the length $n$   
we have the following estimate,
\begin{equation}\label{equ:only}
\sum_{y\in F^{-n}(x)}\frac{1}{|(F^{n})'(y)|^{\delta}}\;\lesssim\;
\sum_{ F^{-n}}
\frac{\nu(F^{-n}(B_{R'}(x)))}{\nu(B_{R'}(x))} \leq 
\frac{1}{\nu(B_{R'}(x))}\leq K_{R'}\;,
\end{equation}
which is independent from $n$.

Consider now a set of all preimages $y\in F^{-N}(z)$. 
Let $11\dots 13$ denote the set of all points  $x\in
\bigcup_{i=1}^{N}F^{-i}(z)$ 
which are coded by maximal sequences of $1$'s and $3$'s.
We define $n_{x}$ by the condition $F^{n_{x}}(x)=z$.

Hence,
\begin{eqnarray*}
{\cal{ L}}^{N}(z) & = & \sum_{ 
      {\mbox {all
      codings}}}\frac{1}{|(F^{N})'(y)|^{\delta}} \\
&\leq & \sum_{n_{x}=1}^{N}\left(\sum_{11\dots13}
\frac{1}{|(F^{n_{x}})'(x)|^{\delta}}\right)
\left(\sum_{y\in F^{-N+n_{x}}}
      \frac{1}{|(F^{n})'(y)|^{\delta}}\right)\\
&\leq&\;K_{R'}\; \sum_{11\dots13}
(\gamma_{k_{l}}\cdot\dots\cdot \gamma_{k_{0}})^{-\delta}\;
\Delta_{k_{0}}^{-(1-\frac{1}{\mmax})\delta}\;\\
\end{eqnarray*}
Since for every sequence of $k_{l},\dots, k_{0}$ positive integers
there is at most $(2\deg F)^{l+1}$ sequences $11\dots 13$ with
the corresponding lengths of the pieces of type $1$ and $3$ (see the
estimate (\ref{equ:deg})), we obtain that
\begin{eqnarray*}
{\cal L}^{N}(z)
&\lesssim&
\sum_{l,k_{l},\dots, k_{0}} (4\deg F)^{l+1}
(\gamma_{k_{l}}\cdot\dots\cdot \gamma_{k_{0}}^{-\delta}\;
\Delta_{k_{0}}^{-(1-\frac{1}{\mmax})\delta}\;\\
&<& \sum_{l=1}^{\infty}
\left(4\deg F \sum_{k_{l}} \gamma_{k_{l}}^{-\delta}\right)
\cdot \dots\cdot \left(4\deg F \sum_{k_{1}} \gamma_{k_{1}}^{-\delta}\right)\cdot
\left(4\deg F \sum_{k_{0}} 
\gamma_{k_{0}})^{-\delta}\Delta_{k_{0}}^{-(1-\frac{1}{\mmax})\delta}\right)\\
&<&\; \sum_{l=1}\left(\frac{1}{4}\right)^{l}
\left(4\deg F \sum_{k_{0}} 
\gamma_{k_{0}}^{-\delta}\Delta_{k_{0}}^{-(1-\frac{1}{\mmax})\delta}\right)\\
&<& K\;  \sum_{k_{0}} 
(\gamma_{k_{0}})^{-\delta}\Delta_{k_{0}}^{-(1-\frac{1}{\mmax})\delta}\;.
\end{eqnarray*}
By Lemma~\ref{lem:techseq}, $\gamma_{k}$ is summable
with any exponent bigger than 
$-\frac{\mmax\alpha}{1-\alpha}$.
\end{proof}
\paragraph{Proof of Theorem~\ref{theo:integ}.}
Now it is a standard reasoning. 
By Proposition~\ref{prop:in1},
${\cal L}^{n}(z)\leq  \gr(z)$ and $\gr(z)\in L^{1}(\nu)$.
Hence,
\[\nu_{N}=\frac{1}{N}\sum_{i=0}^{N-1}F^{i}_{*}(\nu)\;\]
form a weakly compact set of probabilistic  measures absolutely continuous
with respect to $\nu$ with densities bounded by $\gr$.
A weak limit of $\nu_{N}$ gives an absolutely continuous
invariant measure.

Uniqueness and ergodicity follow by an argument presented in 
Section~\ref{sec:lyap}.

\subsection{Regularity of invariant densities}
\begin{coro}\label{theo:inreg}
Suppose that $F$ satisfies the summability
condition with an exponent
\[\alpha < \frac{\delta}{\delta+\mmax}\; .\]
Let $\eta_{1} > \eta_{0}:=\delta(1-\frac{1}{\mmax})$
and assume $\eta$-integrability of the unique $\delta_{Point}$-conformal measure  $\nu$ for every $\eta\in (0,\eta_{1})$.
Then for every $\zeta<\eta_{1}/\eta_{0}$ 
\[\int \left(\frac{d\sigma}{d\nu}\right)
^{\zeta}\, d\nu< \infty \;,\]
and for every Borel set $A$,
\[ \sigma (A)\leq K_{\zeta}
\nu(A)^{1-1/\zeta}\;.\]
\end{coro}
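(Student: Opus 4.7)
The plan is to transfer the uniform estimate $\mathcal{L}^N(z) \leq K\,\gr(z)$ from Proposition~\ref{prop:in1} into a pointwise envelope on $d\sigma/d\nu$, and then reduce everything to estimating $\|\gr\|_{L^\zeta(\nu)}$ by Hölder's inequality. Recall from the proof of Theorem~\ref{theo:integ} that $\sigma$ arises as a weak-$*$ accumulation point of the ergodic averages $\sigma_N := \frac{1}{N}\sum_{i=0}^{N-1}F^{i}_{*}\nu$, whose densities $h_N := d\sigma_N/d\nu$ are all majorated by $K\,\gr$. Once we verify that $\gr \in L^{1}(\nu)$, the family $\{h_N\}$ is uniformly integrable, and by Dunford--Pettis a subsequence converges weakly in $L^{1}(\nu)$ to some $h$; testing against continuous functions identifies $h$ with $d\sigma/d\nu$, and testing against nonnegative indicator functions propagates the bound $h_N \leq K\,\gr$ to the weak limit, giving $d\sigma/d\nu \leq K\,\gr$ $\nu$-a.e.

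For $\zeta \geq 1$, I would split $\gamma_{k}^{-\delta} = \gamma_{k}^{-\delta/\zeta}\cdot\gamma_{k}^{-\delta(\zeta-1)/\zeta}$ and apply Hölder's inequality with conjugate exponents $\zeta$ and $\zeta/(\zeta-1)$ to the series defining $\gr(z)$:
\[
\gr(z)^{\zeta} \;\leq\; \Bigl(\sum_{k}\gamma_{k}^{-\delta}\Bigr)^{\zeta-1}\sum_{k}\gamma_{k}^{-\delta}\,\Delta_{k}^{-\eta_{0}\zeta}.
\]
The factor $\sum_{k}\gamma_{k}^{-\delta}$ is finite: by Lemma~\ref{lem:techseq} the sequence $\gamma_{k}^{-1}$ is summable with exponent $\beta=\mmax\alpha/(1-\alpha)$, and the hypothesis $\alpha<\delta/(\delta+\mmax)$ gives $\delta>\beta$. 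Since $\eta_{0}\zeta < \eta_{1}$ by the choice of $\zeta$, the $\eta$-integrability hypothesis yields $\int \Delta_{k}^{-\eta_{0}\zeta}\,d\nu \leq C_{\eta_{0}\zeta}$ uniformly in $k$ and $c\in\Crit$. Integrating term by term therefore gives
\[
\int \gr(z)^{\zeta}\,d\nu \;\leq\; C_{\eta_{0}\zeta}\,\Bigl(\sum_{k}\gamma_{k}^{-\delta}\Bigr)^{\zeta} \;<\; \infty.
\]
For $0<\zeta\leq 1$ no extra work is needed: by concavity of $x\mapsto x^{\zeta}$ and Jensen's inequality for the probability measure $\nu$, $\int \gr^{\zeta}\,d\nu \leq \bigl(\int \gr\,d\nu\bigr)^{\zeta}$, which is finite by the already handled case $\zeta=1$.

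The final inequality is then a second application of Hölder with exponents $\zeta$ and $\zeta/(\zeta-1)$ to the representation $\sigma(A) = \int \mathbf{1}_{A}\cdot(d\sigma/d\nu)\,d\nu$:
\[
\sigma(A) \;\leq\; \Bigl\|\frac{d\sigma}{d\nu}\Bigr\|_{L^{\zeta}(\nu)}\cdot \nu(A)^{1-1/\zeta},
\]
so one may set $K_{\zeta} := \|d\sigma/d\nu\|_{L^{\zeta}(\nu)}$, which is finite by the preceding paragraph. The only genuinely delicate step is the passage from the uniform bound on the Perron--Frobenius iterates to the pointwise envelope on $d\sigma/d\nu$ through the weak $L^{1}$ limit; this requires the uniform integrability supplied by $K\,\gr\in L^{1}(\nu)$ rather than a naive pointwise limit argument. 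Once this identification is made, the rest is a straightforward bookkeeping exercise with Hölder's inequality.
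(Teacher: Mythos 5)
Your proof is correct and follows essentially the same route as the paper: starting from the envelope $\mathcal{L}^N(z)\leq K\,\gr(z)$ of Proposition~\ref{prop:in1}, applying H\"older with exponents $\zeta,\,\zeta'=\zeta/(\zeta-1)$ to the series defining $\gr$ after the split $\gamma_k^{-\delta}=\gamma_k^{-\delta/\zeta}\gamma_k^{-\delta/\zeta'}$, integrating term-by-term using $\eta$-integrability at $\eta=\eta_0\zeta<\eta_1$, and then a second H\"older for the last inequality. You are somewhat more careful than the paper on two minor points --- justifying the pointwise bound $d\sigma/d\nu\leq K\gr$ through Dunford--Pettis and preservation of the majorant under weak $L^1$ limits, and covering the trivial case $0<\zeta\leq 1$ separately via Jensen --- but these are refinements of the same argument, not a different approach.
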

\begin{proof}
To prove $\zeta$-integrability of the density of 
$d\sigma/d\nu$ we use Proposition~\ref{prop:in1}. Recall that
$\Delta_{k}= \dist{f^{k}(\Crit), z}$.
We use the H\"{o}lder inequality
with the exponents $1/\zeta+1/\zeta'=1$  in the estimates
below.
\begin{eqnarray}\label{equ:above}
\sum_{k}\gamma_{k}^{-\delta}\Delta_{k}^{-(1-1/\mmax)\delta}& =  &
\sum_{k}\gamma_{k}^{-\delta/\zeta'}\;\frac{\gamma_{k}^{-\delta/\zeta}}
{\Delta_{k}^{(1-1/\mmax)\delta}}\\
& \leq & 
\left(\sum_{k}\gamma_{k}^{-\delta}\right)^{1/\zeta'}
\;
\left(\sum_{k}
\frac{\gamma_{k}^{-\delta}}{\Delta_{k}^{\delta(1-1/\mmax)\zeta}}\right)
^{1/\zeta}\nonumber\\
&\leq & K\;  
\left(\sum_{k}
\frac{\gamma_{k}^{-\delta}}{\Delta_{k}^{\eta}}\right)
^{1/\zeta}\nonumber
\end{eqnarray}
with $\eta\in [0,\eta_{1})$. Next, we integrate the density $d\sigma/d\nu$ to the power $\zeta$.
Proposition~\ref{prop:in1} and the inequality
(\ref{equ:above}) imply that   
\begin{eqnarray*}
\int \left(\frac{d\sigma}{d\nu}\right)
^{\zeta}\, d\nu &\lesssim &
\int
\left(\sum_{k}
\gamma_{k}^{-\delta}\Delta_{k}^{-(1-1/\mmax)\delta}\right)^{\zeta}\, d\nu\\
& \lesssim & \sum_{k}
\gamma_{k}^{-\delta}\int \Delta_{k}^{-\eta} d\nu < \infty \;.
\end{eqnarray*}
To prove the last estimate of the Corollary, we apply once more
the H\"{o}lder inequality with the exponents $\zeta$ and $\zeta'$.
\begin{eqnarray*}
\sigma(A) &=& \int_{A} \frac{d\sigma}{d\nu}\,  d\nu 
\leq \left(\int_{A}d\nu\right)^{1/\zeta'}
\left(\int_{A}
  \left(\frac{d\sigma}{d\nu}\right)^{\zeta} d\nu\right)^{1/\zeta'}\\
&\leq & K_{\zeta} \nu(A) ^{1-1/\zeta}\;.
\end{eqnarray*}
\end{proof}
\begin{prop}
Under the assumptions of Theorem~\ref{theo:integ}, there
exists a positive constant $C$ so that for $\nu$ almost
every point $z$, 
\[\frac{d\sigma}{d\nu}(z) \geq C, \]
where
$d\sigma/d\nu$ stands for the density of the absolutely continuous
invariant measure $\sigma$. 
\end{prop}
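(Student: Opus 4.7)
The invariant density $\rho := d\sigma/d\nu$ is a positive fixed point of the transfer operator, so that for every $n \geq 1$ and $\nu$-almost every $z$,
\[
\rho(z) \;=\; {\cal L}^{n}\rho(z) \;=\; \sum_{y \in F^{-n}(z)} \rho(y)\,\abs{(F^{n})'(y)}^{-\delta}.
\]
Since $\int \rho\,d\nu = 1$, we can pick $c_0 > 0$ so that $A := \{\rho \geq c_0\}$ has $\nu(A) > 0$, and select a $\nu$-Lebesgue density point $x_0 \in A$. A preliminary step is that $\rho > 0$ $\nu$-almost everywhere: indeed, the identity $\rho(Fy) \geq \rho(y)\abs{F'(y)}^{-\delta}$ makes $\{\rho > 0\}$ forward invariant modulo $\nu$-null sets, and ergodicity of $\nu$ (Theorem~\ref{theo:4}) then forces the complement to be null. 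In particular, the density point $x_0$ can be taken inside the $\nu$-full set where $\rho$ is defined.

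The strategy is to exploit the induced hyperbolicity of Theorem~\ref{theo:largescale} jointly with the density-point property to produce a uniform lower bound. By that theorem (applied to $x_0$), there exist $n_j \to \infty$ and univalent pullback components $V_j \ni x_0$ of the large-scale balls $B_{R'}(F^{n_j}(x_0))$, with Koebe-controlled distortion. The Koebe lemma makes each $V_j$ a nearly round disk shrinking to $x_0$, so by the density-point assumption
\[
\frac{\nu(V_j \cap A)}{\nu(V_j)} \longrightarrow 1 \qquad (j \to \infty).
\]
Fixing $w = F^{n_j}(x_0)$ in the forward orbit of $x_0$, applying the transfer operator identity at $w$ and retaining only the single branch through $x_0 \in V_j$ gives, via $\nu$-conformality,
\[
\rho(w) \;\geq\; \rho(x_0)\,\abs{(F^{n_j})'(x_0)}^{-\delta} \;\asymp\; c_0\,\frac{\nu(V_j)}{\nu(B_{R'}(w))}.
\]

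The hard part is upgrading this bound to one that is uniform in $z$, since a single term $\nu(V_j)/\nu(B_{R'}(w))$ decays with $j$. The plan is to sum the contributions over many disjoint univalent pullback branches clustering near $x_0$: each such branch produces exactly one preimage in the corresponding $V$, and by the density-point property a positive $\nu$-fraction of these preimages lies in $A$ and hence contributes $\rho \geq c_0$. Invariance $\sigma(F^{-n}E) = \sigma(E)$ together with conformality of $\nu$ and bounded distortion implies that the aggregate $\nu$-mass of these pullbacks covers a definite fraction of the large-scale ball around $w$, giving $\rho(w) \geq C_1 > 0$ on the forward orbit of $x_0$. The final step propagates the lower bound from this countable forward orbit to $\nu$-almost every $z$ using topological transitivity of $F|_J$ and the shrinking estimate of Proposition~\ref{prop:shrink}: any $z$ is approached by pullbacks of nearby orbit points under univalent branches of bounded distortion, so the density bound transfers under the fixed-point identity $\rho = \mathcal{L}^n \rho$.
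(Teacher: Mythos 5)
Your proposal captures the right starting point (the transfer-operator fixed-point identity, a point with $\rho$ bounded below) but breaks down precisely at the step you yourself flag as ``the hard part.''

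The ``upgrade'' step does not go through as described. Fixing a density point $x_0$ of $A=\{\rho\ge c_0\}$ only controls the $\nu$-density of $A$ in shrinking neighborhoods of $x_0$; it says nothing about the global distribution of the preimage set $F^{-n}(w)$ for a generic $w$ or about the weighted fraction $\sum_{y\in F^{-n}(w)\cap A}\abs{(F^n)'(y)}^{-\delta}\,/\,\sum_{y\in F^{-n}(w)}\abs{(F^n)'(y)}^{-\delta}$. Moreover the total weight $\sum_{y\in F^{-n}(w)}\abs{(F^n)'(y)}^{-\delta}={\cal L}^n 1(w)$ is not normalized (the conformal measure $\nu$ is not $F$-invariant), so even granting some equidistribution heuristic there is no reason the contribution of $A$-preimages is bounded below uniformly in $n$. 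You would need a quantitative statement of exactly the type the paper derives, and you never invoke the integrability condition that makes such control possible. Since the proposition is stated ``under the assumptions of Theorem~\ref{theo:integ},'' and the integrability condition is one of those assumptions, its absence from your argument is a serious warning sign.

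The final propagation step also fails: you end up with a lower bound for $\rho$ only on the (countable) forward orbit of $x_0$, which is a $\nu$-null set. To propagate via $\rho(z)\ge\rho(u)\abs{(F^m)'(u)}^{-\delta}$ you need some preimage $u$ of $z$ to land in a set where the bound is already known, and ``landing on a specified countable set'' is a measure-zero event. The paper avoids this by first establishing the lower bound on an \emph{open ball} $B_{R'/2}(w)$ around a good point $w$: it chooses $w$ with $\rho(w)\ge 1$ and, using the integrability condition, with $\gr_n(w)<1/2$ for $n\ge N$, which forces the ``singular'' part ${\cal S}^n_{R'}(w)$ of ${\cal L}^n(w)$ below $1/2$, hence a definite lower bound on the ``regular'' part ${\cal R}^n_{R'}(w)$. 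Bounded distortion then transfers that bound to all $z\in B_{R'/2}(w)$, and the eventually-onto property of $J$ gives each $z\in J$ a preimage $u\in B_{R'}(w)$ after a bounded number $m$ of steps, yielding the uniform constant. Without both the ``open-set lower bound'' and the integrability input controlling the singular part, the argument does not close.
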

\begin{proof}
Let ${\rm g}=\sum_{k=1}^{\infty}
\gamma_{k}^{-\delta}\Delta_{k}^{-(1-\frac{1}{\mmax})\delta}$
and  ${\rm g}_{n}(z):=\sum_{k=n}^{\infty}
\gamma_{k}^{-\delta}\Delta_{k}^{-(1-\frac{1}{\mmax})\delta}$.
By the hypothesis of  Theorem~\ref{theo:integ} (the intergrability condition),  
\[ \int {\mathrm g}(z)\, d\nu < \infty \]
and  thus for almost all $z\in \CC$ with respect to $\nu$,
$\lim_{n\rightarrow\infty}{\rm g}_{n}(z)=0$.
Consequently, there exist a point $w \not \in \bigcup_{n=1}^{\infty}F^{-n}(\Crit)$ and a positive integer $N$ so that $d\sigma/d\nu (w)\geq 1$ and 
${\rm g}_{n}(w)<1/2$ for every $n\geq N$. Observe that the choice of $N$
and $w$ does not depend on $R'$.

Choose  $R'$ so small that $B_{R'}(w)\cap F^{k}(\Crit)=\emptyset$
for all $k\leq N$ and decompose the backward
orbit of  $w$ into pieces of type $1$, $2$ or $3$ as described in the proof 
of Proposition~\ref{prop:in1}. By the construction, only the following
three types of codings are allowable: $2$, $1\dots 3$, $21\dots 3$.

Let ${\cal R}_{ R'}^{n}(z)=\sum_{\{2\}}|(F^{n})'(y)|^{-\delta}$ be
the sum over all type $2$ preimages of $z$ of length $n$ (regular part). 
The sum over all other preimages is denoted by
${\cal S}_{R'}^{n}(z)$ (singular part, compare \cite{przytycki-ce}). 
By the choice of $R'$, ${\cal S}_{R'}^{n}(w)<1/2$ 
for every $n$ positive.
Hence, by the choice of $w$ and for  $n$ large enough
\[\frac{3}{4}\leq \frac{1}{n}\sum_{i=0}^{n-1}{\cal L}^{i}(w)= 
\frac{1}{n}\sum_{i=0}^{n-1}{\cal R}_{R'}^{i}(w)+
\frac{1}{n}\sum_{i=0}^{n-1}{\cal S}_{R'}^{i}(w)
\leq \frac{1}{n}\sum_{i=0}^{n-1}{\cal R}_{R'}^{i}(w)
+\frac{1}{2}~.\]
We choose new $R_{\rm{new}}':=R'/2$. 
This will change the decomposition of backward
orbits of points,  generally allowing more type $2$ sequences
of preimages. If $z\in B_{R'/2}(w)$ then every type $2$ preimage of $w$ 
with the parameter $R'$ corresponds to exactly one type $2$ preimage of $z$
with $R_{\rm new}'$. By the bounded distortion,
${\cal R}_{R'/2}^{n}(z)~\grtsim ~{\cal R}_{R'}^{n}(w)$ and thus
for $n$ large enough,
\[\frac{1}{n}\sum_{i=0}^{n-1}{\cal L}^{i}(z)\geq \frac{1}{n}\sum_{i=0}^{n-1}
{\cal R}_{R'}^{i}(w) ~\grtsim ~1/4~.\]

By the eventually onto property, there exists a positive integer $m$
so that for every $z\in J$
there is a preimage $u=F^{-m}(z)\in B_{R'}(w)$. 
Let $M=\sup_{z\in J} |F'(z)|$. Then
for almost every $z\in \CC$ with respect to $\nu$,
\[d\sigma/d\nu(z)=\lim_{n\rightarrow \infty}\frac{1}{n}
\sum_{i=0}^{n-1}{\cal L}^{i}(z) \geq M^{-m} 
\lim_{n\rightarrow\infty}\frac{1}{n}\sum_{i=j}^{n-1}{\cal L}^{i-j}(u)~ 
\grtsim ~M^{-m}/2~.\]
\end{proof}

\subsection{Analytic maps of interval}
We are interested in analytic maps $f$ with negative Schwarzian
derivative (i.e. ${S}(f)<0$, recall the formula (\ref{eq:schdef})),
which map a compact
interval  $I$ with non-empty interior  into itself. 
We will  denote by $F$  
a complex extension of $f$ 
to a neighborhood $U_F\supset I $ in the complex plane which does not contain any critical points different then the real ones. We will study iterates of {\em real}$\;$ inverse
branches $\Fe$ of $F$ defined by the condition: 
if $U\subset U_F$ is a topological disk such that $U\cap \R$ is an interval then for every
 $x\in U\cap \R$,  $\Fe(x)\in I$. By the definition, $\Fe(U)$ is a topological disk and
$\Fe(U\cap \R)=  \Fe(U)\cap \R$~.   


We will show that  iterates of points by the real inverse branches
stay in a bounded distance from the interval $I$. 
To this aim we will need the disk property  based on 
 Proposition~3 and Proposition~4 of \cite{gss}. For every  $a,b\in \R$ define  $D_{a,b}$
 to be the open disk  centered at $(a+b)/2$ with the radius $|a-b|/2$. 
\begin{fact} \label{SAD1}
Let $h:I\to \R$ be an analytic diffeomorphism from a compact interval $I$
with non-empty interior into the real line. Suppose that
$f:I\to \R$ either concides with $h$ or is  of the form $h^{\ell}$, $\ell>1$
is an integer.  In the latter case assume that  there exists $\zeta\in I$ such that 
$h(\zeta)=0$.  Let $H$ be an 
extension of $h$ to a complex neighborhood of $I$ and $F(z)$
be equal either to $H(z)$ or $H(z)^{\ell}$.
If $S(f) < 0$ on $I$ then there exists
$\delta > 0$ such that for every two distinct points $a,b\in I$ with 
$\zeta \not \in  (a,b)$  and $\diam{D_{f(a),f(b)}}<\delta$ the connected
component of $F^{-1}(D_{f(a),f(b)})$ which contains $(a,b)$
is contained in $D_{a, b}$.
\end{fact}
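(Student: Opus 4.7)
My plan is to reduce the statement to two already-established results in \cite{gss}, namely the disk property for real-analytic diffeomorphisms with negative Schwarzian (Proposition 3) and the disk property for the pure power map (Proposition 4), and then compose them. Throughout, I would work with the branch $\Fe$ singled out in the statement so that the relevant components of $F^{-1}$ and $H^{-1}$ are unambiguous; the no-critical-point assumption on $U_F$ plus the hypothesis $\zeta \notin (a,b)$ guarantee that each branch used below is genuinely defined on the corresponding small Euclidean disk.

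First, I would dispatch the case $F = H$. Here $f = h$ is a real-analytic diffeomorphism on $I$ with $S(h) < 0$, so Proposition 3 of \cite{gss} applies directly: there is $\delta_0 > 0$ such that whenever $\diam(D_{h(a), h(b)}) < \delta_0$, the component of $H^{-1}(D_{h(a), h(b)})$ containing $(a,b)$ is contained in $D_{a,b}$. The mechanism is that negative Schwarzian forces a strict expansion of the hyperbolic metric of the Poincar\'e disks $D_{\alpha, \beta}$ along $H^{-1}$, and hence, for small targets, nests the pullback inside $D_{a,b}$. Compactness of $I$ makes the threshold $\delta_0$ uniform in $(a,b) \subset I$.

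For the case $f = h^\ell$, I would factor $F = Z_\ell \circ H$ with $Z_\ell(z) := z^\ell$. Then $F^{-1}(D_{f(a),f(b)})$, restricted to the component cut out by $\Fe$, factors as $H^{-1} \circ Z_\ell^{-1}$, where $Z_\ell^{-1}$ is the branch through $(h(a), h(b))$ and $H^{-1}$ the branch through $(a,b)$. Since $h(\zeta) = 0$ and $\zeta \notin (a,b)$ with $h$ monotone on $I$, the points $h(a), h(b)$ lie on a common open real ray from the origin. Consequently $f(a) = h(a)^\ell$ and $f(b) = h(b)^\ell$ lie on a common real ray as well, and Proposition 4 of \cite{gss} (the disk property for $z \mapsto z^\ell$) applies: there is $\delta_1 > 0$ such that, as soon as $\diam(D_{f(a),f(b)}) < \delta_1$, the chosen branch of $Z_\ell^{-1}$ sends $D_{f(a),f(b)}$ into $D_{h(a),h(b)}$. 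Combining this with the step above, there exists $L > 0$ (a uniform Lipschitz bound for $Z_\ell^{-1}$ on bounded pieces of the relevant ray, bounded away from the origin by a scale-invariance argument near $\zeta$) so that the choice $\delta := \min(\delta_1, \delta_0/L)$ yields $F^{-1}(D_{f(a),f(b)}) \subset D_{a,b}$.

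The main obstacle will be ensuring the uniformity of $\delta$ near $\zeta$, because there $h(a), h(b) \to 0$ and the Lipschitz constant of $Z_\ell^{-1}$ blows up. The remedy is the observation that $Z_\ell^{-1}$ is scale-covariant: if $D_{f(a),f(b)}$ is a Euclidean disk whose endpoints sit at distance comparable to $t$ from $0$ and whose diameter is small compared to $t$, then the pullback $D_{h(a),h(b)}$ has diameter comparable to $t^{1-\ell}$ times the diameter of $D_{f(a),f(b)}$; meanwhile $\delta_0$ from Step~1, applied near $\zeta$, also scales like $t$ by the compactness/continuity of $S(h)$ on $I$ together with invariance of the Poincar\'e disks under dilations centered at $\zeta$. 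Balancing these scalings yields a single $\delta > 0$ uniform over all admissible pairs $(a,b) \subset I$ with $\zeta \notin (a,b)$, completing the argument.
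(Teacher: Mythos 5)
The paper does not actually prove Fact~\ref{SAD1}; it is cited from Propositions~3 and~4 of \cite{gss}. So your proposal must be judged on its own merits, and it has a genuine gap in the case $f=h^\ell$.

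The gap is this: your argument pulls back through $Z_\ell^{-1}$ (Proposition 4) to land in $D_{h(a),h(b)}$, and then invokes ``the step above'' --- that is, Proposition 3 applied to the diffeomorphism $H$ --- to land in $D_{a,b}$. But Proposition 3, as you yourself describe it, needs $S(h)<0$, and the hypothesis of the Fact in this case is $S(f)=S(h^\ell)<0$, \emph{not} $S(h)<0$. The chain rule for the Schwarzian gives
$S(h^\ell)(x)=\frac{1-\ell^2}{2\,h(x)^2}\,(h'(x))^2+S(h)(x)$,
and the first term is strictly negative (and tends to $-\infty$ as $x\to\zeta$). Hence $S(h^\ell)<0$ is strictly weaker than $S(h)<0$: taking $h(x)=x+cx^3$ with small $c>0$ and $\zeta=0$, one checks $S(h)(0)=6c>0$ while $S(h^2)<0$ on a neighborhood of $0$. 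In that situation your second step has no justification --- the pullback through $H^{-1}$ can push points of $D_{h(a),h(b)}$ outside $D_{a,b}$, and nothing in your argument records a quantitative margin from the $Z_\ell^{-1}$ step that would absorb this. What the intended reduction to \cite{gss} must do is keep track of the strict gain coming from the power factor (precisely the term $\frac{1-\ell^2}{2h^2}(h')^2$ that makes $S(f)$ negative) and show that it dominates the possibly positive contribution of $S(h)$, or alternatively treat the composite $f$ directly away from $\zeta$ (where $f$ is itself a diffeomorphism with $S(f)<0$, so the diffeomorphism disk property applies to $f$, not to $h$) and do a separate normal-form analysis near $\zeta$. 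Your factor-by-factor composition does neither.

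A minor further point: the scaling in your last paragraph is off. If $|w|\sim t$ and $z=Z_\ell^{-1}(w)$, then $|z|\sim t^{1/\ell}$ and $|(Z_\ell^{-1})'(w)|\sim t^{(1-\ell)/\ell}$, not $t^{1-\ell}$. This would not by itself invalidate the approach, but it does affect the balance you are trying to strike near $\zeta$ and should be redone once the main gap is addressed.
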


\begin{lem}\label{coro:real}
There exists  a constant $L>0$ so that for every $n\in \N$, every $x\in I$,
and every  inverse branch $f^{-n}$ defined on $B_L(x)\cap \R$, 
the real inverse branch  $F_{\rm{rl}}^{-n}$  which coincides with $f^{-n}$ on
$B_L(x)\cap \R$ is well defined on $B_L(x)$ and 
 $F_{\rm{rl}}^{-n}(B_{L}(x))\subset U_{F}$.
\end{lem}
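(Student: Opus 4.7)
The plan is to extend the real inverse branches $f^{-n}$ to complex inverse branches on $B_L(x)$ by iterating Fact~\ref{SAD1} one step at a time. First, since $f$ is analytic on the compact interval $I$ with only finitely many critical points $c_1,\dots,c_k$ (of multiplicities $\ell_1,\dots,\ell_k$), we may cover $I$ by finitely many subintervals on each of which $f$ has either the form $h^{\ell_i}$ (near a critical $c_i$) or the form $h$ (on a diffeomorphic lap between consecutive critical points), with $h$ an analytic diffeomorphism. Applying Fact~\ref{SAD1} to each such local representation and taking the minimum of the resulting constants, we obtain a uniform $\delta_0>0$ governing the one-step disk pullback property for any local real inverse branch of $f$.

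The decisive ingredient is a uniform real contraction bound: there exists $L_0>0$ such that for every $x\in I$, every $n\ge 1$, and every real monotone branch $f^{-n}$ defined on $B_{L_0}(x)\cap \R$, each intermediate pullback $f^{-k}\br{B_{L_0}(x)\cap \R}$ with $0\le k\le n$ has length $<\delta_0$. This is the standard \emph{Ma\~n\'e shrinking lemma} for $C^2$ multimodal maps whose periodic points are all repelling (in particular non-parabolic, as assumed in Theorem~\ref{coro:uni}); see~\cite{demelo-vanstrien}. Negative Schwarzian derivative is used to obtain this with uniform, rather than $x$-dependent, $L_0$.

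Set $L:=\min(L_0,\delta_0/2)$, and consider $x\in I$ with a real branch $f^{-n}$ defined on $B_L(x)\cap\R$. Write $f^{-n}=g_n\circ\dots\circ g_1$ as a composition of local real inverse branches of $f$, and let $(a_k,b_k):=(g_k\circ\dots\circ g_1)\br{(x-L,x+L)}\subset I$. We construct the complex extension inductively. Since $B_L(x)=D_{a_0,b_0}$ has diameter $2L<\delta_0$, Fact~\ref{SAD1} applied to $g_1$ produces a holomorphic extension $G_1$ of $g_1$ on $B_L(x)$ with image contained in $D_{a_1,b_1}$. Assuming inductively that $G_k\circ\dots\circ G_1$ is defined and holomorphic on $B_L(x)$ with image inside $D_{a_k,b_k}$, we observe that $|b_k-a_k|<\delta_0$ by the contraction step, so Fact~\ref{SAD1} extends $g_{k+1}$ to a holomorphic map on $D_{a_k,b_k}$ with image in $D_{a_{k+1},b_{k+1}}$, and composing yields the desired extension of $f^{-(k+1)}$ on $B_L(x)$.

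After $n$ steps the image $F^{-n}_{\mathrm{rl}}\br{B_L(x)}$ lies in $D_{a_n,b_n}$, a disk whose center and radius are both governed by the real interval $(a_n,b_n)\subset I$, hence the image sits within a $\delta_0/2$-neighborhood of $I$, which is contained in $U_F$ after possibly shrinking the original choice of $U_F$. The main obstacle is the uniform shrinking bound of the second paragraph; once it is in hand, the disk-to-disk construction is a straightforward induction driven entirely by Fact~\ref{SAD1}.
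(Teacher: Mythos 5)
Your proof is correct and takes essentially the same approach as the paper's terse argument: set up the local normal form of $F$ near critical points and iterate Fact~\ref{SAD1}. What you make explicit, and what the paper's ``follows immediately from Fact~\ref{SAD1}'' conceals, is that the iteration is not automatic: each application of Fact~\ref{SAD1} requires the disk $D_{a_k,b_k}$ being pulled back to have diameter below the threshold $\delta_0$, and individual real pullback steps can expand an interval (a branch of $f^{-1}$ landing near a critical point of multiplicity $\ell$ inflates a length-$\eta$ interval near the critical value to length of order $\eta^{1/\ell}$). The Ma\~n\'e-type backward-contraction statement you invoke --- pullbacks of uniformly short intervals are uniformly short --- is exactly the missing ingredient, and it is available under the standing hypotheses of Theorem~\ref{coro:uni} (non-flat critical points, no wandering intervals, all periodic orbits repelling). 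Isolating this step is the right move and supplies what the paper's proof omits.

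Two minor corrections. You attribute the uniformity of $L_0$ to the negative Schwarzian derivative; in fact uniformity in $x$ comes from compactness of $I$ in the Ma\~n\'e argument, and the Schwarzian hypothesis enters Lemma~\ref{coro:real} primarily through Fact~\ref{SAD1} itself (and, indirectly, because it excludes wandering intervals, without which the backward-contraction fact would fail). Also, for the inductive step to yield a single-valued $G_{k+1}$ you should observe that $F$ is univalent on the component of $F^{-1}(D_{a_k,b_k})$ containing $(a_{k+1},b_{k+1})$: the real critical point $\zeta$ is excluded because its critical value $f(\zeta)$ is real and lies off $D_{a_k,b_k}\cap\R=(a_k,b_k)$, and the nonreal critical points of $F$ are excluded since, by the contraction step, the component lies in $D_{a_{k+1},b_{k+1}}\subset U_F$; thus $F$ restricted there is a proper critical-point-free map onto a simply connected disk, hence a biholomorphism.
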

\begin{proof}
Observe that
$f$ has only finitely many critical points in $I$ and for every critical
point $c\in \Crit$ there exists a neighborhood $U_{c}\ni c$
such that $F$ is in the form $F(c)(1-H_{c}(z)^{\ell(c)})$ with 
$H_{c}$  a biholomorphic
function near $c$. The proof follows immediately from
Fact~\ref{SAD1}. 

\end{proof}

\paragraph{Proof of Theorem~\ref{coro:uni}.} 
The normalized Lebesgue measure $\nu$ on $I$ can be regarded as a $\delta$-conformal 
measure with an exponent $\delta=1$. Clearly, $\nu$ satisfies
the uniform integrability condition for any $\eta<1$. 
We will show that we can use the estimates of
Proposition~\ref{prop:in1} for the real inverse branches of $F$. 

From  Lemma~\ref{coro:real}  we infer that all preimages of disks
$B_{L}(x)$ by the  real inverse branches of $F$  are
well-defined and of uniformly bounded diameters. 
This means that the estimates of
Lemma~\ref{lem:main} (the Main Lemma)
for disks $B_{\Delta}(x)$, $x\in\R$, and the real inverse branches $F_{\rm{rl}}^{-N}$  
are still valid. 

We define a real part ${\cal
L}_{{\mathrm rl}}$ of the Ruelle-Perron-Frobenius operator $\cal
L$ by,
\[{\cal L}_{{\mathrm rl}}(\nu)(z):=
\sum_{y\in F^{-1}
(z)\cap \R}\frac{1}{|F'(y)|^{\delta}}\;.\]

In the proof of Proposition~\ref{prop:in1},
the equality $\Jac_{\nu}(x)=|F'(x)|^{\delta}$  was used  only for the estimates
involving type $2$ preimages. Explicitly, it is the estimate
(\ref{equ:only}). Let  $F_{\rm{rl}}^{-k}$ be a real inverse branch 
of type $2$ (we use  the inductive procedure from the
proof of Proposition~\ref{prop:in1}) and  $y= F_{\rm{rl}}^{-k}(x)$.   We have the following
uniform estimate,  
\[|(F^{k})'(y)|\;\sim \frac{\nu(B_{R'}(x))}{\nu(F^{-k}(B_{R'}(x)))}~.\]

The inductive construction of Proposition~\ref{prop:in1} yields a coding of backward orbits  by
sequences of $1$'s, $2$'s, and $3$'s. We recall that only three types of the codings are
allowable: $2$, $1\dots 13$, $21\dots13$.  
Lemma~\ref{coro:real} guarantees  that only the critical points of $f$ are used in the inductive
construction of the  backward codings.  

After these preparations
we are ready to invoke Proposition~\ref{prop:in1} with
$\cal L$ replaced by ${\cal L}_{{\mathrm rl}}$.
Hence, there exists a positive constant $K$ such that for every $z\in \R\setminus \bigcup_{n=1}^{\infty} f^{n}(\Crit)$
and every positive integer $N$, 
\[{\cal L}_{{\mathrm rl}}^{N}(\nu)(z) =\sum_{y\in F^{-N}
(z)\cap \R}\frac{1}{|(F^{N})'(y)|^{\delta}} < K\; \sum_{k=1}^{\infty}
\gamma_{k}^{-\delta}\Delta_{k}^{-(1-\frac{1}{\mmax})\delta}\;,\]
where $\Delta_k=\dist{f^k(\Crit),z}$ and the sequence $\gamma_{k}^{-1}$ (defined in \mbox{Lemma~\ref{lem:techseq}})
is summable with  an exponent smaller than $\delta$. To complete the proof, 
observe that ${\cal L}_{{\mathrm rl}}$ with $\delta=1$ is
the Ruelle-Perron-Frobenius operator  for $f$. Therefore, the densities of
$\nu_{N}=f^{N}_{*}(\nu)$ are bounded by   a constant multiple of an $L^1$ function $\sum_{k=1}^{\infty}\gamma_{k}^{-1}\Delta_{k}^{-(1-\frac{1}{\mmax})}$. Any
weak limit of $\nu_{N}$ gives an absolutely continuous
invariant measure for $f$. The additional claims
of Theorem~\ref{coro:uni} follow immediately from Corollary~\ref{theo:inreg}.

\section{Geometry of Fatou components}
\subsection{Integrable domains}
\begin{lem}\label{prop:sum}
For every periodic Fatou component  ${\cal F}$ 
the following two conditions are equivalent:
\begin{enumerate}
\item For any 
(some - by the 
Koebe distortion lemma~\ref{lem:koeb}
the statements are equivalent)  
point $z\in{\cal F}$ away from the critical orbits 
there exist a sequence $\omega_{n}$ so that
$\abs{\br{F^n}'(y)}~>~\omega_{n}~$
for points $y\in{\cal F}\cap F^{-n}z$,
and $\sum_{n=1}^{\infty}\omega_{n}^{-1}<\infty$.
\item $\cal F$ is an integrable domain.
\end{enumerate}
\end{lem}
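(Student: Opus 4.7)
The plan is to bridge condition (1), which is analytic and phrased in terms of derivatives along backward orbits, with condition (2), which is geometric and phrased in terms of the quasihyperbolic profile of $\Cal F$. The link is supplied by the Koebe distortion lemma, which on univalent inverse branches converts $|(F^n)'(y)|^{-1}$ into the Euclidean distance $\dist{y,\partial\Cal F}$, combined with the observation that the preimages of a fixed point $z_0$ distribute across quasihyperbolic shells of $\Cal F$ at an essentially linear rate. First I would replace $F$ by $F^p$ so that $\Cal F$ is fully invariant, fix a basepoint $z_0\in\Cal F$ off the critical orbits, and use that $F$ has no elliptic components by Corollary~\ref{cor:nosiegel}, so $\Cal F$ is either attracting or superattracting.

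The first technical ingredient is the Koebe comparison: for each $y\in F^{-n}(z_0)\cap \Cal F$ reached by a univalent branch $\phi_n$ of $F^{-n}$ defined on the maximal disk in $\Cal F$ around $z_0$ (of radius $\asymp\dist{z_0,\partial\Cal F}$), Lemma~\ref{lem:koeb} yields
\[
\dist{y,\partial\Cal F}\;\asymp\;|\phi_n'(z_0)|\,\dist{z_0,\partial\Cal F}\;=\;|(F^n)'(y)|^{-1}\,\dist{z_0,\partial\Cal F}\,,
\]
so condition (1) is equivalent to summability of $\max_{y\in F^{-n}(z_0)\cap\Cal F}\dist{y,\partial\Cal F}$ in $n$. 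The second ingredient is the growth of the quasihyperbolic distance: each $\phi_n$ is a conformal self-embedding of $\Cal F$, hence contracts the quasihyperbolic metric by Schwarz--Pick, giving the upper bound $\qhdist{\phi_n(z_0),z_0}\lesssim n$; the matching lower bound $\qhdist{\phi_n(z_0),z_0}\gtrsim n$ is obtained by linearizing near the attracting cycle (Koenigs coordinate, or B\"ottcher in the superattracting case), where each inverse step expands the quasihyperbolic distance from the attractor by at least $\log|\lambda|^{-1}$ (respectively by at least $\log 2$). Together, $\qhdist{\phi_n(z_0),z_0}\asymp n$ uniformly in the branch. The third ingredient is a Koebe-neighborhood covering argument: the disks $B\bigl(y,c_0\dist{y,\partial\Cal F}\bigr)$ for $y$ ranging over all preimages of $z_0$ form a cover of $\Cal F$ of uniformly bounded quasihyperbolic diameter and bounded multiplicity, using the finite degree of $F|_{\Cal F}$ and its eventually-onto property.

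Combining these ingredients one gets the equivalence: defining $H(r):=\sup\{\dist{z,\partial\Cal F}:\qhdist{z,z_0}\in[r,r{+}1)\}$, the covering step replaces the supremum over $z$ by the maximum over preimages, while the quasihyperbolic growth lets us match the index $r$ with the dynamical depth $n$. The integrability $\int_0^\infty H(r)\,dr<\infty$ then converts, up to multiplicative constants, into $\sum_n\max_{y\in F^{-n}(z_0)\cap\Cal F}\dist{y,\partial\Cal F}<\infty$, i.e.\ into $\sum_n\omega_n^{-1}<\infty$; this passes in both directions by the same chain. Explicitly, for (1)$\Rightarrow$(2) one sets $H(r):=C\,\omega_{\lfloor cr\rfloor}^{-1}$ and checks the defining inequality via the cover, and for (2)$\Rightarrow$(1) one takes $\omega_n:=H(cn)^{-1}/C$ and uses that $\sum_n H(cn)\asymp\int H<\infty$.

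\textbf{Main obstacle.} The delicate point is the third ingredient, the Koebe-neighborhood covering with the right multiplicity and with quasihyperbolic scale matched to $n$: one must verify that the preimages of $z_0$ of order $n$ do not concentrate in a single quasihyperbolic shell of $\Cal F$, and simultaneously that every quasihyperbolic ball of radius $\asymp n$ is covered by Koebe disks of preimages of order $\lesssim n$. A secondary difficulty is the lower bound in Step~3: while the upper bound is automatic from Schwarz--Pick, the lower bound on $\qhdist{\phi_n(z_0),z_0}$ requires a genuine expansion estimate for inverse branches in the quasihyperbolic metric, which is cleanest via linearization near the attracting or superattracting cycle and then propagated to all of $\Cal F$ by the eventually-onto property.
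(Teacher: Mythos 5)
Your strategy rests on the same two comparisons that the paper uses --- $\dist{y,\partial\Cal F}\asymp\abs{(F^n)'(y)}^{-1}$ and $\qhdist{y,z_0}\asymp n$ --- and it is exactly this pair that the paper packages as Lemma~\ref{lem:ourkoebe} (Lemma~7 of \cite{ceh}), stated for \emph{every} $z\in\Cal F\setminus\Omega$ in terms of the first entry time $n(z)$ into an absorbing subdomain $\Omega\ni z_0$ containing the critical points. The proof of Lemma~\ref{prop:sum} in the paper is then a one-line translation. Phrasing the estimates in terms of $n(z)$ for an arbitrary $z$, rather than in terms of preimages of $z_0$, is precisely what dissolves your ``third ingredient'': every point of $\Cal F\setminus\Omega$ is already assigned a unique depth $n(z)$, so no Koebe-disk covering of $\Cal F$ by preimages with controlled multiplicity is needed.

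The genuine gap is in your lower bound for the quasihyperbolic growth. Linearizing at the attracting cycle controls the map where the quasihyperbolic density $1/\dist{\,\cdot\,,\partial\Cal F}$ of $\Cal F$ is \emph{bounded}; but $\qhdist{\phi_n(z_0),z_0}$ is dominated by the contribution near $\partial\Cal F$, which the Koenigs/B\"ottcher chart does not reach, and ``propagating by the eventually-onto property'' is not a quasihyperbolic-metric statement. For a polynomial basin of infinity with connected $J$ one can repair this, because the B\"ottcher map is a global Riemann map and the quasihyperbolic metric is conformally natural up to a factor $4$, so the explicit computation for $w\mapsto w^d$ on $\brs{\abs{w}>1}$ gives a step of size $\asymp\log d$; however the lemma is asserted for every periodic Fatou component, including non--simply-connected attracting basins, where this argument does not go through. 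A secondary imprecision: $\phi_n$ is not in general a conformal self-embedding of $\Cal F$, since $F^n|_{\Cal F}$ has critical values in $\Cal F$ and the branch is single-valued only on the collar $\Cal F\setminus\Omega$, which is the setup \cite{ceh} actually uses. Both of these issues are exactly what Lemma~7 of \cite{ceh} resolves; your architecture is the right one, but the key technical input is not supplied.
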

Without loss of generality we may assume that
$F$ fixes a Fatou component ${\cal F}$. 
Throughout the rest of this Section we will always mean by $F^{-n}$
a branch mapping ${\cal F}$ to itself.

Take a subdomain $\om\,\subset\,\Cal F$ with smooth
boundary containing all critical points from $\Cal F$
such that $F\om~\subset~\om$.
Any point $z\in{\cal F}$ eventually gets to $\om$
under some iterate of $F$, so we can define
$n(z):=\min\brs{n:~F^n(z)\in\Omega}$.
Also fix a reference point $z_0\in\om$.

Lemma~\ref{prop:sum} follows immediately from
 Lemma~7 from \cite{ceh} which we state as Lemma~\ref{lem:ourkoebe}.
\begin{lem}\label{lem:ourkoebe}
Suppose that $z\notin\om$
and $n=n(z)$.
Then 
\begin{eqnarray*}
\dist{z,\partial\Cal F}&\asymp&\abs{\br{F^n}'(z)}^{-1}~,\\
\qhdist{z,z_0}&\asymp&n~,
\end{eqnarray*}
up to some constant depending on $\Cal F$ and our choice of $\Omega$ only.
\end{lem}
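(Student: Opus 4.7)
The plan is to prove the derivative estimate first, then derive the quasihyperbolic estimate from it. Both claims rely only on the forward invariance $F\Omega \subset \Omega$ together with the fact that $\Omega$ contains every critical point of $F$ that lies in $\mathcal{F}$.

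For the derivative estimate, set $z_k := F^k(z)$ for $0 \le k \le n$. By the definition of $n = n(z)$, the points $z_0,\ldots,z_{n-1}$ lie in $\mathcal{F} \setminus \Omega$ while $z_n \in \Omega$. First I would establish a uniform radius $r_0 > 0$, depending only on $\mathcal{F}$ and $\Omega$, such that the branch of $F^{-n}$ sending $z_n$ back to $z$ along the given orbit extends univalently to $B_{r_0}(z_n)$. The key observation is that every forward iterate of every critical point of $F$ in $\mathcal{F}$ stays in $\Omega$ (since $F\Omega \subset \Omega$), so for small enough $r_0$ the successive pullback domains along our branch avoid the critical points of $F$; uniformity comes from compactness of $\overline{\Omega}$ and finiteness of the critical set. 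Koebe's $1/4$-theorem applied to this univalent branch then yields
\[
\dist{z,\partial\mathcal{F}} \asymp r_0\,|(F^{-n})'(z_n)|\cdot \dist{z_n,\partial\mathcal{F}} \asymp |(F^n)'(z)|^{-1},
\]
since $\dist{z_n,\partial\mathcal{F}} \asymp 1$ as $z_n$ lies in the compact set $\overline{\Omega} \subset \mathcal{F}$.

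For the upper bound $\qhdist{z,z_0} \lesssim n$ I would exhibit an explicit curve. Inside each univalent pullback disc from the derivative argument one can connect $z_k$ to $z_{k+1}$ by an arc along which $\dist{\cdot,\partial\mathcal{F}}$ varies by only a bounded factor (by Koebe), so each such arc contributes a bounded quasihyperbolic length; concatenating $n$ of them together with a bounded connector inside $\Omega$ from $z_n$ to $z_0$ yields the upper bound. For the lower bound I would introduce the nested sequence $\Omega_k := \{w \in \mathcal{F} : n(w) \le k\}$, so that $\Omega_0 = \Omega$ and $z \in \Omega_n \setminus \Omega_{n-1}$. Each ``annulus'' $\Omega_{k+1} \setminus \Omega_k$ has uniformly positive quasihyperbolic width, because on it $\dist{\cdot,\partial\mathcal{F}}$ is comparable to its own diameter (once more via Koebe inside the pullback discs and the derivative estimate already proved). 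Any curve from $z$ to $z_0$ must cross all $n$ of these annuli and therefore has quasihyperbolic length $\gtrsim n$.

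The main technical obstacle is the uniformity of the pullback radius $r_0$ and of the quasihyperbolic widths of the annuli, independent of the orbit and of the level $k$. This uniformity is forced by a compactness argument on $\partial\Omega$ combined with the periodicity of $\mathcal{F}$: the exclusion of critical orbits from $\mathcal{F} \setminus \Omega$ guarantees that every univalent branch of $F^{-1}$ rooted at a point of $\partial\Omega$ extends to a disc of size bounded below, and the desired uniform constants then propagate through iterated pullback via Koebe distortion.
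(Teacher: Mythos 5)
The paper does not actually prove this lemma; it simply cites Lemma~7 of \cite{ceh}, so there is no internal proof to compare against. Your overall strategy (uniform univalent pullback $+$ Koebe for the derivative estimate, then concatenated arcs for the upper quasihyperbolic bound and nested annuli for the lower bound) is the natural one and is presumably close to the original. However, two places are glossed over in a way that hides genuine work. First, the uniformity of the pullback radius does not follow merely from ``compactness of $\overline\Omega$ and finiteness of the critical set.'' The bootstrap is potentially circular (to show $V_k$ misses $\Crit F$ you want Koebe bounds on $\diam V_k$, which presuppose missing $\Crit F$). The argument that actually closes the loop is an induction that exploits not just that critical points sit in $\Omega$, but that the \emph{entire forward critical orbit} $\bigcup_{j\ge 1}F^j(\Crit F\cap\Cal F)$ lies in $\overline{F(\Omega)}$, which is compactly contained in $\Omega$, hence at a fixed positive distance $r_P$ from $\Cal F\setminus\Omega\ni z_{n-1}$. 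Taking the base disc around $z_{n-1}$ (not $z_n$, which can lie arbitrarily close to the postcritical set) of radius $<\min(r_P,\dist{\overline{\Omega_1}\setminus\Omega,\Crit F})$, one shows inductively that if $V_k$ met a critical point $c$ then $F^k(c)\in B_{r_1}(z_{n-1})$, contradicting $F^k(c)\in\overline{F\Omega}$. This is the precise mechanism; naming the compact set as $\overline\Omega$ misses it.

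Second, the quasihyperbolic upper bound as you describe it fails: the points $z_k$ and $z_{k+1}$ do \emph{not} lie in a common univalent pullback disc, and there is no arc joining them along which $\dist{\cdot,\partial\Cal F}$ varies by a bounded factor. Indeed $\dist{z_{k+1},\partial\Cal F}/\dist{z_k,\partial\Cal F}\asymp|F'(z_k)|$, which can be arbitrarily small if $z_k$ happens to be near a critical point lying on $J$; in such a case $\qhdist{z_k,z_{k+1}}\gtrsim|\log|F'(z_k)||$ is itself comparable to $n$, not to a constant. The correct test curve is not the orbit path: one fixes a single base arc inside $\Omega_1$ of bounded quasihyperbolic length, and pulls it back repeatedly by the univalent branches of $F^{-1}$ produced above, obtaining a spiral from $z$ to $z_0$ in which \emph{each pulled-back loop} (not each orbit step) has quasihyperbolic length $\asymp1$ by Koebe control, and the number of loops is $\asymp n$. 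Your lower-bound sketch via the quasihyperbolic widths of the annuli $\Omega_{k+1}\setminus\Omega_k$ is sound once the pullback control is in place, though ``comparable to its own diameter'' should read ``comparable to its width,'' since the annuli are not connected and their diameters are of order $\diam J$, not of order $\dist{\cdot,\partial\Cal F}$.
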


The proof of Theorem~\ref{prop:integrable} is preceded by a few
analytical observations, compare \cite{gehring-martio} and \cite{smith-stegenga}.
We will use that quasihyperbolic metric is a geodesic metric, see 
exposition \cite{koskela-quasihyp} for this and other properties of quasihyperbolic metric.
\begin{lem}\label{lem:integre}
Every  integrable domain (see Definition~\ref{def:integral})  is geodesically bounded, i.e.  
every  minimal quasihyperbolic geodesic  is of  uniformly bounded Euclidean arclength. 
\end{lem}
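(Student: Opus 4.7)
The plan is to parametrize a minimal quasihyperbolic geodesic $\gamma$ from the reference point $z_0$ by its quasihyperbolic arclength $s\in[0,L]$, where $L=\qhdist{\gamma(L),z_0}$. Directly from the definition
$$\qhdist{y,z}~=~\inf_{\gamma}\int_{\gamma}\frac{|d\zeta|}{\dist{\zeta,\partial\Cal F}}~,$$
the quasihyperbolic line element is $|d\zeta|/\dist{\zeta,\partial\Cal F}$, so parametrizing by quasihyperbolic arclength yields $|\gamma'(s)|=\dist{\gamma(s),\partial\Cal F}$ for almost every $s\in[0,L]$.

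The crucial property of a minimal geodesic is that every initial subsegment is itself minimal: if one had $\qhdist{\gamma(s),z_0}<s$, then concatenating a shorter path from $z_0$ to $\gamma(s)$ with the tail $\gamma|_{[s,L]}$ would contradict minimality of $\gamma$. Hence $\qhdist{\gamma(s),z_0}=s$ for each $s\in[0,L]$, and the quasihyperbolic boundary condition from Definition~\ref{def:integral} applied at $\gamma(s)$ gives
$$\dist{\gamma(s),\partial\Cal F}~\leq~H(\qhdist{\gamma(s),z_0})~=~H(s)~.$$

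Combining the two observations, the Euclidean arclength of $\gamma$ satisfies
$$\int_0^L |\gamma'(s)|\,ds~=~\int_0^L \dist{\gamma(s),\partial\Cal F}\,ds~\leq~\int_0^L H(s)\,ds~\leq~\int_0^{\infty} H(r)\,dr~<~\infty~,$$
which is a bound depending only on $\Cal F$ and the function $H$ supplied by integrability, not on the endpoint $\gamma(L)$. This yields the uniform bound claimed in the lemma.

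The only point requiring care is the identity $\qhdist{\gamma(s),z_0}=s$, which relies on $\gamma$ being a \emph{minimal} geodesic; fortunately this is precisely the hypothesis of the lemma. (If one wishes to extend the argument to near-geodesics, taking a $(1+\epsilon)$-almost minimizing curve yields the same estimate with $H(s)$ replaced by $H(s/(1+\epsilon))$, still integrable, so no quantitative loss occurs.) The existence of such quasihyperbolic geodesics in planar domains is standard (Gehring--Osgood), so the hypothesis is non-vacuous for the Fatou components of interest.
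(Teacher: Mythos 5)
Your proof is correct and is essentially the paper's argument viewed through a change of variables: the paper parametrizes $\gamma$ by Euclidean arclength $t$, sets $g(t)=\qhdist{z_0,\gamma(t)}$ and integrates the differential inequality $g'(t)H(g(t))\geq 1$, whereas you parametrize by quasihyperbolic arclength $s=g(t)$ and write the same integral $\int\dist{\gamma,\partial\Cal F}\,ds\leq\int H$ directly. Both rest on the same two facts — minimality of subsegments giving $\qhdist{z_0,\gamma(s)}=s$, and the integrability of $H$ — so there is no substantive difference.
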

\begin{proof}
Let $\gamma$ be a (minimal) quasihyperbolic geodesic joining $z_{0}$
and $z$, i.e. $\qhdist{z_{0},z}=\qhlen\gamma$. 
We parameterize $\gamma$ by its Euclidean arclength 
starting from $z_{0}$. 
We define function $g$ on the interval $[0,\len\gamma]$
by $g(t):=\qhlen{\gamma[0,t]}=\qhdist{z_0,\gamma(t)}$,
the latter quantities are identical
since the geodesic is minimal. 
Then 
$$\frac{d}{dt} g(t)=\dist{\gamma(t),\partial{\cal F}}^{-1}~.$$
From the definition of integrable domains,
we obtain the following differential inequality
\begin{equation}\label{equ:equ}
\frac{d}{dt}g(t) \cdot H(g(t))\geq 1~.
\end{equation} 
Integrating leads to
\[s=\int_{0}^{s}dt \leq \int_{0}^{s}
 H(g(t))dg(t) \leq 
\int_{0}^{\infty}H(r)dr < \infty~. \]
\end{proof}

Denote by $\gamma(z,y)$ a minimal quasihyperbolic geodesic
joining $z, y \in \Omega$, that is 
 $\qhdist{z,y}=\qhlen{\gamma(z,y)}$.
\begin{lem}\label{lem:geo}
Let $z_{0}$ be a base point in $\Omega$ (see Definition~\ref{def:integral}).
There exists a continuous decreasing function $A: \R_{+}\to\R_{+}$ 
with $\lim_{r\rightarrow 0}A(r)=\infty$ so that for
every point $z_{1}\in \gamma$
\[\qhdist{z_{0},z}\leq A(\len{\gamma(z_{1},z)})~.\]
\end{lem}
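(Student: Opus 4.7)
The plan is to integrate the differential inequality from the proof of Lemma~\ref{lem:integre} along the subarc of $\gamma$ from $z_{1}$ to $z$, and then exploit the finiteness of $M := \int_{0}^{\infty}H(u)\,du$ to convert the resulting lower bound into an upper estimate on the quasihyperbolic distance $\qhdist{z_{0},z_{1}}$.

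Parameterize $\gamma$ by Euclidean arclength starting at $z_{0} = \gamma(0)$ and write $g(t) := \qhdist{z_{0},\gamma(t)}$, $\Psi(r) := \int_{0}^{r}H(u)\,du$. The proof of Lemma~\ref{lem:integre} shows $\frac{d}{dt}\Psi(g(t)) \geq 1$. A subarc of a minimal quasihyperbolic geodesic is itself minimal, so integrating this inequality over $[t_{1}, t_{f}]$, with $z_{1} = \gamma(t_{1})$ and $z = \gamma(t_{f})$, yields
\[\Psi\!\left(\qhdist{z_{0},z}\right)\;-\;\Psi\!\left(\qhdist{z_{0},z_{1}}\right)\;\geq\;t_{f} - t_{1}\;=\;\len{\gamma(z_{1},z)}.\]
Since $\Psi \leq M$ everywhere on $[0,\infty)$, rearranging gives
\[\Psi\!\left(\qhdist{z_{0},z_{1}}\right)\;\leq\;M - \len{\gamma(z_{1},z)},\]
which is the quantitative form of the lemma.

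I would then set $A(r) := \Psi^{-1}(M - r)$ on $[0, M)$ and extend by $A(r) = 0$ for $r \geq M$; the extension is harmless since $\len{\gamma(z_{1},z)} \leq \len{\gamma} \leq M$. Replacing $H(r)$ by $H(r) + e^{-r}$ if necessary (which preserves integrability) forces $\Psi$ to be strictly increasing, so $\Psi : [0,\infty) \to [0,M)$ becomes a continuous strictly increasing bijection. Consequently $A$ is continuous, strictly decreasing on $[0,M)$, and $\lim_{r \to 0^{+}} A(r) = \Psi^{-1}(M^{-}) = +\infty$, as demanded.

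I do not anticipate any substantive obstacle; the conceptual point is simply that the boundedness of $\Psi$, i.e.\ the integrability of $H$, is exactly what converts the lower bound on quasihyperbolic distance used in the proof of Lemma~\ref{lem:integre} into an upper bound of the analogous form. The finiteness of $\Psi(\qhdist{z_{0},z})$ that makes the rearrangement legitimate is automatic, since $z \in \Cal F$ forces $\qhdist{z_{0},z} < \infty$, and $\Psi$ is finite on $[0,\infty)$.
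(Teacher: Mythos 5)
Your proof is correct and follows essentially the same route as the paper's: both integrate the differential inequality $g'(t)H(g(t))\ge 1$ from the proof of Lemma~\ref{lem:integre} over the subarc $\gamma(z_1,z)$, exploit the boundedness of the antiderivative of $H$, and invert, and in fact your $A(r)=\Psi^{-1}(M-r)$ is literally the same function as the paper's inverse of $r\mapsto\int_r^\infty H(u)\,du$, since $\Psi(r)=\int_0^r H=M-\int_r^\infty H$; your extra regularization $H\mapsto H+e^{-r}$ merely makes explicit an invertibility point the paper leaves tacit. One remark worth flagging: what you actually prove --- and what the paper's own computation yields, and what feeds into Corollary~\ref{coro:loc} --- is $\qhdist{z_0,z_1}\le A\bigl(\len{\gamma(z_1,z)}\bigr)$, with $z_1$ rather than the endpoint $z$ on the left; the inequality as printed in the statement, with $\qhdist{z_0,z}$, cannot hold (take $z_1=z_0$ and let $z$ tend to a boundary point of $\Omega$ distinct from $z_0$: then $\len{\gamma(z_1,z)}$ stays bounded below while $\qhdist{z_0,z}\to\infty$), so your reading of the lemma is the intended one and the display in the statement is a misprint.
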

\begin{proof}
We follow the notation from the proof of Lemma~\ref{lem:integre}.
Let $\qhlen{\gamma(z_{0},z_{1})}=L$ and $z_1=\gamma(t)$. Then 
$\qhdist{z_{1},z}=L-g(t)$.
By the inequality (\ref{equ:equ}),
\[\frac{d}{dt}\len{\gamma(z_{1},z)}= -1
\geq -\frac{d}{dt}g(t) H(g(t)) = \frac{d}{dt}
\int_{\qhdist{z,z_{0}}}^{\infty}H(r)dr~.\]
Integrating from  $t$ to $L$, we obtain that
\[-\len{\gamma(z_{1},z)} \geq 
-\int_{\qhdist{z,z_{0}}}^{\infty}H(r)dr+\int_{L}^{\infty}H(r)dr~\]
and 
\[\len{\gamma(z_{1},z)} \leq 
\int_{\qhdist{z,z_{0}}}^{\infty}H(r)dr~.\]
Set $A(r)$ to be the inverse of $\int_{r}^{\infty}H(r)dr$.
Then $A(r)$ is a non-decreasing continuous function
and $\lim_{r\rightarrow 0}A(r)=\infty$. The lemma follows. 
\end{proof}
\begin{coro}\label{coro:loc}
For every integrable domain $\Omega$
there exists a continuous function $A$ defined in Lemma~\ref{lem:geo} so that
for every subarc $\gamma_{1}$ of a minimal quasihyperbolic geodesic
$\gamma$ starting at the base point $z_{0}$ 
the following inequality holds
\[\frac{\len{\gamma_{1}}}{A(\len{\gamma_{1}})}~\lesssim ~\max_{z\in \gamma_{1}}
\dist{z,\partial\Omega}~.\] 
\end{coro}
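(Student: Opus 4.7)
The plan is to combine a quasihyperbolic length bound for $\gamma_1$, extracted from Lemma~\ref{lem:geo}, with the trivial lower bound on $\qhlen{\gamma_1}$ coming from the maximum of $\dist{\zeta,\partial\Omega}$ along $\gamma_1$. Let $\gamma_1$ have endpoints $a$ and $b$, with $a$ lying between $z_0$ and $b$ on $\gamma$, and set $D=\max_{z\in\gamma_1}\dist{z,\partial\Omega}$. The target inequality $\len{\gamma_1}/A(\len{\gamma_1})\lesssim D$ will follow once we show $\qhlen{\gamma_1}\leq A(\len{\gamma_1})$ and $\qhlen{\gamma_1}\geq \len{\gamma_1}/D$.

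For the upper bound on the quasihyperbolic length, observe that the subarc $\gamma(z_0,b)\subset\gamma$ is itself a minimal quasihyperbolic geodesic. Applying Lemma~\ref{lem:geo} to this geodesic with the choice $z_1=a$ gives $\qhdist{z_0,b}\leq A(\len{\gamma(a,b)})=A(\len{\gamma_1})$. Since $\gamma_1=\gamma(a,b)$ is itself a minimal geodesic, $\qhlen{\gamma_1}=\qhdist{a,b}$, and by the additivity of quasihyperbolic length along the geodesic $\gamma(z_0,b)$ we have $\qhdist{z_0,b}=\qhdist{z_0,a}+\qhdist{a,b}\geq\qhdist{a,b}$. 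Combining these inequalities yields
$$\qhlen{\gamma_1}\leq \qhdist{z_0,b}\leq A(\len{\gamma_1}).$$

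For the lower bound, we simply unfold the definition:
$$\qhlen{\gamma_1}=\int_{\gamma_1}\frac{|d\zeta|}{\dist{\zeta,\partial\Omega}}\geq \frac{\len{\gamma_1}}{\max_{\zeta\in\gamma_1}\dist{\zeta,\partial\Omega}}=\frac{\len{\gamma_1}}{D}.$$
Chaining the two estimates gives $\len{\gamma_1}/D\leq A(\len{\gamma_1})$, which rearranges to the claimed inequality. There is essentially no obstacle once Lemma~\ref{lem:geo} is in hand: the only subtle point is that the lemma must be applied to the subgeodesic $\gamma(z_0,b)$ (not the original $\gamma$) so that the Euclidean length appearing inside $A$ is exactly $\len{\gamma_1}$, and one should note that monotonicity of $A$ (decreasing) is not even needed here since we apply it at a single argument.
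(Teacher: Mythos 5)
Your proof is a logically valid deduction from Lemma~\ref{lem:geo} as literally printed, and it is almost certainly what the authors had in mind; unfortunately the chain inherits a flaw from the lemma itself. Re-examining the proof of Lemma~\ref{lem:geo}: with $\Psi(r):=\int_r^\infty H$ and $g(t):=\qhdist{z_0,\gamma(t)}$, the differential inequality gives $\frac{d}{dt}\,\Psi(g(t))\le-1$, and integrating from the parameter of $z_1$ to $\len{\gamma}$ yields $\Psi(\qhdist{z_0,z_1})-\Psi(\qhdist{z_0,z})\ge\len{\gamma(z_1,z)}$, hence $\qhdist{z_0,z_1}\le A(\len{\gamma(z_1,z)})$ --- with the \emph{near} endpoint $z_1$ on the left-hand side, not the far endpoint $z$. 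The version printed in the lemma already fails for the unit disk with $z_0=0$ and $z_1=z_0$: there $H(r)=e^{-r}$, $A(\ell)=\log(1/\ell)$, and the claim $\qhdist{0,z}\le A(\len{\gamma(z_0,z)})$ becomes $-\log(1-|z|)\le-\log|z|$, false whenever $|z|>\tfrac12$.

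Consequently your key step, $\qhlen{\gamma_1}\le\qhdist{z_0,b}\le A(\len{\gamma_1})$, does not survive the correction: applying the corrected lemma to $\gamma(z_0,b)$ with $z_1=a$ only gives $\qhdist{z_0,a}\le A(\len{\gamma_1})$, and $\qhlen{\gamma_1}=\qhdist{z_0,b}-\qhdist{z_0,a}$ is not dominated by $\qhdist{z_0,a}$. In fact the Corollary as printed appears to share the defect: in the disk with $\gamma_1=[a,b]$ one has $\len{\gamma_1}/A(\len{\gamma_1})=(b-a)/\log\frac{1}{b-a}\to\infty$ as $a\to0$, $b\to1$, while $\max_{\gamma_1}\dist{\cdot,\partial\Omega}=1-a$ stays bounded. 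What does hold, and is all that the downstream Lemma~\ref{lem:conver} actually uses, is the weaker estimate $\max_{z\in\gamma_1}\dist{z,\partial\Omega}\ge\dist{a,\partial\Omega}\ge\dist{z_0,\partial\Omega}\,e^{-\qhdist{z_0,a}}\ge\dist{z_0,\partial\Omega}\,e^{-A(\len{\gamma_1})}$, obtained by combining the corrected lemma with the elementary quasihyperbolic bound $\qhdist{z_0,a}\ge\log\bigl(\dist{z_0,\partial\Omega}/\dist{a,\partial\Omega}\bigr)$. So the gap here is not in your reasoning but in the statement you were handed; you should flag it rather than take Lemma~\ref{lem:geo} at face value.
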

\subsection{Local connectivity
and continua of convergence}   
A connected set $K$ is {\em locally connected} if for every $z\in K$
and each neighborhood $U$ of $z$ there exists a neighborhood $V$
of $z$ such that $K\cap V$ lies in a single component of $K\cap U$.

\begin{defi}
A  continuum $K_{\infty}\subset M$  is called a
continuum of convergence of  a set $M$ if there exists a sequence
of continua $K_{i}\subset M$ so that
\begin{enumerate}
\item $K_{i}$ are pairwise disjoint for $i=1,\dots,\infty$.
\item $\lim_{i\rightarrow \infty}K_{i}=K_{\infty}$ in the Haudorff  metric. 
\end{enumerate}

\end{defi}
A concept of continuum of convergence has a principal application
in study of local connectedness of continua.
We will say that a continuum of convergence is non-trivial if it
contains more than one point. We have the following fact (see
Theorems~12.1 and 12.3 in \cite{whyburn}) which follows almost immediately
from the definition of local connectivity.
\begin{fact}~\label{fact:why}
If a continuum $M$ is not locally connected then there exists a
non-trivial continuum of convergence of $M$.
\end{fact}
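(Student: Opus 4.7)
The plan is to prove the contrapositive by extracting a convergent subsequence of pairwise disjoint subcontinua whose limit is forced to be nondegenerate. All the work is topological and takes place inside the ambient compact metric space (say $\hat{\C}$), where the hyperspace of subcontinua is compact in the Hausdorff metric; no dynamics is used.

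First I would quantify the failure of local connectedness. If $M$ is not locally connected at some point $p \in M$, then there exists $\varepsilon > 0$ such that for every $\delta>0$ one can find $q \in M$ with $\mathrm{dist}(q,p) < \delta$ while the component $C(q)$ of $M \cap \overline{B_\varepsilon(p)}$ containing $q$ does not contain $p$. Choosing $\delta = 1/n$ I produce points $q_n \to p$ and components $C_n := C(q_n) \subset M \cap \overline{B_\varepsilon(p)}$ with $p \notin C_n$. Each $C_n$ is a subcontinuum of $M$.

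Next I would invoke the boundary bumping principle: since $M$ itself is a continuum meeting both $\overline{B_\varepsilon(p)}$ and its complement (I may shrink $\varepsilon$ so that $M \not\subset \overline{B_\varepsilon(p)}$, using that $M$ has more than one point; the degenerate case is excluded by the hypothesis of being not locally connected), every component of $M \cap \overline{B_\varepsilon(p)}$, and in particular each $C_n$, must intersect $\partial B_\varepsilon(p)$. Combined with $q_n \in C_n$ and $q_n \to p$, this forces $\mathrm{diam}(C_n) \ge \varepsilon/2$ for all large $n$.

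Finally, I would pass to a subsequence using the compactness of the hyperspace of subcontinua of $\overline{B_\varepsilon(p)}$ in the Hausdorff metric (Blaschke selection), obtaining $C_{n_k} \to K_\infty$, where $K_\infty$ is a subcontinuum of $M$ containing $p$ (as the limit of $q_{n_k} \to p$) and at least one point on $\partial B_\varepsilon(p)$; in particular $K_\infty$ is nondegenerate. To guarantee that the $C_{n_k}$ can be chosen pairwise disjoint, I would use that distinct components of $M \cap \overline{B_\varepsilon(p)}$ are automatically disjoint, and that only finitely many of the $C_n$ can coincide with any given one (otherwise that common component would contain the accumulation point $p$, contradicting $p \notin C_n$); hence one may thin the sequence to distinct and therefore disjoint continua without altering the Hausdorff limit. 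The mildly delicate step is ensuring this disjointness while keeping the diameter lower bound, but both are immediate once the boundary bumping estimate is in place; this is the only substantive point and it is a direct application of a standard result from continuum theory.
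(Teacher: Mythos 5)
Your proof is correct, and it fills in what the paper only attributes to Whyburn (Theorems~12.1 and 12.3 of \cite{whyburn}) without giving details. The four ingredients you use --- (a) replacing failure of local connectedness by failure of connectedness im kleinem at some point $p$, yielding $\varepsilon>0$ and points $q_n\to p$ whose components $C_n$ in $M\cap\overline{B_\varepsilon(p)}$ avoid $p$; (b) boundary bumping to force $C_n$ to reach $\partial B_\varepsilon(p)$, hence $\diam C_n\ge \varepsilon/2$; (c) Blaschke/hyperspace compactness to extract $C_{n_k}\to K_\infty$, a nondegenerate subcontinuum of $M$ containing $p$ and a boundary point; and (d) disjointness because a component appearing infinitely often would be a closed set accumulating at $p$ and so would contain $p$ --- are exactly the standard argument behind Whyburn's theorem, and each step is sound. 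One small point of hygiene: you open by saying ``$M$ is not locally connected at some point $p$,'' but what your $\varepsilon$--$\delta$ statement negates is \emph{connectedness im kleinem} at $p$. These can differ pointwise (there are continua cik but not locally connected at a given point); what saves you is the global equivalence --- a continuum is locally connected iff it is cik at every point --- so the hypothesis ``$M$ is not locally connected'' does yield some $p$ where cik fails, and that is the $p$ your argument needs. It would be cleaner to say this explicitly. Everything else, including the shrinking of $\varepsilon$ to ensure $M\not\subset\overline{B_\varepsilon(p)}$ (legitimate since shrinking the ball only refines the component decomposition, preserving $p\notin C(q)$), the fact that Hausdorff limits of continua in a compact metric space are continua, and the passage to a pairwise-disjoint subsequence, is correct.
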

From Fact~\ref{fact:why}, it is clear that the local continuity of $M$
cannot fail just in one point. It fails for all points
from  a non-trivial continuum of convergence.

\begin{lem}\label{lem:conver}
The boundary of an integrable
domain $\Omega$ (which is not necessarily connected) does not have a non-trivial
continuum of convergence.
\end{lem}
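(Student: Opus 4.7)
The plan is to argue by contradiction, exploiting the uniform bound on the Euclidean length of minimal quasihyperbolic geodesics from $z_0$ provided by Lemma~\ref{lem:integre}. Suppose a non-trivial continuum of convergence $K_\infty\subset\partial\Omega$ exists, together with pairwise disjoint continua $K_i\subset\partial\Omega$ converging to $K_\infty$ in the Hausdorff metric, and let $d:=\diam K_\infty>0$. Hausdorff convergence immediately gives $\diam K_i\ge d/2$ for all large $i$, so I can select $p_i,q_i\in K_i$ with $|p_i-q_i|\ge d/3$. Approximating each of $p_i,q_i$ by points in $\Omega$ and applying Lemma~\ref{lem:integre} to minimal quasihyperbolic geodesics from $z_0$, an Arzel\`a--Ascoli argument (after reparameterising by arclength) yields rectifiable curves $\Gamma_i,\Gamma_i'\subset\overline{\Omega}$ of Euclidean length at most $L:=\int_0^\infty H(r)\,dr<\infty$, joining $z_0$ to $p_i$ and $q_i$ respectively, with interiors in $\Omega$. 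Passing to a further subsequence so that $p_i\to p\in K_\infty$, $q_i\to q\in K_\infty$ with $|p-q|\ge d/3$, and $\Gamma_i\to\Gamma$, $\Gamma_i'\to\Gamma'$ uniformly, I obtain two rectifiable curves $\Gamma,\Gamma'\subset\overline{\Omega}$ of length at most $L$ that end at distinct points of $K_\infty$.

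To extract the contradiction I would then analyse the resulting planar configuration. For each large $i$, the concatenation of $\Gamma_i$, a subcontinuum of $K_i$ joining $p_i$ to $q_i$, and $\Gamma_i'$ traversed in reverse forms a closed set in $\overline{\Omega}$ separating the plane into complementary components. For $j\gg i$, the disjointness of $K_j$ from $K_i$ confines each $K_j$ to one such component; combined with Hausdorff convergence $K_j\to K_\infty$, this traps the $K_j$'s inside arbitrarily thin ``corridors'' bounded by arcs of $\Gamma$, $\Gamma'$ and pieces of $K_\infty$. Because each $K_j$ has diameter at least $d/2$, such a corridor must contain points $z\in\Omega$ with $\dist{z,\partial\Omega}$ bounded below by a fixed positive multiple of $d$, while $\qhdist{z,z_0}$ is forced to tend to $+\infty$ as $j\to\infty$. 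After replacing $H$ by its decreasing envelope (still in $L^1$ and hence tending to zero at infinity), this contradicts the defining inequality $\dist{z,\partial\Omega}\le H(\qhdist{z,z_0})$ of an integrable domain.

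The main obstacle is the planar-topology step in the preceding paragraph, which requires careful handling of continua $K_i$ that need not be locally connected and of two-sided accumulation of the $K_i$'s onto $K_\infty$. A cleaner alternative I would pursue in a fully detailed proof is to first reduce $p_i,q_i$ to boundary points accessible from $\Omega$ (dense in $\partial K_i$ by elementary prime-end considerations) and then to work with prime ends of $\Omega$: Lemma~\ref{lem:integre} implies that for an integrable domain the impression of every prime end collapses to a single point of $\partial\Omega$, which directly rules out the presence of a continuum of convergence in $\partial\Omega$.
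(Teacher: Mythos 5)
There is a genuine gap in the proposal, and it lies precisely at the step you flag as the "main obstacle." The corridor argument cannot produce the contradiction you want: a thin corridor trapping a continuum $K_j\subset\partial\Omega$ consists of points that are necessarily \emph{close} to $\partial\Omega$ (since $K_j$ itself is part of $\partial\Omega$), so you cannot find $z$ there with $\dist{z,\partial\Omega}$ bounded below by a fixed multiple of $d$. Large quasihyperbolic distance together with small Euclidean distance to the boundary is perfectly consistent with the defining inequality $\dist{z,\partial\Omega}\le H(\qhdist{z,z_0})$; no contradiction arises. What the proof actually needs is the opposite phenomenon: integrability forces geodesics to pass through \emph{wide} parts of $\Omega$. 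This is exactly what Corollary~\ref{coro:loc} provides, and it is the key tool you are missing. It says that any subarc $\gamma_1$ of a minimal quasihyperbolic geodesic from $z_0$ satisfies $\len{\gamma_1}/A(\len{\gamma_1})\lesssim\max_{z\in\gamma_1}\dist{z,\partial\Omega}$, a John-type condition. The paper then picks $w\in K_\infty$, a small $\epsilon$, and a base point $z_0$ outside $\overline{B_\epsilon(w)}$; shows that there are infinitely many distinct components $\Omega_n$ of $\Omega\cap B_\epsilon(w)$ separated by the $K_j$'s; joins a point $z_n\in\Omega_n$ near $w$ to $z_0$ by a minimal geodesic; observes that this geodesic has a subarc of Euclidean length at least $\epsilon/4$ inside $B_{\epsilon/2}(w)$; and invokes Corollary~\ref{coro:loc} to conclude that each $\Omega_n$ contains a ball of a fixed radius $\delta=\delta(\epsilon,A)$. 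Infinitely many disjoint balls of radius $\delta$ inside $B_\epsilon(w)$ is the contradiction — a counting/area argument, not a direct violation of the quasihyperbolic boundary inequality. Lemma~\ref{lem:integre} (bounded geodesic length) alone, which is what your Arzel\`a--Ascoli step extracts, is not strong enough; you need the quantitative lower bound on $\max\dist{\cdot,\partial\Omega}$.

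Your alternative via prime ends is also not adequate as stated. First, $\Omega$ is a periodic Fatou component and need not be simply connected, so classical prime end theory does not apply directly. Second, and more fundamentally, the claim that integrability forces all prime end impressions to be singletons is essentially equivalent to local connectivity of $\partial\Omega$, which is what the absence of continua of convergence is being used to establish (see Fact~\ref{fact:why} and the remarks following the lemma). Invoking it here would be circular unless you give an independent proof — and such a proof would again require something like Corollary~\ref{coro:loc}.
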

\begin{proof}
Suppose that there exists a non-trivial continuum
of convergence  $K_{\infty}\subset \partial \Omega$. 
Let $K_{i}\rightarrow K_{\infty}$ be mutually disjoint continua of 
$\partial \Omega$.
We choose a point $w$ in $K_{\infty}$ and $\epsilon>0$ 
so that the circle $\{|z-w|= \epsilon\}$
intersects $K_{\infty}$ in at least two points and
$\Omega\setminus \overline{B_{\epsilon}(w)}$ is non-empty. 
Without loss of generality, every $K_{i}$ intersects $\{|z-w|=\epsilon\}$ 
in at least two points and hence $B_{\epsilon}(w)\setminus K_{i}$
has at least two components.
For every $n>1/\epsilon$ choose  a component  $\Omega_{n}$ of
$\Omega\cap B_{\epsilon}(w)$ and $z_{n}\in \Omega_{n}$    
with the following properties: 
${\rm(i)}$  $\dist{z_{n},w}=1/n$, ${\rm (ii)}$ 
there exists $j$ such that $w$ and
$z_{n}$ are in  different components of $B_{\epsilon}(w)\setminus K_{j}$.
Observe that there are infinitely many different $\Omega_{n}$. Indeed, 
let $\Gamma_{j}$ be a component of $B_{\epsilon}(w)\setminus K_{j}$ 
which contains $w$. By {\rm (ii)},
there exists $j$ so that $\Omega_{n}$
is contained in a component of $B_{\epsilon}(w)\setminus K_{j}$ different 
than $\Gamma_{j}$. By {\rm (i)}, there
exists $N>n$ so that $z_{N}$ and thus $\Omega_{N}$ are contained
in $\Gamma_{j}$. Consequently,
every $\Omega_{m}$ with $m>N$ and $\Omega_{n}$ are different.

By the connectivity of $\Omega$, $\partial\Omega_{n}\cap \{|z-w|=\epsilon \}\not = \emptyset$. 
We join every $z_{n}$ with a base point $z_{0}\in \Omega
\setminus \overline{B_{\epsilon}(w)}$ 
by a minimal quasihyperbolic geodesic $\gamma(z_{n},z_{0})$.
Let  $\gamma_{n}\ni z_{n}$ be a component of $\gamma(z_{n},z_{0})\cap
B_{\epsilon/2}(w)$.
The Euclidean length of $\gamma_{n}$ is at least $\epsilon/4$ for
large $n$.
Then, by Corollary~\ref{coro:loc}, 
\[~\max_{y\in \gamma_{n }}\dist{y,\partial\Omega_{n}}
\geq~\max_{y\in \gamma_{n }}\dist{y,\partial\Omega}
~\grtsim~\frac{\len{\gamma_{i}}}{A(\len{\gamma_{i}})}~\grtsim~
\frac{\epsilon}{4A(\epsilon/4)}=:\delta.\]
Consequently, every $\Omega_{n}$ contains a ball of radius $\delta$ 
and hence there  are infinitely many disjoint
balls of the same radius in  $B_{\epsilon}(w)$, a contradiction.

\end{proof}
The non-existence of a non-trivial continuum of convergence implies
immediately that every component of the boundary is locally connected.
We  see immediately that every subcontinuum of a continuum
without non-trivial continuum of convergence is locally connected.
In fact it gives even more precise topological description of every
component of $\partial \cal F$ (see \cite{whyburn} pp 82-84). 
\begin{defi}
We say that a continuum $K$ is a {\em rational} curve if each point $z\in K$
has a family of arbitrary small neighborhoods $U_{n}$ so that $\partial U_{n}
\cap K$ is {\em countable}.
\end{defi}
Theorem~3.3 and Remark 2.1 of Chapter V of \cite{whyburn} assert
that every continuum which has no non-trivial continuum of
convergence is a rational curve.  Another direct consequence
of Lemma~\ref{lem:conver} is that for every $\epsilon > 0$
there exists only finitely many components $\partial {\cal F}$ 
with diameters larger than $\epsilon$. This in turn will imply
the absence of wandering continua for polynomials which satisfy
the summability condition.
\subsection{Wandering continua}
\begin{defi}\label{def:wander}
A continuum  $K$ is called wandering if for every two non-negative integers
$m\not = n$
\[F^{m}(K)\cap F^{n}(K)=\emptyset\;.\]
\end{defi}

\begin{lem}\label{lem:wand}
Suppose that  $F$ satisfies the summability condition with
exponent $\alpha< \frac{1}{1+\mmax}$ and $\cal F$ is a periodic
Fatou component. Then $F$ has no wandering continua contained
in $\partial {\cal F}$ other than points.
In particular, if $F$ is a polynomial then
every non-point component of connectivity of $J$
is preperiodic.
\end{lem}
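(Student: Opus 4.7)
The plan is to combine two ingredients: the absence of a non-trivial continuum of convergence in $\partial\Cal F$ (Lemma~\ref{lem:conver}, which applies because Theorem~\ref{prop:integrable} asserts that $\Cal F$ is integrable under our hypothesis $\alpha<1/(1+\mmax)$) and the uniform shrinking of inverse branches supplied by Proposition~\ref{prop:shrink}. First I would pass to the iterate $F^p$, where $p$ is the period of $\Cal F$, so that $F^p(\Cal F)=\Cal F$ and consequently $F^p(\partial\Cal F)\subseteq\partial\Cal F$; the forward orbit $\{F^{np}(K):n\ge 0\}$ of any wandering continuum $K\subset\partial\Cal F$ then consists of pairwise disjoint continua in $\partial\Cal F$.

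Assume for contradiction that $\diam(K)>0$. Any Hausdorff subsequential limit $K_\infty$ of the sequence $(F^{np}(K))_{n\ge 0}$ lies in $\partial\Cal F$ and is, by construction, a continuum of convergence realized by the pairwise disjoint family $\{F^{np}(K)\}$; Lemma~\ref{lem:conver} then forces $K_\infty$ to be a single point. Since this holds along every subsequence, $\diam\bigl(F^{np}(K)\bigr)\to 0$ as $n\to\infty$.

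To reach the contradiction, fix $a,b\in K$ with $|a-b|=\diam(K)$ and a radius $\rho<R'$, where $R'$ is the scale appearing in Proposition~\ref{prop:shrink}. For $n$ large enough that $\diam\bigl(F^{np}(K)\bigr)<\rho$, the ball $B_n:=B_\rho(F^{np}(a))$ is centered at the point $F^{np}(a)\in J$ and contains $F^{np}(K)$. Since $K$ is connected and $F^{np}(K)\subseteq B_n$, the set $K$ lies in a single connected component $U_n$ of $F^{-np}(B_n)$. Proposition~\ref{prop:shrink} yields $\diam(U_n)\le\tilde\omega_{np}^{-1}$, and the right-hand side tends to $0$, contradicting $\{a,b\}\subseteq U_n$ with $|a-b|>0$.

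For the polynomial statement, I would apply the first part to the basin of infinity $\Cal F_\infty$, which is fully invariant and satisfies $\partial\Cal F_\infty=J$. If a component $C$ of $J$ is not preperiodic, then the distinct components of $J$ containing the successive iterates $F^n(C)$ are pairwise disjoint, so $F^n(C)\cap F^m(C)=\emptyset$ for $n\neq m$; furthermore, $F^n$ has finite fibers and so cannot be constant on a non-degenerate continuum, so each $F^n(C)$ is itself a non-degenerate continuum. Hence $C$ is a non-point wandering continuum in $\partial\Cal F_\infty$, contradicting the first part. The essential step in the whole argument is the contradiction in the third paragraph, where the uniform decay rate $\tilde\omega_{np}^{-1}$ from Proposition~\ref{prop:shrink} defeats the fixed lower bound $\diam(K)$; the remainder is a routine combination of compactness and the component-level dynamics.
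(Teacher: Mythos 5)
Your argument is correct and follows the same route as the paper: Lemma~\ref{lem:conver} (applicable via Theorem~\ref{prop:integrable}) forces the forward iterates of a putative wandering continuum to have diameters tending to zero, after which Proposition~\ref{prop:shrink} pulls a small ball back over the continuum and shows $\diam K=0$. Your write-up is a bit more careful than the paper's (you pass to $F^p$ so the iterates actually stay in $\partial\Cal F$, and you explicitly supply the deduction of the polynomial statement, which the paper leaves implicit), but the underlying ideas and the key lemmas invoked are identical.
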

\begin{proof}
We can always assume that $K$ is connected. 
Let $R'$ be supplied by Proposition~\ref{prop:shrink}.
We look at the orbit  $K_{i}:=F^{i}(K)$. Since $\partial {\cal F}$
does not have a non-trivial continuum of convergence almost all
continua $K_{i}$ have the diameter smaller than $R'/2$. 
A ball $B_{i}$  of the radius $R'$ centered at any point of $K_{i}$
contains $K_{i}$.
By Proposition~\ref{prop:shrink}, we obtain that
\[\diam K \leq \diam  F^{-i}(B_{i}) < (\tilde{\omega_{n}})^{-1} \rightarrow 0\]
and $K$ must be a point.
\end{proof}

\section{Uniform summability condition}
\paragraph{Continuity of  scales and parameters.} 
Suppose that $F$ is a rational function which satisfies the summability
condition with an exponent $\alpha$ and a sequence of rational functions $F_i$ tends $S(\alpha)$-uniformly to $F$.
By the definition of $S(\alpha)$-uniform convergence and Lemma~\ref{lem:techseq}, we choose
the same sequences $\{\alpha_{n}\}$, $\{\gamma_{n}\}$, and $\{\delta_{n}\}$ 
for all $F_i$ and $F$.

All constants defined in $\rm{(i)-(iii)}$ in Section~\ref{sec:const} can be chosen uniformly that is
independently from $F_i$ for $i$ large enough.  The condition (iv) from Section~\ref{sec:const} 
needs to be replaced by the condition $\rm {(iv')}$: 
\begin{description}
\item{$\rm{(iv')}$} Let $\Crit'_F$ be a set of critical points of $F$ outside the Julia set $J_F$.
We chose $R'$ is so small that for all $i$ large enough  there are no critical points $\Crit_F'$
in the $2R'$-neighborhood of the Julia set $J_i$. 
\end{description} 
This choice of $R'$ is indeed possible. Since $F$  and $F_i$, $i\geq 0$,  satisfy the summability
condition  all periodic orbits of  $F$ and $F_i$, $i\geq 0$, are either repelling or attracting, see Corollary~\ref{cor:nosiegel}. Suppose now that we can find   a sequence $c_{i_k}$ of  critical
points of $F_{i_k}$ such that  for some $c\in \Crit_F'$
$$\dist{c_{i_k}, c}\rightarrow 0\;\;\; \mbox{and}\;\;\:   \dist{c_{i_k}, J_{i_k}} \rightarrow 0$$ 
when $k\rightarrow \infty$. The critical point  $c$ is in the basin of an attracting periodic point  $w$.
Let $\eta>0$.  There is $m>0$ such that  $\dist{F^m(c), w}< \eta/3$.    For every $i$ large enough there is an attracting periodic point
$w_i$ of $F_i$ such that $\dist{w_i, w}<\eta/3$. 
Since $\dist{F^m(c_{i_k}), J_{i_k}}\rightarrow 0$ when $k\rightarrow \infty$,  we have that
 for  all $k$ large enough $\dist{J_{i_k},w_{i_k}}<\eta$, a contradiction.


\paragraph{Uniform version of Lemma~\ref{lem:2t}.}
We will argue that after the above choice of the sequences,  the constants and scales $\rm{(i)-(iv')}$,
all  the constants  $R', R_{2t}, L, C(q), K$ from  Lemma~\ref{lem:2t} 
are uniform,   that is  independent from   $F_i$ for $i$ large enough.
\begin{enumerate} 
\item
We say that a  sequence $F^{-n}(z), \dots\, F^{-1}(z), z,$ of  type  $2$ preimages of $z$
is in  the scale $R'$ if   the ball $B_{R'/2}(z)$ can be pulled back univalently 
along the orbit $F^{-n}(z),\cdots, F^{-1}(z), z$   and $\dist{z,J_F}\leq R'/2$.


We claim that for  every 
$L_u'>0$ there exists a neighborhood
of $F$ in the space of rational functions with the topology of uniform convergence
so that for every $G$ from that neighborhood
and every sequence $G^{-L_u'}(z), \cdots, z$ of type $2$ preimages of $z$ in the scale $R'$
and such that $\dist{z,J_G}<R'/16$,  the corresponding sequence
  $F^{-L_u'}(z), \cdots, z$ is of type $2$  in the scale $R'/2$.  Indeed, if $G$ close enough to $F$
  then  $J_F$ is contained in $R'/8$-neighborhood of $J_G$. To see this consider all 
repelling periodic orbits of $F$ with  periods smaller than a constant  and  the property that they  are $R'/16$-dense in $J_F$. By the implicit function theorem $J_G$ is $R'/4$ dense in $J_F$ if $G$ is close to $F$.  

We possibly increase the constant  $L$  from Lemma~\ref{lem:2t}  so that the first two claims
of Lemma~\ref{lem:2t} are satisfied  with the estimates $7$ and $1/37$ 
instead of $6$ and $1/36$, respectively, for all type $2$ preimages of $z$ by $F$
in the scale $R'/2$. We put $L_u:=L$.

Therefore, by $C^1$ continuity  for every  $L_u'>L_u$ there exists a neighborhood of $F$
so that for every $G$ from this neighborhood and every sequence of type $2$ preimages
of $z$ in the scale $R'$ , 
 
$$\inf_{y\in \typeii{(z)},\,L_u'\ge n(y)\ge L_u}~\abs{\br{G^n}'(y)}~
>~6~,$$
and  if the Poincar\'e series $\Sigma_{q}(v)$ for $F$ converges
for some point $v$  then
$$\sum_{y\in \typeii{(z)},\,L_u'\ge n(y)\ge L_u}\abs{\br{G^n}'(y)}^{-q}
~<~\fr1{36}~$$
provided $\dist{z,J_G}<R'/16$.
\item 
The constants $K, R_{2t}$, and $C(q)$ of Lemma~\ref{lem:2t} come from
the Koebe type estimates and  depend only on other uniform constants hence are uniform. 

\end{enumerate}
We formulate a uniform version of Lemma~\ref{lem:comp}.
\begin{lem}\label{lem:comp1}
Let $F$ be a rational function which satisfies the summability condition with an exponent $\alpha\leq 1$
and $(F_i)$ be a sequence of rational maps converging $S(\alpha)$-uniformly to $F$. 
There exists $\epsilon>0$ such that the backward orbit  $\;\cdots , F_i^{-k}(z), \cdots , z\;$ 
of any $z$ from the $\epsilon$-neighborhood of the Julia set $J_i$
stays in the $R'/16$-neighborhood of $J_i$.  The same claim is true for $F$. 
\end{lem}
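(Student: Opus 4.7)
The strategy is to transfer the proof of the single-map version (Lemma~\ref{lem:comp}) to the family $\{F_i\}$ while ensuring that every constant is uniform in $i$. I would first recall the structural input: by Corollary~\ref{cor:nosiegel}, Corollary~\ref{cor:nocremer}, and the hypothesis, $F$ has no indifferent cycles, so every periodic Fatou component of $F$ is absorbed by a trapping neighbourhood of a (super)attracting cycle. Hence there exists a closed set $V_F$ with $F(V_F)\subset \mathrm{int}(V_F)$ that absorbs every Fatou orbit and satisfies $\dist{V_F,J_F}\ge R'/8$.

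The next step is to build analogous sets $V_i$ for $F_i$. Because the attracting multipliers at the cycles of $F$ lie strictly inside the unit disk, the implicit function theorem applied to $F_i^k(z)-z$ produces, for each attracting cycle of $F$, a nearby attracting cycle of $F_i$ with multiplier close to the original; a mild shrinkage of $V_F$ combined with the $C^1$-convergence $F_i\to F$ yields, for $i$ large, trapping regions $V_i$ close to $V_F$ with $\dist{V_i,J_{F_i}}\ge R'/16$, the latter using lower semi-continuity of the Julia set together with persistence of repelling periodic points that detect $J_F$. One must then verify that $V_i$ absorbs every Fatou orbit of $F_i$. This is where condition~3 of Definition~\ref{def:unif} (matching critical counts) and the $S(\alpha)$-uniform summability (which forbids parabolics and controls critical points ``asymptotically'' in $J_{F_i}$) are used: critical points of $F_i$ not asymptotically in $J_{F_i}$ must correspond to critical points of $F$ lying in attracting basins of $F$, and they remain in basins of the persistent nearby attracting cycles of $F_i$. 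Hence no ``new'' Fatou components of $F_i$ appear away from those of $F$, and $V_i$ captures every Fatou orbit of $F_i$.

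The set $W_i:=\CC\setminus\mathrm{int}(V_i)$ then satisfies $F_i^{-1}(W_i)\subset W_i$, so any backward orbit starting in $W_i$ remains in $W_i$. Inside each Fatou component, backward iteration under $F_i$ expands distance from the attracting cycle --- by the Schwarz lemma in linearising coordinates near attracting points, or by $z\mapsto z^k$ coordinates near superattracting points --- so preimages of points close to $J_{F_i}$ stay close to $J_{F_i}$. Choosing $\epsilon$ small relative to the uniform margin built into Step~2 then yields the conclusion, with $\epsilon$ independent of $i$. I expect the main obstacle to be Step~2: verifying the absence of ``spurious'' Fatou components of $F_i$ relies delicately on the critical-count hypothesis together with the uniform summability to prevent a subsequence of critical points of $F_i$ from converging to a point of $J_F$ while simultaneously generating Fatou components of $F_i$ with no analogue in $F$.
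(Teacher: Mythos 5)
Your strategy is genuinely different from the paper's. The paper's proof is a compactness/contradiction argument: assuming no uniform $\epsilon$ exists, one extracts $z_i\to z$ with $\dist{z_i,J_i}\ge R'/16$ while $\dist{F_i^{n_i}(z_i),J_i}\to0$; then $z$ lies at positive distance from $J_F$ (lower semicontinuity of Julia sets), hence in an attracting basin of $F$, and the persistence of the trapping neighbourhood of the corresponding attracting cycle under $F_i$ forces the forward $F_i$-orbit of $z_i$ to settle far from $J_i$ — a contradiction. Your approach attempts to construct the uniform trapping sets $V_i$ directly and then deduce backward invariance of their complement. This is a reasonable alternative outline, and your Steps 1 and 2 (building $V_F$ and perturbing to $V_i$ using implicit function theorem and $C^1$-convergence) are sound in spirit.

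The genuine gap is in Step 3, and specifically in the claim that ``no new Fatou components of $F_i$ appear away from those of $F$''. This is false in general under the hypotheses. Consider a Collet--Eckmann quadratic $f_c$ and a sequence $c_i\to c$ along the external ray through hyperbolic parameters: the $f_{c_i}$ possess attracting cycles of unbounded period with no counterpart in $f_c$, and these cycles accumulate on $J_c$. The uniform summability condition permits exactly this behaviour — condition 2 of Definition~\ref{def:unif} bounds a sum only up to the escape time $E(\epsilon)$, so a critical point $c_i$ with $\dist{c_i,J_{F_i}}<\epsilon$ may eventually leave $B_\epsilon(J_{F_i})$ and fall into a fresh attracting basin. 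Your correspondence argument via condition 3 of Definition~\ref{def:unif} tracks where critical points sit but does not preclude such escapes, so the deduction ``hence $V_i$ captures every Fatou orbit'' does not follow. What is true, and what the paper's argument exploits, is that any such spurious attracting cycle is necessarily contained in the Fatou set of $F_i$ \emph{inside a neighbourhood of $J_F$}: if the cycle (or, more generally, any point of its basin visited along a backward orbit from near $J_i$) were at distance $\ge R'/16$ from $J_i$, then, by lower semicontinuity of Julia sets, it would be at a definite distance from $J_F$, hence in an attracting basin of $F$ whose trapping neighbourhood persists for $F_i$ — which prevents its forward orbit from returning close to $J_i$. So the spurious basins exist but do not threaten the conclusion. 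Closing your Step 3 correctly would amount to showing: any $y$ with $\dist{y,J_i}\ge R'/16$ satisfies $\dist{y,J_F}\gtrsim R'$ and the forward $F_i$-orbit of the ball $B_{\rho}(y)$ is absorbed by $V_i$, with $\rho$ and the absorption time uniform over such $y$ and over $i$ large. That uniformity is extracted via compactness, which essentially reproduces the paper's proof; the constructive reformulation does not circumvent this step.
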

\begin{proof}
The existence of $\epsilon_i>0$ for every $F_i$ is proved in Lemma~\ref{lem:comp}.
If $\epsilon_i$ tend to $0$ then $J_F\not = \hat{\C}$. Suppose that there is an infinite
set $I$ of positive integers such that  for every $i\in I$ there are $z_i\in \hat{\C}$
  and $n_i\in \N$ so that $\dist{z_i, J_i}\geq R'/16$ and
$\dist{F_i^{n_i}(z_i), J_i}\leq \epsilon_i$. By the compactness, there exists
a converging sequence $z_{i_j}\rightarrow z\in \hat{\C}$  with the property that $B_{R'/32}(z)$ is disjoint with  every  Julia set $J_{i_j}$, $i_j\in I$, and thus also disjoint from $J_F$.  This means that 
there is  $m>0$ such that $F^m(z)$ is in the immediate bassin of attraction of
an attracting  periodic point $w$ of $F$. Let $w_{i_j}$ be the corresponding attracting periodic point
of $F_{i_j}$ for $j$ large enough.  By the continuity,   $F_{i_j}^m(z)$ are uniformly close
to $w_{i_j}$  and therefore the orbit of $z$ by $F_{i_j}$
 must accumulate on the periodic orbit of $w_{i_j}$ for all $j$ large enough, a contradiction.
\end{proof}

\begin{lem}[Uniform version of the Main Lemma]\label{lem:mainunif}
Assume that a rational
function $F$ satisfies the summability condition with an exponent 
$\alpha\leq 1$ and set $\beta=\mmax\alpha/(1-\alpha)$.
Suppose that $F_i$ is a sequence of rational functions tending $S(\alpha)$-uniformly
to $F$. Let $\epsilon$ be supplied  by Lemma~\ref{lem:comp1}
and suppose that a point $z$ belongs to the $\epsilon$-neighborhood of the Julia
set $J_i$ and a ball $B_{\Delta}(z)$ can be pulled
back univalently by a branch of $F_i^{-N}$.

We claim that there exists $i_0$ and  positive constants 
$L'>L, K$ independent of $z, \Delta$, and $\epsilon$
such that for every $i\geq i_0$ the sequence $F_i^{-N}(z),\dots, z$ 
can be decomposed into blocks of types $1$, $2$, and $3$, and
\begin{itemize}
\item  every type $2$ block,
except possibly the leftmost one, 
 has the length contained in $[L,L')$ and yields expansion  $6$,
\item the leftmost type $2$ block has the length contained in $[0,L]$
and  yields expansion  $K>0$, 
\item  all subsequences of the form $1\dots13$,
except possibly the rightmost one,  yield expansion 
$$\gamma_{k_j}\dots\gamma_{k_1}\gamma_{k_0}~,$$
$k_i$ being the lengths of the corresponding blocks,
\item  the rightmost sequence of the form $1\dots 13$ yields
expansion
$$\begin{array}{ll}
 \gamma_{k_j}\dots\gamma_{k_1}\gamma_{k_0}~\Delta^{(1-\mu(c)/\mmax)}&
{\it~if~a~critical~point~}c\in B_{\Delta}(z)~,\\
\gamma_{k_j}\dots\gamma_{k_1}\gamma_{k_0}~\Delta^{(1-1/\mmax)}&
{\it~if~otherwise~.}
\end{array}$$
\end{itemize}
\end{lem}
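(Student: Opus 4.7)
The plan is to mimic the proof of the Main Lemma~\ref{lem:main} while observing that every ingredient can be made uniform in $i$ for $i$ large enough, exploiting the $S(\alpha)$-uniform convergence of $(F_i)$ to $F$.

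First I would verify that the technical sequences $\{\alpha_n\},\{\gamma_n\},\{\delta_n\}$ supplied by Lemma~\ref{lem:techseq} can be chosen uniformly for $F$ and all $F_i$ with $i \geq i_0$. Denote by $\sigma_n^{(i)}$ the analogue of $\sigma_n$ for $F_i$, taking the minimum only over critical points of $F_i$ lying in $B_\epsilon(J_{F_i})$. For critical points of $F_i$ in $J_{F_i}$ the escape time $E(\epsilon)$ equals $+\infty$, so the $S(\alpha)$-uniform bound (\ref{unisum}) gives genuine summability with a uniform tail; for the remaining critical points in $B_\epsilon(J_{F_i})$, summability holds up to their escape time, after which they enter an attracting basin and their derivatives grow geometrically by uniform continuity with $F$. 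Consequently one can choose a common sequence $\sigma_n$ dominating all $\sigma_n^{(i)}$ and then construct common sequences $\alpha_n, \gamma_n, \delta_n$ as in Lemma~\ref{lem:techseq}. The scale constants $R, R', M, \tau$ from \S\ref{sec:const} are likewise uniform, using condition $\rm(iv')$ to ensure that no ``wandering'' critical point of $F_i$ spoils the absence of critical points in the $2R'$-neighborhood of $J_{F_i}$.

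Next, I would observe that the three local lemmas transfer verbatim to $F_i$. The uniform version of Lemma~\ref{lem:2t} has already been established in the preceding discussion: once $L_u, L_u', R_{2t}, K, C(q)$ are chosen for $F$, the same values work for every $F_i$ with $i \geq i_0$, because the compactness/eventually-onto argument is stable under uniform $C^0$ perturbations of the rational map and its Julia set. For Lemmas~\ref{lem:1t} and~\ref{lem:3t} (types 1 and 3), inspection of their proofs shows that the expansion bounds depend only on the Koebe distortion lemma, the scales $R, R', M$, and the inequality $\sigma_n^{(i)} \geq (\alpha_n)^2(\gamma_n)^{\mmax}/\delta_n$, together with the local behaviour of $F_i$ near its critical points. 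All of these are uniform in $i$ by the first step and by $C^1$ continuity of $F_i \to F$ on compact subsets of $\hat\C$ (so that $|F_i'(y)|/\mathrm{dist}(y,c_i)^{\mu(c_i)-1}$ is uniformly pinched near any critical point $c_i$ of $F_i$ close to a critical point of $F$; here condition 3 of Definition~\ref{def:unif} ensures the one-to-one correspondence $c \leftrightarrow c_i$).

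Given these uniform local estimates, I would then run the inductive decomposition of \S\ref{sec:induc} with the stopping rule of \S\ref{subsec:estimates} for each $F_i$, starting from the ball $B_\Delta(z)$ with $\dist{z,J_{F_i}} < \epsilon$. The combinatorial book-keeping, the reshuffling of short $1\dots132$ blocks into a single long type 2 block (via the choice of $L''$ so that $\alpha'_n C_{3t} (R_{2t})^{\mmax-1} \geq 1$), and the derivation of the four bulleted expansion estimates are purely formal manipulations of the local estimates and of the uniform constants $L'' = L'-L$, and therefore carry over word-for-word.

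The main obstacle is the control over critical points of $F_i$ that approach the Julia set $J_F$ without lying on $J_{F_i}$: if such a critical point were encountered by the shrinking-neighborhood procedure, the derivative $|(F_i^n)'(F_i c_i)|$ along its orbit is controlled by $S(\alpha)$-uniform convergence only up to the (possibly finite) escape time $E(\epsilon)$, which requires one to argue that beyond the escape time the orbit lives in a Fatou component where the derivative grows exponentially with a rate uniform in $i$; combining this geometric growth with the uniform summable tail before escape yields a uniform $\sigma_n$-type bound, which is precisely what is needed for the local lemmas to apply uniformly. Once this uniform derivative control along every critical orbit of $F_i$ is secured, the proof of Lemma~\ref{lem:main} applies verbatim with the constants $L', L, K$ independent of $i$, $z$, and $\Delta$.
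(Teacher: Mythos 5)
Your overall strategy matches the paper's: choose the technical sequences $\{\alpha_n\},\{\gamma_n\},\{\delta_n\}$ and the scales $R,R'$ uniformly, establish a uniform version of Lemma~\ref{lem:2t}, and then re-run the block decomposition of the Main Lemma for $F_i$ with $L:=L_u$ and $L':=L_u+L''$, noting that $L''$ depends only on $\{\alpha_n\}$, $C_{3t}$, $R_{2t}$, which are uniform. This is exactly the paper's route.

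However, your resolution of what you call the ``main obstacle'' contains a genuine error. You claim that once a critical point $c_i\in\Crit_{F_i}\cap B_\epsilon(J_i)$ escapes $B_\epsilon(J_i)$ and falls into an attracting basin, ``their derivatives grow geometrically,'' and that combining this with the uniform summable tail before escape yields a uniform $\sigma_n$-type lower bound valid for all $n$. This is false: near an attracting (or super-attracting) cycle $w_i$ the forward multiplier satisfies $|F_i'(w_i)|<1$, so $|(F_i^n)'(F_i c_i)|$ \emph{decays} exponentially once the orbit enters the basin, and the full Poincar\'e-type sum $\sum_n |(F_i^n)'(F_i c_i)|^{-\alpha}$ actually diverges for such $c_i$. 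There is no uniform lower bound of the form $|(F_i^n)'(F_i c_i)|\ge(\alpha_n)^2(\gamma_n)^{\mmax}/\delta_n$ valid for $n>E(\epsilon)$, and hence no common dominating sequence $\sigma_n$ over all $n$ and all $i$ in the sense you assert.

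The correct way around this, implicit in the paper, does not try to control critical orbits past the escape time at all. Rather one observes that the inductive decomposition, applied to a point $z$ in the $\epsilon$-neighborhood of $J_i$, can only encounter a critical point $c_2$ of $F_i$ at depth $n$ for which the whole initial segment $F_i(c_2),\dots,F_i^n(c_2)$ lies in the shrinking neighborhoods $U_{n-1},\dots,U_0$, each of which contains a preimage $F_i^{-m}(z_j)$ that, by Lemma~\ref{lem:comp1}, stays within $R'/16$ of $J_i$; together with condition $\rm(iv')$ and the scale choices, this guarantees the critical orbit has not yet escaped $B_\epsilon(J_i)$, i.e.\ $n\le E(\epsilon)$, so the bound $\eqref{unisum}$ from the $S(\alpha)$-uniform condition is precisely what is applied in Lemmas~\ref{lem:1t} and~\ref{lem:3t}. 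Your proposal identifies the right difficulty, but the argument you offer to dispose of it fails, and without a correct substitute the transfer of the type~1 and type~3 estimates to $F_i$ is not justified.
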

\begin{proof}
We claim that the inductive construction  from the Main Lemma can be carried out 
for $F_i$ and any $z$ from the $\epsilon$-neighborhood of $J_i$ provided $i$
is large enough.  The constants and scales are given by the conditions $\rm{(i)-(iv')}$. 
Recall that the same sequences
 $\{\alpha_{n}\}$, $\{\gamma_{n}\}$, and $\{\delta_{n}\}$ 
can be used for both  $F_i$ and $F$. 
By the definition of  $ S(\alpha)$-uniform convergence,
there is  a $1-1$ correspondence between the critical points of $F$ and 
these of $F_i$ if only $i$ is large enough. We follow the inductive decomposition
of backward orbits  of $F_i$ of the Main Lemma with one exception,
all the critical points of $F_i$ contained in the $\epsilon$-neighborhood of $J_i$ are included
in the construction. We treat $F_i$ as small perturbations
of $F$ and work in the same scale $0<R'<R$  both for $F_i$ and $F$. 
Using the uniform summability condition for all critical points  inside the  $\epsilon$-neighborhood
of $J_i$,  we obtain the expansion from the last three  claims of Lemma~\ref{lem:mainunif} 
in exactly the same way as in the proof of the Main Lemma.  In the construction of    
all backward   pieces of type $1\dots13$ we use all the critical points of $F_i$
from the $\epsilon$-neighborhood of $J_i$.  

What remains  is the proof that we can find $L$ and $L'>L$
independently from $F_i$ so that  the first two claims of  Lemma~\ref{lem:mainunif}
hold. We put $L:=L_{u}$ and hence $L$
is independent from $F_i$  by the uniform version of Lemma~\ref{lem:2t}.
Next, for a given $F_i$,  the constant $L'$ in the proof of the Main Lemma is defined as 
$ L+L''$ where $L''$ is a function
of $\{\alpha_{n}\}$, $C_{3t}$, and $R_{2t}$ only. These  are the uniform
constants. We conclude the proof by
invoking the uniform version of Lemma~\ref{lem:2t} for $L_{u}$ and 
$L_{u}'=L_{u}+L''$. 
\end{proof}
\paragraph{Uniform self-improving property and main estimates.}
The proof of Proposition~\ref{prop:poincare} is self-contained except for references to  
Lemmas~\ref{lem:main2} and ~\ref{lem:bbb}. Suppose 
that rational functions $F_i$ converge $S(\alpha)$-uniformly to
a rational function $F$ which satifies the summability condition with
an exponent $\alpha \leq 1$.  By the uniform version of the Main Lemma
and Lemma~\ref{lem:comp1}, we obtain the  estimates of Lemma~\ref{lem:main2}
for all $F_i$ sufficiently close to $F$. The decomposition of backward orbits of $F_i$
into pieces of type $1,2$, and $3$ uses,  as in the uniform version of the Main Lemma,
all the critical points of $F_i$ contained in the $\epsilon$-neighborhood of $J_i$
($\epsilon$ comes from Lemma~\ref{lem:comp1}).  Lemma~\ref{lem:bbb} is a reformulation
of two claims of Lemma~\ref{lem:2t} which already has a uniform version.  The constants
$L:=L_u'$ and $L':=L_u'$ are supplied by the uniform version of the Main Lemma. 

Now  suppose as in the hypothesis of Proposition~\ref{prop:poincare} that $F$ satisfies the summability
condition with an exponent $\alpha< q/(\mmax+q)$  for some $q>0$ and that the sequence
$(F_i)$ converges $S(\alpha)$-uniformly to $F$. Also, assume that the Poincar\'e
series for $F$ with an exponent $q>0$ converges for some point $v\in \hat{\C}$,  $\Sigma_q(v)<\infty$.
In the proof of 
Proposition~\ref{prop:poincare} the assumption  $\Sigma_{q}(v)<\infty$ is used
to derive the following property:  there exists
$\eta>0$ so that  for every $z$ such that $\dist{z, J_F} < \eta$,
$$\sum_{y\in\typeii_{l}(z)}
\abs{\br{F^{n(y)}}'(y)}^{-q}~<~\fr1{36}~,$$ 
where $\typeii_{l}(z)$ is  the set of all 
``long'' (of order $L'>n(y)\ge L$) type $2$ preimages $y$ of $z$.
The property in fact holds for all $F_i$ close enough to $F$  by the uniform 
version of Lemma~\ref{lem:2t} with $\eta:=\epsilon$ of Lemma~\ref{lem:comp1}.
This shows that the self-improving property of Proposition~\ref{prop:poincare}
is true for all $F_i$ close enough to $F$.

Taking into account  the uniform self-improving property  and  Corollary~\ref{cor:critexpon},
we obtain the main estimate behind the continuity of the Hausdorff
dimension of  Julia sets for  $S(\alpha)$-uniformly converging rational maps. 
\begin{coro}\label{coro:dim}
Assume that $F$ satisfies the summability condition with an exponent
$$\alpha~<~\fr{p}{\mmax+p}~,$$
$p$, and a sequence of rational maps $(F_i)$ converges $S(\alpha)$-uniformly
to $F$.  If  there exist $\epsilon>0$ and $q>p$ so that for every point
 $z$ from the $\epsilon$-neighborhood of the Julia set $J_F$,
$$\sum_{y\in\typeii_{l}(z)}
\abs{\br{F^{n(y)}}'(y)}^{-q}~<~\fr1{36}~,$$ 
then for every  $i$ large enough 
$\dpoin(J_i)\,<\,q$. 
\end{coro}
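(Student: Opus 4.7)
The plan is to apply the self-improving argument of Proposition~\ref{prop:poincare} to each $F_i$, uniformly in $i$, using the uniform versions of Lemma~\ref{lem:2t}, Lemma~\ref{lem:comp1}, and the Main Lemma~\ref{lem:mainunif} that are set up in the paragraphs immediately preceding the corollary. The outcome will be a single exponent $p<q$ such that the $F_i$-Poincar\'e series at exponent $p$ converges at some $F_i$-admissible point for all sufficiently large $i$, which yields $\dpoin(J_{F_i})\leq p<q$.

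The first step is to transfer the hypothesis from $F$ to $F_i$. Let $\epsilon$ be the minimum of the constant given in the hypothesis and of the one supplied by Lemma~\ref{lem:comp1}, and let $L,L',R'$ denote the uniform constants coming from Lemma~\ref{lem:mainunif}. By the uniform version of Lemma~\ref{lem:2t} (item~1 in the discussion preceding the corollary), the bound on the long type~2 preimage sum for $F$ is stable under $S(\alpha)$-uniform perturbations, so there is $i_0$ such that for every $i\geq i_0$ and every $z$ with $\dist{z,J_{F_i}}<\epsilon$,
\[\sum_{y\in\typeii_l(z)}\abs{\br{F_i^{n(y)}}'(y)}^{-q}~<~\frac{1}{36},\]
where the sum runs over long type 2 $F_i$-preimages of $z$.

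The second step is to upgrade the bound to an exponent $p<q$ via the power means inequality, exactly as in the proof of Proposition~\ref{prop:poincare}. Since $\deg F_i=\deg F$ for $i\geq i_0$ by Definition~\ref{def:unif}(3) and $L'$ is uniform, the estimate $\#\typeii_l(z)\leq(\deg F)^{L'}$ is uniform in $i$. Set $\beta:=\mmax\alpha/(1-\alpha)$ and fix once and for all
\[p~\in~\br{\max\br{\beta,\;q-\frac{q\log 2}{L'\log\deg F}},\;q}.\]
The power means inequality then gives, for every $i\geq i_0$, $\sum_{y\in\typeii_l(z)}\abs{\br{F_i^{n(y)}}'(y)}^{-p}<\frac{1}{3}$. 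The uniform Main Lemma~\ref{lem:mainunif} provides the analogue of Lemma~\ref{lem:main2} with constants independent of $i$, so $\sum_{y\in\typei(x)}\abs{\br{F_i^{n(y)}}'(y)}^{-\beta}<\frac{1}{3}$, and the uniform Lemma~\ref{lem:2t} yields $\sum_{y\in\typeii_s(x)}\abs{\br{F_i^{n(y)}}'(y)}^{-p}<C(p)$ uniformly in $i$. Expanding $\sum_{y\in\xx{}{z,\Delta}}\abs{\br{F_i^n}'(y)}^{-p}$ by alternating $\typei$- and $\typeii$-blocks as in the closing cascade of Proposition~\ref{prop:poincare} produces a geometric series of ratio at most $\frac{2}{3}$ and therefore a bound of the form $\leq 3C(p)\,\Delta^{p(1/\mmax-1)}$, uniform in $i$ and valid for every $z$ in the $\epsilon$-neighborhood of $J_{F_i}$.

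To conclude, choose for each $i\geq i_0$ a point $z_0$ in a Fatou component of $F_i$ meeting the $\epsilon$-neighborhood of $J_{F_i}$ and at positive distance from $\overline{\bigcup_n F_i^n(\Crit_{F_i})}$; such a $z_0$ can be obtained as a nearby point of a repelling periodic point of $F$ lying in the Fatou set of $F_i$. For $\Delta$ sufficiently small every $F_i$-preimage of $z_0$ lies in $\xx{}{z_0,\Delta}$, so the restricted bound of the previous step coincides with the full $F_i$-Poincar\'e series at $z_0$ at exponent $p$; hence $\Sigma^{F_i}_p(z_0)<\infty$ and $\dpoin(J_{F_i})\leq p<q$. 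The principal difficulty of the argument lies in the uniformity: every constant appearing above — $L,L',R',C(p)$, and the power means threshold defining $p$ — must be chosen independently of $i$, and this is precisely what is supplied by the uniform versions of Lemma~\ref{lem:2t}, Lemma~\ref{lem:comp1}, and Lemma~\ref{lem:mainunif}; once that uniformity is in hand, the corollary follows by a faithful repetition of the self-improving cascade of Proposition~\ref{prop:poincare}.
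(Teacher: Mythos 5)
Your proposal is correct and follows essentially the same route as the paper: transfer the long type-2 bound from $F$ to $F_i$ via the uniform version of Lemma~\ref{lem:2t}, run the self-improving cascade of Proposition~\ref{prop:poincare} for $F_i$ uniformly in $i$ (using the uniform Main Lemma and Lemma~\ref{lem:comp1}), and then invoke the reasoning of Corollary~\ref{cor:critexpon} at a point away from the critical orbits to conclude $\dpoin(J_i)<q$; the paper's own proof is exactly this, merely stated more tersely. The only minor blemishes are that you reuse the letter $p$ for the new exponent, which already has a fixed meaning in the statement, and you cite Definition~\ref{def:unif}(3) for $\deg F_i=\deg F$, whereas that item only fixes the number of critical points (equality of degree follows instead from uniform convergence on the sphere).
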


\begin{coro}\label{coro:cont}
If a sequence $(F_{i})$ of rational functions
tends $S(\alpha)$-uniformly to $F$ which satisfies the summability
condition with an exponent  $\alpha< \frac{\dpoin(J_{F})}{\mmax+\dpoin(J_{F})}$
 then
\[ \limsup_{i\rightarrow \infty}\dpoin(J_{i})\leq \dpoin(J_{F})\;.\]
\end{coro}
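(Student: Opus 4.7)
The plan is to derive Corollary~\ref{coro:cont} as a direct consequence of Corollary~\ref{coro:dim}, by choosing the auxiliary parameter $p$ in the latter equal to $\dpoin(J_F)$ and letting $q$ approach $\dpoin(J_F)$ from above. Fix an arbitrary $\epsilon' > 0$ and set $q := \dpoin(J_F) + \epsilon'$. I would verify the two hypotheses of Corollary~\ref{coro:dim} with $p = \dpoin(J_F)$: the condition $\alpha < p/(\mmax + p)$ is exactly the assumption on $\alpha$, so only the uniform bound on the type~2 sum needs checking.

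For this uniform bound, I would first invoke Theorem~\ref{theo:poincare}: since $q > \dpoin(J_F)$, any point $v$ at a positive distance from the critical orbits satisfies $\dpoin(v) = \dpoin(J_F) < q$, and hence the Poincar\'e series $\Sigma_q(v)$ converges. With this convergence in hand, Lemma~\ref{lem:bbb} (a restatement of the large-scale part of Lemma~\ref{lem:2t}) produces an $\epsilon > 0$ such that
\[
\sum_{y \in \typeii_{l}(z)} \abs{\br{F^{n(y)}}'(y)}^{-q} ~<~ \frac{1}{36}
\]
for every $z$ in the $\epsilon$-neighborhood of $J_F$. This is precisely the remaining hypothesis of Corollary~\ref{coro:dim}.

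Applying Corollary~\ref{coro:dim} then yields $\dpoin(J_i) < q = \dpoin(J_F) + \epsilon'$ for every $i$ sufficiently large, so $\limsup_{i \to \infty} \dpoin(J_i) \leq \dpoin(J_F) + \epsilon'$. Letting $\epsilon' \to 0$ concludes the argument.

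Since the heavy lifting is already done by Corollary~\ref{coro:dim} (which relies on the uniform versions of the Main Lemma, Lemma~\ref{lem:2t}, and the self-improving property of the Poincar\'e exponent), there is no real obstacle here — the only subtlety is confirming that the type~2 estimate established for $F$ via Lemma~\ref{lem:bbb} is the exact input needed to trigger Corollary~\ref{coro:dim} for the perturbations $F_i$. This is ensured because the $S(\alpha)$-uniform convergence allows all technical constants from the uniform Main Lemma and the uniform version of Lemma~\ref{lem:2t} to be chosen independently of $i$ for $i$ large enough, so the type~2 bound for $F$ transfers (with a possibly slightly enlarged constant, still below the threshold $1/36$ after shrinking $\epsilon'$ if necessary) to the approximants.
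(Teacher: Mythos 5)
Your proposal is correct and follows the paper's own route: both reduce to Corollary~\ref{coro:dim} with $p=\dpoin(J_F)$, $q=\dpoin(J_F)+\eta$, after obtaining the convergence $\Sigma_q(\cdot)<\infty$ from Theorem~\ref{theo:poincare} and the type-$2$ bound from Lemma~\ref{lem:2t}/\ref{lem:bbb}. The only cosmetic differences are that the paper takes the reference point to be a critical point of maximal multiplicity (rather than a point at positive distance from the critical orbits) and writes the type-$2$ estimate directly for $F_i$ via the uniform version of Lemma~\ref{lem:2t}, whereas you verify it for $F$ and let Corollary~\ref{coro:dim} handle the transfer -- these are the same argument presented in a slightly different order.
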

\begin{proof}
By Theorem~\ref{theo:poincare}, for every positive $\eta$ and every critical point of maximal
multiplicity,  the Poincar\'e series for $F$,  $\Sigma_{\dpoin(J_{F})+\eta}(c)<\infty$.
By the uniform version of  Lemma~\ref{lem:2t}, for every $\eta>0$ 
we can find $L_u(\eta)\leq L_u'(\eta)$
so that for every $z$ such that $\dist{z, J_i}< \epsilon$
($\epsilon$ is supplied by Lemma ~\ref{lem:comp1}),
$$\sum_{y\in \typeii{(z)},\,L_u (\eta)'\ge n(y)\ge {\l2t}_u(\eta)}\abs{\br{F_{i}^n}'(y)}^
{-\dpoin(J_{F})-\eta}
~<~\fr1{36}~$$
provided $i$ is large enough.  $\typeii{(z)}$ stands for a set of type $2$ preimages of $z$ 
for $F_{i}$.  In the definition of these type $2$ preimages all  critical points of $F_i$ inside the $\epsilon$-neighborhood of $J_i$ were considered. The constants $L_u'(\eta) >L_u(\eta)$ depend only
on $F$ and  $\eta$. 
We use the  uniform version  of Corollary~\ref{coro:dim}  (set $p:=\dpoin(J)$ and 
$q:=\dpoin(J)+\eta$)  to conclude that 
$\dpoin(J_{i})\leq \dpoin(J)+\eta$ for all $i$ large enough.

\end{proof}

\paragraph{Proof of Theorem~\ref{theo:contin}.}
By Corollary~\ref{coro:cont} and Theorem~\ref{theo:poincare},
\[\limsup_{n\rightarrow \infty}\HD(J_{n})=\limsup_{n\rightarrow \infty}
\dpoin(J_{n})\leq  \dpoin(J_{F})= \HD(J_{F})\;.\]
Since $\HD(J_{F})=\HH(J_{F})$,  for every $\eta >0$  there 
exists a hyperbolic subset $K$ of $J_{F}$ 
so that $\HD(K)\geq \HD(J_{F})-\eta$
(see \cite{denker-urbanski-sullconf, denker-urbanski-existconf}).
By the general theory of hyperbolic sets,  the set $K$ persists
under small perturbations  and therefore every $J_{i}$ for $i$
large enough contains a hyperbolic set $K_{i}$ of the Hausdorff dimension $\HD(K)-2\eta$. Consequently, 
$$\lim\inf_{i\rightarrow \infty}\HD(J_{i})\geq \HD(J_{F})~,$$
which proves the theorem.

\section{Unicritical polynomials}
It is known that the connectedness locus ${\cal M}_{d}$
for unicritical polynomials $z^{d}+c$ 
is a full compact. 
Let $\phi$ be the Riemann map from the unit disk to 
$\C\setminus {\cal M}_{d}$. By Fatou's theorem for almost all
$\xi$ with respect to the Lebesgue measure on the unit circle, 
there exists a radial limit $\lim_{r\rightarrow 1} \phi(r\xi)$.
Therefore,  the harmonic measure $\chi$ is given by,
\[\chi = \phi_{*}(m)\;.\]
By Fatou's theorem, for almost every $c\in \partial{\cal  M}_{d}$ 
with respect to $\chi$ the external radius $\Gamma(c)$ 
(see Definition~\ref{defi:radius}) terminates at $c$.

Denote the Julia set of the polynomial $z^{d}+c$ by  $J_{c}$.
By Shishikura's theorem, $\cite{shishikura}$, there exists a residual set 
$Z \subset \partial{\cal M}_{2}$ with the property that $\forall c \in Z$,
\[ \HH(J_{c})=2\;.\]
Let $c\in \partial {\cal M}_{2}$ corresponds to a Collet-Eckmann
polynomial. By \cite{graczyk-swiatek-ce} and \cite{smirnov-symbce},
Collet-Eckmann parameters are
 typical with respect to the harmonic measure $\chi$ and the corresponding
Julia sets are of Hausdorff dimension $<2$, 
 \cite{ceh}. 
Choose now a sequence $c_{n}\in Z$, $c_{n}\rightarrow c$. Since
$\HH(\cdot)$ is lower semicontinuous, there exist open disks
$D_{n}$ centered at $c_{n}$ 
 so that $\forall c\in D_{n}$,
\[ \HD(J_{c_{n}})> 2-\frac{1}{n}\;.\]
By Yoccoz's theorem, ${\cal M}_{2}$ is locally connected at $c$
and thus there exists a curve $\gamma$ terminating at $c$  so that
\[\limsup_{\gamma \ni c'\rightarrow c}\HD(J_{c'})=2 > \HD(J_{c})\;\]
and $\HD$ as a function of $c'\in \C\setminus {\cal M}_{2}$
does not extend continuously to $\partial {\cal M}_{2}$.

Another type of discontinuity of $\HD(\cdot)$ is caused
by the parabolic implosion.
Let $c\in \partial {\cal M}_{2}$, $c\in \Gamma(c)$, has a parabolic cycle.
The parabolic implosion  means that $\HD(J_{c})$ is strictly contained in
the Hausdorff limit of $J_{c'}, c'\in \Gamma(c)$. 
It was recently shown in \cite{douady-sentenac-zinsmeister} that if $d=2$
and $c> 1/4$ then 
\[\HD(J_{1/4})< \liminf_{c\rightarrow 1/4}\HD(J_{c}) 
\leq  \limsup_{c\rightarrow  1/4}\
\HD(J_{c}) < 2~. \]

\subsection{Renormalizations}
We will start with the observation  unicritical  polynomials satisfying
the summability condition are not infinitely renormalizable. 
\begin{lem}\label{lem:rig}
Suppose that $f_{c}$ satisfies the summability condition
with an exponent $\alpha\leq \frac{1}{1+d}$.
Then $f_{c}$ is only finitely many times renormalizable.
\end{lem}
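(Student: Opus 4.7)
}
The strategy is to derive a contradiction by showing that infinite renormalizability forces $\sigma_{p_n} = |(f_c^{p_n})'(c)|$ to stay bounded along the sequence of renormalization periods $p_n \to \infty$, whereas summability $\sum_n \sigma_n^{-\alpha} < \infty$ requires $\sigma_n \to \infty$. Recall that for the unicritical polynomial $f_c(z) = z^d + c$ the unique critical point in the Julia set is $0$ (with multiplicity $\mmax = d$), so $\sigma_n = |(f_c^n)'(c)|$.

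Suppose for contradiction that $f_c$ admits polynomial-like restrictions $f_c^{p_n} : U_n \to V_n$ of degree $d$ with $p_n \to \infty$, whose small filled Julia sets $K_n \ni 0$ are nested and $f_c^{p_n}$-invariant. Write $K_n^j := f_c^j(K_n)$ for $j = 0, 1, \dots, p_n - 1$. In the primitive case these are pairwise disjoint and only $K_n^0 = K_n$ contains the critical point; the satellite case reduces to the primitive one by combining consecutive satellite renormalizations. The key geometric input comes from the identity $f_c(z) - c = z^d$: since $0 \in K_n$, one has $\diam K_n^1 \asymp (\diam K_n)^d$, with the lower bound obtained by choosing $z_1 = 0$ and $z_2 \in K_n$ with $|z_2| \asymp \diam K_n$.

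I would then factor
\[
\sigma_{p_n} \;=\; |(f_c^{p_n - 1})'(c)| \cdot |f_c'(f_c^{p_n - 1}(c))|
\]
and estimate each factor. The terminal factor equals $d\,|f_c^{p_n}(0)|^{d-1}$, which is at most $d (\diam K_n)^{d-1}$ since $f_c^{p_n}(0) \in K_n \ni 0$. For the leading factor, the iterate $f_c^{p_n - 1}$ is univalent on $f_c(U_n) \ni c$ (because $K_n^1, \dots, K_n^{p_n - 1}$ avoid the critical point), and a Koebe distortion estimate on this univalent map, combined with $\diam V_n \asymp \diam K_n$ and $\diam f_c(U_n) \asymp \diam K_n^1 \asymp (\diam K_n)^d$, gives
\[
|(f_c^{p_n - 1})'(c)| \;\asymp\; \frac{\diam V_n}{\diam f_c(U_n)} \;\lesssim\; (\diam K_n)^{1-d}.
\]
Multiplying the two estimates yields $\sigma_{p_n} = O(1)$ uniformly in $n$, contradicting $\sigma_n \to \infty$.

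The main obstacle is providing rigorous control of the Koebe distortion step, which in turn rests on an a priori lower bound for the modulus $\mod(V_n \setminus f_c(U_n))$ and on $\diam V_n \asymp \diam K_n$. Within the framework of this paper, the most economical substitute is to apply Proposition~\ref{prop:shrink} to a ball of radius comparable to $\diam V_n$ centered at $f_c^{p_n}(0) \in K_n \subset J$ and pulled back univalently along $f_c^{-(p_n - 1)}$; the resulting shrinking estimate, combined with Lemma~\ref{lem:koeb}, delivers the required upper bound on $|(f_c^{p_n - 1})'(c)|$ without appealing to external renormalization theory.
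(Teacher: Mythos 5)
Your approach is genuinely different from the paper's and has a real gap at the distortion step. The paper never touches the critical-orbit derivative $\sigma_{p_n}$: it observes that if every forward iterate of every small Julia set $J_j$ had diameter at least $R'$, the nested intersection $\bigcap_j J_j$ would be a non-trivial wandering continuum, contradicting Lemma~\ref{lem:wand}; so some $f^k(J_j)$ fits in a ball $B_{R'}$, and then pulling that ball back by the branch $f^{-k-n_js}$ (using $f^{n_j}(J_j)=J_j$) and invoking Proposition~\ref{prop:shrink} gives $\diam J_j\le\tilde\omega_{k+n_js}^{-1}\to0$, contradicting non-triviality. No a priori bound on the polynomial-like restriction is ever needed.

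The gap in your proposal is exactly the step you flag, and the proposed fix does not close it. To get $|(f_c^{p_n-1})'(c)|\lesssim \diam V_n/\diam f_c(U_n)$ via Koebe you must apply Lemma~\ref{lem:koeb} on a round disk around $c$ of radius comparable to $\diam f_c(U_n)$ contained in $f_c(U_n)$; this requires control of the shape of $f_c(U_n)$ near $c$, i.e.\ an a priori lower bound on $\mod\bigl(V_n\setminus\overline{U_n}\bigr)$. Such a priori bounds are precisely what is not available for general infinitely renormalizable maps (and are a deep theorem in the cases where they hold). Proposition~\ref{prop:shrink} cannot substitute: it bounds $\diam(F^{-n}B)$ from \emph{above} for an arbitrary inverse branch, making no claim of univalence or bounded distortion along that branch. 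It therefore gives a lower bound of the form $|(f_c^{p_n-1})'(c)|\gtrsim\rho\,\tilde\omega_{p_n-1}$, which points in the wrong direction and does not control the derivative from above. (Pulling a ball of radius $\asymp\diam V_n$ around $f_c^{p_n}(0)$ back by $f_c^{-(p_n-1)}$ is also not guaranteed to be univalent unless the ball lies inside $V_n$, which again needs a roundness bound for $V_n$.) Without these a priori bounds it is not even clear that $\sigma_{p_n}$ is bounded, so the key claim of your argument is itself open. Moreover, even granting the bound, your route is a detour: once Proposition~\ref{prop:shrink} is in play, shrinking $J_j$ directly along $f^{-n_js}$ already gives the contradiction, with no need for the chain-rule factorization, the identity $\diam K_n^1\asymp(\diam K_n)^d$, or any Koebe estimate.
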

\begin{proof}
Indeed, suppose that $f:= f_{c}$ is infinitely renormalizable.
Then there is a sequence $n_{j}$ and two topological disks 
$\overline{U_{j}}\subset V_{j}$
so that $f^{n_{j}}:U_{j}\rightarrow V_{j}$ is proper of degree $\ell$ and 
\[J_{j}:= \cap_{i=0}^{\infty}f^{-n_{j}i}(U_{j})\ni 0\]
is a non-trivial continuum ($f^{-n_{j}}$ is a branch which
sends $V_{j}$ onto $U_{j}$). 
We may assume that $\forall j\geq 0$, $U_{j+1}\subset U_{j}$.
Let $R'$ be a constant chosen in Section~\ref{sec:const}.
If for every $j$ positive and every $0\leq k\leq n_{j}$, 
$\diam f^{k}(J_{j})\geq R'$
then  $\cap_{j}^{\infty} J_{j}$ is a non-trivial wandering continuum
and the Julia set of $f_{c}$ would  not be locally connected.
Hence, there exist $j$ and $k$ so that $\diam f^{k}(J_{j})\leq R'$. 
Let $B_{R'}\supset f^{k}(J_{j})$.
We apply Proposition~\ref{prop:shrink} for the  inverse branches
$f^{-k-n_{j}s}$, $s$ is a positive integer,  
$f^{-n_{j}s}(B_{R'})\supset J_{j}$,
and the ball $B_{R'}$. Then
\[\diam J_{j}= \diam f^{-k-n_{j}s}(B_{R'})\leq \omega_{k+n_{j}s}^{-1} \rightarrow 0,\]
a contradiction.
\end{proof}
\subsection{Radial limits}
\paragraph{Proof of Theorem~\ref{theo:cont}.}
Let $f_{c}=z^{d}+c$.
We  use the following fact  stated as Theorem~1.2 in \cite{graczyk-swiatek-ce}.
\begin{fact}
Let $\Gamma(c_{0})$ be an external ray landing at $c_{0}$. 
For every $d\geq 2$
and for almost every $c_{0}\in \partial {\cal M}_{d}$ with
respect to the harmonic measure there exist constants $K>0$ and
$\Lambda>1$ so that for each $c\in \Gamma(c_{0})$ and every $n>0$
\[(f_{c}^{n})'(c)\geq K\Lambda^{n}\;.\]
\end{fact}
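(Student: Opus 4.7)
The plan is to bound $|(f_c^n)'(c)|$ from below using the Böttcher coordinate identity together with harmonic-measure regularity of the Douady--Hubbard uniformization $\Phi:\C\setminus{\cal M}_d \to \C\setminus\overline{\D}$. Since $c\in\C\setminus{\cal M}_d$ along $\Gamma(c_0)$, the orbit of the critical point $0$ escapes to $\infty$, so the Böttcher coordinate $\phi_c$ conjugating $f_c$ to $z\mapsto z^d$ is defined on the whole basin of infinity, in particular at $c$. Differentiating $\phi_c\circ f_c^n = \phi_c^{d^n}$ at $z=c$ yields
\[
(f_c^n)'(c) \;=\; \frac{d^n\,\phi_c(c)^{d^n-1}\,\phi_c'(c)}{\phi_c'(f_c^n(c))}.
\]
Using $\Phi(c)=\phi_c(c)$, $|\Phi(c)|>1$ on external rays, together with $\phi_c'(w)\to 1$ as $|w|\to\infty$ (since $\phi_c(w)\sim w$) and $|f_c^n(c)|\to\infty$, one obtains the asymptotic
\[
|(f_c^n)'(c)| \;\asymp\; d^n\,|\Phi(c)|^{d^n-1}\,|\phi_c'(c)| \;\geq\; d^n\,|\phi_c'(c)|,
\]
so it suffices to establish a uniform lower bound $|\phi_c'(c)|\geq K$ as $c\to c_0$ along $\Gamma(c_0)$, for harmonic-measure-almost-every $c_0$. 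This would give the conclusion with $\Lambda=d>1$.

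Next I would relate the dynamical derivative $\phi_c'(c)$ (with respect to $z$, evaluated at $z=c$) to the parameter derivative $\Phi'(c)$ by differentiating the identity $\Phi(c) = \phi_c(c)$ jointly in the two variables of $(c,z)\mapsto\phi_c(z)$. This Wronskian-type identity expresses $\phi_c'(c)$ in terms of $\Phi'(c)$ and a correction coming from the $c$-dependence of $\phi_c$, the latter being controllable (via the escape time of the critical orbit and the Green function comparison $G_{{\cal M}_d}(c) = G_{f_c}(c)$ of Douady--Hubbard) in a neighborhood of $c_0$.

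With the reduction in hand, I would invoke the regularity of the Riemann map $\phi:\D\to\C\setminus{\cal M}_d$ afforded by Makarov-type results. The harmonic measure of $\partial{\cal M}_d$ is the pushforward of the Lebesgue measure on $\partial\D$ by $\phi$, and for Lebesgue-almost-every $\xi\in\partial\D$ the radial behavior of $|\phi'(r\xi)|$ is controlled: in particular $\log|\phi'(r\xi)|$ does not go to $-\infty$, and (by Makarov's law of iterated logarithm) satisfies tight two-sided estimates. Combined with the parameter-dynamical derivative relation of the previous step, this translates into $|\phi_c'(c)|\gtrsim 1$ along $\Gamma(c_0)$ for harmonic-measure-a.e.~$c_0$, yielding the claimed uniform Collet--Eckmann bound with constants $K=K(c_0)$ and $\Lambda=d$.

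The main obstacle is the passage from the purely analytic regularity of $\phi$ at the boundary to a dynamical lower bound on $\phi_c'(c)$: the two conformal quantities are linked only through the Douady--Hubbard identification, and the transfer requires careful bookkeeping of how distortion accumulates as $c\to c_0$ (both along the external ray in the parameter plane and along the escaping critical orbit in the dynamical plane). This is the technical core of the arguments in \cite{graczyk-swiatek-ce,smirnov-symbce}, and makes essential use of Yoccoz puzzle combinatorics and Koebe-type distortion along the trajectories that pass near $\partial{\cal M}_d$ but avoid small hyperbolic components.
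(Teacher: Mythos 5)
This statement is not proved in the paper at all: it is quoted verbatim as Theorem~1.2 of \cite{graczyk-swiatek-ce} and used as a black box (the paper's only ``proof'' is the citation). So the relevant question is whether your sketch could actually establish it, and it cannot as written.

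The central gap is in the very first reduction. The B\"ottcher chain rule
$(f_c^n)'(c)=d^n\,\phi_c(c)^{d^n-1}\,\phi_c'(c)\,/\,\phi_c'\!\left(f_c^n(c)\right)$
is correct, but the step ``$\phi_c'(w)\to1$ as $|w|\to\infty$, hence $|(f_c^n)'(c)|\asymp d^n|\Phi(c)|^{d^n-1}|\phi_c'(c)|$'' silently assumes that $|\phi_c'(f_c^n(c))|$ is bounded above \emph{uniformly in $n$ and in $c\in\Gamma(c_0)$}. It is not. The map $\phi_c$ is conformal only on $\{G_c>G_c(0)\}$, and by Koebe $|\phi_c'(w)|\asymp\bigl(|\phi_c(w)|-e^{G_c(0)}\bigr)/\,\mathrm{dist}\bigl(w,\{G_c=G_c(0)\}\bigr)$. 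The point $f_c^n(c)$ has Green's level $d^{n+1}G_c(0)$, which for $c$ close to $\partial{\cal M}_d$ is tiny for many values of $n$, while the orbit can pass arbitrarily close to the equipotential through the critical point (indeed close to $0$ itself, where $f_c'$ vanishes). In that case $|\phi_c'(f_c^n(c))|$ blows up and the derivative $(f_c^n)'(c)$ genuinely drops --- this is exactly the recurrence of the critical orbit that the theorem must control, and it is the entire content of the combinatorial/statistical arguments in \cite{graczyk-swiatek-ce} and \cite{smirnov-symbce}. Your reduction discards it.

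Two further steps would also fail. First, Makarov's law of the iterated logarithm does \emph{not} give $|\phi'(r\xi)|\gtrsim1$ along almost every radius: for a boundary as fractal as $\partial{\cal M}_d$ (Hausdorff dimension $2$ by Shishikura) the oscillation of $\log|\phi'(r\xi)|$ is two-sided and unbounded for a.e.\ $\xi$, so $\liminf_{r\to1}|\phi'(r\xi)|=0$ almost surely; no uniform lower bound $K(c_0)$ can be extracted this way. Second, the ``Wronskian-type identity'' linking the dynamical derivative $\phi_c'(c)$ to the parameter derivative $\Phi'(c)$ is never written down, and the transfer between the two planes is precisely the hard technical core you defer to the cited papers. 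In short, the sketch reduces the theorem to statements that are either false or equivalent in difficulty to the theorem itself; if you need this Fact, cite \cite{graczyk-swiatek-ce} as the paper does.
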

This means that for almost all $c_{0}$ with respect
to the harmonic measure, $f_{c}$, $c\in \Gamma(c_{0})$, converges $S(\alpha)$-uniformly to
$f_{c_{0}}$  for every $\alpha>0$. 
To complete the proof of 
Theorem~\ref{theo:cont}, we invoke
Theorem~\ref{theo:contin}, 
\[\limsup_{\Gamma(c_{0})\ni c\rightarrow c_{0}}\HD(J_{c})
=\HD(J_{c_{0}})\;.\] 

\paragraph{Proof of Theorem~\ref{theo:end}.}
Fix a generic $c_{0}\in \partial {\cal M}_{d}$ as in the
proof of Theorem~\ref{theo:cont}. 
Consider $f_{c}$ with $c\in \Gamma(c_{0})$. 
Since $f_{c}$ is hyperbolic,
by \cite{sullivan-rio} it admits 
a unique conformal measure $\nu_{c}$
which is a Hausdorff measure
(restricted to the Julia set $J_{c}$)
normalized by a multiplicative constant
so that its total mass is one. 
Hence, the conformal exponent of $\nu_{c}$ is equal to
$\HD(J_{c})$. By  Theorem~\ref{theo:cont},
$\HD(J_{c})\rightarrow \HD(J_{c_{0}})$ as
$\Gamma(c_{0}) \ni c\rightarrow c_{0}$. 
This means that any weak accumulation point of $\nu_{c}$
is a conformal measure with an exponent $\HD(J_{c_{0}})$. But there is
only one such a measure for $f_{c_{0}}$
by Theorem~\ref{theo:4}, and  we obtain the  convergence of
$\nu_{c}$ to the geometric measure of $f_{c_{0}}$.

\clearpage


\end{document}